\def\namedlabel#1#2{\begingroup
	#2%
	\def\@currentlabel{#2}%
	\phantomsection\label{#1}\endgroup
}
\tikzstyle{box} = [rectangle, minimum width=3cm, text centered, text width=3cm, draw=black]
\tikzstyle{arrow} = [thick,->,>=stealth]
\let\oldtocsection=\tocsection
\let\oldtocsubsection=\tocsubsection
\renewcommand{\tocsection}[2]{\hspace{0em}\oldtocsection{#1}{#2}}
\renewcommand{\tocsubsection}[2]{\hspace{1em}\oldtocsubsection{#1}{#2}}
\theoremstyle{definition}
\newtheorem{theorem}{Theorem}[section]
\newtheorem{prop}[theorem]{Proposition}
\newtheorem{lemma}[theorem]{Lemma}
\newtheorem{cor}[theorem]{Corollary}
\newtheorem{ex}[theorem]{Example}
\theoremstyle{remark}
\newtheorem{dfn}[theorem]{Definition}
\newtheorem{remark}[theorem]{Remark}
\newtheorem{claim}[theorem]{Claim}
\numberwithin{equation}{section}
\def\co{\colon\thinspace}
\def\ep{\epsilon}
\def\R{\mathbb{R}}
\def\Z{\mathbb{Z}}
\def\N{\mathbb{N}}
\def\C{\mathbb{C}}
\def\uph{\underline{\phi}}
\DeclareMathOperator{\Img}{Im}
\begin{document}
	\title{Filtered cospans and interlevel persistence with boundary conditions}
	\author{Michael Usher}
	\address{Department of Mathematics, University of Georgia, 
		Athens, GA 30602}
	\email{usher@uga.edu}

	\maketitle 

\begin{abstract}
	We develop the notion of a ``filtered cospan'' as an algebraic object that stands in the same relation to interlevel persistence modules as filtered chain complexes stand with respect to sublevel persistence modules.  This relation is expressed via a functor from a category of filtered cospans to a category of persistence modules that arise in Bauer-Botnan-Fluhr's study of relative interlevel set homology. We associate a filtered cospan to a Morse function $f:\mathbb{X}\to [-\Lambda,\Lambda]$ such that $\partial \mathbb{X}$ is the union of the regular level sets $f^{-1}(\{\pm\Lambda\})$; this allows us to capture the interlevel persistence of such a function in terms of data associated to Morse chain complexes. Similar filtered cospans are associated to simplicial and singular chain complexes, and isomorphism theorems are proven relating these to each other and to relative interlevel set homology. Filtered cospans can be decomposed, under modest hypotheses, into certain standard elementary summands, giving rise to a notion of persistence diagram for filtered cospans that is amenable to computation.  An isometry theorem connects interleavings of filtered cospans to matchings between these persistence diagrams.
\end{abstract}

\section{Introduction}
The standard framework for classical Vietoris-Rips or sublevel persistent homology constructs from geometric data a persistence module indexed by $\R$---that is, a collection of vector spaces $V_t$ for all $t\in \R$ together with appropriately compatible maps $V_s\to V_t$ for $s\leq t$---and then extracts quantitative data from the persistence module (in the form of a barcode or persistence diagram) which classifies it.  While classification theorems such as the one in \cite{CB} require only the presence of a persistence module structure satisfying modest tameness hypotheses, the standard computational procedures going back in various contexts to \cite{Bar}, \cite{ELZ}, \cite{ZC} exploit the fact that this persistence module is usually obtained by taking the filtered homologies of a filtered (or, in the terminology of this paper, ``ascending'') chain complex.   An ascending chain complex is given by data $(C_{\uparrow *},\partial,\ell)$ where $(C_{\uparrow *},\partial)$ is a chain complex and $\ell\co \C_{\uparrow *}\to\R\cup\{-\infty\}$ is a certain type of function that records the filtration data (see Definition \ref{ascdef}).  Even if, as is generally the case in practical situations, the vector spaces $V_t$ change only at a discrete set of values $t\in \{t_0,\ldots,t_N\}$,  it stands to reason that it would be more efficient to compute persistent homology by manipulating the single object $(C_{\uparrow *},\partial,\ell)$  rather than by recording and analyzing the full persistence module $V_{t_0}\to V_{t_1}\to\cdots\to V_{t_N}$, which often contains much redundant information when $N$ is large.  The standard algorithms as in \cite{ZC} corroborate this intuition.

For a continuous function $f\co \mathbb{X}\to\R$ on a topological space $\mathbb{X}$, a more refined object than the sublevel persistence alluded to in the previous paragraph is the \emph{interlevel} persistence \cite{CDM}, which encompasses the relations between the homologies $H_*(f^{-1}(I))$ as $I$ varies through the partially ordered set of closed intervals in $\R$.   As was already shown in \cite{CDM}, under modest hypotheses the interlevel persistence carries equivalent information to the extended persistence of \cite{CEH09}.  There are various viewpoints under which the interlevel persistence can be derived from a type of persistence module structure which admits decompositions into standard irreducible summands from which one can derive a version of a persistence diagram.  For example, \cite{BEMP} and \cite{CDM} formulate interlevel persistence in terms of zigzag modules, while \cite{BGO} and \cite{BBF20} describe it in terms of special classes of two-dimensional persistence modules which satisfy  properties corresponding to the Mayer-Vietoris sequence.  Our preferred formulation throughout this paper is the elegant construction in \cite{BBF20} of the ``relative interlevel set homology'' as a certain kind of persistence module indexed by the strip \begin{equation} \label{mdef}\mathbb{M}=\{(x,y)\in\R^2|-2\Lambda\leq x+y\leq 2\Lambda\}\end{equation} with the partial order $\preceq$ given by $(x,y)\preceq (x',y')\Leftrightarrow \left(x\geq x' \mbox{ and }y\leq y'\right)$.  Here $\Lambda$ is some fixed positive number, which in \cite{BBF20} is usually taken to be $\frac{\pi}{2}$. This single persistence module captures all relative homologies $H_k(f^{-1}(I),f^{-1}(C))$ where $I$ is a closed interval, $C$ is the complement of an open interval, and $k\in \mathbb{Z}$, and incorporates information from connecting homomorphisms in Mayer-Vietoris sequences.  

The specific $\mathbb{M}$-indexed persistence modules considered in \cite{BBF20} decompose as direct sums of ``block modules'' $B^v$ indexed by elements  $v\in \mathbb{M}\setminus\partial\mathbb{M}$, yielding a persistence diagram for relative interlevel set homology that is a multiset of elements of $\mathbb{M}\setminus\partial\mathbb{M}$.

The present paper introduces the notion of a ``filtered cospan'' which is intended to stand in the same relation to the $\mathbb{M}$-indexed persistence modules of \cite{BBF20} as filtered chain complexes stand with respect to more classical $\R$-indexed persistence modules.  A filtered cospan (see Definition \ref{codfn}) amounts to a diagram \begin{equation}\label{gencospan} \xymatrix{ (C^{\downarrow}_{ *},\partial^{\downarrow},\ell^{\downarrow}) \ar[rd]^{\psi^{\downarrow}} & \\ & (D_*,\partial) \\ (C_{\uparrow *},\partial_{\uparrow},\ell_{\uparrow}) \ar[ru]_{\psi_{\uparrow}}   }\end{equation} where $(C_{\uparrow *},\partial_{\uparrow},\ell_{\uparrow})$ is an ascending chain complex, $(C^{\downarrow}_{ *},\partial^{\downarrow},\ell^{\downarrow})$ is a descending chain complex (like an ascending chain complex but with a descending filtration rather than an ascending one), $(D_*,\partial)$ is another chain complex, and the maps are chain maps.  Filtered cospans form a category $\mathsf{HCO}(\kappa)$ (where $\kappa$ is the field of coefficients of the chain complexes) which we intend to be analogous to the homotopy category of filtered chain complexes; like the latter, $\mathsf{HCO}(\kappa)$ arises as the zeroth cohomology category of a dg-category.  To simplify the description of the relation to \cite{BBF20}, it is convenient to restrict to the full subcategory $\mathsf{HCO}^{\Lambda}(\kappa)$ of ``$\Lambda$-bounded'' filtered cospans where $\Lambda$ is the parameter in (\ref{mdef}) (see Definition \ref{bounded-dfn}); geometrically, this corresponds to considering functions $\mathbb{X}\to(-\Lambda,\Lambda)$ instead of $\mathbb{X}\to\R$.  Among our results and constructions are: 
\begin{itemize}\item[(i)] A classification theorem (Corollary \ref{decompexists}) which presents any suitably well-behaved filtered cospan as a direct sum of certain standard elementary summands.
	\item[(ii)] A functor $H_0\circ\mathcal{F}$ from $\mathsf{HCO}^{\Lambda}(\kappa)$ to the category $\mathsf{Vect}_{\kappa}^{\mathbb{M}}$ of $\mathbb{M}$-indexed persistence modules, which maps the collection of standard elementary summands from (i) bijectively to the collection of block modules $B^v$ from \cite{BBF20}, as $v$ varies through $\mathbb{M}\setminus\partial\mathbb{M}$. (See Section \ref{fsect} and Proposition \ref{blockclass}.)
	\item[(iii)] An isometry theorem (Theorem \ref{isometry}) equating a suitably-defined interleaving distance between two objects $\mathcal{C}$ and $\mathcal{X}$ of $\mathsf{HCO}^{\Lambda}(\kappa)$ with a suitably-defined bottleneck distance between the persistence diagrams of the $\mathbb{M}$-indexed persistence modules $H_0\circ\mathcal{F}(\mathcal{C})$ and $H_0\circ\mathcal{F}(\mathcal{X})$.
\end{itemize}

Given the algebraic data as in (\ref{gencospan}) associated to a filtered cospan $\mathcal{C}$, Section \ref{cospan-decomp} shows in fairly concrete terms how to determine the decomposition in (i).  While we leave to future work (including work in progress by the author's student Cameron Thomas) the matter of systematically describing a particularly efficient algorithm for this, the description in Section \ref{cospan-decomp} should make  clear that it can be achieved by a sequence of the type of Gaussian elimination-based methods that appear in classical persistence algorithms like that in \cite{ZC}, along with intermediate steps of similar or lower complexity.  Several examples of the computation of the decomposition are given in Section \ref{calcsect}. Just as sublevel persistence is normally computed by taking as input a filtered chain complex rather than the persistence module given by the sequence of inclusion-induced maps on its filtered homology groups, our filtered cospans are intended to provide a more efficient input to the calculation of interlevel persistence than a zigzag module or an $\mathbb{M}$-indexed persistence module. 

If $\mathbb{X}$ is the geometric realization of a simplicial complex and $f$ is a simplexwise linear function on $\mathbb{X}$ which takes values in $(-\Lambda,\Lambda)$, a special case of Example \ref{simpdef} associates to $f$ a filtered cospan in which each of the three chain complexes $(C_{\uparrow *},\partial_{\uparrow}),(C^{\downarrow}_{*},\partial^{\downarrow}),(D_*,\partial)$ are   equal to the simplicial chain complex of $\mathbb{X}$, and the chain maps $\psi_{\uparrow}$ and $\psi^{\downarrow}$ in (\ref{gencospan}) are the identity.  In this algebraic situation the decomposition of the filtered cospan can be computed using the algorithm in \cite[Section 6]{CEH09}.  However, we will also have occasion to consider cases where chain complexes in (\ref{gencospan}) are different from each other and the maps $\psi_{\uparrow}$ and $\psi^{\downarrow}$ do not induce isomorphisms on homology.  These cases seem to be bit outside the purview of extended persistence; correspondingly, their decompositions include summands of a type that do not arise in that context.  We discuss the origin of such cases now.

\subsection{Interlevel persistence with boundary conditions}

Our general motivating object of study will be a continuous function $f\co \mathbb{X}\to [-\Lambda,\Lambda]$ where $\Lambda>0$.   The topological space $\mathbb{X}$ (generally assumed compact), the parameter $\Lambda$, and the sets  $\partial^-\mathbb{X}:=f^{-1}(\{-\Lambda\})$ and $\partial^+\mathbb{X}:=f^{-1}(\{\Lambda\})$  should each be regarded as background input (given before choosing $f$); thus, when $f$ is treated as variable (for example in a stability theorem) the variation should be within the class of functions that satisfy the boundary conditions $f|_{\partial^-\mathbb{X}}= -\Lambda$ and $f|_{\partial^+\mathbb{X}}=\Lambda$ for prespecified subsets $\partial^{\pm}\mathbb{X}$ of $\mathbb{X}$.  It is entirely permissible for $\partial^-\mathbb{X}$ and/or $\partial^+\mathbb{X}$ to be empty; if both are empty then we are studying functions $f\co \mathbb{X}\to (-\Lambda,\Lambda)$, which is equivalent to studying functions $\varphi^{-1}\circ f\co \mathbb{X}\to\R$ where $\varphi$ is the reader's favorite homeomorphism $\R\to (-\Lambda,\Lambda)$.   

In Example \ref{pinsing}, we associate  to the function $f\co \mathbb{X}\to [-\Lambda,\Lambda]$ as above the ``pinned singular filtered cospan'' $\mathcal{S}_{\partial}(X,f;\kappa)$.  (The word ``pinned'' is meant to evoke the constraint that $f|_{\partial^{\pm}\mathbb{X}}=\pm\Lambda$.)	This is an object of the category $\mathsf{HCO}^{\Lambda}(\kappa)$ and is constructed from subquotients of the singular chain complex of $\mathbb{X}$ with coefficients in the field $\kappa$.   Theorem \ref{H0iso} asserts that, under modest tameness hypotheses on $f$, the image of $\mathcal{S}_{\partial}(X,f;\kappa)$ under the functor $H_0\circ\mathcal{F}\co \mathsf{HCO}^{\Lambda}(\kappa)\to \mathsf{Vect}_{\kappa}^{\mathbb{M}}$ is isomorphic to the relative interlevel set homology (as in \cite{BBF20}, but see Section \ref{rishsect} for the details of our conventions) of the function $f|_{\mathbb{X}\setminus\partial\mathbb{X}}$, where $\partial\mathbb{X}=\partial^-\mathbb{X}\cup\partial^+\mathbb{X}$.  

In the case that $\mathbb{X}$ is the geometric realization of a finite simplcial complex $X$ and that $f\co \mathbb{X}\to [-\Lambda,\Lambda]$ is simplexwise linear with $f^{-1}(\partial\mathbb{X})$ corresponding to a subcomplex of $X$, Example \ref{simpdef} constructs the ``pinned simplicial filtered cospan'' $\mathcal{C}_{\partial}(X,f;\kappa)$ in terms of subquotients of the simplicial chain complex of $X$.   Proposition \ref{simpsing} shows that $\mathcal{C}_{\partial}(X,f;\kappa)$ is isomorphic in the category $\mathsf{HCO}^{\Lambda}(\kappa)$ to $\mathcal{S}_{\partial}(\mathbb{X},f;\kappa)$.  As the isomorphism type of $\mathcal{C}_{\partial}(X,f;\kappa)$ can be computed combinatorially, this together with Theorem \ref{H0iso} yields a way of computing the relative interlevel set homology of $f|_{\mathbb{X}\setminus\partial\mathbb{X}}$.

To be sure, since $\partial\mathbb{X}=f^{-1}(\{-\Lambda,\Lambda\})$ and $f(\mathbb{X})\subset [-\Lambda,\Lambda]$, one could also determine the relative interlevel set homology of  $f|_{\mathbb{X}\setminus\partial\mathbb{X}}$ by using the procedure in \cite{CEH09} to compute the extended persistence of $f$.  A modest advantage of the approach using $\mathcal{C}_{\partial}(X,f;\kappa)$ is that its computation can be carried out while disregarding any simplices that are contained in $\partial \mathbb{X}$.

The case which most heavily motivated this paper is where $\mathbb{X}$ is a smooth compact manifold with boundary $\partial\mathbb{X}=\partial^-\mathbb{X}\cup\partial^+\mathbb{X}$, and the function $f\co \mathbb{X}\to [-\Lambda,\Lambda]$ with $f^{-1}(\{\pm\Lambda\})=\partial^{\pm}\mathbb{X}$ is Morse and has $\Lambda$ and $-\Lambda$ as regular values. (Figure \ref{k1n2fig} illustrates some examples.)  Such Morse functions on cobordisms have long been fruitful objects of study in topology, as for example in \cite{Milhcob}. When $\partial^-\mathbb{X}=\varnothing$, it is well-known (\cite[Theorem 3.5]{Mil}) that $\mathbb{X}$ has the homotopy type of a cell complex formed from the descending manifolds of the critical points of $f$. The associated cellular chain complex is naturally filtered by the function values of the critical points, and the sublevel persistence of $f$ can then be computed as the filtered homology persistence module of this ascending chain complex, which is denoted in this paper as $CM_*(f,v;\kappa)$, with $v$ denoting the gradient-like vector field used to construct the descending manifolds of the critical points. If $\partial^-\mathbb{X}\neq\varnothing$, one can still form the ascending Morse complex $CM_*(f,v;\kappa)$, but now its filtered homologies are isomorphic to the relative homologies of the pairs $(f^{-1}([-\Lambda,t]),\partial^-\mathbb{X})$ as $t$ varies through $[-\Lambda,\Lambda]$.  

We introduce the Morse filtered cospan $\mathcal{M}(\mathbb{X},f,v,;\kappa)$ in Example \ref{morseex} as a way to capture the interlevel persistence of $f$ in terms of data associated to the Morse complexes of $f$ and of $-f$.  In the case that $\partial\mathbb{X}=\varnothing$ this is equivalent to a special case of constructions in \cite{U23} and can also be understood in terms of extended persistence; see \cite[Section 3.1]{U23} for a discussion.  However, the setup of \cite{U23}, while well-adapted to the Novikov-theoretic situations that are the focus of that paper, does not apply to the case that $\partial\mathbb{X}\neq \varnothing$ since in this case the Morse homologies of $f$ and $-f$ are generally non-isomorphic (as they are isomorphic to $H_*(\mathbb{X},\partial^-\mathbb{X};\kappa)$ and $H_*(\mathbb{X},\partial^+\mathbb{X};\kappa)$, respectively).

Proposition \ref{morseiso} shows that      $\mathcal{M}(\mathbb{X},f,v;\kappa)$ is isomorphic in the category $\mathsf{HCO}^{\Lambda}(\kappa)$ to the pinned singular filtered cospan $\mathcal{S}_{\partial}(\mathbb{X},f;\kappa)$; hence it is connected to the relative interlevel set homology of $f$ by Theorem \ref{H0iso}.  While, in general, the isomorphism type of $\mathcal{M}(\mathbb{X},f,v;\kappa)$ cannot be computed combinatorially since it is based in part on the flow of the gradient-like vector field $v$, we compute many examples in Section \ref{calcsect}.  In particular, in Section \ref{elemsect} we give a thorough discussion of the case that $f\co \mathbb{X}\to[-\Lambda,\Lambda]$ has only one critical point, so that $\mathbb{X}$ is the cobordism associated to surgery on a sphere in $\partial^-\mathbb{X}$. 

The cospans $\mathcal{S}_{\partial}(X,f;\kappa)$, $\mathcal{C}_{\partial}(X,f;\kappa)$, and $\mathcal{M}(\mathbb{X},f,v;\kappa)$ discussed in this section have in common that, when one passes to (unfiltered) homology from the diagram (\ref{gencospan}), one obtains, up to isomorphism, \[ \xymatrix{H_*(\mathbb{X},\partial^+\mathbb{X};\kappa) \ar[rd] & \\ & H_*(\mathbb{X},\partial\mathbb{X};\kappa) \\ H_*(\mathbb{X},\partial^-\mathbb{X};\kappa) \ar[ru] &    }   \] where the maps are induced by inclusion of pairs.  The only case in which both of these maps are isomorphisms in all gradings is when $\partial\mathbb{X}=\varnothing$.  Our general decomposition theorem, Corollary \ref{decompexists}, involves eight types of standard elementary summands, denoted $(\uparrow_{a}^{b})_k$, $(\downarrow^a_b)_k$, $(\uparrow_{a}^{\infty})_k$, $(\downarrow^a_{-\infty})_k$, $(\nearrow_a)_k$, $(\searrow^a)_k$, $(>_a^b)_k$, and $\square_k$, where $a$ and $b$ denote filtration parameters and $k$ refers to grading.  If the maps  $\psi_{\uparrow}$ and $\psi^{\downarrow}$ in (\ref{gencospan}) induce isomorphisms on homology, then the only summands that arise are of the form $(\uparrow_{a}^{b})_k$, $(\downarrow^a_b)_k$, and  $(>_a^b)_k$, and these correspond to the ``ordinary,'' ``relative,'' and ``extended'' subdiagrams  in the language of extended persistence.  Under our functor $H_0\circ\mathcal{F}\co \mathsf{HCO}^{\Lambda}(\kappa)\to \mathsf{Vect}_{\kappa}^{\mathbb{M}}$, the summands $(\uparrow_{a}^{b})_k$, $(\downarrow^a_b)_k$, and  $(>_a^b)_k$, are (modulo changing the value of $k$) sent to block modules $B^v$ where $v$ lies in the interior of one of the regions that are denoted $T^kL,T^kA$ and $T^kS$ starting in Section \ref{stripfig}; see also \cite[Figure 1.1]{BBF20}.  Elements $v\in \mathbb{M}\setminus\partial\mathbb{M}$ 
that do \emph{not} lie in the interiors of any of the regions $T^kL,T^kA$ and $T^kS$ are precisely those lying on the perimeter of a square $T^kS$.  In the context of \cite{BBF20}, these points are seen as non-realizable, since in our language \cite{BBF20} concerns the case that $\psi_{\uparrow}$ and $\psi^{\downarrow}$ both induce isomorphisms on homology.  But by relaxing that condition, these points can be realized: the four open edges of the square $T^kS$ are accounted for by summands of type $(\uparrow_{a}^{\infty})_{-k}$, $(\downarrow^a_{-\infty})_{-k}$, $(\nearrow_a)_{-k}$, $(\searrow^a)_{-k}$, respectively, and the two vertices of $T^kS$ that do not lie on $\partial\mathbb{M}$ correspond to $\square_{-k}$ and $\square_{-k+1}$.  See Figure \ref{blockfig}.  We will see in Section \ref{elemsect} that the case of a Morse function with one critical point is already rich enough to yield examples of decompositions involving any of the summands $(\uparrow_{a}^{\infty})_k$, $(\downarrow^a_{-\infty})_k$, $(\nearrow_a)_k$, $(\searrow^a)_k$, $(>_a^b)_k$, and $\square_k$;  the remaining summands $(\uparrow_a^b)_k$ and $(\downarrow^a_b)_k$ first arise for Morse functions with two critical points as in Example \ref{cubic}.

\subsection{Cospan basics}
We now begin formulating some of the notions discussed above more precisely.
	
	\begin{dfn}\label{ascdef}
		 An \textbf{ascending chain complex} over a field $\kappa$ is a triple $\mathcal{C}_{\uparrow}=(C_{\uparrow*},\partial_{\uparrow},\ell_{\uparrow})$ where $(C_{\uparrow*},\partial_{\uparrow})$ is a chain complex of $\kappa$-vector spaces, and $\ell_{\uparrow}\co C_*\to  \R\cup\{-\infty\}$ is a function satisfying:
		\begin{itemize} \item $\ell_{\uparrow}(x)=-\infty$ if and only if $x=0$;
			\item $\ell_{\uparrow}\left(\sum_{i=1}^{m}c_ix_i\right)\leq \max\{\ell_{\uparrow}(x_i)|c_i\neq 0\}$ for all $x_1,\ldots,x_m\in C_{\uparrow*}$ and $c_1,\ldots,c_m\in\R$, with equality if $x_i\in C_{\uparrow k_i}$ for distinct $k_1,\ldots,k_m$.
			\item $\ell_{\uparrow}(\partial_{\uparrow}x)\leq \ell_{\uparrow}(x)$ for all $x\in C_*$. 
		\end{itemize}
	\end{dfn}
	
	In this situation, we have a nested family of subcomplexes $C_{\uparrow}^{\leq t}:=\{x\in C_{\uparrow *}|\ell_{\uparrow}(x)\leq t\}$, with $C_{\uparrow}^{\leq s}\subset C_{\uparrow }^{\leq t}$ for $s\leq t$.
	The symbol $\uparrow$, obviously, is meant to emphasize the ascending nature of this family of subcomplexes; however, we acknowledge (and hereby warn the reader of) the confusion that might result from the fact that this family of subcomplexes is determined by the \emph{sub}level sets of $\ell_{\uparrow}$, not the superlevel sets.  
	
	There is a companion notion yielding subcomplexes that are associated to superlevel sets of a function:
	
	\begin{dfn}\label{descdef}
		 A \textbf{descending chain complex} over a field $\kappa$ is a triple $\mathcal{C}^{\downarrow}=(C^{\downarrow}_{*},\partial^{\downarrow},\ell^{\downarrow})$ where $(C^{\downarrow},\partial^{\downarrow})$ is a chain complex of $\kappa$-vector spaces and $\ell^{\downarrow}\co C^{\downarrow}_{*}\to \R\cup\{+\infty\}$ has the property that $(C^{\downarrow}_{*},\partial^{\downarrow},-\ell^{\downarrow})$ is an ascending chain complex in the sense of Definition \ref{ascdef}.
	\end{dfn}
	
	Thus for a descending chain complex $(C^{\downarrow}_{*},\partial^{\downarrow},\ell^{\downarrow})$ we have $\ell^{\downarrow}\left(\sum_ic_ix_i\right)\geq \min\{\ell^{\downarrow}(x_i)|c_i\neq 0\}$ and $\ell^{\downarrow}(\partial^{\downarrow}x)\geq \ell^{\downarrow}(x)$, and there is a decreasing family of subcomplexes $C^{\downarrow}_{\geq t}=\{x\in C^{\downarrow}_{*}|\ell^{\downarrow}(x)\geq t\}$.

As suggested above, our key algebraic objects are defined as follows:
	\begin{dfn}\label{codfn}
	Let $\kappa$ be a field.  A \textbf{filtered cospan} over $\kappa$ consists of the following data:
	\begin{itemize} \item an ascending chain complex over $\kappa$, $\mathcal{C}_{\uparrow}=(C_{\uparrow *},\partial_{\uparrow},\ell_{\uparrow})$;
		\item a descending chain complex over $\kappa$, $\mathcal{C}^{\downarrow}=(C^{\downarrow}_{*},\partial^{\downarrow},\ell^{\downarrow})$;
		\item another chain complex of $\kappa$-vector spaces, $(D_*,\partial_D)$; and
		\item chain maps $\psi_{\uparrow}\co C_{\uparrow *}\to D_*$ and $\psi^{\downarrow}\co C^{\downarrow}_{*}\to D_*$.	\end{itemize}		
\end{dfn}

The relation to the setup of \cite{BBF20} simplifies notationally if we fix a number $\Lambda>0$ and assume that the nontrivial values of our filtration functions have absolute value bounded by $\Lambda$. 

\begin{dfn}\label{bounded-dfn} Let $\Lambda$ be a positive number.  An ascending chain complex $\mathcal{C}_{\uparrow}=(C_{\uparrow *},\partial_{\uparrow},\ell_{\uparrow})$ (resp., a descending chain complex $\mathcal{C}^{\downarrow}=(C^{\downarrow}_{*},\partial^{\downarrow},\ell^{\downarrow})$) is said to be $\Lambda$-\textbf{bounded} if $-\Lambda<\ell(x)<\Lambda$ for all $x\in C_{\uparrow *}\setminus\{0\}$ (resp. for all $x\in C^{\downarrow}_{*}\setminus\{0\}$).
	
A filtered cospan $\mathcal{C}=(\mathcal{C}_{\uparrow},\mathcal{C}^{\downarrow},(D_*,\partial_D))$ is said to be $\Lambda$-bounded if both the ascending chain complex $\mathcal{C}_{\uparrow}$ and the descending chain complex $\mathcal{C}^{\downarrow}$ are $\Lambda$-bounded.
\end{dfn}

	Thus in this case we have $C_{\uparrow *}=\cup_{t<\Lambda}C_{\uparrow}^{\leq t}$, $C_{*}^{\downarrow}=\cup_{t>-\Lambda} C^{\downarrow}_{\geq t}$, and $C_{\uparrow}^{\leq -\Lambda}=C^{\downarrow}_{\geq \Lambda}=\{0\}$.

	There is no real loss of generality in restricting attention to $\Lambda$-bounded ascending or descending chain complexes, since if we fix an increasing bijection $\varphi\co \R\to(-\Lambda,\Lambda)$ then an arbitrary ascending chain complex   $(C_{\uparrow *},\partial_{\uparrow},\ell_{\uparrow})$ will become $\Lambda$-bounded upon replacing $\ell_{\uparrow}|_{C_{\uparrow *}\setminus\{0\}}$ by $\varphi\circ\ell_{\uparrow}|_{C_{\uparrow *}\setminus\{0\}}$, and similarly for descending chain complexes.  Something like this conversion appears repeatedly in \cite{BBF20}, with $\Lambda=\frac{\pi}{2}$ and $\varphi$ equal to the arctangent function.
		Throughout the rest of the paper, the positive real parameter $\Lambda$ should be regarded as fixed.

	In Section \ref{catsect}, we will construct a category $\mathsf{HCO}(\kappa)$ whose objects are filtered cospans over $\kappa$. We denote by $\mathsf{HCO}^{\Lambda}(\kappa)$ the full subcategory of $\mathsf{HCO}(\kappa)$ whose objects are the $\Lambda$-bounded filtered cospans. 
	
	The definition of the morphisms in $\mathsf{HCO}(\kappa)$ (and hence also in $\mathsf{HCO}^{\Lambda}(\kappa)$) is deferred to Section \ref{catsect}.
 However, we record here the following useful characterization of which filtered cospans are isomorphic to each other in $\mathsf{HCO}(\kappa)$.
	
	\begin{theorem}\label{introiso}
		Let $\mathcal{C}=\left((C_{\uparrow *},\partial_{\uparrow}^C,\ell_{\uparrow}^C),(C^{\downarrow}_{*},\partial^{\downarrow}_C,\ell^{\downarrow}_C),(D_*,\partial_D),\psi_{\uparrow},\psi^{\downarrow}\right)$  and \\ $\mathcal{X}=\left((X_{\uparrow *},\partial_{\uparrow}^X,\ell_{\uparrow}^{X}),(X^{\downarrow}_{*},\partial^{\downarrow}_{X},\ell^{\downarrow}_{X}),(Y_*,\partial_Y),\phi_{\uparrow},\phi^{\downarrow}\right)$ be two objects of $\mathsf{HCO}(\kappa)$.  Then $\mathcal{C}$ and $\mathcal{X}$ are isomorphic in $\mathsf{HCO}(\kappa)$ if and only if there exists a diagram  \[ \xymatrix{ C^{\downarrow}_{*} \ar[rrr]^{\alpha^{\downarrow}} \ar[rd]_{\psi^{\downarrow}} & & & X^{\downarrow}_{*}\ar[rd]^{\phi^{\downarrow}} & \\ & D_* \ar[rrr]^{\alpha} & & & Y_* \\ C_{\uparrow*} \ar[ru]^{\psi_{\uparrow}} \ar[rrr]_{\alpha_{\uparrow}}  & & & X_{\uparrow*} \ar[ru]_{\phi_{\uparrow}} & }  \] where $\alpha_{\uparrow}$ and $\alpha^{\downarrow}$ are filtered homotopy equivalences, $\alpha$ is a homotopy equivalence, and the two parallelograms commute up to chain homotopy.
	\end{theorem}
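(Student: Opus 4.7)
My plan rests on the expected shape of the morphisms in $\mathsf{HCO}(\kappa)$, which, since this category is announced as the $H^0$ of a dg-category modelled on the homotopy category of filtered complexes, should encode a morphism $\mathcal{C}\to\mathcal{X}$ as (an equivalence class of) a triple of chain maps $\alpha^{\downarrow}\co C^{\downarrow}_{*}\to X^{\downarrow}_{*}$, $\alpha_{\uparrow}\co C_{\uparrow *}\to X_{\uparrow *}$, $\alpha\co D_*\to Y_*$---with the first two filtration-preserving---together with chain homotopies witnessing $\alpha\circ \psi_{\uparrow}\simeq \phi_{\uparrow}\circ\alpha_{\uparrow}$ and $\alpha\circ \psi^{\downarrow}\simeq \phi^{\downarrow}\circ\alpha^{\downarrow}$.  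With that description in hand, the forward direction is essentially bookkeeping: an isomorphism in $\mathsf{HCO}(\kappa)$ supplies the diagram verbatim, and the component maps of its two-sided inverse serve as (filtered) chain-homotopy inverses to $\alpha_{\uparrow},\alpha^{\downarrow},\alpha$, so these are (filtered) homotopy equivalences in the required sense.

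The substantive content is the converse. Given a diagram as in the statement, package its data into a morphism $A\co\mathcal{C}\to\mathcal{X}$ in $\mathsf{HCO}(\kappa)$.  To construct a candidate inverse, choose filtered homotopy inverses $\beta_{\uparrow},\beta^{\downarrow}$ of $\alpha_{\uparrow},\alpha^{\downarrow}$ and a homotopy inverse $\beta$ of $\alpha$; the key step is to produce chain homotopies
\[ \beta\circ\phi_{\uparrow}\simeq \psi_{\uparrow}\circ\beta_{\uparrow},\qquad \beta\circ\phi^{\downarrow}\simeq \psi^{\downarrow}\circ\beta^{\downarrow}. \]
For the first, I would chain together the standard three homotopies encoded by
\[ \beta\phi_{\uparrow}\;\simeq\; \beta\phi_{\uparrow}\alpha_{\uparrow}\beta_{\uparrow}\;\simeq\; \beta\alpha\psi_{\uparrow}\beta_{\uparrow}\;\simeq\; \psi_{\uparrow}\beta_{\uparrow}, \]
using $\alpha_{\uparrow}\beta_{\uparrow}\simeq\mathrm{id}$, the given parallelogram homotopy of $A$, and $\beta\alpha\simeq\mathrm{id}$; the descending side is handled symmetrically. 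The resulting triple $(\beta^{\downarrow},\beta_{\uparrow},\beta)$ together with these homotopies defines a morphism $B\co\mathcal{X}\to\mathcal{C}$ in $\mathsf{HCO}(\kappa)$.

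It remains to verify that $B\circ A$ and $A\circ B$ represent the identity morphisms.  On the level of the three component chain maps this is immediate from the choice of homotopy inverses.  The main obstacle---and the one point that cannot be resolved without the precise definition to be given in Section \ref{catsect}---is to show that the composed parallelogram homotopies of $BA$ and $AB$ are themselves equivalent, in the sense of the equivalence relation defining $\mathsf{HCO}(\kappa)$, to the trivial homotopies of the identity morphisms.  Because $\mathsf{HCO}(\kappa)$ is by construction $H^0$ of a dg-category, this equivalence is by coboundaries in the appropriate morphism complex; the required coboundary should be written down explicitly from the same homotopy data already assembled (the triangle homotopies $\alpha_{\uparrow}\beta_{\uparrow}\simeq\mathrm{id}$ etc.\ give chain-level primitives for the mismatches between the concatenated parallelogram homotopies and the trivial ones).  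Once this standard---if slightly tedious---verification is carried out, $B$ is the desired two-sided inverse of $A$ and the theorem follows.
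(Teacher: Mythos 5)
Your overall strategy is the same as the paper's: the forward direction is indeed just unwinding the definition of a closed degree-zero morphism, and for the converse the paper also takes (filtered) homotopy inverses $\beta^{\downarrow},\beta_{\uparrow},\beta$, assembles them into a morphism back, and then checks that both composites represent the identity in $\mathsf{HCO}(\kappa)$ (this is Corollary \ref{hcoiso}, resting on Propositions \ref{htopicpsi} and \ref{bounce}). However, the place where you write ``on the level of the three component chain maps this is immediate'' and then defer the rest as a ``standard---if slightly tedious---verification'' is exactly where all the substance lies, and as stated it contains a gap. First, the components of $B\circ A$ are $\beta^{\downarrow}\alpha^{\downarrow}$, $\beta_{\uparrow}\alpha_{\uparrow}$, $\beta\alpha$, which are only homotopic to the identities, not equal to them, so already the first three components of the required degree $-1$ primitive must be chosen (as filtered homotopies), and the remaining two components of that primitive must then satisfy equations mixing $\psi_{\uparrow},\psi^{\downarrow},\phi_{\uparrow},\phi^{\downarrow}$, the parallelogram homotopies of $A$ and $B$, and those chosen homotopies. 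Second, and more importantly, this is not automatic from ``the same homotopy data already assembled'': with an arbitrary choice of the parallelogram homotopies defining $B$ (e.g.\ your concatenation $\beta\phi_{\uparrow}\simeq\beta\phi_{\uparrow}\alpha_{\uparrow}\beta_{\uparrow}\simeq\beta\alpha\psi_{\uparrow}\beta_{\uparrow}\simeq\psi_{\uparrow}\beta_{\uparrow}$), the class of $B$ in $\mathsf{HCO}(\kappa)$ can change --- two morphisms with the same three chain-map components differ by a closed element of the form $(0,0,0,k^{\downarrow},k_{\uparrow})$, which may be nonzero in cohomology --- so for a careless choice the composite $BA$ need not be the identity class at all, and no coboundary exists. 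One must therefore make the homotopy components of the inverse carefully and then actually exhibit the primitives.

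This is precisely what the paper does: it first uses Proposition \ref{htopicpsi} to reduce to the normalized situation $\phi_{\uparrow}=\alpha\circ\psi_{\uparrow}\circ\beta_{\uparrow}$, $\phi^{\downarrow}=\alpha\circ\psi^{\downarrow}\circ\beta^{\downarrow}$, then in Proposition \ref{bounce} takes the specific homotopy components $\alpha\circ\psi^{\downarrow}\circ K^{C^{\downarrow}}$, $\alpha\circ\psi_{\uparrow}\circ K^{C_{\uparrow}}$ for $\mathfrak{a}$ and $-K^D\circ\psi^{\downarrow}\circ\beta^{\downarrow}$, $-K^D\circ\psi_{\uparrow}\circ\beta_{\uparrow}$ for $\mathfrak{b}$, and writes down the explicit elements $\zeta$ and (the rather involved) $\eta$ with $\delta\zeta=1_{\mathcal{C}}-\mathfrak{b}\mathfrak{a}$ and $\delta\eta=1_{\mathcal{X}}-\mathfrak{a}\mathfrak{b}$. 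Your proposal correctly identifies the architecture of the argument, but to close it you need either to carry out this explicit construction (including specifying the homotopy components of $B$ compatibly) or to supply some alternative formal mechanism for invertibility; as written, the decisive step is asserted rather than proved.
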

	\begin{proof} This is the first part of Corollary \ref{hcoiso}.\end{proof}

We now give the details of the definitions of the singular, simplicial, and Morse filtered cospans mentioned earlier in the introduction.

\begin{ex}\label{singular}
	Let $f\co \mathbb{X}\to  \R$ be a continuous function.  The \textbf{singular filtered cospan} $\mathcal{S}(\mathbb{X},f;\kappa)$ associated to $f$ has all three chain complexes $(C_{\uparrow *},\partial_{\uparrow})$, $(C^{\downarrow}_{*},\partial^{\downarrow})$, and $(D_*,\partial_D)$ equal to the singular chain complex $S_*(\mathbb{X};\kappa)$ of $\mathbb{X}$ with coefficients in $\kappa$.  The chain maps $\psi_{\uparrow}\co C_{\uparrow *}\to D_*$ and $\psi^{\downarrow}\co C^{\downarrow}_{*}\to D_*$ are each taken to be the identity.  
	
	Writing a general element $c$ of $S_k(\mathbb{X})$ as $c=\sum_{i=1}^{n}  r_i\sigma_i$ for $r_i\in \kappa\setminus \{0\}$ and $\sigma_i\co \Delta^k\to \mathbb{X}$ (with the various $\sigma_i$ distinct from one another), we define the filtration functions $\ell_{\uparrow}\co S_*(\mathbb{X};\kappa)\to \R\cup\{-\infty\}$ and $\ell^{\downarrow}\co S_*(\mathbb{X};\kappa)\to \R\cup \{+\infty\}$ by \[ \ell_{\uparrow}(c)=\max\left\{t\in \R\left|(\forall i)\left(\max_{\Delta^k}(f\circ\sigma_i)\leq t\right)\right.\right\},\quad \ell^{\downarrow}(c)=\min\left\{t\in\R\left|(\forall i)\left(\min_{\Delta^k}(f\circ\sigma_i)\geq t\right)\right.\right\} \] (with the maximum of the empty set considered to be $-\infty$, and the minimum of the empty set considered to be $+\infty$). 
	We thus have $C_{\uparrow }^{\leq t}=S_*(f^{-1}((-\infty,t]);\kappa)$ and $C^{\downarrow}_{\geq t}=S_*(f^{-1}([t,\infty));\kappa)$. If $f$ has image in the open interval $(-\Lambda,\Lambda)$ then $\mathcal{S}(\mathbb{X},f;\kappa)$ is $\Lambda$-bounded.
\end{ex}

\begin{ex}  \label{pinsing}
	Now consider a continuous map $f\co \mathbb{X}\to [-\Lambda,\Lambda]$.  Denote \[ \partial^+\mathbb{X}=f^{-1}(\{\Lambda\})\qquad \partial^-\mathbb{X}=f^{-1}(\{-\Lambda\}) \qquad \partial \mathbb{X}=\partial^+\mathbb{X}\cup \partial^-\mathbb{X}.\]  We allow one or both of $\partial^+\mathbb{X}$ and $\partial^-\mathbb{X}$ to be empty; if both of them are empty this example will reduce to Example \ref{singular}.
	
	The \textbf{pinned singular filtered cospan} $\mathcal{S}_{\partial}(\mathbb{X},f;\kappa)$, which is an object of $\mathsf{HCO}^{\Lambda}(\kappa)$, is defined as follows.  The three chain complexes in the definition will be the relative singular chain complexes \[ 
	C_{\uparrow *}=S_*(\mathbb{X}\setminus \partial^+\mathbb{X},\partial^-\mathbb{X};\kappa),\qquad C^{\downarrow}_{*}=S_*(\mathbb{X}\setminus\partial^-\mathbb{X},\partial^+\mathbb{X};\kappa),\qquad D_*=S_*(\mathbb{X},\partial \mathbb{X};\kappa).\]  The maps $\psi_{\uparrow}\co C_{\uparrow *}\to D_*$  and $\psi^{\downarrow}\co C^{\downarrow}_{*}\to D_*$ are those induced by the inclusions of pairs $(\mathbb{X}\setminus \partial^{\pm}\mathbb{X},\partial^{\mp}\mathbb{X})\to (\mathbb{X},\partial\mathbb{X})$; in other words, they are the restrictions to $S_*(\mathbb{X}\setminus\partial^{\pm}\mathbb{X},\partial^{\mp}\mathbb{X};\kappa)$ of the projections $\pi^{\mp}\co \frac{S_*(\mathbb{X};\kappa)}{S_*(\partial^{\mp}\mathbb{X};\kappa)}\to \frac{S_*(\mathbb{X};\kappa)}{S_*(\partial\mathbb{X};\kappa)}$.
	
	A general element of $C_{\uparrow k}$ can then be represented uniquely as a singular chain $c=\sum_{i=1}^{n}r_i\sigma_i$ with $r_i\in \kappa\setminus\{0\}$ and with the $\sigma_i$ being distinct maps $\Delta^k\to \mathbb{X}\setminus \partial^+\mathbb{X}$ such that no image of any $\sigma_i$ is completely contained in $\partial^-\mathbb{X}$.  As in Example \ref{singular} we define in this case $\ell_{\uparrow}(c)=\max\left\{t \left|(\forall i)\left(\max_{\Delta^k}(f\circ\sigma_i)\leq t\right)\right.\right\}$.
	Similarly, a general element of $C^{\downarrow}_{k}$ can be represented as $c=\sum_ir_i\sigma_i$ with the $\sigma_i$ having image disjoint from $\partial^-\mathbb{X}$ and not contained in $\partial^+\mathbb{X}$, and in this case we put $\ell^{\downarrow}(c)=\min\left\{t\left|(\forall i)\left(\min_{\Delta^k}(f\circ\sigma_i)\geq t\right)\right.\right\}$.  Thus, for $t\in [-\Lambda,\Lambda)$, $C_{\uparrow}^{\leq t}=S_*(f^{-1}([-\Lambda,t]),\partial^-\mathbb{X};\kappa)$, and for $t\in (-\Lambda,\Lambda]$, $C^{\downarrow}_{\geq t}=S_*(f^{-1}([t,\Lambda]),\partial^+\mathbb{X};\kappa)$.
\end{ex}

As mentioned earlier, Theorem \ref{H0iso} implies, under appropriate tameness hypotheses on the function $f$, that the interlevel persistence of $f|_{\mathbb{X}\setminus\partial\mathbb{X}}$ can be read off from the isomorphism classification of $\mathcal{S}_{\partial}(\mathbb{X},f;\kappa)$.

\begin{ex}\label{simpdef}
	Let $X$ be a finite abstract simplicial complex with geometric realization $\mathbb{X}$ in some $\R^d$, say with the vertices identified with points $v_0,\ldots,v_N\in \R^d$. (In particular, we have chosen a total order on the vertices.) Suppose that $f\co \mathbb{X}\to \R$ is a simplexwise linear function, with image contained in $[-\Lambda,\Lambda]$. Thus $f$ is determined by its values on the vertices $v_i$.  As in Example \ref{pinsing}, let $\partial^{\pm}\mathbb{X}=f^{-1}(\{\pm\Lambda\})$; for either choice of $+$ or $-$, the faces of $X$ whose vertices all lie in $\partial^{\pm}\mathbb{X}$ form a (possibly empty) subcomplex $\partial^{\pm}X$ of the original abstract simplicial complex $X$.  Write $\partial \mathbb{X}=\partial^{+}\mathbb{X}\cup\partial^-\mathbb{X}$ and $\partial X=\partial^+X\cup\partial^-X$. (These are both disjoint unions, and $\partial X$ is, like $\partial^-X$ and $\partial^+X$, thus a subcomplex of $X$.) 
	Let $X^-$ be the full subcomplex of $X$ consisting of the simplices in $X$ having no vertices in $\partial^+X$, and likewise let $X^+$ be the full subcomplex of $X$ generated by the vertices in $X\setminus \partial^-X$.

	One has an inclusion $\iota$ from the $\kappa$-coefficient simplicial chain complex $C_*(X;\kappa)$ to $S_*(\mathbb{X};\kappa)$, sending a generator of $C_*(X;\kappa)$ that corresponds to a $k$-simplex spanned by, say, $v_{i_0},\ldots,v_{i_k}$ with $i_0<\cdots<i_k$ to the generator for $S_*(\mathbb{X};\kappa)$ given by the map \begin{align*} \Delta^k &\to \mathbb{X} \\ (t_0,\ldots,t_k) &\mapsto \sum_j t_jv_{i_j}.  \end{align*}  This inclusion restricts to inclusions $C_*(X^-;\kappa)\to S_*(\mathbb{X}\setminus \partial \mathbb{X}^+;\kappa)$, $C_*(X^+;\kappa)\to S_*(\mathbb{X}\setminus \partial \mathbb{X}^-;\kappa)$, $C_*(\partial^-X;\kappa)\to S_*(\partial^-\mathbb{X};\kappa)$, and $C^*(\partial^+X;\kappa)\to S_*(\partial^+\mathbb{X};\kappa)$.  
	
	We form the \text{pinned simplicial filtered cospan} $\mathcal{C}_{\partial}(X,f;\kappa)$ as follows.  We take \[ C_{\uparrow *}=\frac{C_{*}(X^-;\kappa)}{C_*(\partial^-X;\kappa)},\qquad C^{\downarrow}_{*}=\frac{C_*(X^+;\kappa)}{C_*(\partial^+X;\kappa)},\qquad D_*=\frac{C_*(X;\kappa)}{C_*(\partial X;\kappa)}. \] The maps $\psi_{\uparrow}\co C_{\uparrow *}\to D_*$ and $\psi^{\downarrow}\co C^{\downarrow}_{*}\to D_*$ are the obvious ones induced by inclusions.  
	
	The inclusion $\iota$ induces maps $\iota_{\uparrow}\co C_{\uparrow *}\to S_*(\mathbb{X}\setminus\partial^+\mathbb{X},\partial^-\mathbb{X};\kappa)$ and $\iota^{\downarrow}\co C^{\downarrow}_{*}\to S_*(\mathbb{X}\setminus \partial^-\mathbb{X},\partial^+\mathbb{X}:\kappa)$, and the filtration functions $\ell_{\uparrow},\ell^{\downarrow}$ are taken to be equal to the pullbacks via $\iota_{\uparrow}$ and $\iota^{\downarrow}$ of the corresponding functions on the singular filtered cospan $\mathcal{S}(\mathbb{X},\partial\mathbb{X},f;\kappa)$ from Example \ref{pinsing}.
	Thus $C_{\uparrow}^{\leq t}$ is the subcomplex of $\frac{C_*(X^-:\kappa)}{C_*(\partial^-X;\kappa)}$ spanned the images under the quotient projection of just those simplices whose vertices $v_i$ all have $f(v_i)\leq t$, and similarly for $C^{\downarrow}_{\geq t}$.  
	
	In Proposition \ref{simpsing} we show, based on Theorem \ref{introiso}, that the inclusion $\iota$ gives rise to an isomorphism in the category $\mathsf{HCO}^{\Lambda}(\kappa)$ between $\mathcal{C}_{\partial}(X,f;\kappa)$ and $\mathcal{S}_{\partial}(\mathbb{X},f;\kappa)$. Together with Theorem \ref{H0iso}, this allows the interlevel persistence of $f|_{\mathbb{X}\setminus\partial\mathbb{X}}$ to be computed from the isomorphism type of $\mathcal{C}_{\partial}(X,f;\kappa)$, which in turn can be determined algorithmically following the outline at the start of Section \ref{calcsect}.
\end{ex}

\begin{ex}\label{morseex}
	Let $\mathbb{X}$ be a compact smooth manifold with boundary $\partial \mathbb{X}$ decomposed into disjoint closed subsets as $\partial \mathbb{X}=\partial^-\mathbb{X}\cup\partial^+\mathbb{X}$.  Consider Morse functions $f\co \mathbb{X}\to [-\Lambda,\Lambda]$ with $-\Lambda$ and $\Lambda$ being regular values of $f$, such that $f^{-1}(\{-\Lambda\})=\partial^-\mathbb{X}$ and $f^{-1}(\{\Lambda\})=\partial^+\mathbb{X}$.  We allow the possibility that $\partial^-\mathbb{X}$ and/or $\partial^-\mathbb{X}$ is empty.  

	A generic choice of gradient-like vector field $v$ for $f$, together (if $2\neq 0$ in $\kappa$) with an arbitrary choice of orientations for the descending manifolds of $v$, gives rise to a Morse chain complex $CM_*(f,v;\kappa)$ over the field $\kappa$; $CM_k(f,v;\kappa)$ is freely generated by the index-$k$ critical points for $f$, and the matrix elements for the boundary operator count isolated unparametrized trajectories for $-v$ that are asymptotic at their ends to critical points of index differing by one.  For the details of the definition we follow the conventions of \cite[Chapter 6]{Paj}, where this complex is denoted $\mathcal{M}_*(f,v)$.  The complex $CM_*(f,v;\kappa)$ becomes an ascending chain complex by putting $\ell_{\uparrow}\left(\sum_i r_ip_i\right)=\max\{f(p_i)|r_i\neq 0\}$ for distinct critical points $p_i$ and for $r_i\in \kappa$.  We can likewise form $CM_*(-f,-v;\kappa)$ which becomes a descending complex by putting $\ell^{\downarrow}\left(\sum_ir_ip_i\right)=\min \{f(p_i)|r_i\neq 0\}$.
	
	We form the \textbf{Morse filtered cospan} of $f$, denoted $\mathcal{M}(\mathbb{X},f,v;\kappa)$, as follows: \begin{itemize} 
		\item $C_{\uparrow *}=CM_*(f,v;\kappa)$;
		\item $C^{\downarrow}_{*}=CM_*(-f,-v;\kappa)$;
		\item $D_*=S_*(\mathbb{X},\partial\mathbb{X};\kappa)$;
		\item $\psi_{\uparrow}\co CM_*(f,v;\kappa)\to S_*(\mathbb{X},\partial\mathbb{X})$ is the composition of the chain homotopy equivalence $\mathcal{E}(f,v)\co CM_*(f,v;\kappa)\to S_*(\mathbb{X},\partial^-\mathbb{X};\kappa)$ from \cite[p. 218]{Paj} with the map $ S_*(\mathbb{X},\partial^-\mathbb{X};\kappa)\to S_*(\mathbb{X},\partial\mathbb{X};\kappa)$ induced by inclusion of pairs.
		\item $\psi^{\downarrow}\co CM_*(-f,-v;\kappa)\to S_*(\mathbb{X},\partial\mathbb{X};\kappa)$ is the composition of the chain homotopy equivalence $\mathcal{E}(-f,-v)\co CM_*(-f,-v;\kappa)\to S_*(\mathbb{X},\partial^+\mathbb{X};\kappa)$ from \cite[p. 218]{Paj} with the map $ S_*(\mathbb{X},\partial^+\mathbb{X};\kappa)\to S_*(\mathbb{X},\partial\mathbb{X};\kappa)$ induced by inclusion of pairs.
\end{itemize}
The maps $\mathcal{E}(f,v)$ and $\mathcal{E}(-f,-v)$ are specified only up to homotopy in \cite{Paj}, but Proposition \ref{htopicpsi} shows that changing $\psi_{\uparrow}$ and $\psi^{\downarrow}$ within their homotopy classes does not affect isomorphism classes in $\mathsf{HCO}^{\Lambda}(\kappa)$, so the above specifies $\mathcal{M}(X,f,v;\kappa)$ uniquely up to isomorphism.  In fact, we will show in Proposition \ref{morseiso} that $\mathcal{M}(X,f,v;\kappa)$ is isomorphic to the pinned singular cospan $\mathcal{S}_{\partial}(\mathbb{X},f;\kappa)$.  

An appealing feature of the Morse filtered cospan is that the filtered complexes $C_{\uparrow *}$ and $C^{\downarrow}_{*}$  are often rather small.  On the other hand, in the definition given above $D_*=S_*(\mathbb{X},\partial\mathbb{X};\kappa)$ is quite large; however, it follows from Theorem \ref{introiso} that $D_*$ can be replaced, without changing the isomorphism type of the filtered cospan, by any complex that is homotopy equivalent to $D_*$, if one composes $\psi_{\uparrow}$ and $\psi^{\downarrow}$ with the homotopy equivalence.  So we could equally well use, in the role of $D_*$, smaller complexes such as the singular complex of a relative triangulation of $(\mathbb{X},\partial\mathbb{X})$, or the Morse complex of a Morse function $g\co \mathbb{X}\to [0,\infty)$ having $g^{-1}(\{0\})=\partial \mathbb{X}$.  

In the case that $\partial^-\mathbb{X}=\varnothing$, the map $\psi^{\downarrow}\co CM_*(-f,-v;\kappa)\to S_*(\mathbb{X},\partial\mathbb{X};\kappa)=S_*(\mathbb{X},\partial^+\mathbb{X};\kappa)$ is a homotopy equivalence. So in this case the Morse filtered cospan is isomorphic to a filtered cospan of the form \[ \xymatrix{ CM_*(-f,-v;\kappa)\ar@{=}[rd] & \\ &  CM_*(-f,-v;\kappa) \\ CM_*(f,v;\kappa)\ar[ru] & } \] where the only nontrivial arrow induces on homology a map which can be identified with the canonical map $H_*(\mathbb{X};\kappa)\to H_*(\mathbb{X},\partial\mathbb{X};\kappa)$. 
\end{ex}

\subsection{Organization of the rest of the paper, and further discussion}

The forthcoming Section \ref{catsect} completes the definition of the category $\mathsf{HCO}(\kappa)$, obtaining it as the zeroth cohomology category of a dg-category $\mathsf{CO}(\kappa)$, and then, based upon this definition, establishes the criterion for isomorphism in $\mathsf{HCO}(\kappa)$ stated above as Theorem \ref{introiso}.

The main result of Section \ref{decompsect} is Corollary \ref{decompexists}, which asserts that any ``admissible'' filtered cospan is isomorphic in $\mathsf{HCO}(\kappa)$ to a direct sum of a collection of the ``standard elementary summands'' enumerated in Definition \ref{sesdef}.  The admissibility hypothesis (Definition \ref{admdef}) is satisfied for instance if the ascending and descending chain complexes $C_{\uparrow *}$ and $C^{\downarrow}_{*}$ in the filtered cospan are finite-dimensional in each degree, as is the case for the pinned simplicial and Morse filtered cospans considered in this paper.  A natural source of filtered cospans \emph{not} satisfying this property would arise from the Morse complexes of Morse functions $f\co \mathbb{X}\to \R$ having critical value sets that are discrete and bounded below but unbounded above; these occur, for example, in the classical Morse theory of geodesics and in symplectic homology. Natural  filtered cospans can be constructed in this situation which are not admissible but (using appropriate truncations of the descending chain complex $C^{\downarrow}_{*}$) are inverse limits of admissible filtered cospans.  We leave the development of this to future work.     

Section \ref{quasisect} establishes the isomorphisms (Propositions \ref{simpsing} and \ref{morseiso}) in $\mathsf{HCO}(\kappa)$ between, respectively, the pinned simplicial and Morse filtered cospans and the pinned singular filtered cospan.  The algebraic ingredients for this are Theorem \ref{introiso} and Proposition \ref{quasihtopy}, the latter of which relates filtered homotopy equivalences (as appear in Theorem \ref{introiso}) to filtered quasi-isomorphisms.  The relation between the simplicial and singular filtered cospans then follows readily from the standard isomorphism between simplicial and singular homology.  The relation between the Morse and singular filtered cospans is obtained by a formally similar argument, but some technical ingredients concerning the isomorphism between Morse and singular homology are required for this; these are deferred to Appendix \ref{app}.

Section \ref{calcsect} provides several examples of the calculation of decompositions into standard elementary summands (as in Corollary \ref{decompexists}) of the pinned simplicial or Morse filtered cospans of various functions (and hence, by Propositions \ref{simpsing} and \ref{morseiso}, of their pinned singular filtered cospans). As mentioned earlier, this includes a general consideration in Section \ref{elemsect} of the Morse filtered cospans associated to Morse functions $f\co\mathbb{X}\to [-2,2]$ with only one critical point.  In this case, if $\dim \mathbb{X}=n$ and the critical point has index $k$, then $\mathbb{X}$ can be obtained from an $(n-1)$-dimensional manifold with boundary $Y$ such that $\partial Y\cong S^{k-1}\times S^{n-k-1}$, by gluing a neighborhood of the critical point to $Y\times [-2,2]$.  The upper and lower boundaries $\partial^+\mathbb{X}$ and $\partial^-\mathbb{X}$ are then obtained by filling  $Y$ by gluing, respectively, $D^k\times S^{n-k-1}$ or $S^{k-1}\times D^{n-k}$ to its boundary.  In Corollary \ref{morsesummary}, we show how to read off the isomorphism class of the Morse filtered cospan from whether the respective sphere factors of $\partial Y\cong S^{k-1}\times S^{n-k-1}$ are homologically essential in $Y$.

While Sections \ref{catsect} through \ref{calcsect} are entirely concerned with filtered cospans in their own right, in Section \ref{funsect} we build a bridge between filtered cospans and persistence modules.  This takes the form of a functor $\mathcal{F}$ from $\mathsf{HCO}^{\Lambda}(\kappa)$ to the category $\mathsf{K}(\kappa)^{\mathbb{M}}$ of $\mathbb{M}$-indexed persistence modules with values in the homotopy category of chain complexes $\mathsf{K}(\kappa)$.  For each $(s,t)\in \mathbb{M}$, we first construct in Section \ref{f0sec} a functor $\mathcal{F}_{(s,t)}^{0}$ from the dg-category $\mathsf{CO}^{\Lambda}(\kappa)$ to the dg-category $\mathsf{Ch}(\kappa)$ of chain complexes by using mapping cones associated to filtered subcomplexes, depending on $(s,t)$, of the constituent chain complexes of a filtered cospan.  In Section \ref{transf} we define transformations between these functors in various cases when $(s,t)\preceq (s',t')$ that become natural upon passing to the zeroth cohomology categories $\mathsf{HCO}^{\Lambda}(\kappa)$ and $\mathsf{K}(\kappa)$, and in Section \ref{fsect} we show how these assemble to give the functor $\mathcal{F}\co \mathsf{HCO}^{\Lambda}(\kappa)\to \mathsf{K}(\kappa)^{\mathbb{M}}$.

Denoting by $\mathsf{Vect}_{\kappa}$ the category of vector spaces over $\kappa$, one has a covariant functor $H_0\co \mathsf{K}(\kappa)\to \mathsf{Vect}_{\kappa}$ and a contravariant functor $H^0\co \mathsf{K}(\kappa)\to \mathsf{Vect}_{\kappa}$, given respectively by the zeroth homology and the zeroth cohomology of a chain complex.  The category $\mathsf{K}(\kappa)$ is triangulated, and $H_0$ and $H^0$ both have the general property of mapping distinguished triangles in $\mathsf{K}(\kappa)$ to exact sequences.  In \cite{BBF21}, it is noted that the relative interlevel set homology and cohomology of a function, as functors $\mathbb{M}\to\mathsf{Vect}_{\kappa}$ and $\mathbb{M}^{op}\to\mathsf{Vect}_{\kappa}$, are, in their language, ``homological'' and ``cohomological'' in the sense that certain sequences associated to rectangles in $\mathbb{M}$ are always exact.  Section \ref{dist} gives what could be considered an algebraic account of this, showing that for any filtered cospan $\mathcal{C}$, the functor $\mathcal{F}(\mathcal{C})\co \mathbb{M}\to\mathsf{K}(\kappa)$ has the property that certain triangles associated to rectangles in $\mathbb{M}$ are distinguished; the exactness of the sequences as in \cite{BBF21} then follows directly from the fact that $H_0$ and $H^0$ map distinguished triangles to exact sequences.  See Theorem \ref{distmain} and Corollary \ref{cohfunct}.

From Section \ref{elemfun} to the end of the paper, we generally focus on the functor $H_0\circ \mathcal{F}\co \mathsf{HCO}^{\Lambda}(\kappa)\to \mathsf{Vect}_{\kappa}^{\mathbb{M}}$, rather than $\mathcal{F}\co \mathsf{HCO}^{\Lambda}(\kappa)\to\mathsf{K}(\kappa)^{\mathbb{M}}$.  (Of course, we could equally well consider $H^0\circ \mathcal{F}\co \mathsf{HCO}^{\Lambda}(\kappa)\to \mathsf{Vect}_{\kappa}^{\mathbb{M}^{op}}$, giving rise to cohomology rather than homology, with similar results.)  In Section \ref{elemfun} we determine how $H_0\circ\mathcal{F}$ acts on the standard elementary summands of Section \ref{decompsect}, showing in Proposition \ref{blockclass} that $H_0\circ\mathcal{F}$ sets up an explicit bijection between these basic filtered cospans and the block modules $B^v$ of \cite{BBF21}, as $v$ varies through $\mathbb{M}\setminus\partial\mathbb{M}$.  The persistence diagram of a $\Lambda$-bounded admissible filtered cospan $\mathcal{C}$ is accordingly defined to be the submultiset of $\mathbb{M}\setminus\partial\mathbb{M}$ indexing the block modules that appear in the direct sum decomposition of $H_0\circ\mathcal{F}(\mathcal{C})$.  

Section \ref{rishsect} contains the proof of Theorem \ref{H0iso}, asserting that, under modest hypotheses on the function $f\co \mathbb{X}\to [-\Lambda,\Lambda]$, the image of the pinned singular filtered cospan $\mathcal{S}_{\partial}(\mathbb{X},f;\kappa)$ under $H_0\circ\mathcal{F}$ is isomorphic to the relative interlevel set homology, as in \cite{BBF20}, of $f|_{\mathbb{X}\setminus\partial\mathbb{X}}$.  We follow \cite{BBF20}---and not \cite{BBF21}---by basing the relative interlevel set homology on preimages of closed intervals rather than open ones (and also by using homology rather than cohomology).  If one prefers to work with open intervals, one could appropriately modify the functor $\mathcal{F}$, for example by replacing appearances of $\mathcal{C}^{\downarrow}_{\geq x}$ and $C_{\uparrow}^{\leq y}$ in the definition of $\mathcal{F}^0_{T^k(x,y)}$ in (\ref{mdecomp})  by $\mathcal{C}^{\downarrow}_{>x}$ and $C_{\uparrow}^{<y}$,  and by changing the definition of $S$ in Section \ref{stripsect} to $[-\Lambda,\Lambda)\times (-\Lambda,\Lambda]$ (instead of $(-\Lambda,\Lambda]\times [-\Lambda,\Lambda)$) and making corresponding adjustments to $L$ and $A$.  One could then prove an analogue of Theorem \ref{H0iso} for relative interlevel set homology (or, using $H^0\circ\mathcal{F}$ in place of $H_0\circ\mathcal{F}$, cohomology) based on open intervals instead of closed ones; indeed, the proof would be slightly easier and would not require a tameness hypothesis on $f$ because certain triads would automatically be excisive.   However, this author has a mild preference for closed interlevel persistence, at least for the well-behaved functions that this paper is largely concerned with, because it manifestly contains within it the level set persistence.  By combining Theorem \ref{H0iso} with the calculations in Section \ref{elemfun}, we show in Proposition \ref{barclass} how to read off the level set persistence barcode of $f|_{\mathbb{X}\setminus\partial\mathbb{X}}$ from the decomposition into standard elementary summands of the pinned singular filtered cospan of $f$ (equivalently, when defined, of the pinned simplicial or Morse filtered cospan).  This is helpful for visualizing the significance of the calculations in Section \ref{calcsect}.

Finally, Section \ref{stabsect} proves an algebraic isometry theorem.  For any $\Lambda$-bounded filtered cospan $\mathcal{C}$, we have a persistence diagram $\mathcal{D}(\mathcal{C})$, based upon the decomposition of $H_0\circ\mathcal{F}(\mathcal{C})$ into block modules, which is a multiset of elements of $\mathbb{M}\setminus\partial\mathbb{M}$.  A choice of increasing homeomorphism $\varphi\co [-\infty,\infty]\to [-\Lambda,\Lambda]$ gives rise to a family of translations of the interval $(-\Lambda,\Lambda)$ and consequently to flows both on the category $\mathsf{HCO}^{\Lambda}(\kappa)$ and on the strip $\mathbb{M}$.  The flow on $\mathbb{M}$ is similar to one described in \cite[Section 4]{BBF21}, but our description (see (\ref{mflow})) is simpler, partly as an artifact of our use of a different fundamental domain for the action of the glide-reflection $T\co\mathbb{M}\to\mathbb{M}$ that corresponds to shifting homological degree.  From these flows we define an interleaving distance $d_{\mathrm{int}}^{\mathcal{T}^{\varphi}}$ on $\mathsf{HCO}^{\Lambda}(\kappa)$, an extended metric $d_{\mathrm{int}}^{\mathcal{T}_{\mathbb{M}}^{\varphi}}$ on $\mathbb{M}$, and a bottleneck distance $d_{B,\mathcal{T}_{\mathbb{M}}^{\varphi}}^{\mathbb{M},\partial\mathbb{M}}$ on multisets of elements of $\mathbb{M}\setminus\partial\mathbb{M}$, and we prove:

\begin{theorem}\label{isometry}
	Let $\mathcal{C}$ and $\mathcal{X}$ be two $\Lambda$-bounded filtered cospans that are of finite type.  Then $d_{B,\mathcal{T}_{\mathbb{M}}^{\varphi}}^{\mathbb{M},\partial\mathbb{M}}(\mathcal{D}(\mathcal{X}),\mathcal{D}(\mathcal{C}))= d_{\mathrm{int}}^{\mathcal{T}^{\varphi}}(\mathcal{X},\mathcal{C})$.
\end{theorem}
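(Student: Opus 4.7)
The plan is to split the claimed equality into its two constituent inequalities and attack each using the decomposition of Corollary \ref{decompexists} together with the bijective correspondence of Proposition \ref{blockclass} between the standard elementary summands of Definition \ref{sesdef} and the block modules $B^v$ of \cite{BBF20}. Both $\mathcal{C}$ and $\mathcal{X}$, being of finite type, decompose as direct sums of such elementary summands, so that $H_0\circ\mathcal{F}(\mathcal{C})$ and $H_0\circ\mathcal{F}(\mathcal{X})$ are block-decomposable with diagrams $\mathcal{D}(\mathcal{C})$ and $\mathcal{D}(\mathcal{X})$. For the stability inequality $d_{B,\mathcal{T}_{\mathbb{M}}^{\varphi}}^{\mathbb{M},\partial\mathbb{M}}(\mathcal{D}(\mathcal{X}),\mathcal{D}(\mathcal{C}))\leq d_{\mathrm{int}}^{\mathcal{T}^{\varphi}}(\mathcal{X},\mathcal{C})$, I would first verify that the functor $H_0\circ\mathcal{F}$ intertwines $\mathcal{T}^{\varphi}$ with $\mathcal{T}_{\mathbb{M}}^{\varphi}$, so that it sends $\epsilon$-interleavings of cospans to $\epsilon$-interleavings of their images as $\mathbb{M}$-indexed persistence modules. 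This reduces the inequality to the bottleneck–interleaving inequality for block-decomposable $\mathbb{M}$-indexed modules, which follows from the framework of \cite{BBF21}, or alternatively by an induced matching argument applied to the block-module setting.

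For the converse inequality $d_{\mathrm{int}}^{\mathcal{T}^{\varphi}}(\mathcal{X},\mathcal{C})\leq d_{B,\mathcal{T}_{\mathbb{M}}^{\varphi}}^{\mathbb{M},\partial\mathbb{M}}(\mathcal{D}(\mathcal{X}),\mathcal{D}(\mathcal{C}))$, given any $\epsilon$-matching $\mu$ of the diagrams, I would build an $\epsilon$-interleaving between $\mathcal{C}$ and $\mathcal{X}$ directly at the level of elementary summands. Writing $\mathcal{C}\cong\bigoplus_i E_i$ and $\mathcal{X}\cong\bigoplus_j F_j$ via Corollary \ref{decompexists}, each matched pair $(E_i,F_{\mu(i)})$ should admit an explicit elementary $\epsilon$-interleaving given by $\varphi$-controlled shifts of the filtration functions $\ell_{\uparrow},\ell^{\downarrow}$ together with the identity (or a standard map) on the constituent chain complexes; and each unmatched summand $E$ whose diagram point lies within $\epsilon$ of $\partial\mathbb{M}$ (in the sense of $d_{\mathrm{int}}^{\mathcal{T}_{\mathbb{M}}^{\varphi}}$) should have the property that its $2\epsilon$-transition morphism is null-homotopic in $\mathsf{CO}^{\Lambda}(\kappa)$, so that the zero morphism pair provides an $\epsilon$-interleaving of $E$ with the zero cospan. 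Assembling these elementary pieces as direct sums yields the required morphisms in $\mathsf{HCO}^{\Lambda}(\kappa)$, and the interleaving identity is verified summandwise.

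The main obstacle is the case analysis required to exhibit the elementary interleavings and null-homotopies across the eight types of standard summands in Definition \ref{sesdef}. The interval-type summands $(\uparrow_a^b)_k$, $(\downarrow^a_b)_k$, $(>_a^b)_k$ behave in a manner directly analogous to classical interval barcodes and admit the expected shift interleavings. The perimeter-type summands $(\uparrow_a^\infty)_k$, $(\downarrow^a_{-\infty})_k$, $(\nearrow_a)_k$, $(\searrow^a)_k$, $\square_k$ correspond to points on the boundary of the square $T^kS$ in $\mathbb{M}$ and are more subtle: for each such type one must identify the precise $\epsilon$-threshold beyond which the $2\epsilon$-transition is null-homotopic and verify that this threshold coincides with the bottleneck-geometric distance to $\partial\mathbb{M}$ that appears in $d_{B,\mathcal{T}_{\mathbb{M}}^{\varphi}}^{\mathbb{M},\partial\mathbb{M}}$. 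The non-linearity of the flow $\mathcal{T}^{\varphi}$, conjugated by the homeomorphism $\varphi$, means that these threshold computations are carried out with $\varphi$-distorted shifts of $\ell_{\uparrow}$ and $\ell^{\downarrow}$; matching these consistently against the corresponding $\varphi$-distorted geometry on the $\mathbb{M}$ side is where the bulk of the verification sits, but the resulting calculations are local to a single summand and should follow a uniform pattern once one case (e.g.\ $(\uparrow_a^b)_k$) is treated in detail.
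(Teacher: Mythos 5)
Your converse-stability half is essentially the paper's own argument (Proposition \ref{convstab}): decompose both cospans into standard elementary summands via Corollary \ref{decompexists}, use Proposition \ref{blockclass} to translate the $\ep$-matching, interleave matched pairs by the identity on the underlying complexes with $\varphi$-shifted filtrations, and interleave with zero any summand whose diagram point is within $\ep$ of $\partial\mathbb{M}$ by exhibiting a null-homotopy of the $2\ep$-transition. One simplification you are missing there: by Proposition \ref{sepregions} only the regions $T^kL^{\circ}$ and $T^kA^{\circ}$ (i.e.\ summands $(\uparrow_a^b)_k$ and $(\downarrow^a_b)_k$) lie a finite $d_{\mathrm{int}}^{\mathcal{T}_{\mathbb{M}}^{\varphi}}$-distance from $\partial\mathbb{M}$ or from other regions, so the null-homotopy analysis is only needed for those two types (via Proposition \ref{bdrydist}); the six ``perimeter'' types you worry about can never be unmatched or cross-matched for finite $\ep$, so no threshold computation arises for them.

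The genuine gap is in the stability direction. After correctly observing that $H_0\circ\mathcal{F}$ is equivariant (Proposition \ref{flowfunctor}) and hence carries an $\ep$-interleaving of cospans to one of $\mathbb{M}$-indexed modules, you declare that the bottleneck--interleaving inequality for block-decomposable $\mathbb{M}$-indexed modules ``follows from the framework of \cite{BBF21}, or alternatively by an induced matching argument.'' That is precisely the hard content of the theorem, and it is not available off the shelf. The stability results of \cite{BBF20},\cite{BBF21} concern the realizable setting where $\psi_{\uparrow}$ and $\psi^{\downarrow}$ induce isomorphisms on homology, so only block types corresponding to the interiors of $T^kS$, $T^kL$, $T^kA$ occur; here the persistence diagrams also contain the six perimeter block types, the flow $\mathcal{T}_{\mathbb{M}}^{\varphi}$ is a $\varphi$-conjugated, region-dependent flow (different from the one in \cite{BBF21}), and the bottleneck distance is taken relative to $\partial\mathbb{M}$ with its peculiar metric geometry (Propositions \ref{sepregions}, \ref{bdrydist}). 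Likewise, induced-matching arguments in the style of \cite{BL} are a one-parameter technique whose extension to this class of blocks and this flow is exactly what would have to be proved. The paper does not cite such a result: it proves only a \emph{local} stability statement by a rank-counting argument for block modules (Proposition \ref{rishstab}, Corollary \ref{localhco}, valid only when the interleaving distance is below a threshold $\delta_F$ depending on the diagram), and then globalizes by an interpolation lemma (Lemma \ref{interp}) constructed at the level of filtered cospans using mapping cylinders of the implantation maps, with the interpolated filtrations defined by $\varphi$-convex combination; finally it converts the resulting one-sided (hemidistance) bounds into the bottleneck bound via the Schr\"oder--Bernstein-type identity of Proposition \ref{symm}. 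Your proposal contains none of this machinery, so as written the stability inequality rests on an unavailable external theorem; you would need either to supply the block-module stability proof in this generality yourself, or to reproduce an interpolation-plus-local-stability argument along the paper's lines.
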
 
\begin{proof}
	One inequality (``stability'') is the content of Theorem \ref{mainstab}, and the other (``converse stability'') is established in Proposition \ref{convstab}.
\end{proof}

Here a finite-type filtered cospan is one that is isomorphic in $\mathsf{HCO}(\kappa)$ to one in which the chain complexes $C_{\uparrow *}$ and $C^{\downarrow}_{*}$ are finite-dimensional.  It is likely that the finite-type hypothesis can be weakened somewhat.

If $\mathcal{C}$ and $\mathcal{X}$ are the pinned singular filtered cospans associated to functions $f,g\co \mathbb{X}\to[-\Lambda,\Lambda]$, one has $d_{\mathrm{int}}^{\mathcal{T}^{\varphi}}(\mathcal{X},\mathcal{C})\leq \|\varphi^{-1}\circ f-\varphi^{-1}\circ g\|_{\infty}$, and so one obtains a stability result for the associated persistence diagrams in terms of $ \|\varphi^{-1}\circ f-\varphi^{-1}\circ g\|_{\infty}$ (Corollary \ref{fnstable}).

Our eight types of standard elementary summands are sent by $H_0\circ \mathcal{F}$ to block modules corresponding to eight types of regions in $\mathbb{M}\setminus \partial\mathbb{M}$ (images under a power of $T$ of the sets in (\ref{regionlist}), excluding $\mathfrak{D}\cap\partial\mathbb{M}$); by Proposition \ref{sepregions}, these regions lie an infinite distance away from one another according to the extended metric $d_{\mathrm{int}}^{\mathcal{T}_{\mathbb{M}}^{\varphi}}$, and only two of them (corresponding to summands of form $(\uparrow_a^b)_k$ and $(\downarrow^a_b)_k$, or to the ``ordinary'' and ``relative'' subdiagrams of extended persistence) lie a finite distance from $\partial\mathbb{M}$.  Hence, for any of the remaining six types of regions, the total multiplicity of points in a persistence diagram within that region is preserved under the matchings involved in the definition of the bottleneck distance $d_{B,\mathcal{T}_{\mathbb{M}}^{\varphi}}^{\mathbb{M},\partial\mathbb{M}}$.  In particular, for example, points corresponding to the extended subdiagram of extended persistence (corresponding in our notation to $T^kS^{\circ}$) must be matched to other such points, consistently with \cite[Remark 2.5]{BBF20}; in the context of level set persistence this corresponds to the statement that a closed interval in a continuous family of level set barcodes in degree $k$ can shrink to a singleton and then be replaced by an open interval in degree $k-1$, see the remark after Theorem 1.5 in \cite{BH}.  In terms of our persistence diagrams, this process corresponds a point in $T^kS^{\circ}$ moving from $T^{-k}S^-$ into $T^{-k}S^+$, where $S^-$ and $S^+$ are as defined shortly after the statement of Theorem \ref{H0iso}.  

\subsection*{Acknowledgements}  The author thanks Cameron Thomas for clarifying discussions.  This project was partly motivated by questions of Mihai Damian and Claude Viterbo at the 2023 Conference on Persistence Homology in Symplectic and Contact Topology in Albi.  The work is supported by the Simons Foundation through award MPS-TSM-8091.

\section{The category of filtered cospans}\label{catsect}

As indicated in the introduction, the category $\mathsf{HCO}(\kappa)$ is to have as its objects the filtered cospans over $\kappa$, as   in Definition \ref{codfn}.
Before stating what the morphisms in $\mathsf{HCO}(\kappa)$ are, we digress in order to introduce some background. 

Recall from \cite{BK} that a dg-category is a preadditive category $\mathcal{C}$ in which the morphism groups $\mathrm{Hom}_{\mathcal{C}}(X,Y)$ are each endowed with the structure of a cochain complex, say with coboundary operator $\delta_{X,Y}$, such that composition defines a cochain map $\mathrm{Hom}_{\mathcal{C}}(Y,Z)\otimes \mathrm{Hom}_{\mathcal{C}}(X,Y)\to \mathrm{Hom}_{\mathcal{C}}(X,Z)$, and such that each identity element $\mathrm{id}_X$ has $\delta_{X,X}(\mathrm{id}_X)=0$. 

A standard example of a dg-category is the category $\mathsf{Ch}(\kappa)$ of chain\footnote{We will generally take our \emph{objects} to be chain complexes, even while following the usual convention that the morphism spaces in a dg-category are cochain complexes.} complexes $\left(C_*=\oplus_{k\in \Z}C_k,\partial_C \right)$ over  the field $\kappa$, with the $\kappa$-vector space of degree-$m$ morphisms $\mathrm{Hom}_{\mathsf{Ch}(\kappa)}^{m}\left((C_*,\partial_C),(D_*,\partial_D)\right)$ consisting of all $\kappa$-linear maps $C_*\to D_*$ of homogeneous grading $-m$, \emph{i.e.}, \[ \mathrm{Hom}_{\mathsf{Ch}(\kappa)}^{m}\left((C_*,\partial_C),(D_*,\partial_D)\right)=\prod_{j\in \Z}\mathrm{Hom}_{\kappa}(C_j,D_{j-m}).\]  The differential $\delta_{C,D}\co \mathrm{Hom}_{\mathsf{Ch}(\kappa)}^{m}\left((C_*,\partial_C),(D_*,\partial_D)\right)\to \mathrm{Hom}_{\mathsf{Ch}(\kappa)}^{m+1}\left((C_*,\partial_C),(D_*,\partial_D)\right)$ is then given by \[ \delta_{C,D} f=\partial_D\circ f-(-1)^mf\circ\partial_C. \]
So the degree-zero elements $f$ satisfying $\delta_{C,D} f=0$ are precisely the chain maps $(C_*,\partial_C)\to (D_*,\partial_D)$, and the zeroth cohomology $H^0(\mathrm{Hom}_{\mathsf{Ch}(\kappa)}^{*}\left((C_*,\partial_C),(D_*,\partial_D)\right))$ is the $\kappa$-vector space of chain maps modulo chain homotopy.  Thus the zeroth cohomology category of $\mathsf{Ch}(\kappa)$ (\emph{i.e.}, the category with the same objects as $\mathsf{Ch}(\kappa)$ but with morphisms from $X$ to $Y$ given by the zeroth cohomology of the cochain complex $\mathrm{Hom}^*_{\mathsf{Ch}(\kappa)}(X,Y)$, with the obvious composition) is precisely the homotopy category of chain complexes $\mathsf{K}(\kappa)$.

Our category $\mathsf{HCO}(\kappa)$ will likewise be the zeroth cohomology of a dg-category, $\mathsf{CO}(\kappa)$.  So $\mathsf{CO}(\kappa)$  also has as its objects the filtered cospans over $\kappa$, any of which consists of data $\mathcal{C}=\left((C_{\uparrow *},\partial_{\uparrow}^C,\ell_{\uparrow}^C),(C^{\downarrow}_{*},\partial^{\downarrow}_C,\ell^{\downarrow}_C),(D_*,\partial_D),\psi_{\uparrow},\psi^{\downarrow}\right)$ as in Definition \ref{codfn}. 
A degree-$m$ morphism in $\mathsf{CO}(\kappa)$ from $\mathcal{C}$ to another filtered cospan \[\mathcal{X}=\left((X_{\uparrow *},\partial_{\uparrow}^X,\ell_{\uparrow}^{X}),(X^{\downarrow}_{*},\partial^{\downarrow}_{X},\ell^{\downarrow}_{X}),(Y_*,\partial_Y),\phi^{\downarrow},\phi_{\uparrow}\right)\] will be a quintuple \begin{align}\label{quint} \left(\alpha^{\downarrow},\alpha_{\uparrow},\alpha,K^{\downarrow},K_{\uparrow}\right)\in & \mathrm{Hom}_{\mathsf{Ch}(\kappa)}^m(C^{\downarrow}_{*},X^{\downarrow}_{*})\times \mathrm{Hom}_{\mathsf{Ch}(\kappa)}^m(C_{\uparrow*},X_{\uparrow*})\times\mathrm{Hom}_{\mathsf{Ch}(\kappa)}^m(D_{*},Y_{*})\\ & \quad \times \mathrm{Hom}_{\mathsf{Ch}(\kappa)}^{m-1}(C^{\downarrow}_{*},Y_{*})\times\mathrm{Hom}_{\mathsf{Ch}(\kappa)}^{m-1}(C_{\uparrow*},Y_{*}) \nonumber \end{align}
such that \begin{equation}\label{filtpres} \ell_{\uparrow}^{X}(\alpha_{\uparrow}c)\leq \ell_{\uparrow}^{C}(c)  \mbox{\,\, for all $c\in C_{\uparrow *}$, and\,\,\,}  \ell^{\downarrow}_{X}(\alpha^{\downarrow}c)\geq \ell^{\downarrow}_{C}(c) \quad \mbox{for all $c\in C^{\downarrow}_{*}$}\end{equation}

Thus we have a diagram \[ \xymatrix{ C^{\downarrow}_{*} \ar[rrr]^{\alpha^{\downarrow}}\ar[rrrrd]_{K^{\downarrow}} \ar[rd]_{\psi^{\downarrow}} & & & X^{\downarrow}_{*}\ar[rd]^{\phi^{\downarrow}} & \\ & D_* \ar[rrr]^{\alpha} & & & Y_* \\ C_{\uparrow*} \ar[ru]^{\psi_{\uparrow}} \ar[rrr]_{\alpha_{\uparrow}} \ar[rrrru]_{K_{\uparrow}} & & & X_{\uparrow*} \ar[ru]_{\phi_{\uparrow}} & },  \] with $\alpha^{\downarrow},\alpha,\alpha_{\uparrow}$ each affecting grading on the respective chain complexes by $-m$, and $K^{\downarrow}$ and $K_{\uparrow}$ affecting grading by $1-m$. As stated in (\ref{filtpres}), $\alpha^{\downarrow}$ and $\alpha_{\uparrow}$ are required to respect the (respectively, descending and ascending) filtrations on the appropriate complexes; the other three maps have codomain the unfiltered complex $(Y_*,\partial_Y)$ and so no filtration condition is imposed on them. Composition of morphisms is defined by 
\begin{align}\label{composition} & (\beta^{\downarrow},\beta_{\uparrow},\beta,L^{\downarrow},L_{\uparrow})\circ (\alpha^{\downarrow},\alpha_{\uparrow},\alpha,K^{\downarrow},K_{\uparrow})\\ & \quad =\left(\beta^{\downarrow}\circ\alpha^{\downarrow},\,\beta_{\uparrow}\circ\alpha_{\uparrow},\,\beta\circ\alpha,\,L^{\downarrow}\circ\alpha^{\downarrow}+\beta\circ K^{\downarrow},\,L_{\uparrow}\circ\alpha_{\uparrow}+\beta\circ K_{\uparrow}\right).  \nonumber\end{align}  Note that this operation has $(1_{C^{\downarrow}_{*}},1_{C_{\uparrow *}},1_{D_*},0,0)$ as the identity in $\mathrm{Hom}^{0}_{\mathsf{CO}(\kappa)}(\mathcal{C},\mathcal{C})$.

The dg-category structure is given by the differential $\delta_{\mathcal{C},\mathcal{X}}\co \mathrm{Hom}^{m}_{\mathsf{CO}(\kappa)}(\mathcal{C},\mathcal{X})\to\mathrm{Hom}^{m+1}_{\mathsf{CO}(\kappa)}(\mathcal{C},\mathcal{X})$  defined by the formula 

\begin{align}\label{diffl} &\delta_{\mathcal{C},\mathcal{X}} \left(\alpha^{\downarrow},\alpha_{\uparrow},\alpha,K^{\downarrow},K_{\uparrow}\right)=\left(-\partial^{\downarrow}_{X}\circ\alpha^{\downarrow}+(-1)^m\alpha^{\downarrow}\circ\partial^{\downarrow}_{C},\,-\partial_{\uparrow}^{X}\circ\alpha_{\uparrow}+(-1)^m\alpha_{\uparrow}\circ\partial_{\uparrow}^{C},\,\partial_Y\circ\alpha-(-1)^m\alpha\circ\partial_D,  \right. \\  & \quad \left. \phi^{\downarrow}\circ\alpha^{\downarrow}-(-1)^m\alpha\circ\psi^{\downarrow}+\partial_Y\circ K^{\downarrow}+(-1)^mK^{\downarrow}\circ\partial^{\downarrow}_{C}, \,
\phi_{\uparrow}\circ\alpha_{\uparrow}-(-1)^m\alpha\circ\psi_{\uparrow}+\partial_Y\circ K_{\uparrow}+(-1)^mK_{\uparrow}\circ\partial_{\uparrow}^{C}	
	 \right).  \nonumber \end{align} Thus an element $(\alpha^{\downarrow},\alpha_{\uparrow},\alpha,K^{\downarrow},K_{\uparrow})\in \mathrm{Hom}^{0}_{\mathsf{CO}(\kappa)}(\mathcal{C},\mathcal{X})$ belongs to the kernel of $\delta_{\mathcal{C},\mathcal{X}}$ if and only if $\alpha^{\downarrow},\alpha_{\uparrow},$ and $\alpha$ are each chain maps, $K^{\downarrow}$ is a chain homotopy from $\phi^{\downarrow}\circ\alpha^{\downarrow}$ to $\alpha\circ\psi^{\downarrow}$, and $K_{\uparrow}$ is a chain homotopy from $\phi_{\uparrow}\circ\alpha_{\uparrow}$ to $\alpha\circ\psi_{\uparrow}$.

\begin{remark}
The following may help put the formulas (\ref{composition}) and (\ref{diffl}) into context.
	Recall that the mapping cone $\mathrm{Cone}(f)$ of a chain map $f\co C_*\to D_*$  between two chain complexes of $\kappa$-vector spaces is the chain complex given as a graded vector space by $\mathrm{Cone}(f)_k=C_{k-1}\oplus D_k$, with boundary operator $\partial_{\mathrm{Cone}(f)}(c,d)=(-\partial_Cc,fc+\partial_Dd)$.  Analogously to \cite[2.9]{Drin}, one obtains a dg-functor $\mathcal{S}^1\co \mathsf{CO}(\kappa)\to \mathsf{Ch}(\kappa)$ which, at the level of objects, sends \[ \xymatrix{ C^{\downarrow}_{*}\ar[rd]^{\psi^{\downarrow}} & & \\ & D_* & \quad \mbox{to}\quad \mathrm{Cone}\left(-\psi^{\downarrow}+\psi_{\uparrow}\co C^{\downarrow}_{*}\oplus C_{\uparrow *}\to D_* \right). \\ C_{\uparrow *}\ar[ru]_{\psi_{\uparrow}} & &}. \]

This functor will appear again in Section \ref{transf}.  At the level of morphisms, we take $\mathcal{S}^1$ to send a degree-$m$ morphism $\left(\alpha^{\downarrow},\alpha_{\uparrow},\alpha,K^{\downarrow},K_{\uparrow}\right)\co \mathcal{C}\to\mathcal{X}$ as above to the map $\mathrm{Cone}(-\psi^{\downarrow}+\psi_{\uparrow})\to \mathrm{Cone}(-\phi^{\downarrow}+\phi_{\uparrow})$ given in block form (as a map $C^{\downarrow}_{*-1}\oplus C_{\uparrow*-1}\oplus D_*\to X^{\downarrow}_{*-m-1}\oplus X_{\uparrow*-m-1}\oplus X_{*-m}$) by \[ \left(\begin{array}{ccc} \alpha^{\downarrow} & 0 & 0 \\ 0 & \alpha_{\uparrow} & 0 \\ -K^{\downarrow} & K_{\uparrow} & \alpha 
\end{array}\right).\]

The formulas (\ref{composition}) and (\ref{diffl}) for the composition law and the differential in $\mathsf{CO}(\kappa)$ are designed precisely so that the following proposition will hold; its proof is a routine computation that we leave to the reader.

\begin{prop}\label{conefunctor}
	For morphisms $\mathfrak{a}=\left(\alpha^{\downarrow},\alpha_{\uparrow},\alpha,K^{\downarrow},K_{\uparrow}\right)\co \mathcal{C}\to\mathcal{X}$ and\\ $\mathfrak{b}=\left(\beta^{\downarrow},\beta_{\uparrow},\beta,L^{\downarrow},L_{\uparrow}\right)\co \mathcal{X}\to\mathcal{Z}$ in the dg-category $\mathsf{CO}(\kappa)$, we have \[ \mathcal{S}^1(\mathfrak{b}\circ\mathfrak{a})=\mathcal{S}^1(\mathfrak{b})\circ \mathcal{S}^1(\mathfrak{a})\quad \mbox{and}\quad \mathcal{S}^1(\delta_{\mathcal{C},\mathcal{X}}\mathfrak{a})=\delta_{\mathcal{S}^1(\mathcal{C}),\mathcal{S}^1(\mathcal{X})}\left(\mathcal{S}^1(\mathfrak{a})\right).\] Thus $\mathcal{S}^1\co\mathsf{CO}(\kappa)\to\mathsf{Ch}(\kappa)$ is indeed a dg-functor.	
\end{prop}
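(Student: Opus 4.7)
The proposition is a direct matrix computation, so the plan is simply to organize the bookkeeping carefully. I will represent $\mathcal{S}^1(\mathfrak{a})$ by the block matrix
\[ \mathcal{S}^1(\mathfrak{a})=\begin{pmatrix} \alpha^{\downarrow} & 0 & 0 \\ 0 & \alpha_{\uparrow} & 0 \\ -K^{\downarrow} & K_{\uparrow} & \alpha\end{pmatrix}\]
acting on the decomposition $\mathrm{Cone}(-\psi^{\downarrow}+\psi_{\uparrow})_*=C^{\downarrow}_{*-1}\oplus C_{\uparrow*-1}\oplus D_*$, and similarly for $\mathcal{S}^1(\mathfrak{b})$ with entries $\beta^{\downarrow},\beta_{\uparrow},\beta,-L^{\downarrow},L_{\uparrow}$. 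I will also record once and for all that the boundary operator on the mapping cone has block form
\[ \partial_{\mathcal{S}^1(\mathcal{C})}=\begin{pmatrix} -\partial^{\downarrow}_C & 0 & 0 \\ 0 & -\partial_{\uparrow}^C & 0 \\ -\psi^{\downarrow} & \psi_{\uparrow} & \partial_D\end{pmatrix}\]
(from $\partial_{\mathrm{Cone}(f)}(c,d)=(-\partial c,fc+\partial d)$ applied to $f=-\psi^{\downarrow}+\psi_{\uparrow}$), and similarly for $\partial_{\mathcal{S}^1(\mathcal{X})}$.

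For the first equality, I will simply multiply the two $3\times 3$ matrices $\mathcal{S}^1(\mathfrak{b})\cdot \mathcal{S}^1(\mathfrak{a})$. The diagonal entries obviously give $\beta^{\downarrow}\circ\alpha^{\downarrow}$, $\beta_{\uparrow}\circ\alpha_{\uparrow}$, $\beta\circ\alpha$, matching (\ref{composition}). The bottom-left entry is $-L^{\downarrow}\circ\alpha^{\downarrow}+\beta\circ(-K^{\downarrow})=-(L^{\downarrow}\circ\alpha^{\downarrow}+\beta\circ K^{\downarrow})$, and the bottom-middle entry is $L_{\uparrow}\circ\alpha_{\uparrow}+\beta\circ K_{\uparrow}$; these are exactly $-1$ times, and $+1$ times, the fourth and fifth components prescribed by (\ref{composition}) for $\mathfrak{b}\circ\mathfrak{a}$, which is consistent with the sign convention built into the block form of $\mathcal{S}^1$.

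For the second equality, I will compute $\partial_{\mathcal{S}^1(\mathcal{X})}\circ \mathcal{S}^1(\mathfrak{a})-(-1)^m\mathcal{S}^1(\mathfrak{a})\circ \partial_{\mathcal{S}^1(\mathcal{C})}$ block by block. The three diagonal blocks yield $-\partial^{\downarrow}_X\alpha^{\downarrow}+(-1)^m\alpha^{\downarrow}\partial^{\downarrow}_C$, $-\partial_{\uparrow}^X\alpha_{\uparrow}+(-1)^m\alpha_{\uparrow}\partial_{\uparrow}^C$, and $\partial_Y\alpha-(-1)^m\alpha\partial_D$, reproducing the first three components of (\ref{diffl}). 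The interesting entries are in the bottom row. The bottom-left entry of $\partial_{\mathcal{S}^1(\mathcal{X})}\circ\mathcal{S}^1(\mathfrak{a})$ is $-\phi^{\downarrow}\alpha^{\downarrow}+\partial_Y(-K^{\downarrow})$, while the bottom-left entry of $(-1)^m\mathcal{S}^1(\mathfrak{a})\circ\partial_{\mathcal{S}^1(\mathcal{C})}$ is $(-1)^m\bigl((-K^{\downarrow})(-\partial^{\downarrow}_C)+\alpha(-\psi^{\downarrow})\bigr)$; taking the difference and pulling out an overall minus sign yields $-\bigl(\phi^{\downarrow}\alpha^{\downarrow}-(-1)^m\alpha\psi^{\downarrow}+\partial_YK^{\downarrow}+(-1)^mK^{\downarrow}\partial^{\downarrow}_C\bigr)$, which is exactly $-1$ times the fourth component of $\delta\mathfrak{a}$ as in (\ref{diffl}), again matching the sign convention in $\mathcal{S}^1$. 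An analogous (sign-free) computation handles the bottom-middle entry.

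The main potential obstacle is bookkeeping with the signs $(-1)^m$ and the sign $-1$ that the definition of $\mathcal{S}^1$ inserts in front of $K^{\downarrow}$; once one fixes conventions and writes the three matrices as above, every identity is immediate from block multiplication, which is why the author is content to leave the verification to the reader.
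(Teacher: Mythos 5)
Your verification is correct: the block-matrix bookkeeping, the cone differential $\bigl(\begin{smallmatrix}-\partial^{\downarrow}_C&0&0\\0&-\partial_{\uparrow}^C&0\\-\psi^{\downarrow}&\psi_{\uparrow}&\partial_D\end{smallmatrix}\bigr)$, and the sign tracking against (\ref{composition}) and (\ref{diffl}) all check out, including the overall minus sign in the $K^{\downarrow}$ slot. The paper deliberately leaves this as a routine computation for the reader, and your proof is exactly that intended verification, so there is nothing to add.
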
	
\end{remark}

We now finally complete the definition of the categories $\mathsf{HCO}(\kappa)$ and $\mathsf{HCO}^{\Lambda}(\kappa)$:

\begin{dfn} $\mathsf{HCO}(\kappa)$ is the zeroth cohomology category of the dg-category $\mathsf{CO}(\kappa)$.  Thus $\mathsf{HCO}(\kappa)$ has the same objects as $\mathsf{CO}(\kappa)$, and, in terms of the notation above, \[ \mathrm{Hom}_{\mathsf{HCO}(\kappa)}(\mathcal{C},\mathcal{X})=H^0\left(\mathrm{Hom}^*_{\mathsf{CO}(\kappa)}(\mathcal{C},\mathcal{X}),\delta_{\mathcal{C},\mathcal{X}}\right).\] Similarly, $\mathsf{HCO}^{\Lambda}(\kappa)$ is the zeroth cohomology category of the dg-category $\mathsf{CO}^{\Lambda}(\kappa)$.
\end{dfn}

Since $\mathsf{CO}^{\Lambda}(\kappa)$ is a full subcategory of the dg-category $\mathsf{CO}(\kappa)$, one may equivalently describe $\mathsf{HCO}^{\Lambda}(\kappa)$ as the full subcategory of $\mathsf{HCO}(\kappa)$ that has the same objects as $\mathsf{CO}^{\Lambda}(\kappa)$ (\emph{i.e.}, the $\Lambda$-bounded filtered cospans).

\subsection{Isomorphisms in $\mathsf{HCO}(\kappa)$}

We now work toward the proof of Theorem \ref{introiso} which gives a necessary and sufficient criterion for two filtered cospans to be isomorphic in $\mathsf{HCO}(\kappa)$. A first step is the following.

\begin{prop}\label{htopicpsi}
	Let $\mathcal{C}=\left(\mathcal{C}_{\uparrow},\mathcal{C}^{\downarrow},(D_*,\partial_D),\psi_{\uparrow},\psi^{\downarrow}\right)$ and $ \mathcal{X}=\left(\mathcal{C}_{\uparrow},\mathcal{C}^{\downarrow},(D_*,\partial_D),\phi_{\uparrow},\phi^{\downarrow}\right)$ be two objects of $\mathsf{HCO}(\kappa)$ with the same chain complexes $\mathcal{C}_{\uparrow},\mathcal{C}^{\downarrow},(D_*,\partial_D)$, and assume that there are chain homotopies between the maps $\psi_{\uparrow}$ and $\phi_{\uparrow}$ and between the maps $\psi^{\downarrow}$ and $\phi^{\downarrow}$.  Then $\mathcal{C}$ and $\mathcal{X}$ are isomorphic in $\mathsf{HCO}(\kappa)$ by an isomorphism that is represented by an element $(\alpha^{\downarrow},\alpha_{\uparrow},\alpha,K^{\downarrow},K_{\uparrow})$  of $\mathrm{Hom}_{\mathsf{CO}(\kappa)}(\mathcal{C},\mathcal{X})$ such that $\alpha^{\downarrow},\alpha_{\uparrow},$ and $\alpha$ act as the identity on $C^{\downarrow}_{*},C_{\uparrow*},$ and $D_*$, respectively.
\end{prop}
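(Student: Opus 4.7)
The plan is to produce an explicit degree-$0$ morphism $\mathfrak{a}\co \mathcal{C}\to\mathcal{X}$ in the dg-category $\mathsf{CO}(\kappa)$ whose three chain-level components are identities and whose two homotopy components are the chain homotopies supplied by hypothesis, then to produce a strict two-sided inverse by negating these homotopies.

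First I would invoke the hypothesis to fix chain homotopies $K_{\uparrow}\co C_{\uparrow *}\to D_{*+1}$ and $K^{\downarrow}\co C^{\downarrow}_{*}\to D_{*+1}$ satisfying $\psi_{\uparrow}-\phi_{\uparrow}=\partial_D\circ K_{\uparrow}+K_{\uparrow}\circ\partial_{\uparrow}$ and $\psi^{\downarrow}-\phi^{\downarrow}=\partial_D\circ K^{\downarrow}+K^{\downarrow}\circ\partial^{\downarrow}$ (the sign conventions here are exactly those read off from the fourth and fifth components of the differential (\ref{diffl}) at $m=0$ with $\alpha^{\downarrow}=\alpha_{\uparrow}=\alpha=\mathrm{id}$). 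Define $\mathfrak{a}=(\mathrm{id}_{C^{\downarrow}_{*}},\mathrm{id}_{C_{\uparrow *}},\mathrm{id}_{D_{*}},K^{\downarrow},K_{\uparrow})\in \mathrm{Hom}^{0}_{\mathsf{CO}(\kappa)}(\mathcal{C},\mathcal{X})$. The filtration condition (\ref{filtpres}) is trivial since $\alpha^{\downarrow}$ and $\alpha_{\uparrow}$ are the identities; and plugging into (\ref{diffl}) shows that the first three components of $\delta_{\mathcal{C},\mathcal{X}}\mathfrak{a}$ vanish because the identity is a chain map, while the fourth and fifth components vanish precisely by the chosen homotopy relations. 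Hence $\mathfrak{a}$ is a cocycle of degree $0$ and represents a morphism in $\mathsf{HCO}(\kappa)$ of the required form.

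Next I would define $\mathfrak{b}=(\mathrm{id}_{C^{\downarrow}_{*}},\mathrm{id}_{C_{\uparrow *}},\mathrm{id}_{D_{*}},-K^{\downarrow},-K_{\uparrow})\in \mathrm{Hom}^{0}_{\mathsf{CO}(\kappa)}(\mathcal{X},\mathcal{C})$; the same verification, with the roles of $\phi$ and $\psi$ exchanged, shows that $\mathfrak{b}$ is also a $\delta$-cocycle. Using the composition law (\ref{composition}) with $m=0$, the fourth component of $\mathfrak{b}\circ \mathfrak{a}$ equals $(-K^{\downarrow})\circ\mathrm{id}+\mathrm{id}\circ K^{\downarrow}=0$ and the fifth component equals $(-K_{\uparrow})\circ\mathrm{id}+\mathrm{id}\circ K_{\uparrow}=0$, while the first three components are $\mathrm{id}$; thus $\mathfrak{b}\circ \mathfrak{a}=(\mathrm{id},\mathrm{id},\mathrm{id},0,0)$, which is the identity element of $\mathrm{Hom}^0_{\mathsf{CO}(\kappa)}(\mathcal{C},\mathcal{C})$. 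The analogous computation gives $\mathfrak{a}\circ\mathfrak{b}=\mathrm{id}_{\mathcal{X}}$ in $\mathsf{CO}(\kappa)$. Passing to $\mathsf{HCO}(\kappa)$ shows that the cohomology class of $\mathfrak{a}$ is an isomorphism there, with inverse the class of $\mathfrak{b}$.

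There is really no substantive obstacle: the argument is essentially a sign-tracking exercise in the formulas (\ref{composition}) and (\ref{diffl}), and in fact yields a strict isomorphism already in $\mathsf{CO}(\kappa)$, not merely one up to chain homotopy. The only thing to be careful about is that the sign convention baked into (\ref{diffl}) forces the particular choice of orientation of the homotopies $K_{\uparrow},K^{\downarrow}$; once this is matched up, the inverse is given by flipping the signs of the homotopy components, and the compositions collapse by linearity.
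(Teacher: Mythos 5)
Your proposal is correct and is essentially the paper's own proof: the paper also takes $(1_{C^{\downarrow}_{*}},1_{C_{\uparrow *}},1_{D_*},K^{\downarrow},K_{\uparrow})$ and $(1_{C^{\downarrow}_{*}},1_{C_{\uparrow *}},1_{D_*},-K^{\downarrow},-K_{\uparrow})$, checks via (\ref{diffl}) that both are $\delta$-cocycles, and observes via (\ref{composition}) that they are inverse to each other. Your additional remark that the compositions are strictly the identity already in $\mathsf{CO}(\kappa)$ is also what the paper's computation yields.
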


\begin{proof}
	Assume that $\partial_D\circ K^{\downarrow}+K^{\downarrow}\circ\partial_{C}^{\downarrow}=\psi^{\downarrow}-\phi^{\downarrow}$ and that $\partial_D\circ K_{\uparrow}+K_{\uparrow}\circ\partial^{C}_{\uparrow}=\psi_{\uparrow}-\phi_{\uparrow}$.  One checks directly from (\ref{diffl}) that $\delta_{\mathcal{C},\mathcal{X}}(1_{C^{\downarrow}_{*}},1_{C_{\uparrow *}},1_{D_*},K^{\downarrow},K_{\uparrow})=0$ and that $\delta_{\mathcal{X},\mathcal{C}}(1_{C^{\downarrow}_{*}},1_{C_{\uparrow *}},1_{D_*},-K^{\downarrow},-K_{\uparrow})=0$, so that $(1_{C^{\downarrow}_{*}},1_{C_{\uparrow *}},1_{D_*},K^{\downarrow},K_{\uparrow})$ and $(1_{C^{\downarrow}_{*}},1_{C_{\uparrow *}},1_{D_*},-K^{\downarrow},-K_{\uparrow})$ induce morphisms in $\mathsf{HCO}(\kappa)$; these morphisms are inverse to each other according to (\ref{composition}).
\end{proof}

If $\mathcal{C}_{\uparrow}=(C_{\uparrow*},\partial_{\uparrow}^{C},\ell_{\uparrow}^{C})$ and $\mathcal{X}_{\uparrow}=(X_{\uparrow*},\partial_{\uparrow}^{X},\ell_{\uparrow}^{X}$) are ascending chain complexes, we define \textbf{filtered homotopy equivalence data} between $\mathcal{C}_{\uparrow}$ and $\mathcal{X}_{\uparrow}$ to consist of a tuple $(\alpha_{\uparrow},\beta_{\uparrow},K^{C_{\uparrow}},K^{X_{\uparrow}})$ where $\alpha_{\uparrow}\co C_{\uparrow*}\to X_{\uparrow*}$ and $\beta_{\uparrow}\co X_{\uparrow *}\to C_{\uparrow*}$ are chain maps satisfying $\ell^X_{\uparrow}\circ \alpha_{\uparrow}\leq \ell^C_{\uparrow}$ and $\ell^{C}_{\uparrow}\circ \beta_{\uparrow}\leq \ell_{\uparrow}^{X}$, and where $K^{C_{\uparrow}}\co C_{\uparrow * }\to C_{\uparrow *+1}$ and $K^{X_{\uparrow}}\co X_{\uparrow *}\to X_{\uparrow *+1}$ satisfy the filtration conditions $\ell_{\uparrow}^{C}\circ K^{C_{\uparrow}}\leq \ell_{\uparrow}^{C}$ and $\ell_{\uparrow}^{X}\circ K^{X_{\uparrow}}\leq \ell_{\uparrow}^{X}$ and the chain homotopy equations $\partial_{\uparrow}^{C}\circ K^{C_{\uparrow}}+K^{C_{\uparrow}}\circ \partial_{\uparrow}^{C}=1_{C_{\uparrow *}}-\beta_{\uparrow}\circ \alpha_{\uparrow}$, and $\partial_{\uparrow}^{X}\circ K^{X_{\uparrow}}+K^{X_{\uparrow}}\circ \partial_{\uparrow}^{X}=1_{X_{\uparrow *}}-\alpha_{\uparrow}\circ \beta_{\uparrow}$.   A filtered homotopy equivalence from $\mathcal{C}_{\uparrow}$ to $\mathcal{X}_{\uparrow }$ is by definition a map $\alpha_{\uparrow}\co C_{\uparrow *}\to X_{\uparrow *}$ that can be completed to filtered homotopy equivalence data $(\alpha_{\uparrow},\beta_{\uparrow},K^{C_{\uparrow}},K^{X_{\uparrow}})$.   Likewise one defines filtered homotopy equivalence data $(\alpha^{\downarrow},\beta^{\downarrow},K^{C^{\downarrow}},K^{X^{\downarrow}})$ between descending chain complexes (with the directions of the inequalities reversed to account for the filtrations being descending rather than ascending, \emph{e.g.} in place of $\ell^{C}_{\uparrow}\circ \beta_{\uparrow}\leq \ell_{\uparrow}^{X}$ one requires $\ell_{C}^{\downarrow}\circ \beta^{\downarrow} \geq \ell^{\downarrow}_{X}$).  

\begin{prop} \label{bounce}
	Let $\mathcal{C}=\left((C_{\uparrow *},\partial_{\uparrow}^{C},\ell_{\uparrow}^{C}),(C^{\downarrow}_{*},\partial^{\downarrow}_C,\ell^{\downarrow}_C),(D_*,\partial_D),\psi_{\uparrow},\psi^{\downarrow}\right)$ be an object of $\mathsf{HCO}(\kappa)$, and suppose we have an ascending chain complex $(X_{\uparrow *},\partial_{\uparrow}^{X},\ell_{\uparrow}^{X})$, a descending chain complex $(X^{\downarrow}_{*},\partial_{X}^{\downarrow},\ell_{X}^{\downarrow})$, and another chain complex $(Y_*,\partial_Y)$, as well as a diagram \[ \xymatrix{  C^{\downarrow}_{*} \ar@<.5ex>[rr]^{\alpha^{\downarrow}}\ar[rd]_{\psi^{\downarrow}} & & X^{\downarrow}_{*} \ar@<.5ex>[ll]^{\beta^{\downarrow}} & \\ & D_*\ar@<.5ex>[rr]^{\alpha} & & Y_* \ar@<.5ex>[ll]^{\beta} \\ C_{\uparrow *}\ar@<.5ex>[rr]^{\alpha_{\uparrow}}\ar[ru]^{\psi_{\uparrow}} & & X_{\uparrow *}\ar@<.5ex>[ll]^{\beta_{\uparrow}} &     }\] where $\alpha^{\downarrow}$ and $\beta^{\downarrow}$ are part of  filtered homotopy equivalence data $(\alpha^{\downarrow},\beta^{\downarrow},K^{C^{\downarrow}},K^{X^{\downarrow}})$, $\alpha_{\uparrow}$ and $\beta_{\uparrow}$ are part of filtered homotopy equivalence data $(\alpha_{\uparrow},\beta_{\uparrow},K^{C_{\uparrow}},K^{X_{\uparrow}})$, and the chain maps $\alpha$ and $\beta$ are (unfiltered) homotopy inverses to each other.
	Then, denoting \[\mathcal{X}:=\left((X_{\uparrow *},\partial_{\uparrow}^{X},\ell_{\uparrow}^{X}),(X^{\downarrow}_{*},\partial_{X}^{\downarrow},\ell_{X}^{\downarrow}),(Y_*,\partial_Y),\alpha\circ \psi^{\downarrow}\circ \beta^{\downarrow},\alpha\circ\psi_{\uparrow}\circ \beta_{\uparrow} \right),\] the element $\frak{a}=(\alpha^{\downarrow},\alpha_{\uparrow},\alpha,\alpha\circ\psi^{\downarrow}\circ K^{C^{\downarrow}},\alpha\circ\psi_{\uparrow}\circ K^{C_{\uparrow}})$ of $\mathrm{Hom}^0_{\mathsf{CO}(\kappa)}(\mathcal{C},\mathcal{X})$  descends to an isomorphism in the category $\mathsf{HCO}(\kappa)$.
\end{prop}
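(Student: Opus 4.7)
The plan is to verify directly that $\mathfrak{a}$ is a cocycle in $\mathrm{Hom}^{0}_{\mathsf{CO}(\kappa)}(\mathcal{C},\mathcal{X})$, then construct an explicit inverse $\mathfrak{b}$ and degree-$(-1)$ primitives witnessing that $\mathfrak{b}\circ\mathfrak{a}$ and $\mathfrak{a}\circ\mathfrak{b}$ are cohomologous to the respective identities in $\mathsf{CO}(\kappa)$. The first step, checking $\delta_{\mathcal{C},\mathcal{X}}\mathfrak{a}=0$ via (\ref{diffl}), is routine: the filtration conditions (\ref{filtpres}) on $\alpha_{\uparrow},\alpha^{\downarrow}$ and the chain-map conditions on $\alpha_{\uparrow},\alpha^{\downarrow},\alpha$ are hypotheses; and in the $K^{\downarrow}$ slot (with $\phi^{\downarrow}=\alpha\circ\psi^{\downarrow}\circ\beta^{\downarrow}$ and $K^{\downarrow}=\alpha\circ\psi^{\downarrow}\circ K^{C^{\downarrow}}$) the expression $\phi^{\downarrow}\alpha^{\downarrow}-\alpha\psi^{\downarrow}+\partial_{Y}K^{\downarrow}+K^{\downarrow}\partial^{\downarrow}_{C}$ collapses, via the chain-map properties of $\alpha$ and $\psi^{\downarrow}$ and the chain homotopy identity $\partial^{\downarrow}_{C}K^{C^{\downarrow}}+K^{C^{\downarrow}}\partial^{\downarrow}_{C}=1_{C^{\downarrow}_{*}}-\beta^{\downarrow}\alpha^{\downarrow}$, to $\alpha\circ\psi^{\downarrow}\circ\bigl(\beta^{\downarrow}\alpha^{\downarrow}-1+(1-\beta^{\downarrow}\alpha^{\downarrow})\bigr)=0$. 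The $K_{\uparrow}$ slot is verified identically.

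For the inverse, fix a chain homotopy $K^{D}\co D_{*}\to D_{*+1}$ with $\partial_{D}K^{D}+K^{D}\partial_{D}=1_{D_{*}}-\beta\circ\alpha$, which exists since $\alpha,\beta$ are homotopy inverses. I would define
\[ \mathfrak{b}=\bigl(\beta^{\downarrow},\,\beta_{\uparrow},\,\beta,\,-K^{D}\circ\psi^{\downarrow}\circ\beta^{\downarrow},\,-K^{D}\circ\psi_{\uparrow}\circ\beta_{\uparrow}\bigr)\in \mathrm{Hom}^{0}_{\mathsf{CO}(\kappa)}(\mathcal{X},\mathcal{C}), \]
and check $\delta_{\mathcal{X},\mathcal{C}}\mathfrak{b}=0$ by an analogous collapse that now invokes the identity defining $K^{D}$ together with the chain-map properties of $\psi^{\downarrow},\beta^{\downarrow}$.

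The main step is producing the primitive witnessing $\mathfrak{b}\circ\mathfrak{a}\sim\mathrm{id}_{\mathcal{C}}$. By (\ref{composition}), $\mathfrak{b}\circ\mathfrak{a}$ has chain-map slots $\beta^{\downarrow}\alpha^{\downarrow},\,\beta_{\uparrow}\alpha_{\uparrow},\,\beta\alpha$ and homotopy slots $-K^{D}\psi^{\downarrow}\beta^{\downarrow}\alpha^{\downarrow}+\beta\alpha\psi^{\downarrow}K^{C^{\downarrow}}$ and the analogous one for $\uparrow$. My ansatz is the degree-$(-1)$ element
\[ \mathfrak{h}=\bigl(K^{C^{\downarrow}},\,K^{C_{\uparrow}},\,-K^{D},\,-K^{D}\circ\psi^{\downarrow}\circ K^{C^{\downarrow}},\,-K^{D}\circ\psi_{\uparrow}\circ K^{C_{\uparrow}}\bigr). \]
The three chain-map slots of $\delta_{\mathcal{C},\mathcal{C}}\mathfrak{h}$ come out to $\beta^{\downarrow}\alpha^{\downarrow}-1$, $\beta_{\uparrow}\alpha_{\uparrow}-1$, and $\beta\alpha-1$, by the three chain homotopy equations. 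For the $K^{\downarrow}$ slot, a direct computation—substituting the identity for $K^{D}$ into $-\partial_{D}K^{D}\psi^{\downarrow}K^{C^{\downarrow}}$, then using the chain-map property of $\psi^{\downarrow}$ to move $\partial_{D}$ through, and finally substituting the identity for $K^{C^{\downarrow}}$—produces exactly $\beta\alpha\psi^{\downarrow}K^{C^{\downarrow}}-K^{D}\psi^{\downarrow}\beta^{\downarrow}\alpha^{\downarrow}$, matching the corresponding slot of $\mathfrak{b}\circ\mathfrak{a}-\mathrm{id}_{\mathcal{C}}$; the $K_{\uparrow}$ slot is identical. The statement $\mathfrak{a}\circ\mathfrak{b}\sim\mathrm{id}_{\mathcal{X}}$ is proved in parallel, using instead a chain homotopy $K^{Y}\co Y_{*}\to Y_{*+1}$ with $\partial_{Y}K^{Y}+K^{Y}\partial_{Y}=1_{Y_{*}}-\alpha\beta$ (also supplied by the homotopy-inverse hypothesis), together with the filtered homotopies $K^{X^{\downarrow}},K^{X_{\uparrow}}$, in an ansatz of the same shape.

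The principal obstacle is the bookkeeping in the $K^{\downarrow}$ and $K_{\uparrow}$ slots just discussed: one must interleave the two relevant chain homotopy equations with the chain-map identity for $\psi^{\downarrow}$ (resp.\ $\psi_{\uparrow}$) in exactly the right order, while correctly tracking the sign $(-1)^{m}=-1$ appearing in (\ref{diffl}) at degree $-1$. Once the ansatz above is in place the verification is a direct if mildly tedious calculation, requiring no ingredient beyond the given filtered homotopy equivalence data and the chain homotopies realizing $\beta\alpha\simeq 1$ and $\alpha\beta\simeq 1$.
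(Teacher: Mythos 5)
Your first half is correct and matches the paper: the cocycle checks for $\mathfrak{a}$ and for $\mathfrak{b}=(\beta^{\downarrow},\beta_{\uparrow},\beta,-K^{D}\psi^{\downarrow}\beta^{\downarrow},-K^{D}\psi_{\uparrow}\beta_{\uparrow})$ are exactly as in the paper, and your primitive $\mathfrak{h}$ is (up to an overall sign) the paper's $\zeta$; I verified that its fourth slot does come out to $\beta\alpha\psi^{\downarrow}K^{C^{\downarrow}}-K^{D}\psi^{\downarrow}\beta^{\downarrow}\alpha^{\downarrow}$, matching $\mathfrak{b}\mathfrak{a}-1_{\mathcal{C}}$. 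The reason this direction works so cleanly is that both $\mathfrak{b}\mathfrak{a}$ and your ansatz are built entirely from the homotopies $K^{C^{\downarrow}},K^{C_{\uparrow}},K^{D}$ living on the $\mathcal{C}$/$D$ side.

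The gap is in the last step, where you assert that $\mathfrak{a}\mathfrak{b}\sim 1_{\mathcal{X}}$ follows ``in parallel'' from an ansatz of the same shape built from $K^{X^{\downarrow}},K^{X_{\uparrow}},K^{Y}$. That mirrored element does not have the required differential. Writing $\phi^{\downarrow}=\alpha\psi^{\downarrow}\beta^{\downarrow}$, the fourth slot of $\delta_{\mathcal{X},\mathcal{X}}$ applied to $(K^{X^{\downarrow}},K^{X_{\uparrow}},-K^{Y},-K^{Y}\phi^{\downarrow}K^{X^{\downarrow}},-K^{Y}\phi_{\uparrow}K^{X_{\uparrow}})$ is $\alpha\beta\,\phi^{\downarrow}K^{X^{\downarrow}}-K^{Y}\phi^{\downarrow}\alpha^{\downarrow}\beta^{\downarrow}$, whereas the fourth slot of $\mathfrak{a}\mathfrak{b}-1_{\mathcal{X}}$, computed from (\ref{composition}), is $\alpha\psi^{\downarrow}K^{C^{\downarrow}}\beta^{\downarrow}-\alpha K^{D}\psi^{\downarrow}\beta^{\downarrow}$: the former is built from the $\mathcal{X}$-side homotopies, the latter from the $\mathcal{C}$-side ones, and they are not equal in general. (For instance, if $\alpha^{\downarrow}\beta^{\downarrow}=1_{X^{\downarrow}_{*}}$ and $\alpha\beta=1_{Y_*}$ one may take $K^{X^{\downarrow}}=K^{Y}=0$, making your candidate's fourth slot vanish, while $\alpha\psi^{\downarrow}K^{C^{\downarrow}}\beta^{\downarrow}-\alpha K^{D}\psi^{\downarrow}\beta^{\downarrow}$ need not.) Since the equation $\delta_{\mathcal{X},\mathcal{X}}\eta=1_{\mathcal{X}}-\mathfrak{a}\mathfrak{b}$ is an equality of quintuples of maps, this slot must match on the nose, so the ``parallel'' ansatz fails and the second homotopy is where the real work lies. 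The paper's actual primitive $\eta$ has a corrected $Y$-slot, $\alpha K^{D}\beta+\bigl(\alpha K^{D}\beta+(1_{Y_*}-\alpha\beta)K^{Y}\bigr)(1_{Y_*}-\alpha\beta)$, and fourth/fifth slots mixing both sets of homotopies through terms like $\bigl((1_{Y_*}-\alpha\beta)K^{Y}\alpha-\alpha K^{D}(1_{D_*}-\beta\alpha)\bigr)L^{\downarrow}+\alpha\psi^{\downarrow}K^{C^{\downarrow}}(\beta^{\downarrow}K^{X^{\downarrow}}-K^{C^{\downarrow}}\beta^{\downarrow})$. To repair your argument you must either produce such a corrected primitive explicitly, or replace the second half by a genuinely different formal argument showing $[\mathfrak{a}]$ also has a right inverse (e.g.\ running your one-sided construction with the roles of $(\alpha_{\uparrow},\alpha^{\downarrow},\alpha)$ and $(\beta_{\uparrow},\beta^{\downarrow},\beta)$ exchanged and then invoking Proposition \ref{htopicpsi} to compare the resulting target cospan, whose structure maps are only chain homotopic to $\psi_{\uparrow},\psi^{\downarrow}$, with $\mathcal{C}$ itself); as written, the claim is unsupported.
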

	
\begin{proof}
	Throughout the proof we omit the $\circ$ symbol to save space.  Let $K^D\co D_*\to D_{*+1}$ and $K^Y\co Y_*\to Y_{*+1}$ be such that $\partial_DK^D+K^D\partial_D=1_{D_*}-\beta\alpha$ and $\partial_YK^Y+K^Y\partial_Y=1_{Y_*}-\alpha\beta$.  Complementing the element $\frak{a}\in \mathrm{Hom}^{0}_{\mathsf{CO}(\kappa)}(\mathcal{C},\mathcal{X})$ defined at the end of the proposition, let \[ 	\frak{b} = \left(\beta^{\downarrow},\beta_{\uparrow},\beta,\,-K^D\psi^{\downarrow}\beta^{\downarrow},\,-K^D\psi_{\uparrow}\beta_{\uparrow}\right)\in \mathrm{Hom}_{\mathsf{CO}(\kappa)}^0(\mathcal{X},\mathcal{C}).\] 

	One may verify that $\delta_{\mathcal{C},\mathcal{X}}\frak{a}=0$ and that $\delta_{\mathcal{X},\mathcal{C}}\frak{b}=0$, so that $\frak{a}$ and $\frak{b}$ descend to elements of  $\mathrm{Hom}_{\mathsf{HCO}(\kappa)}(\mathcal{C},\mathcal{X})$ and $\mathrm{Hom}_{\mathsf{HCO}(\kappa)}(\mathcal{X},\mathcal{C})$, respectively.  
			
 	Moreover, if \[ \zeta=(-K^{C^{\downarrow}},\,-K^{C_{\uparrow}},\,K^D,\,K^D\psi^{\downarrow}K^{C^{\downarrow}},\,K^D\psi_{\uparrow}K^{C_{\uparrow}})\in \mathrm{Hom}^{-1}_{\mathsf{CO}(\kappa)}(\mathcal{C},\mathcal{C}), \] a calculation shows that \[ \delta_{\mathcal{C},\mathcal{C}}\zeta=1_{\mathcal{C}}-\frak{ba}.\] (Here $1_{\mathcal{C}}=(1_{C^{\downarrow}_{*}},1_{C_{\uparrow *}},1_{D_*},0,0)$ is the identity in $\mathrm{Hom}^0_{\mathsf{CO}(\kappa)}(\mathcal{C},\mathcal{C})$.)  Similarly, if we let $L^{\downarrow}=K^D\psi^{\downarrow}\beta^{\downarrow}-\psi^{\downarrow}K^{C^{\downarrow}}\beta^{\downarrow}+\psi^{\downarrow}\beta^{\downarrow}K^{X^{\downarrow}}$,  $L_{\uparrow}=K^D\psi_{\uparrow}\beta_{\uparrow}-\psi_{\uparrow}K^{C_{\uparrow}}\beta_{\uparrow}+\psi_{\uparrow}\beta_{\uparrow}K^{X_{\uparrow}}$, and \begin{align*} \eta &=\bigg( -(\alpha^{\downarrow}K^{C^{\downarrow}}\beta^{\downarrow}+(1_{X^{\downarrow}_{*}}-\alpha^{\downarrow}\beta^{\downarrow})K^{X^{\downarrow}}),\,\,-(\alpha_{\uparrow}K^{C_{\uparrow}}\beta_{\uparrow}+(1_{X_{\uparrow*}}-\alpha_{\uparrow}\beta_{\uparrow})K^{X_{\uparrow}}),\\ & \qquad\,\, \alpha K^{D}\beta+\left(\alpha K^D\beta+(1_{Y_*}-\alpha\beta)K^Y\right)(1_{Y_*}-\alpha\beta),\\ & \qquad\,\, \left((1_{Y_*}-\alpha\beta)K^Y\alpha-\alpha K^D(1_{D_*}-\beta\alpha) \right)L^{\downarrow}+\alpha\psi^{\downarrow}K^{C^{\downarrow}}(\beta^{\downarrow}K^{X^{\downarrow}}-K^{C^{\downarrow}}\beta^{\downarrow}), \\ & \qquad\,\,
 	\left((1_{Y_*}-\alpha\beta)K^Y\alpha-\alpha K^D(1_{D_*}-\beta\alpha) \right)L_{\uparrow}+\alpha\psi_{\uparrow}K^{C_{\uparrow}}(\beta_{\uparrow}K^{X_{\uparrow}}-K^{C_{\uparrow}}\beta_{\uparrow})\,\, \bigg ), \end{align*}	 	 

a lengthier, but straightforward, calculation shows that
the element $\eta\in \mathrm{Hom}^{-1}_{\mathsf{CO}(\kappa)}(\mathcal{X},\mathcal{X})$ satisfies \[ \delta_{\mathcal{X},\mathcal{X}}\eta=1_{\mathcal{X}}-\frak{ab}.\]
Thus the cohomology classes represented by $\frak{a}$ and $\frak{b}$ are inverse to each other in $\mathsf{HCO}(\kappa)$, and so are isomorphisms.

\end{proof}	

We can now prove the following more specific version of Theorem \ref{introiso}.
	
	\begin{cor}\label{hcoiso}
Let $\mathcal{C}=\left((C_{\uparrow *},\partial_{\uparrow}^C,\ell_{\uparrow}^C),(C^{\downarrow}_{*},\partial^{\downarrow}_C,\ell^{\downarrow}_C),(D_*,\partial_D),\psi_{\uparrow},\psi^{\downarrow}\right)$  and \\ $\mathcal{X}=\left((X_{\uparrow *},\partial_{\uparrow}^X,\ell_{\uparrow}^{X}),(X^{\downarrow}_{*},\partial^{\downarrow}_{X},\ell^{\downarrow}_{X}),(Y_*,\partial_Y),\phi_{\uparrow},\phi^{\downarrow}\right)$ be two objects of $\mathsf{HCO}(\kappa)$.  Then $\mathcal{C}$ and $\mathcal{X}$ are isomorphic in $\mathsf{HCO}(\kappa)$ if and only if there exists a diagram  \[ \xymatrix{ C^{\downarrow}_{*} \ar[rrr]^{\alpha^{\downarrow}} \ar[rd]_{\psi^{\downarrow}} & & & X^{\downarrow}_{*}\ar[rd]^{\phi^{\downarrow}} & \\ & D_* \ar[rrr]^{\alpha} & & & Y_* \\ C_{\uparrow*} \ar[ru]^{\psi_{\uparrow}} \ar[rrr]_{\alpha_{\uparrow}}  & & & X_{\uparrow*} \ar[ru]_{\phi_{\uparrow}} & }  \] where $\alpha_{\uparrow}$ and $\alpha^{\downarrow}$ are filtered homotopy equivalences, $\alpha$ is a homotopy equivalence, and the two parallelograms commute up to chain homotopy.  Moreover, in this case, such an isomorphism can be taken to be the cohomology class of some $\frak{a}\in \mathrm{Hom}_{\mathsf{CO}(\kappa)}^0(\mathcal{C},\mathcal{X})$ with $\delta_{\mathcal{C},\mathcal{X}}\frak{a}=0$ that can be written in the form $\frak{a}=(\alpha^{\downarrow},\alpha_{\uparrow},\alpha,K^{\downarrow},K_{\uparrow})$ for the given maps $\alpha^{\downarrow},\alpha_{\uparrow},\alpha$ and some maps $K^{\downarrow}\co C^{\downarrow}_{*}\to Y_{*+1}$ and $K_{\uparrow}\co C_{\uparrow *}\to Y_{*+1}$.	
	\end{cor}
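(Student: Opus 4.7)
The plan is to prove the two directions separately, using Propositions \ref{bounce} and \ref{htopicpsi} for the sufficiency direction and unpacking the definitions for necessity.

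For the necessity direction, I start from a morphism $\frak{a}=(\alpha^{\downarrow},\alpha_{\uparrow},\alpha,K^{\downarrow},K_{\uparrow})\in\mathrm{Hom}^{0}_{\mathsf{CO}(\kappa)}(\mathcal{C},\mathcal{X})$ with $\delta_{\mathcal{C},\mathcal{X}}\frak{a}=0$ whose cohomology class is invertible, together with an inverse class represented by some $\frak{b}=(\beta^{\downarrow},\beta_{\uparrow},\beta,L^{\downarrow},L_{\uparrow})$. The first three components of (\ref{diffl}) applied to $\delta\frak{a}=0$ say exactly that $\alpha^{\downarrow},\alpha_{\uparrow},\alpha$ are chain maps, while the last two components give that $K^{\downarrow}$ is a chain homotopy between $\phi^{\downarrow}\circ\alpha^{\downarrow}$ and $\alpha\circ\psi^{\downarrow}$ and similarly $K_{\uparrow}$ gives a chain homotopy $\phi_{\uparrow}\circ\alpha_{\uparrow}\simeq\alpha\circ\psi_{\uparrow}$; the filtration conditions (\ref{filtpres}) are built into membership in $\mathsf{CO}(\kappa)$. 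From $\frak{b}\circ\frak{a}-1_{\mathcal{C}}=\delta_{\mathcal{C},\mathcal{C}}\zeta$ and $\frak{a}\circ\frak{b}-1_{\mathcal{X}}=\delta_{\mathcal{X},\mathcal{X}}\eta$ together with the composition formula (\ref{composition}), the first two components produce filtered chain homotopies assembling with $\beta^{\downarrow},\beta_{\uparrow}$ into filtered homotopy equivalence data for $\alpha^{\downarrow}$ and $\alpha_{\uparrow}$, while the third component witnesses that $\alpha$ and $\beta$ are honest homotopy inverses. This yields the required diagram.

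For the sufficiency direction (and the ``moreover'' clause simultaneously), suppose the diagram is given, with $\alpha^{\downarrow},\alpha_{\uparrow}$ part of filtered homotopy equivalence data $(\alpha^{\downarrow},\beta^{\downarrow},K^{C^{\downarrow}},K^{X^{\downarrow}})$ and $(\alpha_{\uparrow},\beta_{\uparrow},K^{C_{\uparrow}},K^{X_{\uparrow}})$, with $\alpha,\beta$ mutually homotopy inverse, and with chain homotopies $H^{\downarrow}\co \phi^{\downarrow}\circ\alpha^{\downarrow}\simeq \alpha\circ\psi^{\downarrow}$ and $H_{\uparrow}\co\phi_{\uparrow}\circ\alpha_{\uparrow}\simeq\alpha\circ\psi_{\uparrow}$. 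I introduce the auxiliary filtered cospan
\[
\mathcal{X}':=\left((X_{\uparrow *},\partial_{\uparrow}^{X},\ell_{\uparrow}^{X}),(X^{\downarrow}_{*},\partial^{\downarrow}_{X},\ell^{\downarrow}_{X}),(Y_*,\partial_Y),\alpha\circ\psi_{\uparrow}\circ\beta_{\uparrow},\alpha\circ\psi^{\downarrow}\circ\beta^{\downarrow}\right),
\]
which fits precisely into the setting of Proposition \ref{bounce}. That proposition then produces an isomorphism $\mathcal{C}\to\mathcal{X}'$ in $\mathsf{HCO}(\kappa)$ represented by a quintuple whose first three components are the given $\alpha^{\downarrow},\alpha_{\uparrow},\alpha$.

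Next I bridge $\mathcal{X}'$ to $\mathcal{X}$ via Proposition \ref{htopicpsi}. The key computation is the chain of homotopies
\[
\phi^{\downarrow}\simeq \phi^{\downarrow}\circ\alpha^{\downarrow}\circ\beta^{\downarrow}\simeq \alpha\circ\psi^{\downarrow}\circ\beta^{\downarrow},
\]
where the first homotopy comes from composing $\phi^{\downarrow}$ with the filtered homotopy $\alpha^{\downarrow}\circ\beta^{\downarrow}\simeq 1_{X^{\downarrow}_{*}}$ and the second from composing $H^{\downarrow}$ with $\beta^{\downarrow}$; an identical argument yields $\phi_{\uparrow}\simeq \alpha\circ\psi_{\uparrow}\circ\beta_{\uparrow}$. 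Proposition \ref{htopicpsi} now provides an isomorphism $\mathcal{X}'\to\mathcal{X}$ represented by a morphism whose first three components are the identities on $X^{\downarrow}_{*}$, $X_{\uparrow *}$, and $Y_*$. Composing this with the isomorphism $\mathcal{C}\to\mathcal{X}'$ from Proposition \ref{bounce}, and using the composition rule (\ref{composition}), yields an isomorphism $\mathcal{C}\to\mathcal{X}$ in $\mathsf{HCO}(\kappa)$ represented by some $\frak{a}=(\alpha^{\downarrow},\alpha_{\uparrow},\alpha,K^{\downarrow},K_{\uparrow})$ with first three components as prescribed, establishing both the existence of the isomorphism and the ``moreover'' statement in one stroke.

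The main obstacle is simply bookkeeping: once the right intermediate object $\mathcal{X}'$ is identified, Propositions \ref{bounce} and \ref{htopicpsi} do essentially all the heavy lifting, and the only genuinely new verification is the chain-homotopy chain producing $\phi^{\downarrow}\simeq\alpha\circ\psi^{\downarrow}\circ\beta^{\downarrow}$ and its upward analogue, which is routine.
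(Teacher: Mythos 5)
Your proposal is correct and follows essentially the same route as the paper: the forward direction by unpacking the definitions of $\delta_{\mathcal{C},\mathcal{X}}$ and the composition law, and the converse by introducing the intermediate cospan $\mathcal{X}'$ with structure maps $\alpha\circ\psi_{\uparrow}\circ\beta_{\uparrow}$ and $\alpha\circ\psi^{\downarrow}\circ\beta^{\downarrow}$, applying Proposition \ref{bounce} to get $\mathcal{C}\cong\mathcal{X}'$, checking $\phi^{\downarrow}\simeq\alpha\circ\psi^{\downarrow}\circ\beta^{\downarrow}$ (and its ascending analogue) to invoke Proposition \ref{htopicpsi} for $\mathcal{X}'\cong\mathcal{X}$, and composing to obtain the ``moreover'' form. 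No gaps.
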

	
	\begin{proof}
		The forward implication follows straightforwardly from the definitions: if $\frak{a}=(\alpha^{\downarrow},\alpha_{\uparrow},\alpha,K^{\downarrow},K_{\uparrow})\in \ker\delta_{\mathcal{C},\mathcal{X}}|_{\mathrm{Hom}^{0}_{\mathsf{CO}(\kappa)}(\mathcal{C},\mathcal{X})}$ descends to an isomorphism in $\mathsf{HCO}(\kappa)$, the fact that $\delta_{\mathcal{C},\mathcal{X}}\frak{a}=0$ implies that $\alpha^{\downarrow},\alpha_{\uparrow},$ and $\alpha$ are chain maps and that $K^{\downarrow}$ (resp. $K_{\uparrow}$) gives a homotopy between $\phi^{\downarrow}\circ \alpha^{\downarrow}$ and $\alpha\circ \psi^{\downarrow}$ (resp. between $\phi_{\uparrow}\circ \alpha_{\uparrow}$ and $\alpha\circ \psi_{\uparrow})$, and the definitions of composition in $\mathsf{CO}(\kappa)$ and of $\delta_{\mathcal{C},\mathcal{C}}$ imply that $\alpha^{\downarrow}$ and $\alpha_{\uparrow}$ are filtered homotopy equivalences and that $\alpha$ is a homotopy equivalence.  
		
		For the converse, given $\alpha^{\downarrow},\alpha_{\uparrow},\alpha$ as in the corollary, let $\beta^{\downarrow},\beta_{\uparrow},\beta$ be (filtered, in the first two cases) homotopy inverses as in Proposition \ref{bounce}.  That proposition implies the result in the special case that $\phi^{\downarrow}=\alpha\circ \psi^{\downarrow}\circ \beta^{\downarrow}$ and $\phi_{\uparrow}=\alpha\circ \psi_{\uparrow}\circ \beta_{\uparrow}$.  In  the general case, the hypothesis that $\phi^{\downarrow}\circ \alpha^{\downarrow}$ is chain homotopic to $\alpha\circ \psi^{\downarrow}$ together with the fact that $\alpha^{\downarrow}$ and $\beta^{\downarrow}$ are homotopy inverses implies that $\phi^{\downarrow}$ is chain homotopic to $\alpha\circ \psi^{\downarrow}\circ \beta^{\downarrow}$; similarly, $\phi_{\uparrow}$ is chain homotopic to $\alpha\circ\psi_{\uparrow}\circ \beta_{\uparrow}$.  
		
		Let $\mathcal{X}'$ be the object of $\mathsf{HCO}(\kappa)$ obtained from $\mathcal{X}$ by replacing $\phi_{\uparrow}$ and $\phi^{\downarrow}$ by 	$\alpha\circ\psi_{\uparrow}\circ \beta_{\uparrow}$ and  $\alpha\circ \psi^{\downarrow}\circ \beta^{\downarrow}$.  Then Proposition \ref{bounce} gives a map $\frak{a}_0\in \mathrm{Hom}_{\mathsf{CO}(\kappa)}(\mathcal{C},\mathcal{X}')$ that descends to an isomorphism in $\mathsf{HCO}(\kappa)$, and Proposition \ref{htopicpsi} gives a map $\frak{a}_1\in\mathsf{Hom}_{\mathsf{CO}(\kappa)}(\mathcal{X}',\mathcal{X})$ that descends to an isomorphism in $\mathsf{HCO}(\kappa)$.  Thus the cohomology class of $\frak{a}_1\circ\frak{a}_0$ gives an isomorphism in $\mathsf{HCO}(\kappa)$ between $\mathcal{C}$ and $\mathcal{X}$.  Moreover, since $\frak{a}_0$ can be taken to be of the form $(\alpha^{\downarrow},\alpha_{\uparrow},\alpha,*,*)$ and $\frak{a}_1$ can be taken to be of the form $(1_{X^{\downarrow}_{*}},1_{X_{\uparrow*}},1_{Y_*},*,*)$, the map $\frak{a}=\frak{a}_1\circ\frak{a}_0$ can be taken to have the form required by the last sentence of the corollary.
			
	\end{proof}

\section{Direct sum decompositions}\label{decompsect}

We now work towards establishing general conditions for a filtered cospan to decompose into standard elementary summands.  En route, we discuss direct sum decompositions for filtered vector spaces and ascending chain complexes.  Throughout, we use the language of ``orthogonality'' with respect to a filtration as in \cite{UZ}.  (This use of the term ``orthogonal'' is borrowed from non-Archimedean geometry, though since in the present context the relevant non-Archimedean valuation is the trivial one, the term may not appear as well-motivated geometrically.)

\subsection{Filtered vector spaces}

\begin{dfn}\label{filtvsdfn}
	A filtered vector space over a field $\kappa$ is a pair $(V,\ell)$ where $V$ is a vector space over $\kappa$ and $\ell\co V\to\R\cup\{-\infty\}$ is a function satisfying: 
	\begin{itemize} \item $\ell(v)=-\infty$ if and only if $v=0$;
		\item $\ell(cv)=\ell(v)$ for all $v\in V$ and $c\in\kappa\setminus\{0\}$ 
		\item $\ell(v+w)\leq\max\{\ell(v),\ell(w)\}$ for all $v,w\in V$.
	\end{itemize} 
\end{dfn}

\begin{remark}\label{strict}
	A standard exercise shows that, in a filtered vector space $(V,\ell)$, if $\ell(v)\neq \ell(w)$, then $\ell(v+w)=\max\{\ell(v),\ell(w)\}$.
\end{remark}

\begin{remark}\label{ellfinite}
	If $(V,\ell)$ is a filtered vector space then, for any $t\in\R$, $V_{\leq t}:=\ell^{-1}([-\infty,t])$ is a subspace of $V$, with $V^{\leq s}\leq V^{\leq t}$ when $s\leq t$.  So if $t_1<t_2<\cdots<t_k$ are elements of the image of $\ell$, we have a strictly increasing sequence of subspaces $V^{\leq t_1}<V^{\leq t_2}<\cdots<V^{\leq t_k}$.  In particular, if $V$ is finite-dimensional, the image of $\ell$ is a finite set.
\end{remark}

Filtered vector spaces form a category in which a morphism $(V_0,\ell_0)\to (V_1,\ell_1)$ is a linear map $A\co V_0\to V_1$ such that $\ell_1(Av_0)\leq \ell_0(v_0)$ for all $v_0\in V_0$.  This category admits direct sums: given filtered vector spaces $(V_{\alpha},\ell_{\alpha})$ (as $\alpha$ varies over some possibly-infinite index set), let $V=\oplus_{\alpha}V_{\alpha}$ and define $\ell\co V\to\R\cup\{-\infty\}$ by $\ell\left(\sum_{\alpha}c_{\alpha}v_{\alpha}\right)=\max_{\alpha}\{\ell_{\alpha}(v_{\alpha})|c_{\alpha}\neq 0\}$.

\begin{dfn} If $(V,\ell)$ is a filtered vector space and $S\subset V\setminus\{0\}$, we say that $S$ is $\ell$-\textbf{orthogonal} if we have \begin{equation}\label{orthc}\ell(\sum_{i=1}^{n}c_i v_i)=\max_i\{\ell(v_i)|c_i\neq 0\}\end{equation} for every choice of finitely many distinct elements $v_i$ from $S$ and $c_i\in \kappa$.\end{dfn}  
Equivalently, $S$ is $\ell$-orthogonal iff the span of $S$, with filtration given by the restriction of $\ell$, is the direct sum (as filtered vector spaces) of the (one-dimensional) spans of the various elements of $S$.  Note that if $S$ is $\ell$-orthogonal then it is linearly independent.

The following gives a criterion for checking that a basis of a finite-dimensional filtered vector space is orthogonal; we will find this convenient in Section \ref{calcsect}.

\begin{prop}\label{orthcrit} Let $(V,\ell)$ be a finite-dimensional filtered vector space and for each $t\in \R$ introduce the subspaces \[ V^{\leq t}=\{v\in V|\ell(v)\leq t\} \qquad V^{<t}=\{v\in V|\ell(v)<t\}.\]  Then:
	\begin{itemize}\item[(i)] A basis $\{v_1,\ldots,v_d\}$ for $V$ is $\ell$-orthogonal if and only if, for every $t$, the multiplicity of $t$ in the multiset $\{\ell(v_i)|i=1,\ldots,d\}$ is equal to $\dim\left(\frac{V^{\leq t}}{V^{<t}}\right)$.
		\item[(ii)]  If $\{v_1,\ldots,v_d\}$ is an $\ell$-orthogonal basis for $(V,\ell)$, and $\{w_1,\ldots,w_d\}$ is another basis for $V$, then $\{w_1,\ldots,w_d\}$ is $\ell$-orthogonal if and only if the multisets $\{\ell(v_i)|i=1,\ldots,d\}$ and $\{\ell(w_i)|i=1,\ldots,d\}$ are equal to each other. \end{itemize}
	\end{prop}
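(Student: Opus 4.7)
The plan is to prove (i) directly and then obtain (ii) as a short corollary.

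For the forward direction of (i), I would fix $t\in\R$ and introduce $I_{\leq t}=\{i:\ell(v_i)\leq t\}$ and $I_{<t}=\{i:\ell(v_i)<t\}$.  The orthogonality relation (\ref{orthc}), applied to an arbitrary $v=\sum c_iv_i$, forces $c_i=0$ whenever $\ell(v_i)>t$ provided $v\in V^{\leq t}$, and similarly with $<$ in place of $\leq$.  Hence $\{v_i\}_{i\in I_{\leq t}}$ and $\{v_i\}_{i\in I_{<t}}$ are bases for $V^{\leq t}$ and $V^{<t}$ respectively, and $\dim(V^{\leq t}/V^{<t})=|I_{\leq t}|-|I_{<t}|$ is precisely the multiplicity of $t$ in the multiset $\{\ell(v_i)\}$.

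For the converse in (i), suppose the multiplicity condition holds, and let $t_1<\cdots<t_N$ be the distinct values appearing in $\{\ell(v_1),\ldots,\ell(v_d)\}$.  By Remark \ref{ellfinite} the set $\Img\ell|_{V\setminus\{0\}}$ is finite, and the standard identity $\dim V=\sum_{t}\dim(V^{\leq t}/V^{<t})$ holds.  Combined with $\sum_jm(t_j)=d=\dim V$, the hypothesis forces $\dim(V^{\leq t}/V^{<t})=0$ (equivalently $V^{\leq t}=V^{<t}$) for every $t\notin\{t_1,\ldots,t_N\}$; consequently $V^{<t_1}=\{0\}$ and $V^{\leq t}$ is constant on each half-open interval $[t_j,t_{j+1})$.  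A dimension count then shows $\{v_k:\ell(v_k)\leq t_j\}$ is a basis of $V^{\leq t_j}$ for every $j$.  Given $v=\sum c_iv_i$ with $t_j:=\max\{\ell(v_i)\mid c_i\neq 0\}$, if $\ell(v)<t_j$ then $v\in V^{\leq t_{j-1}}$; expressing $v$ in the basis $\{v_k:\ell(v_k)\leq t_{j-1}\}$ and comparing with $\sum c_iv_i$ would, by linear independence of $\{v_k:\ell(v_k)\leq t_j\}$, force every $c_i$ with $\ell(v_i)=t_j$ to vanish, contradicting the choice of $t_j$.  Therefore $\ell(v)=t_j$, which is exactly (\ref{orthc}).

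For (ii), part (i) shows that the multiset of filtration values of any $\ell$-orthogonal basis is intrinsic to $(V,\ell)$: it records, for each $t$, the jump $\dim(V^{\leq t}/V^{<t})$.  Hence $\{\ell(v_i)\}$ and $\{\ell(w_j)\}$ coincide iff the latter also realizes these quotient dimensions, which by (i) is equivalent to $\{w_j\}$ being $\ell$-orthogonal.  The main subtlety lies in the converse of (i): the hypothesis is stated only in terms of the multiset $\{\ell(v_i)\}$, so one must first rule out additional values of $\ell$ on $V\setminus\{0\}$ via the global dimension identity before the subspaces $V^{\leq t_j}$ can be analyzed.
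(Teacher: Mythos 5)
Your proposal is correct and follows essentially the same route as the paper: the forward direction of (i) and the deduction of (ii) are identical, and your converse rests on the same dimension count (the hypothesis forces the number of basis vectors with $\ell\leq t_j$ to equal $\dim V^{\leq t_j}$, so a failure of (\ref{orthc}) is incompatible with linear independence), merely reorganized — the paper contradicts the count $\#\{i\,|\,\ell(v_i)<t_{j_0}\}=\dim V^{<t_{j_0}}$ directly by exhibiting one extra independent vector, whereas you first establish that $\{v_k:\ell(v_k)\leq t_j\}$ is a basis of $V^{\leq t_j}$ and then compare coordinates. Only cosmetic point: handle the case $j=1$ of your final step separately (there $\ell(v)<t_1$ gives $v\in V^{<t_1}=\{0\}$, contradicting $v\neq 0$), since $V^{\leq t_{j-1}}$ is not defined for $j=1$.
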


\begin{proof}
	For the forward implication of (i), just note that the orthogonality of $\{v_1,\ldots,v_d\}$ implies that, for any $t$, $V^{\leq t}$ is the span of those $v_i$ with $\ell(v_i)\leq t$, and $V^{<t}$ is the span of those $v_i$ with $\ell(v_i)<t$.  
	
	We turn to the backward implication of (i).  Since $V$ is finite-dimensional, $\ell(V\setminus\{0\})$ is a finite set, say $\{t_1,\ldots,t_k\}$ with $t_1<\cdots<t_k$.  Our hypothesis is equivalent to the statement that, for $j=1,\ldots,k$, \begin{equation}\label{itj}\#\{i|\ell(v_i)<t_j\}=\dim V^{<t_j}.\end{equation}  Supposing for contradiction that $\{v_1,\ldots,v_d\}$ were not orthogonal, there would be some nonzero linear combination $v=\sum_i a_iv_i$ with $\ell(v)<\max\{\ell(v_i)|a_i\neq 0\}$.  In that case, let $j_0$ be the maximal value such that there is some $i_0$ with $a_{i_0}\neq 0$ and $\ell(v_{i_0})=t_{j_0}$.  Then $\{v\}\cup \{v_i|\ell(v_i)<t_{j_0}\}$ would be a linearly independent set in $V^{<t_{j_0}}$, contradicting the $j=j_0$ version of (\ref{itj}).  This completes the proof of (i).
	
	(ii) is then clear by applying (i) both to $\{v_1,\ldots,v_d\}$ and to $\{w_1,\ldots,w_d\}$.     
\end{proof}

\begin{dfn}
	A filtered vector space $(V,\ell)$ satisfies the \textbf{best approximation property} if, for every proper subspace $W$ of $V$ and every $v\in V\setminus W$, there is $w_0\in W$ such that $\ell(v-w_0)\leq \ell(v-w)$ for all $w\in W$.
\end{dfn}

\begin{prop} \label{wellbest}
	A filtered vector space $(V,\ell)$ satisfies the best approximation property if and only if $\ell(V\setminus \{0\})$ is a well-ordered subset of $\mathbb{R}$.
\end{prop}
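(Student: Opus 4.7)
The proof splits naturally into two directions, the backward one being immediate and the forward one requiring an explicit construction.

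\textbf{Backward direction.} Suppose $\ell(V \setminus \{0\})$ is well-ordered, fix a proper subspace $W \subset V$ and $v \in V \setminus W$. Since $v - w \neq 0$ for every $w \in W$, the set $\{\ell(v - w) \mid w \in W\}$ is a nonempty subset of $\ell(V \setminus \{0\})$ and so has a minimum; any $w_0$ attaining it is a best approximation of $v$.

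\textbf{Forward direction.} I would argue the contrapositive. Suppose $\ell(V \setminus \{0\})$ is not well-ordered, so that it contains a strictly decreasing sequence $t_1 > t_2 > t_3 > \cdots$. Choose $v_i \in V$ with $\ell(v_i) = t_i$; because the $t_i$ are pairwise distinct, iterating Remark \ref{strict} shows that $\{v_i\}_{i \geq 1}$ is $\ell$-orthogonal. Set $W = \mathrm{span}\{v_1 - v_i \mid i \geq 2\}$. The plan is then to verify two things: first, that $v_1 \notin W$, since a putative relation $v_1 = \sum_{i \in F} c_i(v_1 - v_i)$ rearranges to $(1 - \sum c_i)v_1 = -\sum c_i v_i$, which orthogonality of $\{v_i\}_{i \geq 1}$ forces to be the trivial relation on both sides, contradicting $\sum c_i$ simultaneously equalling $1$ and $0$; and second, that for any $w = \sum_{i \in F}c_i(v_1 - v_i) \in W$, writing $v_1 - w = (1 - \sum c_i)v_1 + \sum c_i v_i$ and invoking orthogonality gives $\ell(v_1 - w) = t_1$ when $\sum c_i \neq 1$ and $\ell(v_1 - w) = \max\{t_i \mid i \in F,\, c_i \neq 0\}$ when $\sum c_i = 1$. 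Taking in particular $w = v_1 - v_N$ for each $N \geq 2$ yields $\ell(v_1 - w) = t_N$, so the set $\{\ell(v_1 - w)\mid w \in W\}$ contains every $t_N$. Since the sequence $(t_N)$ strictly decreases, no element of this set is minimal, so $v_1$ admits no best approximation in $W$.

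The only genuinely delicate point is the extraction of the orthogonal sequence $\{v_i\}$ from the distinctness of the $t_i$; once this is in hand the rest is straightforward bookkeeping with filtration values, exploiting that $\ell$-orthogonality makes the value $\ell(v_1 - w)$ computable term by term.
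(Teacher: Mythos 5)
Your proof is correct and follows essentially the same route as the paper: the backward direction is the same immediate observation, and the forward direction is the paper's contrapositive argument, extracting an $\ell$-orthogonal sequence with strictly decreasing filtration values via Remark \ref{strict} and showing the value set $\{\ell(v_1-w)\mid w\in W\}$ equals $\{t_i\}$ and hence has no minimum. The only cosmetic differences are that you span $W$ by $v_1-v_i$ rather than the consecutive differences $x_{i+1}-x_i$ (the same subspace) and that you check $v_1\notin W$ explicitly, a point the paper leaves implicit.
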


(In particular, by Remark \ref{ellfinite}, if $V$ is finite-dimensional then $(V,\ell)$ satisfies the best approximation property.)

\begin{proof}
	The backward implication is clear, since the best approximation property amounts to the statement that subsets of $\ell(V\setminus\{0\})$ of the form $\{\ell(v-w)|w\in W\}$ (where $W\leq V$ and $v\notin W$) have least elements.
	
	For the forward implication, let us suppose that $\ell(V\setminus\{0\})$ is not well-ordered and show that this allows one to construct a counterexample to the best approximation property.  That $\ell(V\setminus\{0\})$ is not well-ordered implies that there is a sequence $\{x_i\}_{i=1}^{\infty}$ in $V\setminus\{0\}$ such that $\ell(x_{i+1})<\ell(x_i)$ for all $i$.  Note that Remark \ref{strict}  implies that $\{x_i|i\geq 1\}$ is $\ell$-orthogonal and hence is a linearly independent set.  Put $v=x_1$ and $W=\mathrm{span}\{x_{i+1}-x_i|i\geq 1\}$.  Since $x_i=x_1+\sum_{j=1}^{i-1}(x_{j+1}-x_j)$, the set $L:=\{\ell(v-w)|w\in W\}$ contains each $\ell(x_i)$; conversely, since every element in $L$ is given by taking $\ell$ of a nonzero linear combination of the various $x_i$, it follows from Remark \ref{strict} that the only elements of $L$ are the $\ell(x_i)$.  So the fact that the $\ell(x_i)$ are strictly decreasing implies that $L$ does not contain its infimum, and the best approximation property is violated. 
\end{proof}

The following is very similar to a special case of \cite[Lemma 2.15]{UZ}; we give a proof for the reader's convenience.
\begin{prop}\label{project}
	Let $(V,\ell)$ be a filtered vector space, $S\subset V$ an $\ell$-orthogonal subset, and $W=\mathrm{span}(S)$.  Suppose that  $v\in V\setminus W$ and $w_0\in W$ have the property that $\ell(v-w_0)\leq \ell(v-w)$ for all $w\in W$.  Then $S\cup\{v-w_0\}$ is still an $\ell$-orthogonal set.
\end{prop}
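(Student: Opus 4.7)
The plan is to verify directly that any finite linear combination $a(v-w_0) + \sum_{i=1}^n c_i s_i$ with distinct $s_i \in S$ and scalars $a, c_1, \ldots, c_n \in \kappa$ satisfies the orthogonality equation (\ref{orthc}). The case $a=0$ is immediate from the hypothesis that $S$ is $\ell$-orthogonal, so the only substantive case is $a \neq 0$.

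When $a \neq 0$, I would use the fact that $\ell(a x) = \ell(x)$ for nonzero scalars, together with $\ell$-orthogonality of $S$, which gives $\ell\!\left(\sum_i c_i s_i\right) = \max\{\ell(s_i) \mid c_i \neq 0\}$. Setting $M := \max\{\ell(s_i) \mid c_i \neq 0\}$ (or $-\infty$ if all $c_i = 0$), I would split into three subcases: (i) $\ell(v-w_0) > M$, (ii) $\ell(v-w_0) < M$, and (iii) $\ell(v-w_0) = M$. In subcases (i) and (ii) the two terms of $a(v-w_0) + \sum c_i s_i$ have unequal $\ell$-values, so Remark \ref{strict} immediately gives $\ell\!\left(a(v-w_0) + \sum c_i s_i\right) = \max\{\ell(v-w_0), M\}$, which is precisely the maximum over the terms with nonzero coefficient.

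The only delicate case is (iii), where the non-Archimedean inequality gives only an upper bound $\ell\!\left(a(v-w_0) + \sum c_i s_i\right) \leq \ell(v-w_0) = M$, and equality must be forced by invoking the best approximation hypothesis on $w_0$. The trick is to rewrite
\[
a(v-w_0) + \sum_{i=1}^n c_i s_i \;=\; a\bigl(v - (w_0 + w)\bigr),
\qquad \text{where } w := -\tfrac{1}{a}\sum_i c_i s_i \in W.
\]
Since $w_0 + w \in W$, the defining property of $w_0$ yields $\ell(v - (w_0+w)) \geq \ell(v-w_0)$, and hence $\ell\!\left(a(v-w_0) + \sum c_i s_i\right) \geq \ell(v-w_0) = M$. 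Combined with the upper bound, this gives equality, completing the verification in subcase (iii).

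The main (minor) obstacle is recognizing how to introduce the best approximation property in subcase (iii): one must view the combined expression as $a$ times the distance from $v$ to a point of $W$, rather than as a sum of two separate terms. Once this reformulation is in place, the argument reduces to the three-case analysis sketched above and presents no further difficulty.
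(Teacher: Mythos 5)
Your proof is correct and follows essentially the same route as the paper: the decisive step in both is to rewrite $a(v-w_0)+\sum_i c_i s_i$ as $a\bigl(v-(w_0+w)\bigr)$ with $w_0+w\in W$ and invoke the minimality of $\ell(v-w_0)$, with Remark \ref{strict} handling the unequal-level cases. The only difference is bookkeeping: you split the case $a\neq 0$ into three subcases where the paper treats $\ell(v-w_0)\geq \ell(w)$ uniformly, which is an immaterial variation.
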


\begin{proof}
	Let $W'=\mathrm{span}(S\cup\{v-w_0\})$, so that a general element $w'\in W'$ can be written uniquely as $w'=a(v-w_0)+w$ where $a\in\kappa$ and $w\in W$; in turn, we can write $w=\sum_{i=1}^{n}b_is_i$ for some $s_i\in S$ and $b_i\in\kappa$, and the orthogonality of $S$ implies that $\ell(w)=\max\{\ell(s_i)|b_i\neq 0\}$.
	
	Comparing with (\ref{orthc}), we are to show that  \begin{equation}\label{orthneed}
	\ell(w')=\left\{\begin{array}{ll} \max\left(\{\ell(v-w_0)\}\cup\{\ell(s_i)|b_i\neq 0\}\right) & \mbox{if }a\neq 0  \\  \max\{\ell(s_i)|b_i\neq 0\} & \mbox{if }a=0 \end{array}\right. 	
		\end{equation}
	If $a=0$ then $w'=w$ and the desired statement is immediate from the orthogonality of $S$, so assume from now on that $a\neq 0$.  If $\ell(v-w_0)<\ell(w)$, then Remark \ref{strict} shows that $\ell(w')=\ell(w)=  \max\{\ell(s_i)|b_i\neq 0\}$, which equals the the first line of the right hand side of (\ref{orthneed}) due to the assumption that $\ell(v-w_0)<\ell(w)$.
	
	In the remaining case that $a\neq 0$ and $\ell(v-w_0)\geq \ell(w)$, we have \[ \ell(w')=\ell\left(\frac{1}{a}w'\right)=\ell\left(v-\left(w_0-\frac{1}{a}w\right)\right)\geq \ell(v-w_0),\] where the first equality uses the second bullet in Definition \ref{filtvsdfn} and the second uses the assumption on $w_0$ in the proposition. So if $\ell(v-w_0)\geq \ell(w)$ then $\ell(w')$ is  greater than or equal to the right-hand side of (\ref{orthneed}); on the other hand, the reverse inequality is immediate from the third bullet in Definition \ref{filtvsdfn}.  This proves (\ref{orthneed}) in all cases, and hence proves the proposition.	

\end{proof}

\begin{prop}\label{orthbasis}
	Suppose that the filtered vector space $(V,\ell)$ satisfies the best approximation property. Then $V$ has an $\ell$-orthogonal subset which is a basis for $V$.  Moreover, if $S$ is an $\ell$-orthogonal basis for a subspace $W$ of $V$, there is an $\ell$-orthogonal basis for $V$ that contains $S$.  
\end{prop}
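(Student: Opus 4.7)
The first assertion is the $W=\{0\}$, $S=\varnothing$ case of the second, so it suffices to prove the second. My plan is a straightforward Zorn's lemma argument, powered by Proposition \ref{project}.

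Fix the $\ell$-orthogonal basis $S$ of the subspace $W$, and consider the collection $\mathcal{S}$ of all $\ell$-orthogonal subsets $T\subset V$ with $S\subset T$, partially ordered by inclusion. The first step is to verify that $\mathcal{S}$ is nonempty (it contains $S$) and that every chain $\{T_{\alpha}\}$ in $\mathcal{S}$ has an upper bound, namely $\bigcup_{\alpha}T_{\alpha}$. The latter sits in $\mathcal{S}$ because the orthogonality condition (\ref{orthc}) only involves finitely many elements at a time, so any finite subset of $\bigcup_{\alpha}T_{\alpha}$ lies inside some single $T_{\alpha}$ and therefore satisfies (\ref{orthc}). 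Zorn's lemma then furnishes a maximal element $S'\in\mathcal{S}$.

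Next I would argue that $\mathrm{span}(S')=V$. Suppose for contradiction that $W':=\mathrm{span}(S')$ is a proper subspace, and pick $v\in V\setminus W'$. Since $(V,\ell)$ satisfies the best approximation property, there exists $w_0\in W'$ with $\ell(v-w_0)\leq \ell(v-w)$ for every $w\in W'$. Proposition \ref{project}, applied to the $\ell$-orthogonal set $S'$ spanning $W'$, then shows that $S'\cup\{v-w_0\}$ is still $\ell$-orthogonal; this set strictly contains $S'$ and still contains $S$, contradicting the maximality of $S'$ in $\mathcal{S}$. Hence $S'$ spans $V$, and being $\ell$-orthogonal it is in particular linearly independent, so $S'$ is an $\ell$-orthogonal basis of $V$ containing $S$.

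The only real content beyond bookkeeping is the appeal to Proposition \ref{project}, and the only delicate point is ensuring the Zorn hypotheses hold; both are handled by the observations above. No obstacle is anticipated.
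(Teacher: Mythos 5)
Your proof is correct and follows essentially the same route as the paper: reduce the first claim to the second, apply Zorn's lemma to the collection of $\ell$-orthogonal sets containing $S$ (the union of a chain being orthogonal since orthogonality is a finitary condition), and use the best approximation property together with Proposition \ref{project} to show a maximal such set must span $V$. No gaps.
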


\begin{proof}
	The empty set is an $\ell$-orthogonal basis for the trivial subspace $\{0\}$ of $V$, so the second sentence follows from the third, which we now prove.  Given the orthogonal basis $S$ for $W$, let $\mathcal{S}$ denote the collection of orthogonal subsets $S'$ of $V$ such that $S\subset S'$, ordered by inclusion.  The union of a totally ordered family of $\ell$-orthogonal subsets of $V$ is still $\ell$-orthogonal, so Zorn's Lemma shows that $\mathcal{S}$ has a maximal element. It follows from Proposition \ref{project} that an $\ell$-orthogonal subset that did not span $V$ could not be maximal in $\mathcal{S}$, so our maximal element is the desired $\ell$-orthogonal basis for $V$.    
\end{proof}

\begin{dfn}
	If $(V,\ell)$ is a filtered vector space with subspace $W$, an $\ell$-\textbf{orthogonal complement} of $W$ is a subspace $X\leq V$ such that $W\oplus X=V$ and $\ell(w+x)=\max\{\ell(w),\ell(x)\}$ for all $w\in W$ and $x\in X$.
\end{dfn}

(Equivalently, $V$ is the direct sum of $W$ and $X$ in the category of \emph{filtered} vector spaces.)

\begin{cor}\label{orthcomp}
	If the filtered vector space $(V,\ell)$ satisfies the best approximation property then every subspace of $V$ has an $\ell$-orthogonal complement.
\end{cor}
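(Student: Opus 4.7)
The plan is to deduce this directly from Proposition \ref{orthbasis} together with the definition of an $\ell$-orthogonal basis. Given the subspace $W\leq V$, the key preliminary observation is that $(W,\ell|_W)$ inherits the best approximation property from $(V,\ell)$: by Proposition \ref{wellbest} it suffices to check that $\ell(W\setminus\{0\})$ is well-ordered, but this is an immediate subset of the well-ordered set $\ell(V\setminus\{0\})$.

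First I would apply Proposition \ref{orthbasis} to $(W,\ell|_W)$ to obtain an $\ell$-orthogonal basis $S$ of $W$. Then I would apply the second sentence of Proposition \ref{orthbasis} to extend $S$ to an $\ell$-orthogonal basis $S\cup T$ of $V$ (with $S\cap T=\varnothing$). I would then set $X:=\mathrm{span}(T)$.

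It remains to verify that $X$ is an $\ell$-orthogonal complement of $W$. That $V=W\oplus X$ is clear from the fact that $S\cup T$ is a basis of $V$ with $S$ a basis of $W$. For the filtration condition, given $w\in W$ and $x\in X$, write $w=\sum_{i}a_i s_i$ and $x=\sum_{j} b_j t_j$ with $s_i\in S$ and $t_j\in T$ distinct, and $a_i,b_j\in\kappa$. Then $w+x$ is a linear combination of distinct elements of the $\ell$-orthogonal set $S\cup T$, so by the orthogonality condition (\ref{orthc}),
\[
\ell(w+x)=\max\bigl(\{\ell(s_i)\mid a_i\neq 0\}\cup\{\ell(t_j)\mid b_j\neq 0\}\bigr)=\max\{\ell(w),\ell(x)\},
\]
where the last equality uses the orthogonality of $S$ and of $T$ separately to evaluate $\ell(w)$ and $\ell(x)$. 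This is the desired condition, so $X$ is an $\ell$-orthogonal complement of $W$. There is no real obstacle here once Proposition \ref{orthbasis} is in hand; the only subtlety is checking that $W$ inherits the best approximation property, which is handled by the well-ordering characterization in Proposition \ref{wellbest}.
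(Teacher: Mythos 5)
Your proposal is correct and follows essentially the same route as the paper: both obtain an $\ell$-orthogonal basis $S$ of $W$ (using that $W$ inherits the best approximation property), extend it via Proposition \ref{orthbasis} to an $\ell$-orthogonal basis of $V$, and take the span of the added vectors as the complement. Your explicit verification of the filtration condition via (\ref{orthc}) is just spelling out what the paper leaves implicit.
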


\begin{proof} Let $W\leq V$ be any subspace.  This subspace inherits the best approximation property from $V$, so $W$ has an $\ell$-orthogonal basis, say $S$, by Proposition \ref{orthbasis}. Moreover, another application of Proposition \ref{orthbasis} gives an $\ell$-orthogonal basis $S'$ for $V$ 
	such that $S\subset S'$.  Then the span of $S'\setminus S$ will be an $\ell$-orthogonal complement to $W$.
\end{proof}

\begin{remark} Orthogonal complements in this setting are not unique: for example, if $V$ is spanned by two elements $v$ and $w$ with $\ell(v)>\ell(w)$, and $W=\mathrm{span}\{w\}$, then either of $\mathrm{span}\{v\}$ and $\mathrm{span}\{v+w\}$ serves as an $\ell$-orthogonal complement to $W$.
\end{remark}

\subsection{Ascending chain complexes}\label{fcc}
According to Definition \ref{ascdef}, an ascending chain complex over a field $\kappa$ consists of data $(C_{\uparrow*},\partial_{\uparrow},\ell_{\uparrow})$ where $C_{\uparrow *}=\oplus_{k\in \Z}C_{\uparrow k}$ is a graded vector space over $\kappa$, $(C_{\uparrow*},\ell_{\uparrow})$ is a filtered vector space in the sense of the previous section with $(C_{\uparrow*},\ell_{\uparrow})=\oplus_k(C_{\uparrow k},\ell|_{C_{\uparrow k}})$ as filtered vector spaces, and $\partial_{\uparrow}\co C_{\uparrow *}\to C_{\uparrow *}$ restricts to maps $\partial_{\uparrow k}\co C_{\uparrow k}\to C_{\uparrow\, k-1}$ satisfying $\partial_{\uparrow k}\partial_{\uparrow k+1}=0$ and $\ell_{\uparrow}(\partial_{\uparrow} c)\leq \ell_{\uparrow}(c)$ for all $c\in C_*$.  As usual we shall write $H_k(C_{\uparrow*})=\frac{\ker\partial_{\uparrow k}}{\Img\partial_{\uparrow k+1}}$, and $H_*(C_{\uparrow *})=\oplus_kH_k(C_{\uparrow *})$.  Ascending chain complexes form a category in which the morphisms are filtered chain maps, \emph{i.e.} those chain maps that are also morphisms of filtered vector spaces.  One obtains direct sums of filtered chain complexes in the obvious way, extending the notion of a direct sum of filtered vector spaces from the previous section.

In this setting, define the \textbf{spectral invariant function} $\rho_{\uparrow}\co H_*(C_{\uparrow *})\to \R\cup\{-\infty\}$ by \begin{equation}\label{specfn} \rho_{\uparrow}(h)=\inf\{\ell(c)|c\in \ker\partial_{\uparrow},\,[c]=h\}\end{equation} (where $[c]$ denotes the homology class of the cycle $c$).

\begin{prop}\label{specproj} Let $(C_{\uparrow *},\partial_{\uparrow},\ell_{\uparrow})$ be a filtered chain complex with notation as above, and assume that, for all $k$, $(C_{\uparrow k},\ell_{\uparrow}|_{C_{\uparrow k}})$ satisfies the best approximation property.  Then, for each $k$, there is a subspace $\mathcal{H}_k\leq \ker\partial_{\uparrow k}$ such that the canonical projection $\pi_k\co \ker\partial_k\to H_k(C_{\uparrow *})$ restricts to $\mathcal{H}_k$ as an isomorphism, and such that for each $c\in\mathcal{H}_k$ we have $\rho_{\uparrow}(\pi_k(c))=\ell_{\uparrow}(c)$.  In particular, $(H_k(C_{\uparrow *}),\rho_{\uparrow}|_{H_k(C_{\uparrow *})})$ is a filtered vector space satisfying the best approximation property.
\end{prop}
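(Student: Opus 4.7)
The plan is to construct $\mathcal{H}_k$ as an $\ell_{\uparrow}$-orthogonal complement to the space of boundaries inside the space of cycles, by invoking Corollary \ref{orthcomp}, and then to read off the remaining assertions from the resulting orthogonal direct sum.

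First I would observe that the best approximation property is inherited by subspaces: by Proposition \ref{wellbest} it is equivalent to well-orderedness of $\ell_{\uparrow}(V\setminus\{0\})$, and well-orderedness passes to subsets.  Consequently $(\ker\partial_{\uparrow k},\ell_{\uparrow}|_{\ker\partial_{\uparrow k}})$ inherits the best approximation property from $(C_{\uparrow k},\ell_{\uparrow}|_{C_{\uparrow k}})$.  Applying Corollary \ref{orthcomp} to the subspace $\Img\partial_{\uparrow k+1}\leq \ker\partial_{\uparrow k}$ then produces an $\ell_{\uparrow}$-orthogonal complement $\mathcal{H}_k$, giving a decomposition $\ker\partial_{\uparrow k}=\Img\partial_{\uparrow k+1}\oplus \mathcal{H}_k$.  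Since $\pi_k$ has kernel $\Img\partial_{\uparrow k+1}$, its restriction to $\mathcal{H}_k$ is an isomorphism onto $H_k(C_{\uparrow *})$.

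Next, I would verify the spectral invariant identity $\rho_{\uparrow}(\pi_k(c))=\ell_{\uparrow}(c)$ for $c\in \mathcal{H}_k$.  Any cycle representing $\pi_k(c)$ is of the form $c+\partial_{\uparrow}b$ for some $b\in C_{\uparrow k+1}$, and $\partial_{\uparrow}b\in \Img\partial_{\uparrow k+1}$.  The $\ell_{\uparrow}$-orthogonality of the decomposition then gives $\ell_{\uparrow}(c+\partial_{\uparrow}b)=\max\{\ell_{\uparrow}(c),\ell_{\uparrow}(\partial_{\uparrow}b)\}\geq \ell_{\uparrow}(c)$, with equality attained by taking $b=0$; hence the infimum in (\ref{specfn}) equals $\ell_{\uparrow}(c)$.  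The case $c=0$ is automatic since both sides are $-\infty$.

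Finally, because $\pi_k|_{\mathcal{H}_k}\co \mathcal{H}_k\to H_k(C_{\uparrow *})$ is a bijection intertwining $\ell_{\uparrow}|_{\mathcal{H}_k}$ with $\rho_{\uparrow}|_{H_k(C_{\uparrow *})}$, the pair $(H_k(C_{\uparrow *}),\rho_{\uparrow}|_{H_k(C_{\uparrow *})})$ is isomorphic, as a vector space with a function to $\R\cup\{-\infty\}$, to the filtered subspace $(\mathcal{H}_k,\ell_{\uparrow}|_{\mathcal{H}_k})$ of $(C_{\uparrow k},\ell_{\uparrow}|_{C_{\uparrow k}})$; it therefore inherits from the latter both the axioms of Definition \ref{filtvsdfn} and the best approximation property.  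I do not anticipate a serious obstacle here: the only mildly subtle point is the remark, made above, that best approximation passes to subspaces, and everything else is formal manipulation of the orthogonal decomposition.
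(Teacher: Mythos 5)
Your proof is correct and follows essentially the same route as the paper: take $\mathcal{H}_k$ to be an $\ell_{\uparrow}$-orthogonal complement to $\Img\partial_{\uparrow k+1}$ inside $\ker\partial_{\uparrow k}$ via Corollary \ref{orthcomp}, use the orthogonality to see that every representative $c+\partial_{\uparrow}b$ of $\pi_k(c)$ has $\ell_{\uparrow}\geq\ell_{\uparrow}(c)$, and transfer the best approximation property through the resulting filtered isomorphism $\mathcal{H}_k\cong H_k(C_{\uparrow *})$. No gaps.
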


\begin{proof}
	The best approximation property is inherited by subspaces, so is satisfied by $\ker\partial_{\uparrow k}$.  Take $\mathcal{H}_k$ to be an $(\ell_{\uparrow}|_{\ker\partial_{\uparrow k}})$-orthogonal complement to the subspace $\Img\partial_{\uparrow k+1}$ of $\ker\partial_{\uparrow k}$ (as given by Corollary \ref{orthcomp}).  Since $(\Img\partial_{\uparrow k+1})\oplus \mathcal{H}_k=\ker\partial_{\uparrow k}$, the projection $\pi_k$ restricts to $\mathcal{H}_k$ as an isomorphism to $H_k(C_{\uparrow *})$.  Also, if $c\in \mathcal{H}_k$ and $c'\in\ker\partial_{\uparrow k}$ with $\pi_k(c)=\pi_k(c')$, then since $c'-c\in\Img\partial_{\uparrow k+1}$, the orthogonality of $\mathcal{H}_k$ and $\Img\partial_{\uparrow k+1}$ implies that $\ell_{\uparrow}(c')=\max\{\ell_{\uparrow}(c'-c),\ell_{\uparrow}(c)\}\geq \ell_{\uparrow}(c)$. Hence $\rho_{\uparrow}(\pi_k(c))=\ell_{\uparrow}(c)$.
	
	Thus $\pi_k$ gives an isomorphism of filtered vector spaces $(\mathcal{H}_k,\ell_{\uparrow}|_{\mathcal{H}_k})\to (H_k(C_{\uparrow *}),\rho_{\uparrow}|_{H_k(C_{\uparrow *})})$.  So since $\mathcal{H}_k$ inherits the best approximation property from $C_{\uparrow k}$, $(H_k(C_{\uparrow *}),\rho_{\uparrow}|_{H_k(C_{\uparrow *})})$ also satisfies this property.
\end{proof}

\begin{prop}\label{ahdecomp}
	Let $(C_{\uparrow *},\partial_{\uparrow},\ell_{\uparrow})$ be an ascending chain complex in which each $C_{\uparrow k}$ satisfies the best approximation property.  Then there are subcomplexes $A_*$ and $\mathcal{H}_*$ of $C_{\uparrow *}$ such that:
	\begin{itemize}
		\item $A_*$ and $\mathcal{H}_*$ are $\ell_{\uparrow}$-orthogonal complements to each other;
		\item $\partial_{\uparrow}|_{\mathcal{H}_*}=0$, and the restriction of the canonical projection $\ker\partial_{\uparrow k}\to H_k(C_{\uparrow *})$ induces an isomorphism of filtered vector spaces $(\mathcal{H}_k,\ell_{\uparrow}|_{\mathcal{H}_k})\to (H_k(C_{\uparrow *}),\rho_{\uparrow}|_{H_k(C_{\uparrow *})})$ for each $k$.
		\item  $H_k(A_*)=0$ for all $k$.
	\end{itemize}
\end{prop}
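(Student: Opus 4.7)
The plan is to build a three-term $\ell_{\uparrow}$-orthogonal decomposition of each $C_{\uparrow k}$ into the cycles-that-are-not-boundaries part (which will be $\mathcal{H}_k$), the boundaries (call them $B_k$), and a complement to all cycles (call it $K_k$), and then set $A_k=K_k\oplus B_k$. The proposition then reduces to verifying the three bulleted properties.

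First I would apply Corollary \ref{orthcomp} twice. Since the best approximation property is inherited by subspaces, $\ker\partial_{\uparrow k}$ (with the restriction of $\ell_{\uparrow}$) satisfies it; so letting $B_k=\Img\partial_{\uparrow\,k+1}$, I can choose an $\ell_{\uparrow}$-orthogonal complement $\mathcal{H}_k\leq \ker\partial_{\uparrow k}$ of $B_k$. This is precisely the subspace of Proposition \ref{specproj}, so the canonical projection restricts to a filtered isomorphism $(\mathcal{H}_k,\ell_{\uparrow}|_{\mathcal{H}_k})\to (H_k(C_{\uparrow *}),\rho_{\uparrow}|_{H_k(C_{\uparrow *})})$, and obviously $\partial_{\uparrow}|_{\mathcal{H}_*}=0$. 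A second application of Corollary \ref{orthcomp}, this time in $C_{\uparrow k}$, yields an $\ell_{\uparrow}$-orthogonal complement $K_k$ to $\ker\partial_{\uparrow k}$. Altogether $C_{\uparrow k}=K_k\oplus B_k\oplus \mathcal{H}_k$, with $K_k$ orthogonal to $B_k\oplus \mathcal{H}_k$ and $B_k$ orthogonal to $\mathcal{H}_k$ within that sum.

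I then set $A_k:=K_k\oplus B_k$. To verify that $A_*$ and $\mathcal{H}_*$ are orthogonal complements in $C_{\uparrow k}$: given $x=k_0+b\in A_k$ with $k_0\in K_k,b\in B_k$ and $y\in \mathcal{H}_k$, apply orthogonality of $K_k$ with $\ker\partial_{\uparrow k}$ to get $\ell_{\uparrow}(x+y)=\max\{\ell_{\uparrow}(k_0),\ell_{\uparrow}(b+y)\}$, and then orthogonality of $B_k$ with $\mathcal{H}_k$ to rewrite $\ell_{\uparrow}(b+y)=\max\{\ell_{\uparrow}(b),\ell_{\uparrow}(y)\}$; combining gives $\ell_{\uparrow}(x+y)=\max\{\ell_{\uparrow}(x),\ell_{\uparrow}(y)\}$. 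Next, $A_*$ is a subcomplex because $\partial_{\uparrow}(K_k)\subseteq \Img\partial_{\uparrow k}=B_{k-1}\subseteq A_{k-1}$ and $\partial_{\uparrow}(B_k)=0$.

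Finally, for acyclicity of $A_*$, the restriction $\partial_{\uparrow}|_{K_{k+1}}\co K_{k+1}\to B_k$ is an isomorphism: it is injective because $K_{k+1}\cap \ker\partial_{\uparrow k+1}=\{0\}$, and its image is all of $B_k=\Img\partial_{\uparrow k+1}=\partial_{\uparrow}(K_{k+1}\oplus\ker\partial_{\uparrow k+1})=\partial_{\uparrow}(K_{k+1})$. Therefore the cycles of $A_k$ are exactly $A_k\cap \ker\partial_{\uparrow k}=(K_k\oplus B_k)\cap(B_k\oplus \mathcal{H}_k)=B_k$, which coincides with $\partial_{\uparrow}(A_{k+1})=\partial_{\uparrow}(K_{k+1})=B_k$, giving $H_k(A_*)=0$. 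I expect no serious obstacle: the only subtle point is the chaining of two pairwise orthogonalities into joint orthogonality of the three summands, which is handled by the computation above using the strict-triangle-inequality behavior from Remark \ref{strict} already encoded in the orthogonality hypotheses.
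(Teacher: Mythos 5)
Your proof is correct and follows essentially the same route as the paper: take $\mathcal{H}_k$ to be an $\ell_{\uparrow}$-orthogonal complement of $\Img\partial_{\uparrow\,k+1}$ inside $\ker\partial_{\uparrow k}$ (the subspace of Proposition \ref{specproj}), take an orthogonal complement $K_k$ (the paper's $F_k$) to $\ker\partial_{\uparrow k}$, and set $A_k=K_k\oplus\Img\partial_{\uparrow\,k+1}$, with the same verifications of the subcomplex, acyclicity, and orthogonality properties. The only cosmetic difference is that you obtain the joint orthogonality of $A_k$ and $\mathcal{H}_k$ by chaining the two orthogonal-complement relations directly, whereas the paper re-derives $\ell_{\uparrow}(b+h)=\max\{\ell_{\uparrow}(b),\ell_{\uparrow}(h)\}$ via Remark \ref{strict} and the spectral-invariant property; both are valid.
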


\begin{proof}
	For each $k$ let $\mathcal{H}_k$ be as in Proposition \ref{specproj}, and let $\mathcal{H}_*=\oplus_k\mathcal{H}_k$; evidently, $\mathcal{H}_*$ is a subcomplex of $C_{\uparrow *}$ with zero differential, which satisfies the second requirement of the proposition.  Also, let $F_k\leq C_{\uparrow k} $ be an $(\ell_{\uparrow}|_{C_{\uparrow k}})$-orthogonal complement to $\ker\partial_{\uparrow k}$.  Take $A_k=F_k\oplus \Img\partial_{\uparrow k+1}$.  Then $A_*=\oplus_k A_k$ gives a subcomplex of $C_{\uparrow *}$ since $A_k$ contains all of $\Img\partial_{\uparrow k+1}$.  Because $F_{k}$ is complementary to $\ker\partial_{\uparrow k}$, $\partial_{\uparrow k}$ restricts to $F_k$ as an isomorphism to $\Img(\partial_{\uparrow k-1})$. Hence $\partial_{\uparrow}|_{A_k}$ has kernel equal to $\Img(\partial_{\uparrow k+1})$, which coincides with $\Img(\partial_{\uparrow k+1}|_{A_{k+1}})$ since $A_{k+1}$ contains the complement $F_{k+1}$ to $\ker\partial_{\uparrow k+1}$.  Thus the subcomplex $A_*$ has trivial homology.
	It remains to check that $\mathcal{H}_k$ and $A_k$ are $\ell_{\uparrow}$-orthogonal for each $k$.  If $h\in \mathcal{H}_k$ and $a\in A_k$, we can write $a=f+b$ where $f\in F_k$ and $b\in \Img\partial_{\uparrow k+1}$.  If $\ell_{\uparrow}(b)>\ell_{\uparrow}(h)$ then $\ell_{\uparrow}(b+h)=\ell_{\uparrow}(b)$ by Remark \ref{strict}; if $\ell_{\uparrow}(b)\leq \ell(h)$ then $\ell_{\uparrow}(h)=\max\{\ell_{\uparrow}(b),\ell_{\uparrow}(h)\}\geq \ell_{\uparrow}(b+h)\geq \rho_{\uparrow}([h])=\ell_{\uparrow}(h)$, so equality holds throughout.  In any event, we will have $\ell_{\uparrow}(b+h)=\max\{\ell_{\uparrow}(b),\ell_{\uparrow}(h)\}$.  Now since $F_k$ is $\ell_{\uparrow}$-orthogonal to $\ker\partial_{\uparrow k}$ we have \begin{align*} \ell_{\uparrow}(a+h)&=\ell_{\uparrow}(f+b+h)=\max\{\ell_{\uparrow}(f),\ell_{\uparrow}(b+h)\}=\max\{\ell_{\uparrow}(f),\ell_{\uparrow}(b),\ell_{\uparrow}(h)\}\\ &=\max\{\ell_{\uparrow}(f+b),\ell_{\uparrow}(h)\}=\max\{\ell_{\uparrow}(a),\ell_{\uparrow}(h)\},\end{align*} as desired.
\end{proof}

In the setting of Proposition \ref{ahdecomp}, by choosing an $\ell_{\uparrow}|_{\mathcal{H}_k}$ orthogonal basis for each $\mathcal{H}_k$ (which exists by Proposition \ref{orthbasis}) we obtain a splitting of $\mathcal{H}_*$ as a direct sum of filtered, one-dimensional chain complexes, each with zero differential.  Similarly, the following proposition allows us to decompose the acyclic complex $A_*$ as a direct sum of simple filtered chain complexes (though, since $H_k(A_*)=0$, the summands need to have nontrivial differential, so they will be two-dimensional rather than one-dimensional).

\begin{prop}\label{acyclic}
	Let $(A_*,\partial_{\uparrow},\ell_{\uparrow})$ be an ascending chain complex such that $H_k(A_*)=0$ for all $k$, and assume that, for each $k$, $A_k$ satisfies the best approximation property.  Then, for suitable index sets $\mathcal{I}_k$, there are $(\ell_{\uparrow}|_{A_k})$-orthogonal bases $\{x_i^k|i\in \mathcal{I}_{k+1}\}\cup \{y_j^k|j\in \mathcal{I}_{k}\}$ for the various $A_k$ such that, for each $i\in\mathcal{I}_{k+1}$, we have $\partial_{\uparrow} y_{i}^{k+1}=x_{i}^{k}$.
\end{prop}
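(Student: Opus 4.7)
The plan proceeds in three parts: an orthogonal kernel--complement splitting in each degree, a reduction to a simultaneous orthogonal basis problem for two filtrations on a single space, and a transfinite construction of such a basis.

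First, using the acyclicity $\ker\partial_{\uparrow k} = \Img\partial_{\uparrow k+1}$ together with Corollary \ref{orthcomp}, I would pick, for each $k$, an $\ell_\uparrow$-orthogonal complement $F_k$ to $Z_k := \ker\partial_{\uparrow k}$ in $A_k$, so that $A_k = F_k \oplus Z_k$ is an $\ell_\uparrow$-orthogonal direct sum and $\partial_\uparrow|_{F_k} \colon F_k \to Z_{k-1}$ is a filtration-decreasing vector space isomorphism. Taking $\mathcal{I}_k$ to index a chosen basis $\{y^k_j\}_{j\in\mathcal{I}_k}$ of $F_k$ and setting $x^{k-1}_j := \partial_\uparrow y^k_j$, the proposition reduces to arranging that, for each $k$, the basis of $F_k$ be $\ell_\uparrow$-orthogonal \emph{and} that its image $\{x^{k-1}_j\}_{j\in\mathcal{I}_k}$ be $\ell_\uparrow$-orthogonal in $Z_{k-1}$; the combined family $\{x^{k-1}_j\}_{j\in\mathcal{I}_k} \cup \{y^{k-1}_j\}_{j\in\mathcal{I}_{k-1}}$ is then automatically an $\ell_\uparrow$-orthogonal basis of $A_{k-1}=Z_{k-1}\oplus F_{k-1}$, from the orthogonality of the direct sum decomposition established in Step~1.

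To construct such a matched basis on $F_k$, I would pull back the $\ell_\uparrow$-filtration on $Z_{k-1}$ via the bijection $\partial_\uparrow|_{F_k}$ to obtain a second filtration $\ell'_k(y) := \ell_\uparrow(\partial_\uparrow y)$ on $F_k$; the image of a basis $\{y^k_j\}$ of $F_k$ is $\ell_\uparrow$-orthogonal in $Z_{k-1}$ if and only if $\{y^k_j\}$ is itself $\ell'_k$-orthogonal in $F_k$. The task thus reduces to the following purely linear-algebraic lemma: a vector space $V$ with two filtrations $\ell,\ell'$, each satisfying the best approximation property, admits a basis that is simultaneously $\ell$- and $\ell'$-orthogonal. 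My plan for this lemma is a transfinite recursion over the well-ordered set $T := \ell(V\setminus\{0\})$ (well-ordered by Proposition \ref{wellbest}): construct nested simultaneously orthogonal bases $B^{\leq t}$ of each $V^{\leq t} := \{v \in V : \ell(v) \leq t\}$, taking unions at limit stages (orthogonality is finitary). At a successor stage $t$, Corollary \ref{orthcomp} applied to the single filtration $\ell'|_{V^{\leq t}}$ (which inherits the best approximation property since well-ordering is hereditary) gives an $\ell'$-orthogonal decomposition $V^{\leq t} = V^{<t} \oplus U_t$; pick an $\ell'$-orthogonal basis $B_t^{\mathrm{top}}$ of $U_t$ by Proposition \ref{orthbasis}, and set $B^{\leq t} := B^{<t} \cup B_t^{\mathrm{top}}$. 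Finally $B := \bigcup_{t\in T} B^{\leq t}$ is a basis of $V$, and applying the lemma to each $F_k$ completes the proof.

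The main obstacle will be verifying that $B^{\leq t}$ is $\ell$-orthogonal (its $\ell'$-orthogonality is manifest from the $\ell'$-orthogonality of the decomposition $V^{\leq t} = V^{<t} \oplus U_t$ and of $B^{<t}$ and $B_t^{\mathrm{top}}$). The key observation should be that every nonzero $u \in U_t$ satisfies $\ell(u) = t$ exactly: $u \in V^{\leq t}$ gives $\ell(u) \leq t$, while $u \notin V^{<t}$ (since $V^{<t} \cap U_t = \{0\}$) gives $\ell(u) \geq t$. Hence any nontrivial combination of elements of $B^{\leq t}$ with a nonzero contribution from $B_t^{\mathrm{top}}$ lies in $V^{\leq t} \setminus V^{<t}$ and thus has $\ell$-value exactly $t$, which matches the maximum of $\ell$ over the participating basis elements; combined with the inductive $\ell$-orthogonality of $B^{<t}$ and Remark \ref{strict}, this delivers $\ell$-orthogonality of $B^{\leq t}$ and completes the recursive step.
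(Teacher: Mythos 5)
Your proposal is correct and follows essentially the same route as the paper: the same orthogonal splitting $A_k=F_k\oplus\ker\partial_{\uparrow k}$, the same reduction to producing a basis of $F_k$ that is simultaneously orthogonal for $\ell_{\uparrow}$ and for the pulled-back filtration $y\mapsto\ell_{\uparrow}(\partial_{\uparrow}y)$, and the same transfinite recursion over the well-ordered set of filtration values with an orthogonal-complement extension at each step (your two-filtration lemma is an intrinsic repackaging of the paper's extend-in-the-target-and-pull-back step, and your observation that the new elements have $\ell$-value exactly $t$ plays the role of the paper's quotient argument). One small bookkeeping fix: since every $t\in\ell(V\setminus\{0\})$ satisfies $V^{<t}\subsetneq V^{\leq t}$, the step adjoining $B_t^{\mathrm{top}}$ is needed at \emph{every} stage, not only at successor stages --- at a stage with no immediate predecessor you first take $B^{<t}=\bigcup_{s<t}B^{\leq s}$ (a basis of $V^{<t}$, orthogonality being finitary) and then still adjoin $B_t^{\mathrm{top}}$; with that adjustment your argument goes through.
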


\begin{proof}
	Denote $B_k=\partial_{\uparrow}(A_{k+1})$, so that the hypothesis that $H_k(A_*)=0$ implies that also $B_k=\ker(\partial_{\uparrow}|_{A_k})$.  For each $k$ let $F_k$ be an $(\ell_{\uparrow}|_{A_k})$-orthogonal complement to $B_k$.  Then $\partial_{\uparrow}$ restricts to $F_{k+1}$ as a vector space isomorphism to $B_k$.  In order to prove the proposition it suffices to find, for arbitrary $k$, an $(\ell_{\uparrow}|_{F_{k+1}})$-orthogonal basis $\{y_{i}^{k+1}|i\in\mathcal{I}_{k+1}\}$ for $F_{k+1}$ whose image under $\partial_{\uparrow}$ is an $(\ell_{\uparrow}|_{B_k})$-orthogonal basis for $B_k$.  (Indeed, since $B_k$ and $F_k$ are $\ell_{\uparrow}$-orthogonal, the separate orthogonality of the two subsets $\{\partial_{\uparrow} y_{i}^{k+1}|i\in\mathcal{I}_{k+1}\}\subset B_k$ and $\{y_{j}^{k}|j\in \mathcal{I}_k\}\subset F_k$ would immediately imply the orthogonality of their union.)
	
	Proposition \ref{wellbest} implies that the set $\ell_{\uparrow}(A_{k+1}\setminus\{0\})$ is well-ordered.  We construct the desired basis using recursion indexed by the well-ordered set \[ J_{k+1}:=\ell_{\uparrow}(A_{k+1}\setminus\{0\})\cup\{\infty\} \] (where $\infty>\lambda$ for all $\lambda\in \ell_{\uparrow}(A_{k+1}\setminus\{0\})$).  For any $\lambda\in J_{k+1}$, denote \begin{align*} & F_{k+1}^{\leq \lambda}= \{y\in F_{k+1}|\ell_{\uparrow}(y)\leq \lambda\}\qquad  & F_{k+1}^{< \lambda}= \{y\in F_{k+1}|\ell_{\uparrow}(y)< \lambda\} \\ 
		& B_{k,\leq\lambda} =\partial_{\uparrow}(F_{k+1}^{\leq \lambda})  & B_{k,<\lambda}=\partial_{\uparrow}(F_{k+1}^{<\lambda})  \end{align*}
	Thus $\partial_{\uparrow}$ maps $F_{k+1}^{\leq\lambda}$ isomorphically to $B_{k,\leq \lambda}$ and $F_{k+1}^{<\lambda}$ isomorphically to $B_{k,<\lambda}$.
	As special cases, $F_{k+1}^{<\infty}=F_{k+1}$ and $B_{k,<\infty}=B_k$.
	
	Given $\lambda\in J_{k+1}$, as input to our recursive construction we will suppose that, for all $\mu\in J_{k+1}$ with $\mu<\lambda$, we have constructed an $(\ell|_{F_{k+1}^{< \mu}})$-orthogonal basis $S_{\mu}$ for $F_{k+1}^{< \mu}$ such that $\partial(S_{\mu})$ is also an $(\ell|_{B_{k,<\mu}})$-orthogonal basis for $B_{k,<\mu}$, and we suppose moreover that the various $S_{\mu}$ for $\mu<\lambda$ have the property that, if $\nu<\mu<\lambda$, then $S_{\nu}\subset S_{\mu}$.  To continue the recursion we need to extend this construction to the case $\mu=\lambda$.
	
	If $\lambda$ does not have an immediate predecessor in $J_{k+1}$ (\emph{i.e.}, if $\{\mu\in J_{k+1}|\mu<\lambda\}$ has no largest element), then we can simply take $S_{\lambda}=\cup_{\mu<\lambda}S_{\mu}$.  So now let us assume that $\lambda$ does have an immediate predecessor in $J_{k+1}$, denoted $\lambda^-$.  Then $F_{k+1}^{<\lambda}=F_{k+1}^{\leq \lambda^-}$ and $B_{k,<\lambda}=B_{k,\leq \lambda_-}$, so our task is to extend the basis $S_{\lambda^-}$ for $F_{k+1}^{<\lambda^-}$ to one for $F_{k+1}^{\leq \lambda^-}$, retaining the property that both the basis and its image under $\partial_{\uparrow}$ are $\ell_{\uparrow}$-orthogonal.
	
	Now $B_{k,\leq\lambda^-}$, being a subspace of $A_k$, satisfies the best approximation property, so by Proposition \ref{orthbasis} we may extend the $\ell_{\uparrow}$-orthogonal basis $\partial_{\uparrow}(S_{\lambda^-})$ for $B_{k,<\lambda^-}$ to an $\ell_{\uparrow}$-orthogonal basis $\partial_{\uparrow}(S_{\lambda^-})\cup T$ for $B_{k,\leq \lambda^-}$.  We claim that $S_{\lambda}:=S_{\lambda^-}\cup (\partial_{\uparrow}|_{F_{k+1}})^{-1}(T)$ satisfies the required properties.  That $\partial_{\uparrow}(S_{\lambda})$ is an $\ell_{\uparrow}$-orthogonal basis $B_{k,\leq\lambda^-}$ is immediate from the construction; we just need to check that $S_{\lambda}$ is an $\ell_{\uparrow}$-orthogonal basis for $F_{k+1}^{\leq\lambda^-}$.  
	
	The map $\partial_{\uparrow}|_{F_{k+1}}$ sends $F_{k+1}^{\leq\lambda^-}$ and $F_{k+1}^{<\lambda^-}$ isomorphically to, respectively, to $B_{k,\leq\lambda^-}$ and $B_{k,<\lambda^-}$; hence $\partial_{\uparrow}|_{F_{k+1}}$ descends to a vector space isomorphism $\frac{F_{k+1}^{\leq\lambda^-}}{F_{k+1}^{<\lambda^-}}\to \frac{B_{k,\leq\lambda^-}}{B_{k,<\lambda^-}}$.  Also, the subset $T\subset B_{k,\leq \lambda^-}$ projects to a basis for the quotient $\frac{B_{k,\leq\lambda^-}}{B_{k,<\lambda^-}}$.  Hence $(\partial_{\uparrow}|_{F_{k+1}})^{-1}(T)$ projects to a basis for $\frac{F_{k+1}^{\leq\lambda^-}}{F_{k+1}^{<\lambda^-}}$.  
	
	Quite generally, if $(V,\ell_V)$ is a filtered vector space, if $t\in \R$, and if $Z\subset V$ is a subset of $\ell_{V}^{-1}(\{t\})$, it is easy to check that $Z$ is $\ell_V$-orthogonal iff it descends to a linearly independent set in $\frac{V^{\leq t}}{V^{<t}}$.  Moreover, if this holds, then $\mathrm{span}(Z)$ and $V^{<t}$ are $\ell_V$-orthogonal subspaces (as follows from Remark \ref{strict}).  Applying this to our situation, since $S_{\lambda^-}$ is an $\ell_{\uparrow}$-orthogonal basis for $F_{k+1}^{<\lambda'}$ and $S_{\lambda^-}\cup  (\partial_{\uparrow}|_{F_{k+1}})^{-1}(T)$ spans $F_{k+1}^{\leq\lambda^-}$ with $ (\partial_{\uparrow}|_{F_{k+1}})^{-1}(T)$ projecting to a basis of $\frac{F_{k+1}^{\leq\lambda^-}}{F_{k+1}^{<\lambda^-}}$, it follows that $S_{\lambda^-}\cup  (\partial_{\uparrow}|_{F_{k+1}})^{-1}(T)$ is an $\ell_{\uparrow}$-orthogonal basis of $F_{k+1}^{\leq \lambda^-}$, \emph{i.e.} of $F_{k+1}^{<\lambda}$.  This completes the recursive step in the case that $\lambda$ has an immediate predecessor.
	
	Defining the bases $S_{\lambda}$ of $F_{k+1}^{<\lambda}$ for $\lambda\in J_{k+1}$ recursively in this fashion, we finally obtain a basis $S_{\infty}$ for $F_{k+1}^{<\infty}$, \emph{i.e.} for all of $F_{k+1}$, which satisfies the required properties.
\end{proof}

\begin{dfn}\label{elemdfn} Given a field $\kappa$, define the \textbf{elementary ascending chain complexes} $\mathcal{E}_k(a,b)_{\uparrow}$ and $\mathcal{E}_k(c,\infty)_{\uparrow}$ over $\kappa$ as follows: \begin{enumerate}\item
		If $k\in \Z$ and $a,b\in\R$ with $a\leq b$,  $\mathcal{E}_k(a,b)_{\uparrow}$ is the ascending chain complex $(C_{\uparrow *},\partial_{\uparrow},\ell_{\uparrow})$ having: \begin{itemize} \item $C_{\uparrow k}=C_{\uparrow k+1}=\kappa$, and $C_{\uparrow j}=0$ for $j\notin\{k,k+1\}$, \item $\partial_{\uparrow j}\co C_{\uparrow j}\to C_{\uparrow j-1}$ equal to zero for $j\neq k+1$, and equal to the identity map $\kappa\to\kappa$ for $j=k+1$;
			\item $\ell_{\uparrow}|_{C_{\uparrow k}\setminus\{0\}}\equiv a$; and $\ell_{\uparrow}|_{C_{\uparrow k+1}\setminus\{0\}}\equiv b$.\end{itemize}	
		\item If $k\in \Z$ and $c\in \R$, $\mathcal{E}_k(c,\infty)_{\uparrow}$ is the ascending  chain complex $(C_{\uparrow *},\partial,\ell)$ over $\R$ having:
		\begin{itemize} \item $C_{\uparrow k}=\kappa$, and $C_{\uparrow j}=0$ for $j\neq k$;
			\item $\partial_{\uparrow}=0$; and
			\item $\ell_{\uparrow}|_{C_{\uparrow k}\setminus\{0\}}\equiv c$.\end{itemize}\end{enumerate}
\end{dfn}

\begin{cor}\label{ascsplit}
	Let $(C_{\uparrow *},\partial_{\uparrow},\ell_{\uparrow})$ be a filtered chain complex with the property that, for all $k$, $(C_{\uparrow k},\ell_{\uparrow}|_{C_{\uparrow k}})$ satisfies the best approximation property (equivalently, has $\ell_{\uparrow}(C_{\uparrow k}\setminus\{0\})$ well-ordered).  Then $(C_{\uparrow *},\partial_{\uparrow},\ell_{\uparrow})$ is isomorphic in the category of ascending chain complexes
	to a direct sum of the form \begin{equation}\label{ascspliteqn} \left(\bigoplus_{i\in I} \mathcal{E}_{k_i}(a_i,b_i)_{\uparrow}\right)\oplus\left(\bigoplus_{j\in J} \mathcal{E}_{l_j}(c_j,\infty)_{\uparrow} \right) \end{equation} for suitable index sets $I$ and $J$, integers $k_i,l_j$, and real numbers $a_i,b_i,c_j$ with $a_i\leq b_i$.  Moreover, the subcomplex $\mathcal{H}_*=\bigoplus_{j\in J} \mathcal{E}_{l_j}(c_j,\infty)_{\uparrow}$ is isomorphic to the homology $(H_*(C_{\uparrow *}),0,\rho_{\uparrow})$, viewed as an ascending chain complex with zero differential and filtration function given by the spectral invariant function $\rho_{\uparrow}$ of (\ref{specfn}).
\end{cor}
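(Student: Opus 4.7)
The plan is to assemble the corollary directly from Propositions \ref{ahdecomp}, \ref{orthbasis}, and \ref{acyclic}, which together essentially do all the work: one decomposes $C_{\uparrow *}$ into an acyclic part and a ``harmonic'' part, then decomposes each part into one- or two-dimensional summands.

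First, I would apply Proposition \ref{ahdecomp} to write $C_{\uparrow *} = A_* \oplus \mathcal{H}_*$ as filtered chain complexes, where $A_*$ is acyclic, $\mathcal{H}_*$ has zero differential, and the canonical projection restricts to a filtered isomorphism $(\mathcal{H}_k, \ell_{\uparrow}|_{\mathcal{H}_k}) \to (H_k(C_{\uparrow *}), \rho_{\uparrow}|_{H_k(C_{\uparrow *})})$. Note that $A_k$ and $\mathcal{H}_k$ each inherit the best approximation property from $C_{\uparrow k}$ (by Proposition \ref{wellbest}, since subsets of well-ordered sets are well-ordered).

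Next, for each $k$ I would choose, via Proposition \ref{orthbasis}, an $\ell_{\uparrow}|_{\mathcal{H}_k}$-orthogonal basis of $\mathcal{H}_k$, say $\{h_{j}^{k}\}_{j \in J_k}$. Each one-dimensional span $\kappa \cdot h_{j}^{k}$ is a subcomplex of $\mathcal{H}_*$ with zero differential, and the assignment $h_{j}^{k} \mapsto 1 \in \kappa$ gives an isomorphism of ascending chain complexes with $\mathcal{E}_k(\ell_{\uparrow}(h_{j}^{k}), \infty)_{\uparrow}$. The orthogonality of the basis is exactly the statement that $\mathcal{H}_*$ is, as a filtered chain complex, the direct sum of these one-dimensional subcomplexes.

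Similarly, for $A_*$ I would apply Proposition \ref{acyclic} to obtain orthogonal bases $\{x_{i}^{k} \mid i \in \mathcal{I}_{k+1}\} \cup \{y_{j}^{k} \mid j \in \mathcal{I}_k\}$ of each $A_k$ with $\partial_{\uparrow} y_{i}^{k+1} = x_{i}^{k}$. For each $i \in \mathcal{I}_{k+1}$, the pair $\{x_{i}^{k}, y_{i}^{k+1}\}$ spans a two-dimensional subcomplex of $A_*$ which, via $x_{i}^{k} \mapsto 1 \in C_{\uparrow k}$ and $y_{i}^{k+1} \mapsto 1 \in C_{\uparrow k+1}$, is isomorphic to $\mathcal{E}_k(a_i, b_i)_{\uparrow}$ with $a_i = \ell_{\uparrow}(x_{i}^{k})$ and $b_i = \ell_{\uparrow}(y_{i}^{k+1})$; the inequality $a_i \leq b_i$ follows from $\ell_{\uparrow}(\partial_{\uparrow} y_{i}^{k+1}) \leq \ell_{\uparrow}(y_{i}^{k+1})$. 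Orthogonality of the full basis of $A_*$ again implies that $A_*$, as a filtered chain complex, is the direct sum of these two-dimensional subcomplexes. Combining with the decomposition of $\mathcal{H}_*$ gives the direct sum (\ref{ascspliteqn}), and the final sentence is immediate from the filtered isomorphism $\mathcal{H}_* \cong (H_*(C_{\uparrow *}), 0, \rho_{\uparrow})$ supplied by Proposition \ref{ahdecomp}.

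The only conceptual point to verify carefully is that a basis of a filtered vector space being $\ell$-orthogonal is equivalent to the space being the filtered direct sum of the spans of its basis elements; this is built into the definition of orthogonality via (\ref{orthc}). I do not anticipate a real obstacle here: the heavy lifting (existence of the Hodge-type splitting, and the recursive well-ordered construction of a compatible orthogonal basis of the acyclic part) has already been done in the preceding propositions, so the proof of the corollary amounts to translating bases into summands.
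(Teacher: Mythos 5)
Your proposal is correct and follows essentially the same route as the paper's proof: decompose via Proposition \ref{ahdecomp} into the acyclic part $A_*$ and the zero-differential part $\mathcal{H}_*$, split $A_*$ into two-dimensional summands using the bases from Proposition \ref{acyclic}, and split $\mathcal{H}_*$ into one-dimensional summands using the orthogonal bases from Proposition \ref{orthbasis}. Your added observations (that $a_i\leq b_i$ follows from $\ell_{\uparrow}(\partial_{\uparrow}y)\leq\ell_{\uparrow}(y)$, and that orthogonality of a basis is exactly the filtered direct-sum statement) are correct and only make explicit what the paper leaves implicit.
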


\begin{proof}
	Proposition \ref{ahdecomp} gives a direct sum decomposition of filtered chain complexes $(C_{\uparrow *},\partial_{\uparrow},\ell_{\uparrow})=A_*\oplus \mathcal{H}_*$ where $A_*$ has trivial homology, and $\mathcal{H}_*$ has zero differential and is isomorphic to $(H_*(C_{\uparrow*}),0,\rho_{\uparrow})$.  Proposition \ref{acyclic} shows that $A_*$ decomposes up to isomorphism as a direct sum $\bigoplus_{i\in I} \mathcal{E}_{k_i}(a_i,b_i)_{\uparrow}$.  By taking  $\ell_{\uparrow}|_{\mathcal{H}_k}$ orthogonal bases for the filtered vector spaces $(\mathcal{H}_k,\ell_{\uparrow}|_{\mathcal{H}_k})$ (provided by Proposition \ref{orthbasis}), we see that $\mathcal{H}_*$ (with its zero differential) decomposes up to isomorphism as a direct sum of the form $\bigoplus_{j\in J} \mathcal{E}_{l_j}(c_j,\infty)_{\uparrow}$   
\end{proof}

\subsection{Filtered cospans}\label{cospan-decomp}

Recall that a filtered cospan consists of the data of:\begin{itemize}
	\item an ascending chain complex (in the sense of Section \ref{ascdef}) $\mathcal{C}_{\uparrow}=(C_{\uparrow *},\partial_{\uparrow},\ell_{\uparrow})$; 
	\item a descending chain complex, \emph{i.e.}, a triple $\mathcal{C}^{\downarrow}=(C^{\downarrow}_{*},\partial^{\downarrow},\ell^{\downarrow})$ such that $\overline{\mathcal{C}^{\downarrow}}:=(C^{\downarrow}_{*},\partial^{\downarrow},-\ell^{\downarrow})$ is an ascending chain complex;
	\item another (unfiltered) chain complex $(D_*,\partial_D)$, and chain maps $\psi_{\uparrow}\co C_{\uparrow *}\to D_*$ and $\psi^{\downarrow}\co C^{\downarrow}_{*}\to D_*$.\end{itemize}

In general, if $\mathcal{C}=(C_*,\partial,\ell)$ is an ascending (resp. descending) chain complex, we let $\overline{\mathcal{C}}=(C_*,\partial,-\ell)$ the descending (resp. ascending) chain complex obtained by negating the filtration function $\ell$.  As a special case of this, for $\infty>a\geq b\geq -\infty$, we write \[ \mathcal{E}_{k}(a,b)^{\downarrow}=\overline{\mathcal{E}_k(-a,-b)_{\uparrow}},\] so if $b>-\infty$ then $\mathcal{E}_{k}(a,b)^{\downarrow}$ has generators $x$ and $y$ in degrees $k$ and $k+1$, with respective filtration levels $a$ and  $b$, such that the boundary of $y$ is $x$. Descending chain complexes obviously form a category with morphisms $\mathcal{C}^{\downarrow}_{1}\to \mathcal{C}^{\downarrow}_{2}$ coinciding (set-theoretically as maps) with morphisms of ascending chain complexes $\overline{\mathcal{C}^{\downarrow}_{1}}\to\overline{\mathcal{C}^{\downarrow}_{2}}$.  The notion of a direct sum of ascending chain complexes then carries over via the conjugation operation $\mathcal{C}\leftrightarrow\bar{\mathcal{C}}$ to a notion of direct sum of descending chain complexes, and  Corollary \ref{ascsplit} implies that if $\mathcal{C}^{\downarrow}=(C^{\downarrow}_{*},\partial^{\downarrow},\ell^{\downarrow})$ has the property that, for each $k$, every subset of $\ell^{\downarrow}(C^{\downarrow}_{k}\setminus\{0\})$ has a greatest element, then $\mathcal{C}^{\downarrow}$ splits as a direct sum of the form \begin{equation}\label{descsplit} \mathcal{C}^{\downarrow}=\left(\bigoplus_{i\in I} \mathcal{E}_{k_i}(a_i,b_i)^{\downarrow}\right)\oplus\left(\bigoplus_{j\in J} \mathcal{E}_{l_j}(c_j,-\infty)^{\downarrow} \right).\end{equation}

The appropriate version of the spectral invariant function (\ref{specfn}) for a descending chain complex $\mathcal{C}^{\downarrow}$ is that given by negating the spectral invariant function $\rho_{\uparrow}$ associated to $\overline{\mathcal{C}^{\downarrow}}$, \emph{i.e.} it is the map $\rho^{\downarrow}\co H_*(C^{\downarrow}_{*})\to\R\cup\{\infty\}$ given by \begin{equation}\label{specdown} \rho^{\downarrow}(h)=\sup\{\ell(c)|c\in\ker\partial^{\downarrow},\,[c]=h\}.\end{equation} 

We introduce some terminology for certain types of filtered cospans:
\begin{dfn}\label{admdef}
	A filtered cospan $\mathcal{C}=\left((C_{\uparrow *},\partial_{\uparrow}^C,\ell_{\uparrow}^C),(C^{\downarrow}_{*},\partial^{\downarrow}_C,\ell^{\downarrow}_C),(D_*,\partial_D),\psi_{\uparrow},\psi^{\downarrow}\right)$ is said to be:\begin{itemize}\item[(i)] \textbf{admissible} if, for every $k\in \Z$, the images $\ell_{\uparrow}^{C}(C_{\uparrow k}\setminus \{0\})$ and $(-\ell^{\downarrow}_{C})(C^{\downarrow}_{k}\setminus\{0\})$ are well-ordered subsets of $\mathbb{R}$;
		\item[(ii)] \textbf{acyclic} if the homologies $H_*(C_{\uparrow *})$, $H_*(C^{\downarrow}_{*}),$ and $H_*(D_*)$ are all zero.
		\item[(iii)] \textbf{perfect} if the differentials $\partial_{\uparrow}^{C},\partial^{\downarrow}_{C},$ and $\partial_D$ are all zero. \end{itemize}
\end{dfn}

Thus for an admissible filtered cospan we have direct sum decompositions of both the ascending chain complex $(C_{\uparrow *},\partial_{\uparrow}^C,\ell_{\uparrow}^C)$ as in (\ref{ascspliteqn}) and the descending chain complex $(C^{\downarrow}_{*},\partial^{\downarrow}_C,\ell^{\downarrow}_C)$ as in (\ref{descsplit}).  If the filtered cospan is acyclic then only summands of form $\mathcal{E}_{k}(a,b)_{\uparrow}$ or $\mathcal{E}_k(a,b)^{\downarrow}$ will appear in the decompositions; if instead the filtered cospan is perfect then the only summands are of form $\mathcal{E}_k(c,\infty)_{\uparrow}$ or $\mathcal{E}_k(c,-\infty)^{\downarrow}$.

Direct sums in the category $\mathsf{CO}(\kappa)$ of filtered cospans are obtained in the obvious way, assembling the direct sums of their constituent parts. 

\begin{prop}\label{coah} 
	Any admissible filtered cospan $\mathcal{C}=\left((C_{\uparrow *},\partial_{\uparrow}^C,\ell_{\uparrow}^C),(C^{\downarrow}_{*},\partial^{\downarrow}_C,\ell^{\downarrow}_C),(D_*,\partial_D),\psi_{\uparrow},\psi^{\downarrow}\right)$ with coefficients in a field $\kappa$ is isomorphic in $\mathsf{HCO}(\kappa)$ to the direct sum of an acyclic filtered cospan $\mathcal{A}_{\mathcal{C}}$ and a perfect filtered cospan $\mathcal{H}_{\mathcal{C}}$.  Moreover:\begin{itemize}
		\item $\mathcal{A}_{\mathcal{C}}$ may be taken to be of the form $((A_{\uparrow *},\partial_{\uparrow}^{C}|_{A_{\uparrow *}},\ell_{\uparrow}^{C}|_{A_{\uparrow *}}),(A^{\downarrow}_{*},\partial^{\downarrow}_{C}|_{A^{\downarrow}_{*}},\ell^{\downarrow}_{C}|_{A^{\downarrow}_{*}}),0,0,0)$ for a suitable subcomplexes $A_{\uparrow *}$ of $C_{\uparrow *}$ and $A^{\downarrow}_{*}$ of $C^{\downarrow}_{*}$ each having trivial homology (with the remaining, unfiltered chain complex equal to the zero complex).
		\item $\mathcal{H}_{\mathcal{C}}$ can be taken to be of the form $\left((H_*(C_{\uparrow *}),0,\rho_{\uparrow}),(H_*(C^{\downarrow}_{*}),0,\rho^{\downarrow}),(H_*(D_*),0),\psi_{\uparrow *},\psi^{\downarrow}_{*}\right)$, where $\rho_{\uparrow}$ and $\rho^{\downarrow}$ are the spectral invariant functions of (\ref{specfn} and (\ref{specdown}), and $\psi_{\uparrow *}$ and $\psi^{\downarrow}_{*}$ are the maps induced by $\psi_{\uparrow}$ and $\psi^{\downarrow}$ on homology. \end{itemize}
\end{prop}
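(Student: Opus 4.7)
The plan is to decompose each of the three chain complexes appearing in $\mathcal{C}$ into an acyclic piece and a zero-differential piece identified with the corresponding homology, and then apply Corollary~\ref{hcoiso} to identify $\mathcal{C}$ with the reassembled $\mathcal{A}_{\mathcal{C}} \oplus \mathcal{H}_{\mathcal{C}}$. The admissibility hypothesis feeds directly into Proposition~\ref{ahdecomp}, producing $\ell_{\uparrow}$-orthogonal subcomplexes $A_{\uparrow *}, \mathcal{H}^{\uparrow}_{*} \leq C_{\uparrow *}$ with $A_{\uparrow *}$ acyclic and $\mathcal{H}^{\uparrow}_{*}$ (with zero differential) identified with $H_*(C_{\uparrow *})$ via the canonical projection. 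Applying the ascending version of Proposition~\ref{ahdecomp} to $\overline{\mathcal{C}^{\downarrow}}$ yields a matching decomposition $C^{\downarrow}_{*} = A^{\downarrow}_{*} \oplus \mathcal{H}^{\downarrow}_{*}$, and the unfiltered case (take the filtration function to be identically $0$) gives $D_* = A^D_* \oplus \mathcal{H}^D_*$ with $\mathcal{H}^D_*$ a space of cycles projecting isomorphically onto $H_*(D_*)$. Write $\pi\co D_* \to \mathcal{H}^D_*$ for the projection along $A^D_*$, which is both a chain map and a chain homotopy equivalence.

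To invoke Corollary~\ref{hcoiso}, I take $\alpha_{\uparrow} = \mathrm{id}_{C_{\uparrow *}}$ and $\alpha^{\downarrow} = \mathrm{id}_{C^{\downarrow}_{*}}$ (filtered isomorphisms, viewing $C_{\uparrow *}$ as the direct sum $A_{\uparrow *} \oplus \mathcal{H}^{\uparrow}_{*}$ and similarly for $C^\downarrow_*$) and $\alpha = \pi$. The target cospan $\mathcal{A}_\mathcal{C} \oplus \mathcal{H}_\mathcal{C}$ has $D$-complex $0 \oplus H_*(D_*) = H_*(D_*)$, and its cospan maps vanish on the acyclic summands while restricting to $\bar\psi_\uparrow, \bar\psi^\downarrow$ on the homology summands. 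So the chain-homotopy commutativity of the two parallelograms in Corollary~\ref{hcoiso} reduces to producing a chain homotopy between $\pi\circ\psi_\uparrow\co C_{\uparrow *}\to H_*(D_*)$ and the map that is zero on $A_{\uparrow *}$ and $\bar\psi_\uparrow$ on $\mathcal{H}^{\uparrow}_{*}$, and symmetrically for $\psi^\downarrow$.

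The verification proceeds summand by summand. On $\mathcal{H}^{\uparrow}_{*}$ every element $c$ is already a cycle, so $\psi_\uparrow(c)$ is a cycle representing $\bar\psi_\uparrow([c])$, and $\pi(\psi_\uparrow(c)) \in \mathcal{H}^D_*$ is its unique representative under the identification $\mathcal{H}^D_*\cong H_*(D_*)$; thus $\pi\psi_\uparrow|_{\mathcal{H}^{\uparrow}_{*}}$ coincides with $\bar\psi_\uparrow$ on the nose. The restriction $\pi\psi_\uparrow|_{A_{\uparrow *}}$ must then be shown to be null-homotopic. By Corollary~\ref{ascsplit}, $A_{\uparrow *}$ decomposes as a direct sum of elementary two-term acyclic complexes $\mathcal{E}_{k_i}(a_i,b_i)_\uparrow$; for any chain map $f$ from such a two-term complex $\kappa \xrightarrow{1} \kappa$ (in degrees $k+1,k$, with generators $y,x$) into an arbitrary target, the formula $K_k(x) = f_{k+1}(y)$, with $K_j = 0$ in all other degrees, gives an explicit null-homotopy. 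These summand-wise null-homotopies assemble on each input into a finite sum and therefore into a null-homotopy of $\pi\psi_\uparrow|_{A_{\uparrow *}}$ on the whole of $A_{\uparrow *}$. The same argument handles $\psi^\downarrow$, and Corollary~\ref{hcoiso} then produces the desired isomorphism $\mathcal{C}\cong \mathcal{A}_\mathcal{C} \oplus \mathcal{H}_\mathcal{C}$ in $\mathsf{HCO}(\kappa)$. The main obstacle is the bookkeeping needed to recognize $\pi\psi_\uparrow|_{\mathcal{H}^{\uparrow}_{*}}$ as realizing the induced map on homology under the chosen identifications; once this is in place together with the elementary two-term null-homotopies, the remainder is straightforward assembly.
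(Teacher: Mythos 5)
Your proof is correct and follows essentially the same route as the paper's: decompose $C_{\uparrow *}$, $C^{\downarrow}_{*}$, and $D_*$ via Proposition~\ref{ahdecomp} (with the trivial filtration on $D_*$), take the comparison maps to be the evident filtered isomorphisms on the ascending/descending complexes and the projection $\pi$ on $D_*$, and conclude with Corollary~\ref{hcoiso}. The only cosmetic difference is how the required chain homotopies are produced---the paper uses an explicit homotopy from $j_{\uparrow}\circ\pi_{\uparrow}$ to the identity, whereas you null-homotope $\pi\circ\psi_{\uparrow}$ on the acyclic summand via the two-term decomposition of Corollary~\ref{ascsplit}---and both mechanisms are valid.
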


\begin{proof}
	Decompose the ascending chain complex $(C_{\uparrow *},\partial_{\uparrow}^{C},\ell_{\uparrow}^{C})$ as in Proposition \ref{ahdecomp}, so that $C_{\uparrow *}$ is a direct sum (as ascending chain complexes) of the acyclic subcomplex $A_{\uparrow *}$ and a subcomplex $\mathcal{H}_{\uparrow *}$ on which $\partial_{\uparrow}^{C}$ vanishes, with the filtered vector space $(\mathcal{H}_{\uparrow *},\ell_{\uparrow}^{C}|_{\mathcal{H}_{\uparrow *}})$ identified via the projection to homology with $(H_*(C_{\uparrow}),\rho_{\uparrow})$.  Denote by $\pi_{\uparrow}\co C_{\uparrow *}\to H_*(C_{\uparrow *})$ the composition of the projection $C_{\uparrow *}\to \mathcal{H}_{\uparrow *}$ associated to the direct sum decomposition $C_{\uparrow *}=A_{\uparrow *}\oplus \mathcal{H}_{\uparrow *}$ followed by the isomorphism $\mathcal{H}_{\uparrow *}\to H_*(C_{\uparrow *})$ given by passage to homology. Also define $j_{\uparrow}\co H_{*}(C_{\uparrow *})\to C_{\uparrow *}$ to be the composition of the inverse of the aforementioned isomorphism $\mathcal{H}_{\uparrow *}\to H_*(C_{\uparrow *})$  with the inclusion of $\mathcal{H}_{\uparrow *}$ into $C_{\uparrow *}$.
	
	The unfiltered chain complex $(D_*,\partial_D)$ can likewise be decomposed as a direct sum of an acyclic subcomplex $A_*^{D}$ and a subcomplex $H_*^D$ on which $\partial_D$ vanishes, with the projection to homology giving an isomorphism $H_*^D\cong H_*(D_*)$.\footnote{This is a standard consequence of the splitting of various short exact sequences of vector spaces over the field $\kappa$; it can also be regarded as a special case of Proposition \ref{ahdecomp} if we regard $D_*$ as an ascending chain complex with the trivial filtration function $\ell$ that evaluates as zero on all nonzero elements of $D_*$.}  As in the previous paragraph, we obtain a projection $\pi^D\co D_*\to H_*(D_*)$ and an inclusion $j^D\co H_*(D_*)\to D_*$.  
	
	The maps $\pi_{\uparrow},\pi^D,j_{\uparrow},j^D$ are all chain maps by virtue of the differentials on $H_*(C_{\uparrow *})$ and $H_*(D_*)$ being zero.  Evidently $\pi_{\uparrow}\circ j_{\uparrow}$ and $\pi^D\circ j^D$ are the identities on $H_*(C_{\uparrow *})$ and $H_*(D_*)$, respectively.  Meanwhile, $j_{\uparrow}\circ\pi_{\uparrow}$ and $j^D\circ \pi^D$ are the projections from the complexes $C_{\uparrow *}$ and $D_*$ to their direct summands $\mathcal{H}_{\uparrow *}$ and $H^D_*$. The maps $j_{\uparrow}\circ\pi_{\uparrow}$ and $j^D\circ \pi^D$ are chain homotopic to the respective identities (in the unfiltered sense).  Indeed, as in the proof of Proposition \ref{ahdecomp} one can let $F_{\uparrow k}$ be a vector space complement within $A_{\uparrow k}$ to $B_{\uparrow k}:=\Img(\partial^{C}_{\uparrow}|_{C_{\uparrow\, k+1}})$, so that $\partial_{\uparrow}^{C}$ maps $F_{\uparrow\,k+1}$ isomorphically to $B_{\uparrow k}$.  One obtains a map $K\co C_{\uparrow *}\to F_{\uparrow\,*+1}\subset C_{\uparrow *+1}$ given by first projecting $C_{\uparrow k}$ to $B_{\uparrow k}$ with respect to the direct sum decomposition $C_{\uparrow k}=F_{\uparrow k}\oplus B_{\uparrow k}\oplus \mathcal{H}_{\uparrow k}$ and then sending elements of $B_{\uparrow k}$ to their unique preimages in $F_{\uparrow\,k+1}$ under $\partial_{\uparrow}^{C}$.  This map $K$ gives a chain homotopy between $j_{\uparrow}\circ\pi_{\uparrow}$ and the identity on $C_{\uparrow *}$.  Similar reasoning applies to show that  $j^D\circ \pi^D$ is chain homotopic to the identity on $D_*$.
	
	We then have a commutative diagram \begin{equation}\label{split1}  \xymatrix{ D_* \ar[rr]^{\pi^D} & & H_*(D_*) \\ C_{\uparrow *} \ar[u]^{\psi_{\uparrow}\circ j_{\uparrow}\circ\pi_{\uparrow}} \ar[rr]_<<<<<<<<<{{\tiny \left[\begin{array}{c} 1_{C_{\uparrow *}}-j_{\uparrow}\circ\pi_{\uparrow} \\ \pi_{\uparrow}\end{array}\right] }} & & A_{\uparrow *}\oplus H_*(C_{\uparrow *}) \ar[u]_{{\tiny \left[\begin{array}{cc} 0 & \psi_{\uparrow *}\end{array}\right]} }    } \end{equation}  Indeed, either composition in (\ref{split1}) sends all elements of $A_{\uparrow *}$ to zero and sends an element $h\in \mathcal{H}_{\uparrow *}$ (which necessarily lies in $\ker(\partial_{\uparrow}^{C})$) to the homology class in $H_*(D_*)$ of $\psi_{\uparrow}h$.  Moreover, $\pi^D$ is a homotopy equivalence (using the zero differential on $H_*(D_*)$), and the bottom map in (\ref{split1}) is a filtered homotopy equivalence provided that $A_{\uparrow *}\oplus H_*(C_{\uparrow *})$ is endowed with the ascending chain complex structure given by taking the direct sum of the structure that $A_{\uparrow *}$ inherits as a subcomplex of $C_{\uparrow *}$ with the structure on $H_*(C_{\uparrow *})$ having zero differential and having $\rho_{\uparrow}$ as the filtration function.
	
	Since $j_{\uparrow}\circ\pi_{\uparrow}$ is chain homotopic to the identity, it follows that the diagram \begin{equation}\label{split2}  \xymatrix{
			D_* \ar[rr]^{\pi^D} & & H_*(D_*) \\ C_{\uparrow *} \ar[u]^{\psi_{\uparrow}} \ar[rr]_<<<<<<<<<{{\tiny \left[\begin{array}{c} 1_{C_{\uparrow *}}-j_{\uparrow}\circ\pi_{\uparrow} \\ \pi_{\uparrow}\end{array}\right] }} & & A_{\uparrow *}\oplus H_*(C_{\uparrow *}) \ar[u]_{{\tiny \left[\begin{array}{cc} 0 & \psi_{\uparrow *}\end{array}\right]} }}   \end{equation}
	commutes up to homotopy.
	
	By applying the same reasoning as above to the conjugate $\overline{(C^{\downarrow}_{*},\partial^{\downarrow}_{C},\ell^{\downarrow}_{C})}=(C^{\downarrow}_{*},\partial^{\downarrow}_{C},-\ell^{\downarrow}_{C})$ of the descending chain complex $(C^{\downarrow}_{*},\partial^{\downarrow}_{C},\ell^{\downarrow}_{C})$, we obtain a homotopy-commutative diagram \begin{equation}\label{split3}\xymatrix{  C^{\downarrow}_{*} \ar[d]_{\psi^{\downarrow}} \ar[rr]^<<<<<<<<<{{\tiny \left[\begin{array}{c} 1_{C^{\downarrow}_{ *}}-j^{\downarrow}\circ\pi^{\downarrow} \\ \pi^{\downarrow}\end{array}\right] }} & & A^{\downarrow}_{*}\oplus H_*(C^{\downarrow}_{ *}) \ar[d]^{{\tiny \left[\begin{array}{cc} 0 & \psi^{\downarrow}_{ *}\end{array}\right]} }  \\    D_* \ar[rr]_{\pi^D} & & H_*(D_*)   }  \end{equation}
	whose top row is a filtered homotopy equivalence of descending chain complexes; here $A^{\downarrow}_{*}$ is an acyclic subcomplex of $C^{\downarrow}_{*}$ obtained by applying Proposition \ref{ahdecomp} to $\overline{(C^{\downarrow}_{*},\partial^{\downarrow}_{C},\ell^{\downarrow}_{C})}$, and $H_*(C^{\downarrow}_{*})$ is regarded as a descending chain complex with zero differential and filtration function given by $\rho^{\downarrow}$.
	
	Putting (\ref{split2}) and (\ref{split3}) together, we see from Corollary \ref{hcoiso} that our original filtered cospan $\mathcal{C}$ is isomorphic in $\mathsf{HCO}(\kappa)$ to \[ \xymatrix{ A^{\downarrow}_{*}\oplus H_*(C^{\downarrow}_{*}) \ar[rd]^{{\tiny \left[\begin{array}{cc} 0 & \psi^{\downarrow}_{ *}\end{array}\right]}} & \\ & H_*(D_*) \\  A_{\uparrow *}\oplus H_*(C_{\uparrow *}) \ar[ur]_{{\tiny \left[\begin{array}{cc} 0 & \psi_{\uparrow *}\end{array}\right]} } &    } \] or equivalently to the direct sum in the category of filtered cospans of  \[ \xymatrix{ A^{\downarrow}_{*} \ar[rd] & \\ & 0 \\ A_{\uparrow *} \ar[ru] & } \quad \xymatrix{ \\  \mbox{ and  } \\ } \quad  \xymatrix{H_*(C^{\downarrow}_{ *})\ar[rd]^{\psi^{\downarrow}_{*}} & \\ & H_*(D_*). \\ H_*(C_{\uparrow *})\ar[ru]_{\psi_{\uparrow *}} &       }     \]
	
\end{proof}

The summands $\mathcal{A}_{\mathcal{C}}$ and $\mathcal{H}_{\mathcal{C}}$ of Proposition \ref{coah} admit further decompositions into indecomposables.  Since an acyclic chain complex over a field is always homotopy equivalent to the zero complex, any acyclic filtered cospan $\mathcal{A}$ is (using Corollary \ref{hcoiso}) isomorphic in $\mathsf{HCO}(\kappa)$ to one of the form \[ \xymatrix{A^{\downarrow}_{*}\ar[rd] & \\ & 0 \\ A_{\uparrow *}\ar[ru] & } \] Assuming $\mathcal{A}$ to be admissible, Corollary \ref{ascsplit} gives decompositions $A^{\downarrow}_{*}\cong \bigoplus_{i\in I^{\downarrow}} \mathcal{E}_{k_i^{\downarrow}}(a_i^{\downarrow},b_{i}^{\downarrow})^{\downarrow}$ and $A_{\uparrow *}\cong \bigoplus_{i\in I_{\uparrow}} \mathcal{E}_{k_{i \uparrow}}(a_{i\uparrow},b_{i\uparrow})_{\uparrow}$ in the categories, of, respectively, descending and ascending chain complexes.  Here $a_{i}^{\downarrow}\geq b_{i}^{\downarrow}>-\infty$ and $a_{i\uparrow}\leq b_{i\uparrow}<\infty$. (The $b_{i}^{\downarrow}$ and $b_{i\uparrow}$ are finite because $A^{\downarrow}_{*}$ and $A_{\uparrow *}$ have trivial homology.)  Note that any summands of form $\mathcal{E}_{k}(a,a)_{\uparrow}$ or $\mathcal{E}_{k}(a,a)^{\downarrow}$ are filtered homotopy equivalent to the zero complex.  Hence by Corollary \ref{hcoiso}, an admissible acyclic filtered cospan $\mathcal{A}$ is isomorphic to a direct sum of objects of form \begin{equation}  \label{basicacyc} \xymatrix{ \mathcal{E}_k(a,b)^{\downarrow} \ar[rd] & \\ & 0 \\  0 \ar[ru] & } \xymatrix{ \\ \mbox{ with $a>b>-\infty$, and  } \\ } \xymatrix{0\ar[rd] & \\ & 0 \\ \mathcal{E}_k(a,b)_{\uparrow}\ar[ru] & } \xymatrix{ \\ \mbox{ with $a<b<\infty$.  } \\ }   \end{equation}  We abbreviate the filtered cospans displayed in (\ref{basicacyc}) as $(\downarrow^a_b)_k$ and $(\uparrow_a^b)_k$, respectively.

Now let us describe decompositions of admissible, perfect filtered cospans. As the chain complexes involved in these have zero differential, we can immediately decompose them as direct sums of their graded pieces, with summands in degree $k$ having the general form \begin{equation}\label{genperfect} \xymatrix{ (H^{\downarrow}_k,\rho^{\downarrow}_{k}) \ar[rd]^{\phi^{\downarrow}_{k}} &  \\ & H_k \\ (H_{\uparrow k},\rho_{\uparrow}) \ar[ru]_{\phi_{\uparrow k}}  }  \end{equation} where $(H_{\uparrow k},\rho_{\uparrow})$ and $(H_k^{\downarrow},-\rho^{\downarrow})$ are filtered vector spaces that satisfy the best approximation property (and where the differentials, which are all zero, are all suppressed from the notation).  Consider the subspace $V_k=\phi_{\uparrow k}(H_{\uparrow k})\cap \phi^{\downarrow}_{k}(H^{\downarrow}_{k})\leq H_k$.  Choose: 
\begin{itemize} \item a $\rho_{\uparrow}$-orthogonal complement $I_{\uparrow k}$ to $\ker(\phi_{\uparrow k})$ within $H_{\uparrow k}$; 
	\item a $\rho_{\uparrow}$-orthogonal complement $J_{\uparrow k}$ to $(\phi_{\uparrow k})^{-1}(V_k)\cap I_{\uparrow k}$ within $I_{\uparrow k}$;
	\item a $(-\rho^{\downarrow})$-orthogonal complement $I^{\downarrow}_{ k}$ to $\ker(\phi^{\downarrow}_{ k})$ within $H^{\downarrow}_{ k}$;
	\item a $(-\rho_{\uparrow})$-orthogonal complement $J^{\downarrow}_k$ to $(\phi^{\downarrow}_k)^{-1}(V_k)\cap I^{\downarrow}_{ k}$ within $I^{\downarrow}_{ k}$; and 
	\item a complementary subspace $W_k$ to the subspace $\phi_{\uparrow k}(H_{\uparrow k})+\phi^{\downarrow}_{k}(H^{\downarrow}_{k})$ of $H_k$.
\end{itemize} These give rise to a direct sum decomposition of the filtered cospan (\ref{genperfect}) into the following six summands, where in each case the displayed maps (given by restricting $\phi_{\uparrow k}$ and $\phi^{\downarrow}_{k}$) are isomorphisms except when their domains or codomains are zero: 
\begin{align}\label{sixsummands} & \mbox{(i)}  \xymatrix{  \ker(\phi_{k}^{\downarrow})\ar[rd] & \\ & 0 \\ 0 \ar[ru] &    } \quad & \mbox{(ii)} \xymatrix{  0 \ar[rd] & \\ & 0 \\ \ker(\phi_{\uparrow k}) \ar[ru] &    } \quad & \mbox{(iii)}  \xymatrix{  (\phi^{\downarrow}_{k})^{-1}(V_k)\cap I^{\downarrow}_{k}   \ar[rd] & \\ & V_k \\ (\phi_{\uparrow k})^{-1}(V_k)\cap I_{\uparrow k} \ar[ru] &    } \\ \nonumber & \mbox{(iv)} \xymatrix{  J^{\downarrow}_{k}  \ar[rd] & \\ & \phi^{\downarrow}_{k}(J^{\downarrow}_{k}) \\ 0 \ar[ru] &    }  \quad & \mbox{(v)}  \xymatrix{  0  \ar[rd] & \\ & \phi_{\uparrow k}(J_{\uparrow k}) \\ J_{\uparrow k} \ar[ru] &    } \quad & \mbox{(vi)}
	\xymatrix{ 0 \ar[rd] & \\ & W_k \\ 0 \ar[ru] & } 
\end{align}

Let us write $\kappa_{k}$ for the graded $\kappa$-vector space---regarded as a chain complex with zero differential---whose $k$th graded piece is $\kappa$ and which is zero in all other degrees.  Then, the elementary ascending chain complex $\mathcal{E}_k(a,\infty)_{\uparrow}$ from Definition \ref{elemdfn} coincides as a chain complex with $\kappa_k$, and all of its nonzero elements have filtration level $a$; a similar statement holds for the elementary descending chain complex $\mathcal{E}_k(a,-\infty)^{\downarrow}=\overline{\mathcal{E}_k(-a,\infty)_{\uparrow}}$.

We then introduce the following notation for various indecomposable building blocks into which the summands in (\ref{sixsummands})(i)-(vi) can be further broken down:

\begin{align} \label{sixblocks}  (\downarrow^{a}_{-\infty})_k &=
	\left(\vcenter{\xymatrix{   \mathcal{E}_k(a,-\infty)^{\downarrow}\ar[rd] &   \\
			& 0 \\  0\ar[ru] & }}\right),	\quad (\uparrow_{a}^{\infty})_k=\left(\vcenter{
		\xymatrix{  0 \ar[rd] &   \\
			&  0, \\  \mathcal{E}_k(a,\infty)_{\uparrow} \ar[ru] & }}\right),\quad  (>_a^b)_k = \left(\vcenter{\xymatrix{ \mathcal{E}_k(b,-\infty)^{\downarrow}\ar[rd] & \\ & \kappa_k \\ \mathcal{E}_k(a,\infty)_{\uparrow} \ar[ru] &     }}\right), \\ \nonumber
	(\searrow^{a})_k & =
	\left(\vcenter{\xymatrix{   \mathcal{E}_k(a,-\infty)^{\downarrow}\ar[rd] &   \\
			& \kappa_k \\  0\ar[ru] & }}\right),	\quad (\nearrow_{a})_k=\left(\vcenter{
		\xymatrix{  0 \ar[rd] &   \\
			&  \kappa_k, \\  \mathcal{E}_k(a,\infty)_{\uparrow} \ar[ru] & }}\right),\quad \square_k=\left(\vcenter{\xymatrix{0 \ar[rd] & \\ & \kappa_k \\ 0 \ar[ru] &    }}\right).
\end{align}

Here each map that does not have zero as either its domain or its codomain is, set-theoretically, the identity map on $\kappa$.  By applying Proposition \ref{orthbasis} to $\overline{\ker(\phi^{\downarrow}_{k})},\ker(\phi_{\uparrow k}),\overline{J^{\downarrow}_{k}},$ and $J_{\uparrow k}$, it is easy to check that the filtered cospans in (\ref{sixsummands})(i),(ii),(iv), and (v) decompose up to isomorphism (in both $\mathsf{CO}(\kappa)$ and $\mathsf{HCO}(\kappa)$) into direct sums of terms of the form $(\downarrow^{a}_{-\infty})_k$, $(\uparrow_a^{\infty})_k$, $(\searrow^a)_k$, and $(\nearrow_a)_k$, respectively.  That (\ref{sixsummands})(vi) decomposes as a direct sum of copies of $\square_k$ follows by simply choosing a basis for $W_k$.    It is a little less trivial to see that (\ref{sixsummands})(iii) decomposes as a direct sum of terms form $(>_a^b)_k$, but this is a consequence of:

\begin{prop}\label{isomatch}
	Let $(X,\ell)$ and $(Y,\ell')$ be two filtered vector spaces such that $\ell(A\setminus\{0\})$ and $\ell'(B\setminus\{0\})$ are both well-ordered, and let $\phi\co A\to B$ be a vector space isomorphism.  Then there is an $\ell$-orthogonal basis $\{v_{\alpha}\}$ for $A$ such that $\{\phi(v_{\alpha})\}$ is an $\ell'$-orthogonal basis for $B$.
\end{prop}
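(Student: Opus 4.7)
The plan is to construct the desired basis by transfinite recursion on the well-ordered set $J:=\ell(A\setminus\{0\})\cup\{-\infty,\infty\}$, where $-\infty$ and $\infty$ are declared to be the minimum and maximum respectively. The recursive hypothesis will be that, for each $s\in J$, we have a set $S_s\subseteq A^{<s}:=\{a\in A\mid \ell(a)<s\}$ which is an $\ell$-orthogonal basis of $A^{<s}$, whose image $\phi(S_s)$ is an $\ell'$-orthogonal basis of $\phi(A^{<s})\leq B$, and with the nesting property $S_{s_1}\subseteq S_{s_2}$ whenever $s_1\leq s_2$. The base case $s=-\infty$ is trivial with $S_{-\infty}=\emptyset$, and taking $s=\infty$ (for which $A^{<\infty}=A$) at the end yields the desired simultaneous basis. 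This recursion is directly modeled on the one in the proof of Proposition \ref{acyclic}.

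For the limit step, when $s\in J$ has no immediate predecessor, I take $S_s:=\bigcup_{s'<s}S_{s'}$. The nesting hypothesis together with the fact that every $a\in A^{<s}$ satisfies $\ell(a)<s'$ for some $s'\in\ell(A\setminus\{0\})$ with $s'<s$ (using that $s$ has no immediate predecessor) ensures that $S_s$ is a basis of $A^{<s}$; orthogonality for $S_s$ and $\phi(S_s)$ reduces to orthogonality within some single $S_{s'}$, since any finite linear combination involves only elements lying in a common $S_{s'}$.

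The core of the argument is the successor step, where $s$ has an immediate predecessor $s^-$ in $J$, so that $A^{<s}=A^{\leq s^-}$. Here I apply Proposition \ref{orthbasis} on the $B$-side: the subspace $\phi(A^{\leq s^-})\leq B$ inherits the best approximation property from $B$ (since any subset of the well-ordered set $\ell'(B\setminus\{0\})$ is well-ordered), so the $\ell'$-orthogonal basis $\phi(S_{s^-})$ of $\phi(A^{<s^-})$ extends to an $\ell'$-orthogonal basis $\phi(S_{s^-})\cup T'$ of $\phi(A^{\leq s^-})$. Pulling back via the isomorphism $\phi$, I set $T:=\phi^{-1}(T')$ and $S_s:=S_{s^-}\cup T$. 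Since the extension elements in $T'$ lie outside $\phi(A^{<s^-})$, each element of $T$ lies in $A^{\leq s^-}\setminus A^{<s^-}$ and therefore has $\ell$-value exactly $s^-$; moreover, $T$ descends to a basis of $A^{\leq s^-}/A^{<s^-}$ (since $T'$ descends to a basis of $\phi(A^{\leq s^-})/\phi(A^{<s^-})$ and $\phi$ induces an isomorphism between these quotients). Invoking the general observation used near the end of the proof of Proposition \ref{acyclic} (that a subset of a common level set which is linearly independent modulo the strict sublevel subspace is automatically orthogonal to that subspace), I conclude that $S_{s^-}\cup T$ is $\ell$-orthogonal, while $\phi(S_{s^-})\cup T'$ is $\ell'$-orthogonal by construction.

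The main obstacle is reconciling the two orthogonality requirements at the successor step. The resolution is fundamentally asymmetric: $\ell'$-orthogonality is enforced by the deliberate application of Proposition \ref{orthbasis} on the $B$-side, while $\ell$-orthogonality is then automatic because the pulled-back new basis elements are forced to attain the maximal $\ell$-value $s^-$ available in $A^{\leq s^-}$. Once both orthogonality properties are propagated through the recursion, the output $S_\infty$ is the basis required by the proposition.
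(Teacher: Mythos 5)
Your proof is correct and is essentially the paper's argument: the paper establishes this proposition by applying Proposition \ref{acyclic} to the two-term complex with differential $\phi$, and your transfinite recursion is precisely the proof of that proposition specialized to this situation (the same indexing by the well-ordered set $\ell(A\setminus\{0\})$, the same use of Proposition \ref{orthbasis} on the codomain side at successor stages, and the same level-set/quotient observation to obtain $\ell$-orthogonality of the pulled-back elements for free). Carrying the recursion out directly, as you do, even sidesteps the cosmetic point that the two-term ``complex'' need not literally satisfy $\ell'(\phi(x))\le \ell(x)$, but the mathematical content is the same.
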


\begin{proof}
	This follows formally from Proposition \ref{acyclic} applied to the ascending chain complex that is given in degree $1$ by $(X,\ell)$, in degree $0$ by $(Y,\ell')$, and in all other degrees by zero, and with differential given by $\phi$. 
\end{proof}

\begin{cor}\label{mixdecomp}
	The filtered cospan in (\ref{sixsummands})(iii) is isomorphic in $\mathsf{CO}(\kappa)$ and in $\mathsf{HCO}(\kappa)$ to a direct sum of the form $\oplus_{\alpha}(>_{a_{\alpha}}^{b_{\alpha}})_k$ for suitable choices of real numbers $a_{\alpha},b_{\alpha}$.
\end{cor}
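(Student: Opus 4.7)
The plan is to reduce the corollary to a direct application of Proposition \ref{isomatch}. Write $A = (\phi^{\downarrow}_{k})^{-1}(V_k)\cap I^{\downarrow}_{k}$ and $B = (\phi_{\uparrow k})^{-1}(V_k)\cap I_{\uparrow k}$ for the upper-left and lower-left corners of the cospan (\ref{sixsummands})(iii).  As was noted just after that display, the restrictions $\Phi^{\downarrow} := \phi^{\downarrow}_{k}|_{A}\co A\to V_k$ and $\Phi_{\uparrow}:=\phi_{\uparrow k}|_{B}\co B\to V_k$ are vector-space isomorphisms, so the composition $\Psi := (\Phi^{\downarrow})^{-1}\circ \Phi_{\uparrow}\co B\to A$ is a vector-space isomorphism.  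The filtration $\rho_{\uparrow}|_{B}$ takes values in a well-ordered subset of $\R$ (since admissibility of the ambient cospan propagates to $\rho_{\uparrow}$ on $H_{\uparrow k}$ via Proposition \ref{specproj}, and hence to any subspace), and for the same reason $-\rho^{\downarrow}|_{A}$ is well-ordered.

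Applying Proposition \ref{isomatch} to the isomorphism $\Psi\co (B,\rho_{\uparrow}|_{B})\to (A,-\rho^{\downarrow}|_{A})$ produces a $\rho_{\uparrow}$-orthogonal basis $\{v_{\alpha}\}$ of $B$ such that, setting $w_{\alpha}:=\Psi(v_{\alpha})$, the collection $\{w_{\alpha}\}$ is a $(-\rho^{\downarrow})$-orthogonal basis of $A$.  Define $u_{\alpha}=\Phi_{\uparrow}(v_{\alpha})=\Phi^{\downarrow}(w_{\alpha})\in V_k$; since $\Phi_{\uparrow}$ and $\Phi^{\downarrow}$ are isomorphisms, $\{u_{\alpha}\}$ is a basis for $V_k$.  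Setting $a_{\alpha}=\rho_{\uparrow}(v_{\alpha})$ and $b_{\alpha}=\rho^{\downarrow}(w_{\alpha})$, the orthogonality statements translate to direct-sum decompositions, in the respective categories of ascending and descending chain complexes, of $B\cong \bigoplus_{\alpha}\mathcal{E}_k(a_{\alpha},\infty)_{\uparrow}$ and $A\cong \bigoplus_{\alpha}\mathcal{E}_k(b_{\alpha},-\infty)^{\downarrow}$; meanwhile $V_k=\bigoplus_{\alpha}\mathrm{span}(u_{\alpha})$ as an unfiltered vector space.

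Since $\Phi_{\uparrow}$ carries $\mathrm{span}(v_{\alpha})$ isomorphically to $\mathrm{span}(u_{\alpha})$, and $\Phi^{\downarrow}$ carries $\mathrm{span}(w_{\alpha})$ isomorphically to the same $\mathrm{span}(u_{\alpha})$, these three compatible direct-sum decompositions assemble into a direct-sum decomposition in the category $\mathsf{CO}(\kappa)$ (and a fortiori in $\mathsf{HCO}(\kappa)$) of the cospan (\ref{sixsummands})(iii) as $\bigoplus_{\alpha}\mathcal{C}_{\alpha}$, where each $\mathcal{C}_{\alpha}$ is the one-dimensional subcospan spanned by $w_{\alpha},u_{\alpha},v_{\alpha}$.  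By the definition of $(>_{a}^{b})_k$ in (\ref{sixblocks}), each $\mathcal{C}_{\alpha}$ is isomorphic to $(>_{a_{\alpha}}^{b_{\alpha}})_k$, which is the required conclusion.

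The proof is essentially routine once Proposition \ref{isomatch} is available; the only point requiring care is to verify that the well-ordering hypothesis of that proposition is satisfied on $B$ and $A$, and that the three matching bases $\{v_{\alpha}\}$, $\{w_{\alpha}\}$, $\{u_{\alpha}\}$ really do split both legs of the cospan simultaneously.  There is no obstacle beyond this careful bookkeeping.
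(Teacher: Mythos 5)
Your proposal is correct and takes essentially the same route as the paper: apply Proposition \ref{isomatch} to the composed isomorphism $(\Phi^{\downarrow})^{-1}\circ\Phi_{\uparrow}$ between the two corners (with filtrations $\rho_{\uparrow}$ and $-\rho^{\downarrow}$), use the resulting simultaneously orthogonal bases to split both legs, and identify each one-dimensional subcospan with $(>_{a_{\alpha}}^{b_{\alpha}})_k$. Your explicit verification of the well-ordering hypothesis via Proposition \ref{specproj} is a harmless (and correct) elaboration of what the paper leaves implicit from the best approximation property of the homologies.
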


\begin{proof}
	As noted before (\ref{sixsummands}), the maps appearing in (\ref{sixsummands})(iii), which are given by restricting $\phi^{\downarrow}_{k}$ and $\phi_{\uparrow k}$, are both vector space isomorphisms.  (In the case of (iii) this results from the facts that $I^{\downarrow}_{k}$ and $I_{\uparrow k}$ have trivial intersections with the kernels of $\phi^{\downarrow}_{k}$ and $\phi_{\uparrow k}$, respectively, and from the definition of $V_k$ as the intersection of the images of $\phi^{\downarrow}_{k}$ and $\phi_{\uparrow k}$.)  Hence we obtain a vector space  isomorphism $\phi \co (\phi_{\uparrow k})^{-1}(V_k)\cap I_k\to (\phi^{\downarrow}_{k})^{-1}(V_k)\cap I^{\downarrow}_{k}$ by composing the restriction of $\phi_{\uparrow k}$ with the inverse of the restriction of $\phi^{\downarrow}_{k}$.  Apply Proposition \ref{isomatch} to this isomorphism $\phi$, using $\ell=\rho_{\uparrow k}$ and $\ell'=-\rho^{\downarrow}_{k}$.  This yields a basis $\{v_{\alpha}\}$ of $(\phi_{\uparrow k})^{-1}(V_k)\cap I_k$.  Let $a_{\alpha}=\rho_{\uparrow k}(v_{\alpha})$ and $b_{\alpha}=\rho^{\downarrow}_{k}(\phi(v_{\alpha})$.  We then obtain an isomorphism from $\oplus_{\alpha}(>_{a_{\alpha}}^{b_{\alpha}})_k$ to (\ref{sixsummands})(iii) via the diagram \[ \xymatrix{\oplus_{\alpha}\mathcal{E}_k(b_{\alpha},-\infty)^{\downarrow}\ar[rr]\ar[rd] & & (\phi^{\downarrow}_{k})^{-1}(V_k)\cap I^{\downarrow}_{k} \ar[rd] & \\ &  \oplus_{\alpha}\kappa_k \ar[rr] & & V_k \\  \oplus_{\alpha}\mathcal{E}_k(a_{\alpha},\infty)_{\uparrow} \ar[ru] \ar[rr] & & (\phi_{\uparrow k})^{-1}(V_k)\cap I_k \ar[ru] }\] where the horizontal maps send the standard generator for $\mathcal{E}_k(a_{\alpha},\infty)_{\uparrow} $ to $v_{\alpha}$, the standard generator for the $\alpha$th summand of $\oplus_{\alpha}\kappa_k$ to $\phi_{\uparrow k}(v_{\alpha})$, and the standard generator for $\mathcal{E}_k(b_{\alpha},-\infty)^{\downarrow}$ to $\phi(v_{\alpha})$.
\end{proof}

\begin{dfn}\label{sesdef}
	A \emph{standard elementary summand} is any one of the building blocks $(\downarrow_{b}^{a})_k$ or $(\uparrow_{a}^{b})_k$ from (\ref{basicacyc}), or $(\downarrow^{a}_{-\infty})_k$, $(\uparrow_{a}^{\infty})_k$,  $(>_{a}^{b})_k$, $(\searrow^a)_k$, $(\nearrow_a)_k$, or $\square_k$ from (\ref{sixblocks}). 
\end{dfn}

\begin{cor}\label{decompexists}
	Any admissible filtered cospan $\mathcal{C}$ with coefficients in a field $\kappa$ is isomorphic in $\mathsf{HCO}(\kappa)$ to a direct sum of standard elementary summands.
\end{cor}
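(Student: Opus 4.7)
The plan is to assemble the decomposition results already developed in this section. First, I would invoke Proposition \ref{coah} to write $\mathcal{C}$ as the direct sum $\mathcal{A}_{\mathcal{C}} \oplus \mathcal{H}_{\mathcal{C}}$ of an acyclic and a perfect filtered cospan in $\mathsf{HCO}(\kappa)$. Admissibility is inherited by both summands: $\mathcal{A}_{\mathcal{C}}$ has ascending and descending complexes built from subcomplexes of $C_{\uparrow *}$ and $C^{\downarrow}_{*}$ with the restricted filtrations, while $\mathcal{H}_{\mathcal{C}}$ is built from the homologies whose spectral invariants are well-ordered by Proposition \ref{specproj}. Thus it suffices to decompose admissible acyclic cospans and admissible perfect cospans separately.

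For the acyclic summand $\mathcal{A}_{\mathcal{C}}$, the construction in Proposition \ref{coah} already presents it as a cospan whose unfiltered middle chain complex $D_*$ is zero and whose two filtered complexes are acyclic. Applying Corollary \ref{ascsplit} to the ascending complex and its analogue (via the conjugation $\mathcal{C} \mapsto \overline{\mathcal{C}}$) to the descending complex yields direct sum decompositions into summands of the form $\mathcal{E}_{k}(a,b)_{\uparrow}$ with $a \leq b < \infty$ and $\mathcal{E}_{k}(a,b)^{\downarrow}$ with $a \geq b > -\infty$. Summands of the form $\mathcal{E}_k(a,a)_{\uparrow}$ or $\mathcal{E}_k(a,a)^{\downarrow}$ are filtered homotopy equivalent to the zero complex, so by Corollary \ref{hcoiso} they may be discarded; the remaining summands package as standard elementary summands $(\uparrow_a^b)_k$ and $(\downarrow^a_b)_k$ as displayed in (\ref{basicacyc}).

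For the perfect summand $\mathcal{H}_{\mathcal{C}}$, whose three chain complexes all carry the zero differential, I would first decompose by homological degree, reducing to the case of a perfect cospan concentrated in a single degree $k$, of the shape (\ref{genperfect}). I would then make the choices of subspaces $I_{\uparrow k}, J_{\uparrow k}, I^{\downarrow}_{k}, J^{\downarrow}_{k}, W_k$ listed just before (\ref{sixsummands}), each of which exists by Corollary \ref{orthcomp} thanks to admissibility, and check that they yield a direct sum decomposition of the cospan into the six pieces (\ref{sixsummands})(i)--(vi). Pieces (i), (ii), (iv), (v) then decompose into the standard elementary summands $(\downarrow^a_{-\infty})_k$, $(\uparrow_a^{\infty})_k$, $(\searrow^a)_k$, $(\nearrow_a)_k$ by selecting orthogonal bases via Proposition \ref{orthbasis}; piece (vi) breaks into copies of $\square_k$ by choosing any basis of $W_k$; and piece (iii) decomposes as a direct sum of $(>_a^b)_k$'s by Corollary \ref{mixdecomp}.

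The main subtlety I expect to address is verifying that the six pieces in (\ref{sixsummands}) genuinely form a direct sum decomposition of the cospan (\ref{genperfect}) in $\mathsf{CO}(\kappa)$, and not merely a decomposition of each of the three chain complexes separately. Concretely, one needs to check that $H_k = V_k \oplus \phi^{\downarrow}_k(J^{\downarrow}_k) \oplus \phi_{\uparrow k}(J_{\uparrow k}) \oplus W_k$ and that the restrictions of $\phi^{\downarrow}_k$ and $\phi_{\uparrow k}$ to the chosen subspaces land in the correct factors, which is where the choices of $J_{\uparrow k}$ and $J^{\downarrow}_{k}$ as complements of the preimages of $V_k$ inside $I_{\uparrow k}$ and $I^{\downarrow}_{k}$ become essential. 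Once this verification is in place, chaining together the decompositions listed above proves the corollary.
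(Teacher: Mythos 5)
Your proposal is correct and follows essentially the same route as the paper: Proposition \ref{coah} to split off the acyclic and perfect parts, Corollary \ref{ascsplit} (discarding the trivial $\mathcal{E}_k(a,a)$ summands) for the acyclic part, and the decomposition (\ref{sixsummands}) together with Proposition \ref{orthbasis} and Corollary \ref{mixdecomp} for the perfect part. The extra verification you flag at the end is exactly the content the paper handles in the discussion surrounding (\ref{sixsummands}), so nothing is missing.
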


\begin{proof}
	Proposition \ref{coah} decomposes $\mathcal{C}$ into an acyclic filtered cospan $\mathcal{A}_{\mathcal{C}}$ and a perfect filtered cospan $\mathcal{H}_{\mathcal{C}}$.  As noted before (\ref{basicacyc}), Corollary \ref{ascsplit} implies that $\mathcal{A}_{\mathcal{C}}$ is isomorphic to a direct sum of a collection of objects of form $(\downarrow^{a}_{b})_k$ and $(\uparrow_{a}^{b})_k$.
	Meanwhile, $\mathcal{H}_{\mathcal{C}}$ decomposes as a direct sum of the summands in (\ref{sixsummands}), which are respectively isomorphic to direct sums of objects as in (\ref{sixblocks}) using Proposition \ref{orthbasis} and Corollary \ref{mixdecomp}.
\end{proof}

The summands in the decomposition of Corollary \ref{decompexists} are uniquely determined by $\mathcal{C}$; we defer the proof of this to Corollary \ref{decompunique}.

Of course, the conclusion of Corollary \ref{decompexists} also applies to any filtered cospan $\mathcal{C}$ that is isomorphic in $\mathsf{HCO}(\kappa)$ to an admissible filtered cospan, even if $\mathcal{C}$ is not itself admissible. For example, if $f\co\mathcal{X}\to [-\Lambda,\Lambda]$ is a simplexwise linear function on the geometric realization of a finite simplicial complex, the pinned singular filtered cospan $\mathcal{S}_{\partial}(\mathbb{X},f;\kappa)$ is (unless $f$ is locally constant) very far from being admissible, but by Proposition \ref{simpsing} $\mathcal{S}_{\partial}(\mathbb{X},f;\kappa)$ is isomorphic to the pinned simplicial filtered cospan $\mathcal{C}_{\partial}(X,f;\kappa)$ which is admissible.
 
\begin{remark}
	If $\mathcal{C}$ is $\Lambda$-bounded, then all values of the parameters $a$ and $b$ that arise in summands of form  $(\downarrow_{b}^{a})_k,(\uparrow_{a}^{b})_k,(\downarrow^{a}_{-\infty})_k, (\uparrow_{a}^{\infty})_k,  (>_{a}^{b})_k, (\searrow^a)_k,$ or $(\nearrow_a)_k$ in a decomposition of $\mathcal{C}$ will evidently lie in the open interval $(-\Lambda,\Lambda)$. 
\end{remark} 

\section{More isomorphisms in $\mathsf{HCO}(\kappa)$}\label{quasisect}

In this section we establish a sufficient criterion for filtered homotopy equivalence (Proposition \ref{quasihtopy}) that facilitates checking the condition in Corollary \ref{hcoiso} for two filtered cospans to be isomorphic in $\mathsf{HCO}(\kappa)$, and we use this to establish isomorphisms between the filtered cospans in Examples \ref{pinsing}, \ref{simpdef}, and \ref{morseex}.

\begin{dfn}
	If $(C_*,\partial_C,\ell_{\uparrow}^{C})$ and $(D_*,\partial_D,\ell_{\uparrow}^{D})$ are two ascending chain complexes and if $f\co C_*\to D_*$ is a filtered chain map, we say that $f$ is a \textbf{filtered quasi-isomorphism} if, for each $k\in\Z$ and $t\in J$, the induced map $f_*\co H_k(C_{*}^{\leq t})\to H_k(D_{*}^{\leq t})$ is an isomorphism.
\end{dfn}

If $f\co C_*\to D_*$ is a filtered chain map, the mapping cone $\mathrm{Cone}(f)$ has the structure of an ascending chain complex, with $\mathrm{Cone}(f)_{k}^{\leq t}=C_{k-1}^{\leq t}\oplus D_{k}^{\leq t}$.  From the standard exact sequence \[ \xymatrix{\cdots \ar[r] & H_k(C^{\leq t})\ar[r]^{f_*} & H_k(D^{\leq t})\ar[r]^<<<<{j_*} & H_k(Cone(f)^{\leq t})\ar[r]^<<<<{p_*} &  H_{k-1}(C^{\leq t})\ar[r]^{f_*} & H_{k-1}(D^{\leq t})  } \] (where $j\co D_*\to \mathrm{Cone}(f)$ and $p\co \mathrm{Cone}(f)\to C[-1]$ are the inclusion and projection) we see that $f$ is a filtered quasi-isomorphism if and only if $\mathrm{Cone}(f)$ is \textbf{filtered acyclic} in the sense that $H_k(\mathrm{Cone}(f)^{\leq t})=0$ for all $k$ and $t$.

\begin{prop} \label{quasihtopy}
	Let $(C_*,\partial_C,\ell_{\uparrow}^{C})$ and $(D_*,\partial_D,\ell_{\uparrow}^{D})$ be ascending chain complexes with the following properties:
	\begin{itemize} \item Each $C_k$ admits a $\ell_{\uparrow}^{C}$-orthogonal basis, and each $D_k$ admits a $\ell_{\uparrow}^{D}$-orthogonal basis;
		\item $C_k=D_k=0$ for all $k<0$.\end{itemize}
	Then every filtered quasi-isomorphism from $C_*$ to $D_*$ is a filtered homotopy equivalence.
\end{prop}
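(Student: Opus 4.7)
The plan is to reformulate the conclusion as contractibility of the mapping cone $\mathrm{Cone}(f)$, build a filtration-preserving chain contraction by induction on degree, and then extract from it the required filtered homotopy inverse via a block-matrix decomposition. Recall that $\mathrm{Cone}(f)_k=C_{k-1}\oplus D_k$ carries the differential $\partial_{\mathrm{Cone}}(c,d)=(-\partial_C c,\,fc+\partial_D d)$ and the natural filtration $\ell((c,d))=\max\{\ell_{\uparrow}^C(c),\ell_{\uparrow}^D(d)\}$; each $\mathrm{Cone}(f)_k$ inherits an $\ell$-orthogonal basis (namely the union of the orthogonal bases for the two summands) and vanishes for $k<0$. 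By the remark preceding this proposition, the filtered quasi-isomorphism hypothesis on $f$ is equivalent to $\mathrm{Cone}(f)$ being filtered acyclic, \emph{i.e.}, $H_k(\mathrm{Cone}(f)^{\leq t})=0$ for every $k\in\Z$ and every $t\in\R$.

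Next I will construct maps $H_k\co \mathrm{Cone}(f)_k\to \mathrm{Cone}(f)_{k+1}$ satisfying the contraction identity $\partial_{\mathrm{Cone}}H_k+H_{k-1}\partial_{\mathrm{Cone}}=\mathrm{id}$ and the filtration bound $\ell(H_k z)\leq \ell(z)$, by induction on $k\geq 0$, starting from $H_{-1}=0$. Given the $H_j$ for $j<k$, I fix an $\ell$-orthogonal basis $\{e_\alpha\}$ of $\mathrm{Cone}(f)_k$: each element $e_\alpha-H_{k-1}\partial_{\mathrm{Cone}} e_\alpha$ is a cycle of filtration at most $\ell(e_\alpha)$, so filtered acyclicity at that level yields some $H_k e_\alpha\in\mathrm{Cone}(f)_{k+1}^{\leq \ell(e_\alpha)}$ with $\partial_{\mathrm{Cone}} H_k e_\alpha=e_\alpha-H_{k-1}\partial_{\mathrm{Cone}} e_\alpha$. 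Extending $H_k$ linearly on the basis, the orthogonality of $\{e_\alpha\}$ converts these basis-wise bounds into the global bound $\ell(H_k z)\leq \ell(z)$ for arbitrary $z\in\mathrm{Cone}(f)_k$.

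Finally, writing $H$ in block form $H(c,d)=(\alpha c+\beta d,\,\gamma c+\delta d)$ with respect to the splitting $\mathrm{Cone}(f)_k=C_{k-1}\oplus D_k$, a routine block-by-block expansion of $\partial_{\mathrm{Cone}} H+H\partial_{\mathrm{Cone}}=\mathrm{id}$ shows that $\beta\co D_*\to C_*$ is a chain map and that $\alpha$ and $\delta$ are, up to sign, chain homotopies between $\beta\circ f$ and $\mathrm{id}_{C_*}$ and between $f\circ \beta$ and $\mathrm{id}_{D_*}$ respectively. Since $H$ preserves the filtration, so do $\beta$, $\alpha$, and $\delta$, exhibiting $f$ as a filtered homotopy equivalence.

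The main obstacle is controlling the filtration in the inductive step: a naive choice of preimage under $\partial_{\mathrm{Cone}}$ could easily violate the bound $\ell(H_k z)\leq \ell(z)$, and the orthogonal-basis hypothesis is precisely what is needed to propagate pointwise estimates on basis elements to arbitrary linear combinations. The bounded-below hypothesis anchors the recursion cleanly with $H_{-1}=0$, avoiding any need for a delicate limiting construction at the bottom of the degree range.
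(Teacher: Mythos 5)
Your argument is correct, and it is a genuinely workable variant of the paper's proof rather than a reproduction of it. The paper's route is to prove an auxiliary lemma (every filtered chain map from a complex with orthogonal bases, bounded below, into a filtered acyclic complex is filtered nullhomotopic), apply it to the inclusion $D_*\hookrightarrow\mathrm{Cone}(f)$ to extract only a \emph{right} filtered homotopy inverse $h$ of $f$, and then run the lemma a second time with the roles of $C_*$ and $D_*$ swapped and $h$ in place of $f$, finishing with a short formal argument (via $f\circ h\circ g$) to upgrade to a two-sided filtered homotopy equivalence. You instead contract the cone itself: the union of the orthogonal bases of $C_{k-1}$ and $D_k$ is indeed $\ell$-orthogonal for the max-filtration on $\mathrm{Cone}(f)_k$, the degree induction with filtered acyclicity supplying filtration-controlled preimages goes through exactly as you say (the base case is anchored by $\mathrm{Cone}(f)_k=0$ for $k<0$), and the block decomposition of the contraction correctly yields a filtered chain map $\beta$ together with filtered homotopies $\partial\alpha+\alpha\partial=\beta f-\mathrm{id}_C$ and $\partial\delta+\delta\partial=\mathrm{id}_D-f\beta$, i.e.\ full filtered homotopy equivalence data in one pass. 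The inductive engine is the same as in the paper's lemma; what your packaging buys is a two-sided inverse immediately, avoiding the role-swap and the final bootstrapping step, at the (harmless) cost of invoking the orthogonal-basis hypothesis on both complexes simultaneously, whereas the paper's lemma needs it only on the source of the map being nullhomotoped and is stated in a reusable, slightly more general form.
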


The proof depends on the following lemma, following a strategy similar to that used in \cite[Lemma 10.4.6]{Wei}.

\begin{lemma} \label{nullhtopy}
	Let $(D_*,\partial_D,\ell_{\uparrow}^{D})$ be an ascending chain complex satisfying the conditions in Proposition \ref{quasihtopy}, and let $(A_*,\partial_A,\ell_{\uparrow}^{A})$ be any ascending chain complex that is filtered acyclic.  Then every filtered chain map $j\co D_*\to A_*$ is filtered homotopic to $0$.
\end{lemma}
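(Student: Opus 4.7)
The plan is to construct the filtered nullhomotopy $K \colon D_* \to A_{*+1}$ by induction on the degree $k$, defining $K$ first on a chosen $\ell_{\uparrow}^{D}$-orthogonal basis of $D_k$ and then extending $\kappa$-linearly. In degrees $k<0$ there is nothing to do since $D_k=0$; this provides the base case and, in particular, ensures that $K\circ \partial_D$ is already defined on $D_0$.

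For the inductive step, assume that $K$ has been constructed on $D_0,\ldots,D_{k-1}$ satisfying both the chain-homotopy equation $\partial_A \circ K + K\circ \partial_D = j$ and the filtration bound $\ell_{\uparrow}^{A}\circ K \leq \ell_{\uparrow}^{D}$ on these degrees. Fix an $\ell_{\uparrow}^{D}$-orthogonal basis $\{e_\alpha\}$ of $D_k$, guaranteed by the first hypothesis of Proposition \ref{quasihtopy}. For each basis element $e_\alpha$, set $z_\alpha = j(e_\alpha) - K(\partial_D e_\alpha) \in A_k$. A direct computation using $\partial_A \circ j = j\circ \partial_D$ and the inductive chain-homotopy equation (applied to $\partial_D e_\alpha \in D_{k-1}$) shows $\partial_A z_\alpha = 0$. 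The filtration bound on $K$ together with the fact that $j$ and $\partial_D$ are filtered yields
\[
\ell_{\uparrow}^{A}(z_\alpha) \leq \max\{\ell_{\uparrow}^{A}(j(e_\alpha)),\,\ell_{\uparrow}^{A}(K(\partial_D e_\alpha))\} \leq \ell_{\uparrow}^{D}(e_\alpha),
\]
so $z_\alpha$ is a cycle in $A_k^{\leq t_\alpha}$, where $t_\alpha := \ell_{\uparrow}^{D}(e_\alpha)$. Since $A_*$ is filtered acyclic, $H_k(A_*^{\leq t_\alpha})=0$, so we may choose $w_\alpha \in A_{k+1}^{\leq t_\alpha}$ with $\partial_A w_\alpha = z_\alpha$. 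Define $K(e_\alpha) = w_\alpha$ and extend by linearity.

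It remains to verify that this extension satisfies the required identities. The chain-homotopy equation on basis elements holds by construction and extends linearly. For the filtration bound, any $d\in D_k$ can be written uniquely as $d=\sum_\alpha c_\alpha e_\alpha$, and the $\ell_{\uparrow}^{D}$-orthogonality of $\{e_\alpha\}$ gives $\ell_{\uparrow}^{D}(d) = \max\{t_\alpha : c_\alpha\neq 0\}$; combined with $\ell_{\uparrow}^{A}(K(e_\alpha))\leq t_\alpha$ and the subadditivity axiom for $\ell_{\uparrow}^{A}$, this yields $\ell_{\uparrow}^{A}(K(d))\leq \ell_{\uparrow}^{D}(d)$.

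The only genuine obstacle in this argument is the filtration-respecting lift $w_\alpha$ of $z_\alpha$; everything else is bookkeeping. This is precisely where the filtered acyclicity of $A_*$ is used: it lets us choose the primitive $w_\alpha$ inside the same sublevel subcomplex $A_{k+1}^{\leq t_\alpha}$ that contains $z_\alpha$, rather than merely in $A_{k+1}$. The orthogonal-basis hypothesis on $D_*$ is then what lets the pointwise bound on basis elements propagate to the required inequality $\ell_{\uparrow}^{A}\circ K \leq \ell_{\uparrow}^{D}$ on all of $D_k$, completing the induction.
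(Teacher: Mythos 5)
Your proof is correct and follows essentially the same argument as the paper: induct on degree, observe that $j(e_\alpha)-K(\partial_D e_\alpha)$ is a cycle at filtration level at most $\ell_{\uparrow}^{D}(e_\alpha)$, use filtered acyclicity of $A_*$ to choose a primitive in the same sublevel subcomplex, and extend linearly over an $\ell_{\uparrow}^{D}$-orthogonal basis so that orthogonality propagates the filtration bound.
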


\begin{proof}[Proof of Lemma \ref{nullhtopy}]
	Write $j_n\co D_n\to A_n$, $\partial_{D}^{n}\co D_n\to D_{n-1}$, and $\partial_{A}^{n}\co A_n\to A_{n-1}$ for the restrictions of $j,\partial_D,\partial_A$ to the appropriate graded pieces.  We must construct, for each $n\in \Z$, filtered maps $K_n\co D_n\to A_{n+1}$ (thus $\ell_{\uparrow}^A\circ K_n\leq \ell_{\uparrow}^{D}|_{D_n}$) such that \begin{equation}\label{htopyeq} \partial^{n+1}_{A}\circ K_n+K_{n-1}\circ \partial^n_{D}=j_n.    \end{equation}  Since we assume that $D_n=0$ for all $n<0$, (\ref{htopyeq}) is satisfied for all $n<0$ using $K_n=0$ for all $n<0$.  So assume that $m\geq 0$ and that we have constructed $K_n$ for all $n<m$ in such a way that (\ref{htopyeq}) is satisfied for all $n<m$.  Since $j$ is a chain map, we then have \[ \partial^m_A\circ(j_m-K_{m-1}\circ\partial^m_D)=(j_{m-1}-\partial^m_A\circ K_{m-1})\circ \partial^{m}_D=K_{m-2}\circ\partial^{m-1}_{D}\circ\partial^m_D=0.  \]
	Moreover, $j_m-K_{m-1}\circ\partial_{D}^{m}$ is a filtered map by the inductive hypothesis.  So since $A_*$ is filtered acyclic, for each $d\in D_m$ there is $a_d\in A_{m+1}$ with $\ell_{\uparrow}^A(a_d)\leq \ell_{\uparrow}^D(d)$ and $\partial_{A}^{m+1}a_d=j_md-K_{m-1}\partial_D^md$.  
	
	If $\{d_i\}_{i\in\mathcal{I}}$ is a $\ell_{\uparrow}^{D}$-orthogonal basis for $D_{m}$, we may then set $K_{m}\left(\sum_i r_id_i\right)=\sum_i r_ia_{d_i}$.  Then $K_m$ is a filtered map by the orthogonality of $\{d_i\}$, and (\ref{htopyeq}) holds with $n=m$, completing the inductive argument.  
\end{proof}

\begin{proof}[Proof of Proposition \ref{quasihtopy}]
	Let $f\co C_*\to D_*$ be a filtered quasi-isomorphism and apply Lemma \ref{nullhtopy} with $A_*=\mathrm{Cone}(f)$ and $j$ equal to inclusion of $D_*$ into $\mathrm{Cone}(f)$.  The nullhomotopy $K$ produced by Lemma \ref{nullhtopy} is then a filtered map $D_*\to C_{*}\oplus D_*[1]$.  If we write the components of this map as $h\co D_*\to C_*$ and $L\co D_*\to D_*[1]$, the fact that $K$ is a nullhomotopy of $j$ is easily seen to be equivalent to the statements that $h$ is a chain map and that the filtered map $L$ is a homotopy between $f\circ h$ and the identity on $D_*$.
	
	Thus $h$ is a right filtered homotopy inverse to $f$.  Now $f$ is assumed to be a filtered quasi-isomorphism, so it follows that the maps induced by $h$ on filtered homology are the inverses to those induced by $f$.  In particular, $h$ is a filtered quasi-isomorphism.
	
	We may now apply the first part of the proof with the roles of $C_*$ and $D_*$ interchanged, and with $h$ in place of $f$.  This yields a filtered map $g\co C_*\to D_*$ such that $h\circ g$ is filtered homotopic to the identity on $C$.  But then consideration of $f\circ h\circ g$ readily implies that $f$ and $g$ are filtered homotopic to each other, and that either one of them is a two-sided filtered homotopy inverse to $h$.  In particular, $f$ is a filtered homotopy equivalence.
\end{proof}

\begin{remark}\label{qhrmk}
	The obvious analogue of Proposition \ref{quasihtopy} for descending chain complexes rather than ascending ones also holds, as can be seen by negating the filtration functions to convert from descending to ascending complexes.  Also, in the unfiltered case, if $(C_*,\partial_C)$ and $(D_*,\partial_D)$ are complexes of $\kappa$-vector spaces such that $C_k=D_k=0$ for $k<0$ then any quasi-isomorphism from $C_*$ to $D_*$ is a homotopy equivalence; this can be viewed as a special case of Proposition \ref{quasihtopy} if $C_*$ and $D_*$ are endowed with the trivial $\kappa$-ascending chain complex structures in which all nonzero elements have filtration level zero, or alternatively it can be inferred (more generally for bounded-below complexes of projective modules) from \cite[Corollary 10.4.7]{Wei}.
\end{remark}

We now establish the promised isomorphism between the pinned singular and pinned simplicial filtered cospans.

\begin{prop}\label{simpsing} With notation as in Examples \ref{pinsing} and \ref{simpdef}, for a simplicial complex $X$ with geometric realization $\mathbb{X}$ and for a simplexwise linear function $f\co\mathbb{X}\to [-\Lambda,\Lambda]$, we have an isomorphism in $\mathsf{HCO}^{\Lambda}(\kappa)$ \begin{equation}\label{simpsingdiag} \xymatrix{ \frac{C_*(X^+;\kappa)}{C_*(\partial^+X;\kappa)}\ar[rrrd]^{K^{\downarrow}} \ar[rr]^{\iota^{\downarrow}} \ar[rd] & & S_*(\mathbb{X}\setminus\partial^-\mathbb{X},\partial^+\mathbb{X};\kappa)\ar[rd]  & \\ &\frac{C_*(X;\kappa)}{C_*(\partial X;\kappa)} \ar[rr]^{\iota} &&  S_*(\mathbb{X},\partial \mathbb{X};\kappa) \\ \frac{C_*(X^-;\kappa)}{C_*(\partial^-X;\kappa)} \ar[ur] \ar[rr]_{\iota_{\uparrow}}\ar[rrru]^{K_{\uparrow}} && S_*(\mathbb{X}\setminus\partial^+\mathbb{X},\partial^-X;\kappa) \ar[ur] &             }\end{equation}
	between $\mathcal{C}_{\partial}(X,f;\kappa)$ and $\mathcal{S}_{\partial}(\mathbb{X},f;\kappa)$, for some choice of degree-one maps $K_{\uparrow},K^{\downarrow}$.	
\end{prop}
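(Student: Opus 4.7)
The plan is to apply Corollary \ref{hcoiso}. The three horizontal arrows $\iota^{\downarrow}$, $\iota$, $\iota_{\uparrow}$ in (\ref{simpsingdiag}) are the chain maps induced by the standard inclusion of simplicial into singular chains and passed to the appropriate subquotients; because the simplicial-to-singular chain map sends $C_*(X^{\pm};\kappa)$ into $S_*(\mathbb{X}\setminus\partial^{\mp}\mathbb{X};\kappa)$ and $C_*(\partial^{\pm}X;\kappa)$ into $S_*(\partial^{\pm}\mathbb{X};\kappa)$, these maps are well-defined. Every map appearing in (\ref{simpsingdiag}) is either a simplicial-to-singular inclusion or an inclusion of pairs in (co)homology, and both compositions around either parallelogram reduce to the overall inclusion $C_*(X^{\pm};\kappa)/C_*(\partial^{\pm}X;\kappa)\to S_*(\mathbb{X},\partial\mathbb{X};\kappa)$. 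Hence the parallelograms commute on the nose, and $K_{\uparrow}$ and $K^{\downarrow}$ may be taken to be $0$. It therefore remains to verify that $\iota$ is an (unfiltered) homotopy equivalence and that $\iota_{\uparrow},\iota^{\downarrow}$ are filtered homotopy equivalences.

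For $\iota$, the classical agreement of simplicial and singular homology on a simplicial pair, applied to $(X,\partial X)$, shows that $\iota$ is a quasi-isomorphism between chain complexes of $\kappa$-vector spaces vanishing in negative degrees; by Remark \ref{qhrmk}, it is then automatically a chain homotopy equivalence. I focus now on $\iota_{\uparrow}$ (the argument for $\iota^{\downarrow}$ is identical after negating filtrations and interchanging $+$ and $-$). By Proposition \ref{quasihtopy} it suffices to check that both ascending chain complexes are concentrated in nonnegative degrees (immediate), both admit orthogonal bases in each degree, and $\iota_{\uparrow}$ is a filtered quasi-isomorphism. Orthogonality on the simplicial side is immediate from the fact that $\ell_{\uparrow}$ is computed on the basis of simplices via the max of $f$ on their vertices; on the singular side, the uniqueness of representation described after Example \ref{pinsing} identifies the distinguished singular simplices as a basis, whose orthogonality with respect to $\ell_{\uparrow}$ follows directly from the definition of $\ell_{\uparrow}$ as a maximum over distinct $\sigma_i$.

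It remains to show that for every $t\in[-\Lambda,\Lambda)$, the map $\iota_{\uparrow}^{\leq t}$ is a quasi-isomorphism. Let $X^{-}_{t}$ be the full subcomplex of $X^{-}$ on those vertices $v$ with $f(v)\leq t$; since every vertex of $\partial^{-}X$ has $f$-value $-\Lambda\leq t$, we have $\partial^{-}X\subset X^{-}_{t}$, and the subcomplex $(C_{\uparrow}^{\leq t})_{\mathrm{simp}}$ coincides with $C_*(X^{-}_{t};\kappa)/C_*(\partial^{-}X;\kappa)$, while $(C_{\uparrow}^{\leq t})_{\mathrm{sing}}=S_*(f^{-1}([-\Lambda,t]),\partial^{-}\mathbb{X};\kappa)$. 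The map $\iota_{\uparrow}^{\leq t}$ factors as the simplicial-to-singular map for the pair $(|X^{-}_{t}|,\partial^{-}\mathbb{X})$ composed with the map induced by the inclusion of pairs $(|X^{-}_{t}|,\partial^{-}\mathbb{X})\hookrightarrow (f^{-1}([-\Lambda,t]),\partial^{-}\mathbb{X})$. The first factor is a quasi-isomorphism by the classical simplicial--singular theorem for simplicial pairs. For the second, it is enough, by the long exact sequences of the two pairs and the five lemma, to know that the inclusion $|X^{-}_{t}|\hookrightarrow f^{-1}([-\Lambda,t])$ is a homotopy equivalence; this follows from the standard fact that for a simplexwise linear $f$, the sublevel set $f^{-1}([-\Lambda,t])$ strong-deformation retracts onto $|X^{-}_{t}|$, built simplex by simplex out of the straight-line retraction inside each simplex $\sigma$ of the convex region $\sigma\cap f^{-1}([-\Lambda,t])$ onto its face spanned by the vertices of $\sigma$ with $f$-value $\leq t$.

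The main obstacle is the last point: producing a genuinely continuous, compatible piecewise-linear retraction $f^{-1}([-\Lambda,t])\to |X^{-}_{t}|$ that agrees on each pair of simplices along their common face. This is standard PL topology (it appears implicitly in the simplicial Morse-theoretic literature), but verifying that the per-simplex retractions patch correctly is the one genuinely technical ingredient; everything else is formal. Once this is in hand, Proposition \ref{quasihtopy} upgrades $\iota_{\uparrow}$ and $\iota^{\downarrow}$ from filtered quasi-isomorphisms to filtered homotopy equivalences, all hypotheses of Corollary \ref{hcoiso} are met, and the desired isomorphism in $\mathsf{HCO}^{\Lambda}(\kappa)$ is produced (with $K_{\uparrow}=K^{\downarrow}=0$ supplying the required degree-one maps).
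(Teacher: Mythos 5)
Your proposal is correct and follows essentially the same route as the paper: commute the diagram with $K_{\uparrow}=K^{\downarrow}=0$, invoke Corollary \ref{hcoiso} together with Proposition \ref{quasihtopy} (and Remark \ref{qhrmk}), use the classical simplicial-versus-singular theorem for each filtered piece, and reduce to the standard fact that $f^{-1}([-\Lambda,t])$ deformation retracts onto the realization of the subcomplex spanned by vertices with $f\leq t$, which the paper likewise treats as standard (citing the discussion around \cite[Figure VI.8]{EH}) rather than proving the patching of the per-simplex retractions.
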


\begin{proof}
	The usual basis for $S_k(\mathbb{X}\setminus\partial^+\mathbb{X},\partial^-\mathbb{X};\kappa)$, consisting of maps $\sigma\co\Delta^k\to \mathbb{X}\setminus\partial^+\mathbb{X}$ with image not contained in $\partial^-\mathbb{X}$, is orthogonal for the filtration function $\ell_{\uparrow}$ on $S_k(\mathbb{X}\setminus\partial^+\mathbb{X},\partial^-\mathbb{X};\kappa)$.  Similar remarks apply to  $S_k(\mathbb{X}\setminus\partial^-\mathbb{X},\partial^+\mathbb{X};\kappa)$, to $
	\frac{C_*(X^-;\kappa)}{C_*(\partial^-X;\kappa)}$, and to $\frac{C_*(X^+;\kappa)}{C_*(\partial^+X;\kappa)}$.	The diagram (\ref{simpsingdiag}) commutes if we set $K_{\uparrow}=K^{\downarrow}=0$, so by Corollary \ref{hcoiso}, Proposition \ref{quasihtopy}, and Remark \ref{qhrmk}, it suffices to show that $\iota$ is a quasi-isomorphism and that $\iota_{\uparrow}$ and $\iota^{\downarrow}$ are filtered quasi-isomorphisms.
	
	That $\iota$ is a quasi-isomorphism is immediate from \cite[Theorem 2.27]{Ha}. The cases of $\iota_{\uparrow}$ and $\iota^{\downarrow}$ are identical (and can be converted to each other by reversing the sign of $f$), so we just consider $\iota_{\uparrow}$.  Given $t\in [-\Lambda,\Lambda)$, let $X^{\leq t}$ denote the subcomplex of the simplicial complex $X$ consisting of simplices whose vertices all have $f\leq t$, and let $\mathbb{X}^{\leq t}$ union of these simplices in the geometric realization $\mathbb{X}$ (thus $\mathbb{X}^{\leq t}$ is a geometric realization of $X^{\leq t}$).  We have $\mathbb{X}^{\leq t}\subset f^{-1}([-\Lambda,t])$; typically the inclusion is strict, due to the presence of simplices in which some vertices have $f\leq t$ and others have $f>t$.  The restriction of $\iota_{\uparrow}$ to the $t$-filtered part of 
	$\frac{C_*(X^-;\kappa)}{C_*(\partial^-X;\kappa)}$ is equal to the composition \[ \frac{C_*(X^{\leq t};\kappa)}{C_*(\partial^-X;\kappa)}\to S_*(\mathbb{X}^{\leq t},\partial^-\mathbb{X};\kappa)\to S_*(f^{-1}([-\Lambda ,t]),\partial^-\mathbb{X};\kappa) \] where the first map is the standard inclusion of simplicial into singular homology (which is a quasi-isomorphism by \cite[Theorem 2.27]{Ha}) and the second map is induced by the inclusion $(\mathbb{X}^{\leq t},\partial^-\mathbb{X})\to (f^{-1}([-\Lambda,t]),\partial^-\mathbb{X})$. But by a standard argument (see, \emph{e.g.}, the discussion around \cite[Figure VI.8]{EH}) the latter inclusion is a homotopy equivalence.  Thus, for any $t$, $\iota_{\uparrow}$ restricts as a quasi-isomorphism from the $t$-filtered part of   $\frac{C_*(X^-;\kappa)}{C_*(\partial^-X;\kappa)}$ to the $t$-filtered part of $S_*(\mathbb{X}\setminus\partial^+\mathbb{X},\partial^-\mathbb{X};\kappa)$, completing the proof that $\iota_{\uparrow}$ is a filtered quasi-isomorphism.
\end{proof}

The relation between the pinned singular and Morse filtered cospans  requires more work, involving in particular the subtleties of the construction of the homotopy equivalence $\mathcal{E}(f,v)$ from \cite{Paj} between the Morse and singular chain complexes.  As noted in Example \ref{morseex}, it is only the homotopy class of $\mathcal{E}(f,v)$ that is determined canonically by the description in \cite{Paj}; the map itself depends on various auxiliary choices made in the course of the construction.  We defer a detailed discussion of this to Appendix \ref{app}, in which we prove the following lemma.

\begin{lemma}\label{Efiltered}
	The map $\mathcal{E}(f,v)$ can be constructed consistently with the procedure in \cite{Paj} to have image contained in $S_*(\mathbb{X}\setminus\partial^+\mathbb{X},\partial^-\mathbb{X};\kappa)\subset S_*(\mathbb{X},\partial^-\mathbb{X};\kappa)$, and to be a filtered quasi-isomorphism $\mathcal{E}(f,v)\co CM_*(f,v;\kappa)\to S_*(\mathbb{X}\setminus\partial^+\mathbb{X},\partial^-\mathbb{X};\kappa)$.
\end{lemma}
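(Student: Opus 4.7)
The plan is to trace through Pajitnov's construction of $\mathcal{E}(f,v)$ in \cite[Chapter 6]{Paj} and verify that the auxiliary choices can be arranged to respect the sublevel filtration. The starting observation is that because $-v$ strictly decreases $f$ along non-constant orbits, the descending manifold $D(p)$ of any critical point $p$, along with its closure $\overline{D(p)}$, is contained in $f^{-1}([-\Lambda,f(p)])$.  Since $f(p)<\Lambda$, this closure is disjoint from $\partial^+\mathbb{X}=f^{-1}(\{\Lambda\})$.  In Pajitnov's construction, $\mathcal{E}(f,v)(p)$ is essentially a singular chain representing $\overline{D(p)}$, corrected inductively over critical values by singular chains representing descending manifolds of lower-index critical points that accumulate on $\overline{D(p)}$.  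Each of these ingredients can be chosen with image in $f^{-1}([-\Lambda,f(p)])\setminus\partial^+\mathbb{X}$.  It follows both that $\mathcal{E}(f,v)$ lands in $S_*(\mathbb{X}\setminus\partial^+\mathbb{X},\partial^-\mathbb{X};\kappa)$ and that it is filtration-preserving, since $\max_{\Delta^k}(f\circ\sigma)\leq f(p)=\ell_{\uparrow}(p)$ for each singular simplex $\sigma$ appearing in $\mathcal{E}(f,v)(p)$.

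For the filtered quasi-isomorphism property, I would argue by restricting to sublevel sets.  For any regular value $t\in(-\Lambda,\Lambda)$, the sublevel set $\mathbb{X}_t:=f^{-1}([-\Lambda,t])$ is a smooth cobordism with lower boundary $\partial^-\mathbb{X}$, on which $f|_{\mathbb{X}_t}$ is a Morse function whose Morse complex is precisely $CM_*^{\leq t}(f,v;\kappa)$.  By construction $\mathcal{E}(f,v)$ restricts to a version of the Pajitnov equivalence for the cobordism $\mathbb{X}_t$, now mapping into $S_*(\mathbb{X}_t,\partial^-\mathbb{X};\kappa)$, which is a quasi-isomorphism by \cite{Paj}.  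For a general $t\in[-\Lambda,\Lambda)$, I would choose a regular value $t'\in(t,\Lambda)$ with no critical values in $(t,t']$; then $CM_*^{\leq t}=CM_*^{\leq t'}$ and flowing along $-v$ deformation-retracts $\mathbb{X}_{t'}$ onto $\mathbb{X}_t$ relative to $\partial^-\mathbb{X}$, so the quasi-isomorphism at $t'$ yields the one at $t$.

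The main obstacle is confirming that Pajitnov's inductive construction, which proceeds handle by handle in order of increasing critical values, can be arranged to confine each piece of the chain data to the correct sublevel set.  The chains that build $\mathcal{E}(f,v)(p)$ arise from choices of triangulations of compactified descending manifolds together with chain-level matchings of these triangulations across critical levels, and the filtration compatibility amounts to observing that each of these ingredients sits inside $f^{-1}([-\Lambda,f(p)])$.  While the required modifications do not depart substantively from Pajitnov's setup, the verification is essentially bookkeeping and is best carried out in the self-contained Appendix \ref{app}.
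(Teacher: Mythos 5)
Your overall route is the same as the paper's---restrict to the sublevel cobordisms $\mathbb{X}_t=f^{-1}([-\Lambda,t])$, invoke Pajitnov's homotopy equivalence on each of them, and handle non-critical $t$ by flowing down with $-v$---but the step you dismiss as bookkeeping is in fact the entire content of the lemma, and your sketch of it rests on a misreading of the construction. In \cite[Chapter 6]{Paj} the map $\mathcal{E}(f,v)$ is not assembled critical value by critical value from triangulated descending manifolds; it is the composite $\chi\circ I_{f,\mathcal{W}}$, where $\mathcal{W}$ is a Morse--Smale filtration $W^{(-1)}\subset\cdots\subset W^{(n)}$ arising from an auxiliary \emph{ordered} Morse function $\phi$ (so the $W^{(k)}$ are sublevel sets of $\phi$, not of $f$), and $\chi\co P_*(\mathcal{W})\to S_*(\mathbb{X},\partial^-\mathbb{X};\kappa)$ is built by induction on the index $k$, with the chain attached to a critical point $p$ only constrained to lie in $W^{(k)}$ and accompanied by correction chains $b_p,b'_p$ whose supports are not controlled by $f(p)$. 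Consequently your assertion that ``by construction $\mathcal{E}(f,v)$ restricts to a version of the Pajitnov equivalence for the cobordism $\mathbb{X}_t$'' is not automatic: for arbitrary admissible choices the restriction of $\mathcal{E}(f,v)$ to $CM_*^{\leq t}(f,v;\kappa)$ need not even land in $S_*(f^{-1}([-\Lambda,t]),\partial^-\mathbb{X};\kappa)$, let alone coincide with an equivalence constructed intrinsically on $\mathbb{X}_t$. Arranging this is exactly what has to be proved, and deferring the verification to Appendix \ref{app} is circular, since that appendix is where the lemma is established.

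What is actually required, and what the paper supplies, is a compatible system of auxiliary choices: nested Morse--Smale filtrations $W^{(k)}_m\subset W^{(k)}_{m+1}$ for the sublevel cobordisms $\mathbb{X}_m$ (this uses \cite[Theorem 4.3.58]{Paj}), a single simultaneous induction on degree in which the chain for each critical point $p$ is chosen inside the minimal $\mathbb{X}_m$ containing $p$ and then pushed forward under the inclusions into the larger cobordisms, and the gradient-flow retraction of Remark \ref{deftocrit} to force the supports into $f^{-1}([-\Lambda,t_m])$ rather than merely into the $\phi$-sublevel sets $W^{(k)}_m$. With those ingredients the restriction to each $CM_*^{\leq t_m}(f,v;\kappa)$ is literally a Pajitnov equivalence onto $S_*(f^{-1}([-\Lambda,t_m]),\partial^-\mathbb{X};\kappa)$, and your deformation-retraction argument then correctly handles the remaining, non-critical levels $t$. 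Your observation that $\overline{D(p)}\subset f^{-1}([-\Lambda,f(p)])$ is true but does not by itself control the inductive correction terms, so it cannot substitute for this compatibility argument.
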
 

Given Lemma \ref{Efiltered}, the isomorphism between the pinned singular and Morse fitlered cospans follows readily:

\begin{prop}\label{morseiso}
	Let $\mathbb{X}$ be a compact smooth manifold with boundary and let $f\co\mathbb{X}\to [-\Lambda,\Lambda]$ be a Morse function having no critical points on $\partial\mathbb{X}$, with $\partial\mathbb{X}=\partial^-\mathbb{X}\cup\partial^+\mathbb{X}$ where $\partial^{\pm}\mathbb{X}=f^{-1}(\{\pm\Lambda\})$.  Then, for a gradient-like vector field $v$ for $f$, the Morse filtered cospan $\mathcal{M}(\mathbb{X},f,v;\kappa)$ from Example \ref{morseex} is isomorphic to the pinned singular filtered cospan $\mathcal{S}_{\partial}(\mathbb{X},f;\kappa)$, via an isomorphism of the form 
	\[
	\xymatrix{ CM_*(-f,-v;\kappa)\ar[rrrd]^{K^{\downarrow}} \ar[rr]^{\mathcal{E}(-f,-v)} \ar[rd]_{\psi^{\downarrow}} & & S_*(\mathbb{X}\setminus\partial^-\mathbb{X},\partial^+\mathbb{X};\kappa)\ar[rd]^{\pi^+}  & \\ & S_*(\mathbb{X},\partial \mathbb{X};\kappa) \ar@{=}[rr]  &&  S_*(\mathbb{X},\partial \mathbb{X};\kappa) \\ CM_*(f,v;\kappa)\ar[ur]^{\psi_{\uparrow}} \ar[rr]_{\mathcal{E}(f,v)}\ar[rrru]^{K_{\uparrow}} && S_*(\mathbb{X}\setminus\partial^+\mathbb{X},\partial^-X;\kappa) \ar[ur]_{\pi^-} &             }
	\] where the homotopy equivalences $\mathcal{E}(\pm f,\pm v)$ are defined using suitable choices that are consistent with the construction in \cite[Chapter 6]{Paj}. (In particular, with these choices, their images are contained in the subcomplexes $S_*(\mathbb{X}\setminus\partial^{\pm}\mathbb{X},\partial^{\mp}\mathbb{X};\kappa)$ of $S_*(\mathbb{X},\partial^{\mp}\mathbb{X};\kappa)$.)
\end{prop}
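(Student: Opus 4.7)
The plan is to verify the hypotheses of Corollary~\ref{hcoiso} with $\alpha_{\uparrow} = \mathcal{E}(f,v)$, $\alpha^{\downarrow} = \mathcal{E}(-f,-v)$, and $\alpha = \mathrm{id}_{S_*(\mathbb{X},\partial\mathbb{X};\kappa)}$, where $\mathcal{E}(\pm f,\pm v)$ are chosen as in Lemma~\ref{Efiltered} so that they land in $S_*(\mathbb{X}\setminus\partial^{\pm}\mathbb{X},\partial^{\mp}\mathbb{X};\kappa)$ and are filtered quasi-isomorphisms onto these subcomplexes.

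First I would show that these three horizontal maps are (filtered) homotopy equivalences. The middle map is the identity, so this is automatic there. For $\alpha_{\uparrow}$, I would apply Proposition~\ref{quasihtopy}: both $CM_*(f,v;\kappa)$ and $S_*(\mathbb{X}\setminus\partial^{+}\mathbb{X},\partial^{-}\mathbb{X};\kappa)$ vanish in negative degrees and each admits an $\ell_{\uparrow}$-orthogonal basis in every degree---namely, the index-$k$ critical points of $f$, and the cosets of singular simplices $\sigma:\Delta^k \to \mathbb{X}\setminus\partial^{+}\mathbb{X}$ whose image is not contained in $\partial^{-}\mathbb{X}$; in both cases orthogonality is immediate from the defining formulas for $\ell_{\uparrow}$. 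Hence the filtered quasi-isomorphism $\mathcal{E}(f,v)$ provided by Lemma~\ref{Efiltered} is in fact a filtered homotopy equivalence. The descending analogue, using Remark~\ref{qhrmk} in place of Proposition~\ref{quasihtopy}, handles $\alpha^{\downarrow} = \mathcal{E}(-f,-v)$.

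Next I would check that the two parallelograms commute. Because the image of $\mathcal{E}(f,v)$ lies inside $S_*(\mathbb{X}\setminus\partial^{+}\mathbb{X},\partial^{-}\mathbb{X};\kappa)$, applying the inclusion-induced map $S_*(\mathbb{X},\partial^{-}\mathbb{X};\kappa) \to S_*(\mathbb{X},\partial\mathbb{X};\kappa)$ to $\mathcal{E}(f,v)(c)$ produces the same element as $\pi^{-}(\mathcal{E}(f,v)(c))$, so the bottom parallelogram commutes strictly, with $K_{\uparrow}=0$; the top parallelogram is analogous with $K^{\downarrow}=0$. The maps $\psi_{\uparrow}$ and $\psi^{\downarrow}$ of the Morse cospan in Example~\ref{morseex} were only determined up to chain homotopy, but Proposition~\ref{htopicpsi} guarantees that replacing them by chain-homotopic representatives---specifically, by $\pi^{-}\circ \mathcal{E}(f,v)$ and $\pi^{+}\circ \mathcal{E}(-f,-v)$---does not alter the isomorphism class of $\mathcal{M}(\mathbb{X},f,v;\kappa)$ in $\mathsf{HCO}^{\Lambda}(\kappa)$. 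Corollary~\ref{hcoiso} then produces the desired isomorphism.

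The substantive obstacle is concentrated in Lemma~\ref{Efiltered}, whose proof (deferred to Appendix~\ref{app}) requires a careful revisitation of the construction of $\mathcal{E}(f,v)$ in \cite[Chapter 6]{Paj} to ensure that, for each index-$k$ critical point $p$, the singular chain $\mathcal{E}(f,v)(p)$ can be arranged to lie in $S_k(f^{-1}([-\Lambda,f(p)])\setminus\partial^{+}\mathbb{X},\partial^{-}\mathbb{X};\kappa)$. Granting Lemma~\ref{Efiltered}, everything above is a routine application of the machinery already developed.
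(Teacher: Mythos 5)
Your proposal is correct and follows essentially the same route as the paper: invoke Lemma \ref{Efiltered} to get $\mathcal{E}(\pm f,\pm v)$ as filtered quasi-isomorphisms into the pinned subcomplexes, upgrade them to filtered homotopy equivalences via Proposition \ref{quasihtopy} (and its descending analogue), note the parallelograms commute (up to homotopy, since $\psi_{\uparrow},\psi^{\downarrow}$ are only fixed up to homotopy), and conclude with Corollary \ref{hcoiso}. Your explicit verification of the orthogonal-basis and vanishing-in-negative-degrees hypotheses of Proposition \ref{quasihtopy} is a harmless elaboration of what the paper leaves implicit.
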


\begin{proof}
	Recall that $\psi^{\downarrow}=\pi^+\circ \mathcal{E}(-f,-v)$ and $\psi_{\uparrow}=\pi^-\circ\mathcal{E}(f,v)$, so the above diagram commutes if we set $K^{\downarrow}=K_{\uparrow}=0$.  (As noted in Example \ref{morseex}, different choices made in the construction of $\mathcal{E}(\pm f,\pm v)$ result in homotopic maps, so the diagram still commutes up to homotopy if the versions of $\mathcal{E}(\pm f,\pm v)$ that are used to construct $\psi^{\downarrow}$ and $\psi_{\uparrow}$ are different from the versions constructed in the diagram.)  
	
	By Lemma \ref{Efiltered}, $\mathcal{E}(f,v)$ can be constructed consistently with \cite{Paj} to be a filtered quasi-isomorphism from $CM_*(f,v;\kappa)$ to $S_*(\mathbb{X}\setminus\partial^+\mathbb{X},\partial^-\mathbb{X};\kappa)$.   By Proposition \ref{quasihtopy} this version of $\mathcal{E}(f,v)$ is then a filtered homotopy equivalence.  Similarly (after negating filtrations to convert descending complexes into ascending ones) a suitable version of $\mathcal{E}(-f,-v)$ is a filtered homotopy equivalence.  The conclusion now follows from Proposition \ref{hcoiso}.	
\end{proof}

\section{Calculation and examples}\label{calcsect}

We now discuss more practically how one finds the decomposition promised by Corollary \ref{decompexists}, and carry this out in some examples instantiating Examples \ref{simpdef} and \ref{morseex}.  We assume here that $\mathcal{C}=\left((C_{\uparrow *},\partial_{\uparrow}^C,\ell_{\uparrow}^C),(C^{\downarrow}_{*},\partial^{\downarrow}_C,\ell^{\downarrow}_C),(D_*,\partial_D),\psi_{\uparrow},\psi^{\downarrow}\right)$ is a filtered cospan over a field $\kappa$ such that $C_{\uparrow *}$ and $C^{\downarrow}_{*}$ are both finite-dimensional (which implies that $\mathcal{C}$ is admissible).  The discussion leading to Corollary \ref{decompexists} shows that the decomposition can be read off from bases $\mathcal{X}_{\uparrow k}=\mathcal{A}_{\uparrow k}\sqcup \mathcal{B}_{\uparrow k}\sqcup \mathcal{H}_{\uparrow k}$ and $\mathcal{X}^{\downarrow}_{k} =\mathcal{A}^{\downarrow}_{k}\sqcup \mathcal{B}^{\downarrow}_{k}\sqcup \mathcal{H}^{\downarrow}_{k}$ for the various $C_{\uparrow k}$ and $C^{\downarrow}_{k}$, respectively, with the following properties:
\begin{enumerate}
	\item \label{xbasis1} $\mathcal{X}_{\uparrow k}$ is $\ell_{\uparrow}^{C}$-orthogonal and $\mathcal{X}^{\downarrow}_{k}$ is $(-\ell^{\downarrow}_{C})$-orthogonal, \emph{i.e.} for $c_x\in \kappa$, \[ \ell_{\uparrow}^{C}\left(\sum_{x\in \mathcal{X}_{\uparrow k}}c_x x\right)=\max\{\ell_{\uparrow}^{C}(x)|c_x\neq 0\}\quad \mbox{and}\quad \ell^{\downarrow}_{C}\left(\sum_{x\in \mathcal{X}^{\downarrow}_{k}}c_xx\right)=\min\{\ell^{\downarrow}_{C}(x)|c_x\neq 0\}. \]
	\item \label{xbasis2} $\mathcal{B}_{\uparrow k}\sqcup \mathcal{H}_{\uparrow k}$ is a basis for $\ker(\partial_{\uparrow}^{C}|_{C_{\uparrow k}})$, and $\mathcal{B}^{\downarrow}_{k}\sqcup \mathcal{H}^{\downarrow}_{k}$ is a basis for $\ker(\partial^{\downarrow}_{C}|_{C^{\downarrow}_{k}})$.
	\item \label{xbasis3} $\partial_{\uparrow}^C$ maps $\mathcal{A}_{\uparrow\,k+1}$ bijectively to $\mathcal{B}_{\uparrow k}$, and $\partial^{\downarrow}_C$ maps $\mathcal{A}^{\downarrow}_{k+1}$ bijectively to $\mathcal{B}^{\downarrow}_{k}$.  Thus the quotient projections (modulo the images of $\partial^{C}_{\uparrow}$ and $\partial_{C}^{\downarrow}$) identify $\mathcal{H}_{\uparrow k}$ and $\mathcal{H}^{\downarrow}_{k}$ with bases for the respective homologies $H_k(C_{\uparrow *})$ and $H_k(C^{\downarrow}_{*})$.
	\item \label{xbasis4} For some nonnegative integers $m,p,p',q,q'$ (depending on  $k$), we can write \begin{align*} \mathcal{H}_{\uparrow k}&=\{x_1,\ldots,x_m,x_{m+1},\ldots,x_{m+p},x_{m+p+1},\ldots,x_{m+p+q}\},\\ \mathcal{H}^{\downarrow}_{k}&=\{y_1,\ldots,y_m,y_{m+1},\ldots,y_{m+p'},y_{m+p'+1},\ldots y_{m+p'+q'}\} \end{align*} where (denoting by $[d]$ the homology class of a cycle $d\in D_k$):
	\begin{itemize} \item[(i)] $[\psi_{\uparrow}x_i]=[\psi^{\downarrow}y_i]$ for $i=1,\ldots,m$;
		\item[(ii)] $\{[\psi_{\uparrow}x_1],\ldots,[\psi_{\uparrow}x_m],[\psi_{\uparrow}x_{m+1}],\ldots[\psi_{\uparrow}x_{m+p}],[\psi^{\downarrow}y_{m+1}],\ldots,[\psi^{\downarrow}y_{m+p'}]\}$ is a linearly independent set in $H_k(D_*)$; and 
		\item[(iii)] $[\psi_{\uparrow}x_{m+p+i}]=[\psi^{\downarrow}y_{m+p'+i'}]=0\in H_k(D_*)$ for all $i=1,\ldots,q$,\,$i'=1,\ldots,q'$. \end{itemize}
\end{enumerate}
In this case, $\mathcal{C}$ is isomorphic to a direct sum of the following, as $k$ varies:
\begin{itemize}
	\item For each $x\in \mathcal{A}_{\uparrow\,k+1}$ such that $\ell_{\uparrow}^{C}(\partial_{\uparrow}^{C}x)<\ell_{\uparrow}^{C}(x)$, a copy of $\left(\uparrow_{\ell_{\uparrow}^{C}(\partial_{\uparrow}^{C}x)}^{\ell_{\uparrow}^{C}(x)}\right)_k$;
\item For each $x\in \mathcal{A}^{\downarrow}_{k+1}$ such that $\ell^{\downarrow}_{C}(\partial^{\downarrow}_{C}x)>\ell^{\downarrow}_{C}(x)$, a copy of $\left(\downarrow^{\ell^{\downarrow}_{C}(\partial^{\downarrow}_{C}x)}_{\ell^{\downarrow}_{C}(x)}\right)_k$;
\item For $i=1,\ldots,m$, a copy of $\left(>_{\ell_{\uparrow}^{C}(x_i)}^{\ell^{\downarrow}_{C}(y_i)}\right)_k$;
\item For $i=m+1,\ldots,m+p$, a copy of $(\nearrow_{\ell_{\uparrow}^{C}(x_i)})_k$; 
\item For $i=m+1,\ldots,m+p'$, a copy of $(\searrow^{\ell^{\downarrow}_{C}(y_i)})_k$;
\item For $i=m+p+1,\ldots,m+p+q$, a copy of $\left(\uparrow_{\ell_{\uparrow}^{C}(x_i)}^{\infty}\right)_k$; 
\item For $i=m+p'+1,\ldots,m+p'+q'$, a copy of $\left(\downarrow^{\ell^{\downarrow}_{C}(y_i)}_{-\infty}\right)_k$; and 
\item A number of copies of $\square_k$ equal to $\dim H_k(D_*)-(m+p+p')$ (\emph{i.e.}, to the dimension of the cokernel of $\psi_{\uparrow *}+\psi^{\downarrow}_{*}\co H_k(C_{\uparrow *})\oplus H_k(C^{\downarrow}_{*})\to H_k(D_*)$).
\end{itemize}

While we do not pursue here the goal of finding an optimally efficient algorithm for finding suitable bases $\mathcal{X}_{\uparrow k}$ and $\mathcal{X}^{
\downarrow}_{k}$, we remark that the standard Gaussian-elimination-based algorithms for persistence barcodes that go back to \cite{Bar},\cite{ZC} are sufficient to achieve conditions (\ref{xbasis1}) through (\ref{xbasis3}), and that the bases $\mathcal{H}_{\uparrow k}$ and $\mathcal{H}^{\downarrow}_{k}$   can then be modified by related techniques in order to arrange for (\ref{xbasis4}) to hold as well.

In situations such as those as Examples \ref{simpdef} and \ref{morseex}, the standard bases (given by the $k$-simplices or the index-$k$ critical points) of $C_{\uparrow k}$ and $C^{\downarrow}_{k}$ are orthogonal, but do not typically satisfy any of (\ref{xbasis1}) through (\ref{xbasis4}), and thus must be modified to different orthogonal bases.  Proposition \ref{orthcrit}(ii) then gives a convenient way to check that these new bases are in fact orthogonal.

In all of the examples below, the filtered cospans will be $\Lambda$ bounded, and we set the parameter $\Lambda$ equal to 2.

\subsection{Simplicial complexes}

The following two examples will illustrate the calculation of the isomorphism type of the simplicial filtered cospan $\mathcal{C}_{\partial}(X,f;\kappa)$ for simplicial complex $X$ and a simplexwise linear function $f\co \mathbb{X}\to [-2,2]$ on the geometric realization of $X$, as defined in Example \ref{simpdef}.  (In the first example, $f$ will have image in $[-1,1]$, so that, in the notation of Example \ref{simpdef}, $\partial X=\varnothing$ and $X^-=X^+=X$).

\begin{ex}\label{hornex} Consider the simplicial complex $X$ and function $f$ depicted in Figure \ref{hornfig}.  Thus $X$ is the $0$-horn of the standard $3$-simplex (\emph{i.e.} the boundary of $\Delta^3$ with the face opposite the zeroth vertex deleted), and $f$ takes values $1,0,0,$ and $-1$ on the vertices $v_0,v_1,v_2,v_3,$ respectively.
	
As $\partial X$ is empty in this case, the complexes $(C_{\uparrow *},\partial_{\uparrow}^{C}),(C^{\downarrow}_{*},\partial^{\downarrow}_{C}),$ and $(D_*,\partial_D)$ are each equal to the simplicial chain complex of $X$; the functions $\ell_{\uparrow}^{C}$ and $\ell^{\downarrow}_{C}$ send a simplicial chain to, respectively, the maximal and minimal value of $f$ on the support of the chain.	

Denote by $\partial^k$ the degree-$k$ part of the simplicial boundary operator of $X$ (sending $k$-chains to $(k-1)$-chains).  We see that, with the various simplices labeled as in the figure, $\partial^2$ is injective,  $\ker\partial^1=\Img\partial^2=\mathrm{span}\{e_1-e_2+e_4,e_1-e_3+e_5,e_2-e_3+e_6\}$, and $\Img\partial^1=\left\{\left.\sum_{i=0}^{3}a_iv_i\right|a_0+a_1+a_2+a_3=0\right\}$. 	Thus the homology is, not surprisingly, trivial in degrees other than zero, and in degree zero it is one-dimensional, generated by the class of any one of the vertices $v_i$.

Let us find suitable bases $\mathcal{X}^{\downarrow}_{k}$ and $\mathcal{X}^{k}_{\uparrow}$ as described at the start of this section.    The main subtlety in this example arises for $k=1$: as the homology is trivial in degree $1$, we are to construct $(-\ell^{\downarrow}_{C})$- and $\ell_{\uparrow}^{C}$-orthogonal bases $\mathcal{X}^{\downarrow}_{1}=\mathcal{A}^{\downarrow}_{1}
\sqcup \mathcal{B}^{\downarrow}_{1}$ and $\mathcal{X}_{\uparrow}^{1}=\mathcal{A}_{\uparrow}^{1}\sqcup\mathcal{B}_{\uparrow}^{1}$ such that $\mathcal{B}^{\downarrow}_{1}$ and $\mathcal{B}_{\uparrow}^{1}$ span $\Img\partial_2$ and have orthogonal preimages under $\partial^2$, and such that the images of $\mathcal{A}_{\uparrow}^{1}$ and $\mathcal{A}^{\downarrow}_{1}$ under $\partial^1$ are also orthogonal.

\begin{center}
	\begin{figure}
		\includegraphics[width=3in]{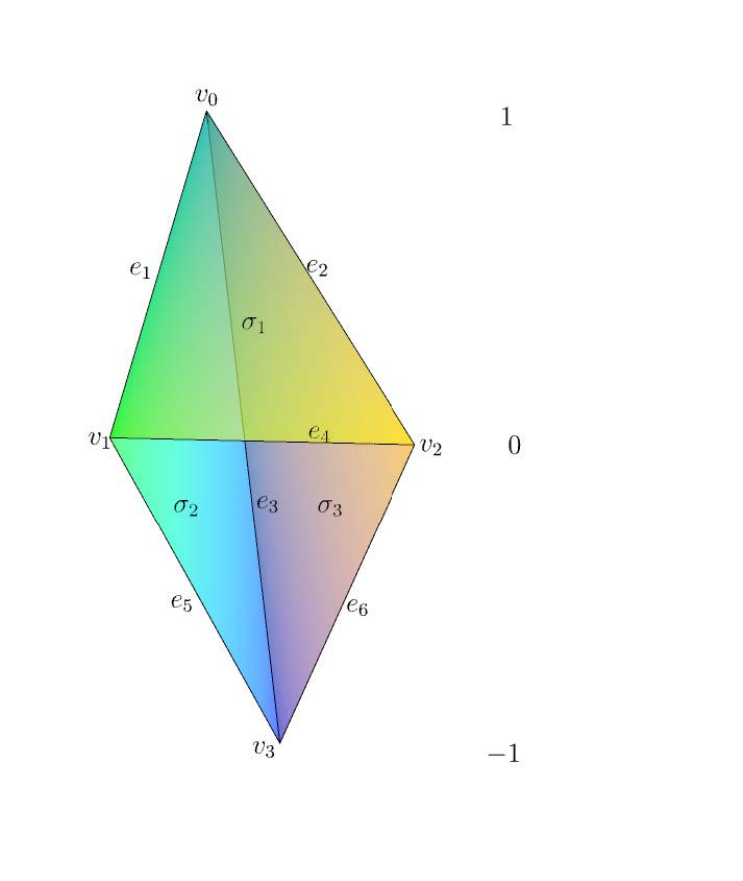}
		\caption{The function from Example \ref{hornex}}
		\label{hornfig}
	\end{figure}
\end{center}

To form $\mathcal{X}^{\downarrow}_{1}$ we may take $\mathcal{B}^{\downarrow}_{1}$ equal to $\{\partial \sigma_1,\partial \sigma_2,\partial \sigma_3\}=\{e_1-e_2+e_4,e_1-e_3+e_5,e_2-e_3+e_6\}$, and $\mathcal{A}_{1}^{\downarrow}=\{e_1,e_4,e_5\}$. (So by (\ref{xbasis3}) we must then also take $\mathcal{X}^{\downarrow}_{2}=\mathcal{A}^{\downarrow}_{2}=\{\sigma_1,\sigma_2,\sigma_3\}$ and $\mathcal{B}^{\downarrow}_{0}=\partial\left(\mathcal{A}^{\downarrow}_{1}\right)=\{v_1-v_0,v_2-v_1,v_3-v_1\}$.)  Indeed, it is not hard to check that $\mathcal{B}^{\downarrow}_{0}$ is then $(-\ell^{\downarrow}_{C})$-orthgonal, and $\mathcal{A}^{\downarrow}_{2}$ is   $(-\ell^{\downarrow}_{C})$-orthgonal directly by the definition of $\ell^{\downarrow}_{C}$.  To see that the basis $\mathcal{X}^{\downarrow}_{1}=\mathcal{A}^{\downarrow}_{1}\cup \mathcal{B}^{\downarrow}_{1}=\{e_1,e_4,e_5,e_1-e_2+e_4,e_1-e_3+e_5,e_2-e_3+e_6\}$ for the one-chains is $(-\ell^{\downarrow}_{C})$-orthogonal, one may appeal to  Proposition \ref{orthcrit}(ii), noting that $\ell^{\downarrow}_{C}$ maps both this basis and the ``standard'' basis $\{e_1,e_2,e_3,e_4,e_5,e_6\}$ to $\{0,0,0,-1,-1,-1\}$.  

To complete the discussion of the bases $\mathcal{X}^{\downarrow}_{k}$ we need to supplement the basis $\mathcal{B}^{\downarrow}_{0}=\{v_1-v_0,v_2-v_1,v_3-v_1\}$ for $\Img\partial_1$ to a $(-\ell^{\downarrow}_{C})$-orthogonal basis $\mathcal{B}^{\downarrow}_{0}\cup \mathcal{H}^{\downarrow}_{0}$ for $\mathrm{span}\{v_0,v_1,v_2,v_3\}$.  One readily sees (for instance by Proposition \ref{orthcrit}(ii) again)  that the unique way to do this (up to a meaningless scalar factor) is to take $\mathcal{H}^{\downarrow}_{0}=\{v_0\}$.

To start to translate this into the desired decomposition into standard elementary summands, observe that, for both $k=0$ and $k=1$, each
 $x\in\mathcal{A}^{\downarrow}_{k+1}$ has $\ell^{\downarrow}_{C}(\partial^{k+1}x)=\ell^{\downarrow}_{C}(x)$.  So the $\mathcal{A}^{\downarrow}_{k+1}$ and $\mathcal{B}^{\downarrow}_{k}$ do not contribute any terms $(\downarrow_{b}^{a})_k$ to the decomposition 
(the underlying reason being that what might be denoted as ``$(\downarrow_{a}^{a})_k$'' is isomorphic to zero in $\mathsf{HCO}(\kappa)$).  On the other hand, as the single element $v_0$ of $\mathcal{H}_{0}^{\downarrow}$ has $\ell^{\downarrow}_{C}(v_0)=1$, it will contribute a summand of the form $(>^{1}_{b})_0$; we will see below after constructing $\mathcal{H}_{\uparrow}^{0}$ that $b=-1$.\footnote{Since the maps that in the general
 theory are denoted by $\psi^{\downarrow}$ and $\psi_{\uparrow}$ in this case induce isomorphisms on homology (indeed, they are the identity), there will be no summands of any of the forms $(\uparrow_{a}^{\infty})_k,(\downarrow^{a}_{-\infty})_k,(\nearrow_a)_k,(\searrow^a)_k,$ or $\square_k$.}

We turn now to the $\mathcal{X}_{\uparrow}^{k}$.  It will not work to have $\mathcal{A}_{\uparrow}^{2}$ coincide with the standard basis $\mathcal{A}^{\downarrow}_{2}=\{\sigma_1,\sigma_2,\sigma_3\}$, because the image of this basis is not $\ell_{C}^{\downarrow}$-orthogonal: one has \[ \ell^{C}_{\uparrow}\left(\partial^2 (\sigma_1-\sigma_2+\sigma_3)\right)=\ell^{C}_{\uparrow}(e_4-e_5+e_6)=0 \quad \mbox{while}\quad \ell_{\uparrow}^{C}(\partial^2 \sigma_1)=\ell_{\uparrow}^{C}(\partial^2 \sigma_2)=\ell_{\uparrow}^{C}(\partial^2 \sigma_3)=1.\]  We remedy this by taking $\mathcal{A}_{\uparrow}^{2}=\{\sigma_1-\sigma_2+\sigma_3,\sigma_2,\sigma_3\}$ and hence $\mathcal{B}_{\uparrow}^{1}=\partial^2(\mathcal{A}_{\uparrow}^{1})=\{e_4-e_5+e_6,e_1-e_3+e_5,e_2-e_3-e_6\}$.  One can then take $\mathcal{A}_{\uparrow}^{1}=\{e_1,e_4,e_5\}$ (that $\mathcal{A}_{\uparrow}^{1}\cup \mathcal{B}_{\uparrow}^{1}$ is $\ell_{\uparrow}^{C}$-orthogonal follows from Proposition \ref{orthcrit}(ii) again), $\mathcal{B}_{\uparrow}^{0}=\partial(\mathcal{A}^{\uparrow}_{0})=\{v_1-v_0,v_2-v_1,v_3-v_1\}$, and $\mathcal{H}_{\uparrow}^{0}=\{v_3\}$.  (The choice of $v_3$ here is necessary to make $\mathcal{B}_{\uparrow}^{0}\cup \mathcal{H}_{\uparrow}^{0}$ $\ell_{\uparrow}^{C}$-orthogonal.)

Since $\ell_{\uparrow}^{C}(v_3)=-1$, this confirms the earlier assertion that the summand associated to the $0$th homology class represented simultaneously by the unique elements of $\mathcal{H}_{0}^{\downarrow}$ and $\mathcal{H}_{\uparrow}^{0}$ is $(>_{-1}^{1})_{0}$.  As was the case for the $\mathcal{A}^{\downarrow}_{k+1}$ most elements of the $\mathcal{A}_{\uparrow}^{k+1}$ have the same filtration levels as their images under $\partial^{k+1}$ and hence do not contribute to the decomposition, but there is one exception: $f_1-f_2+f_3\in \mathcal{A}_{\uparrow}^{2}$ has $\ell_{\uparrow}^{C}(f_1-f_2+f_3)=1$ and $\ell_{\uparrow}^{C}(\partial^2(f_1+f_2+f_3))=0$, yielding a summand $(\uparrow_{0}^{1})_1$.

We conclude that, in this instance, $\mathcal{C}_{\partial}(X,f;\kappa)$  is isomorphic to $(\uparrow_{0}^{1})_1\oplus (>_{-1}^{1})_{0}$.  Based on Propositions \ref{barclass} and \ref{simpsing}, this corresponds to the statement that the level set barcode of $f\co\mathbb{X}\to \R$ consists of an interval $[0,1)$ in degree $1$ and an interval $[-1,1]$ in degree $0$.  And indeed, the level sets of $f$ at levels in $[0,1)$ are homeomorphic to $S^1$, those at levels in $[-1,1]\setminus[0,1)$ are contractible, and the others are empty.	
\end{ex}

\begin{ex}\label{simpbdryex}
	For an example of $\mathcal{C}_{\partial}(X,f;\kappa)$ with $\partial X\neq \varnothing$, consider the function depicted in Figure \ref{simpfig2}.  Here $\partial^+X$ and $\partial^-X$ each consist of one point; these points are not named in the figure because they do not contribute generators to any of the relevant chain complexes.  According to the definitions in Example \ref{simpdef}, the complex $(D_*,\partial_D)$ is the quotient complex $\frac{C_*(X;\kappa)}{C_*(\partial X;\kappa)}$, so is freely generated in degree $0$ by $v_0,v_1,v_2$, in degree $1$ by $e_1,\ldots,e_6$, and in degree $2$ by $\sigma$.  To fix signs, we orient $e_1,e_5$, and $e_6$ to point away from the boundary, so $\partial_D e_1=v_0,\partial_D e_5=v_1,\partial_D e_6=v_2$.  Evidently $H_*(D_*,\partial_D)$ is isomorphic to the homology of $S^1$ relative to two points, so it has rank two in degree $1$ and is trivial in other degrees; as cycles generating the homology we may take $e_1+e_2-e_5$ and $e_4+e_5-e_6$. 

\begin{center}
	\begin{figure}
		\includegraphics[width=3in]{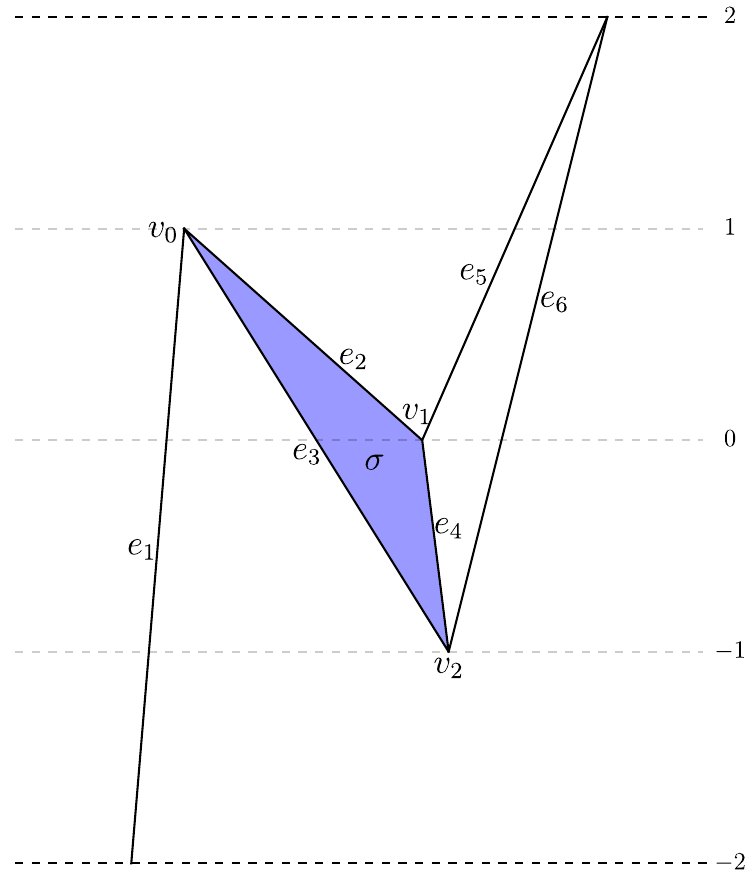}
		\caption{The function from Example \ref{simpbdryex}}
		\label{simpfig2}
	\end{figure}
\end{center}	
	
	The complexes $C_{\uparrow *}$ and $C^{\downarrow}_{*}$ are the subcomplexes of $D_*$ generated just by those simplices that do not intersect, respectively, $f^{-1}(\{2\})$ and $f^{-1}(\{-2\})$.  Thus they are identical to $D_*$ in degrees $0$ and $2$, while, $C_{\uparrow 1}=\mathrm{span}\{e_1,e_2,e_3,e_4\}$ and $C^{\downarrow}_{1}=\mathrm{span}\{e_2,e_3,e_4,e_5,e_6\}$.
	
	As both $C_{\uparrow 2}$ and $C^{\downarrow}_{2}$ are generated by $\sigma$, we take $\mathcal{X}_{\uparrow}^{2}=\mathcal{A}_{\uparrow}^{2}=\mathcal{X}^{\downarrow}_{2}=\mathcal{A}^{\downarrow}_{2}=\{\sigma\}$, and hence $\mathcal{B}^{1}_{\uparrow}=\mathcal{B}_{1}^{\downarrow}=\{e_2-e_3+e_4\}$.  
	
	The kernel of $\partial_D$ in degree one is spanned by $\{e_2-e_3+e_4,e_1+e_2-e_5, e_4+e_5-e_6\}$, which intersects $C_{\uparrow 1}$ only in $\mathrm{span}\{e_2-e_3+e_4\}=\mathrm{span}\mathcal{B}_{\uparrow}^{1}$.  So $\mathcal{H}_{\uparrow}^{1}$ is necessarily empty, and for $\mathcal{A}_{\uparrow}^{1}$ we need to arrange both that $\{e_2-e_3+e_4\}\cup \mathcal{A}_{\uparrow}^{1}$ is $\ell_{\uparrow}^{C}$-orthogonal and that $\partial_D(\mathcal{A}_{\uparrow}^{1})$ is  $\ell_{\uparrow}^{C}$-orthogonal.  The latter condition forces one element of $\mathcal{A}_{\uparrow}^{1}$ to have boundary in $\mathrm{span}\{v_2\}$, and another to have boundary in $\mathrm{span}\{v_1,v_2\}$.  A suitable choice is to take \[ \mathcal{A}_{\uparrow}^{1}=\{e_1+e_3,e_4,e_2\},\quad\mbox{and hence}\quad \mathcal{X}_{\uparrow}^{0}=\mathcal{B}_{\uparrow}^{0}=\{v_2,v_2-v_1,v_1-v_0\}.\]	As $C_{\uparrow *}$ has trivial homology, its only contributions to the decomposition of $\mathcal{C}_{\partial}(X,f;\kappa)$ come from elements $x$ of some $\mathcal{A}_{\uparrow}^{k+1}$ such that $\ell_{\uparrow}^{C}(\partial_Dx)<\ell_{\uparrow}^{C}(x)$.  There is one such $x$ above, namely $x=e_1+e_3\in \mathcal{A}_{\uparrow}^{1}$, giving rise to a summand $(\uparrow_{-1}^{1})_0$.
	
	Turning to $C^{\downarrow}_{*}$, its degree-$1$ cycles are spanned by the boundary $e_2-e_3+e_4$ and the homologically nontrivial element $e_4+e_5-e_6$. These are $(-\ell^{\downarrow}_C)$-orthogonal since any nontrivial linear combination of them has $\ell^{\downarrow}_{C}=-1$, so we take $\mathcal{H}^{\downarrow}_{1}=\{e_4+e_5-e_6\}$. For $\mathcal{A}^{\downarrow}_{1}$, we must arrange both that $\{e_2-e_3+e_4,e_4+e_5-e_6\}\cup\mathcal{A}^{\downarrow}_{1}$ is a $(-\ell^{\downarrow}_C)$-orthogonal basis for $\mathrm{span}\{e_2,e_3,e_4,e_5,e_6\}$ and that $\mathcal{B}^{\downarrow}_{0}:=\partial_D(\mathcal{A}^{\downarrow}_{1})$ is a
	 $(-\ell^{\downarrow}_C)$-orthogonal basis for $\mathrm{span}\{v_0,v_1,v_2\}$; this holds if we take \[ \mathcal{A}^{\downarrow}_{1}=\{-e_2+e_5,e_5,e_6\},\quad\mbox{and hence}\quad \mathcal{X}^{\downarrow}_{0}=\mathcal{B}^{\downarrow}_{0}=\{v_0,v_1,v_2\}.\]

The only element of any $\mathcal{A}^{\downarrow}_{k+1}$ having different $\ell_{C}^{\downarrow}$-filtration level than its boundary is $-e_2+e_5\in\mathcal{A}^{\downarrow}_{1}$; this yields a summand $(\downarrow^{1}_{0})_0$ in the decomposition of $\mathcal{C}_{\partial}(X,f;\kappa)$.  The homologically nontrivial cycle $e_4+e_5-e_6\in \mathcal{H}_{1}^{\downarrow}$, with $\ell^{\downarrow}_{C}=-1$, yields a summand $(\searrow^{-1})_1$ since its image in $H_1(D_*)$ is nontrivial.  Lastly, the one-dimensional complement to the image of $H_1(C_{\uparrow*})\oplus H_1(C^{\downarrow}_{*})\to H_1(D_*)$ gives a summand $\square_1$.

We conclude that $\mathcal{C}_{\partial}(X,f;\kappa)\cong 
(\uparrow_{-1}^{1})_0\oplus (\downarrow^{1}_{0})_0\oplus (\searrow^{-1})_1\oplus \square_1$, corresponding via Proposition \ref{barclass} to a level set barcode consisting of intervals $[-1,1),(0,1],(-1,2),$ and $(-2,2)$, all in degree zero, consistently with Figure \ref{simpfig2}.
\end{ex}

\subsection{Morse functions}

In this section we consider various instances of the Morse filtered cospan $\mathcal{M}(\mathbb{X},f,v;\kappa)$ from Example \ref{morseex}. In all cases $\kappa$ will be a field, and $f\co \mathbb{X}\to [-2,2]$ will be a Morse function on a compact manifold $\mathbb{X}$ with boundary, such that $-2$ and $2$ are regular values and $\partial \mathbb{X}=f^{-1}(\{-2,2\})$.
Then the ingredients in the Morse filtered cospan include the filtered Morse complexes $C_{\uparrow *}=CM_*(f,v;\kappa)$ and $C^{\downarrow}_{*}=CM_*(-f,-v;\kappa)$ associated to a generic gradient-like vector field $v$, as well as the relative singular chain complex 
$D_*=S_*(\mathbb{X},\partial\mathbb{X};\kappa)$.  

We will sometimes denote $I=[-2,2]$, and $\partial I=\{-2,2\}$.  As in Example \ref{morseex}, we also write $\partial^{\pm}\mathbb{X}=f^{-1}(\{\pm 2\})$.

\begin{ex}\label{trivmorse}
The simplest kind of Morse function is one which has no critical points.  In this case the flow of $v$ allows one to construct a diffeomorphism $\mathbb{X}\cong  M\times I$ where $M=\partial^-X$, identifying $f^{-1}(\{t\})$ with $M\times \{t\}$ for each $t\in I$.  The Morse complexes $C_{\uparrow *}$ and $C^{\downarrow}_{*}$ are both the zero complex, and the relative singular chain complex is homotopy equivalent to the relative homology $H_*(M\times I,M\times \partial I;\kappa)$, with zero differential.
 So $\mathcal{M}(X,f,v;\kappa)$ is isomorphic in $\mathsf{HCO}^{\Lambda}(\kappa)$ to \[ \xymatrix{ 0\ar[rd] & \\ & H_*(M\times I,M\times \partial I;\kappa), \\ 0 \ar[ru] &  } \] \emph{i.e.} to the direct sum $\oplus_{k}\left(\square_k\right)^{\oplus \dim  H_k(M\times I,M\times \partial I;\kappa)}$.  This corresponds by Theorem \ref{morseiso} and Proposition \ref{barclass} to a level set barcode having, for each $k$, a number of copies of $(-2,2)$ in degree $k-1$ equal to   $H_k(M\times I,M\times \partial I;\kappa)$, consistently with the suspension isomorphism $H_{k-1}(M;\kappa)\cong H_k(M\times I,M\times \partial I;\kappa)$. 
\end{ex}

\begin{ex}\label{cubic}
For another simple example, consider the function $f\co \mathbb{X}\to[-2,2]$  depicted in Figure \ref{cubicfig}.

\begin{center}
	\begin{figure}
		\includegraphics[width=2in]{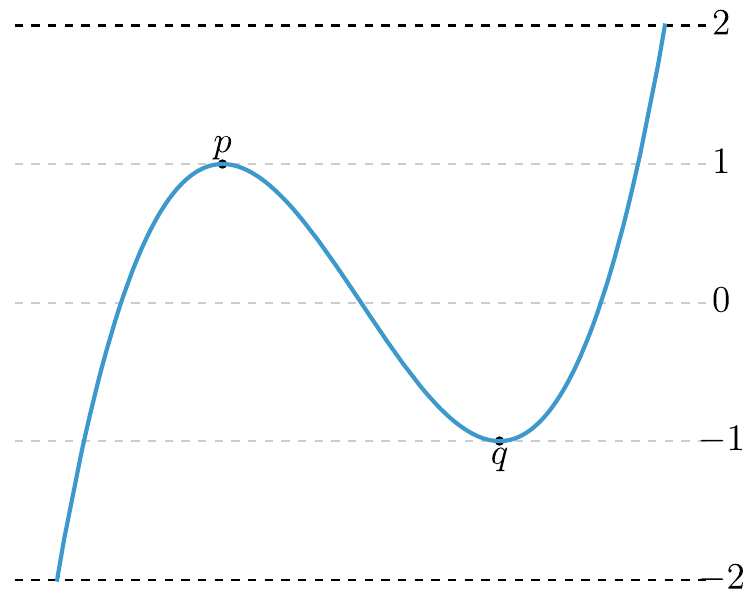}
		\caption{The function from Example \ref{cubic}}
		\label{cubicfig}
	\end{figure}
\end{center}	

Here $C_{\uparrow 1}=C^{\downarrow}_{0}=\mathrm{span}\{p\}$ while $C_{\uparrow}^{0}=C^{\downarrow}_{1}=\mathrm{span}\{q\}$, with the differentials given by $\partial_{\downarrow}^{C}p=\pm q$ and $\partial^{\downarrow}_{C}q=\pm p$ (for some signs $\pm$ dependent on auxiliary choices which do not affect the conclusion).  As $\ell_{\uparrow}(\pm p)=\ell^{\downarrow}(\pm p)=f(p)=1$ and $\ell_{\uparrow}(\pm q)=\ell^{\downarrow}(\pm q)=f(q)=-1$ it follows that the acyclic complexes $C_{\uparrow *}$ and $C^{\downarrow}_{*}$ are filtered homotopy equivalent to 
$\mathcal{E}_0(-1,1)_{\uparrow}$ and $\mathcal{E}_0(1,-1)^{\downarrow}$, respectively.  Meanwhile, as $\mathbb{X}$ is homeomorphic to an interval, $D_*=S_*(\mathbb{X},\partial\mathbb{X};\kappa)$ is homotopy equivalent to $\kappa_1$ (the homology of an interval rel its boundary).  We thus obtain in this case a decomposition \[ \mathcal{M}(\mathbb{X},f,v;\kappa)\cong (\uparrow_{-1}^{1})_0\oplus (\downarrow_{-1}^{1})_0\oplus\square_1.\] \end{ex}

\subsubsection{Elementary cobordisms}\label{elemsect}  We conclude our discussion of examples with a general consideration of the case that the Morse function $f\co \mathbb{X}\to [-2,2]$ has just a single critical point $p$, with critical value zero. We consistently denote \[ n=\dim \mathbb{X},\quad k=\mathrm{ind}_f(p).\]  So  \[ C_{\uparrow *}=\mathcal{E}_k(0,\infty)_{\uparrow},\quad C^{\downarrow}_{ *}=\mathcal{E}_{n-k}(0,-\infty)^{\downarrow}.\]  

For a suitable choice of gradient-like vector field $v$, a neighborhood of the critical point $p$ can be identified with the set 
\[ \mathbb{D}=\left\{(\vec{x},\vec{y})\in \R^k\times \R^{n-k}|-2\leq -\|\vec{x}\|^2+\|\vec{y}\|^2\leq 2,\, \|\vec{x}\|\vec{y}\|\leq 2\right\},\] see Figure \ref{dfig}, with the restriction of $f$ to this neighborhood identified with $(\vec{x},\vec{y})\mapsto  -\|\vec{x}\|^2+\|\vec{y}\|^2$ and with the time-$t$ flow of $v$ given by $(\vec{x},\vec{y})\mapsto (e^{-t}\vec{x},e^t\vec{y})$.
\begin{center}
	\begin{figure}
		\includegraphics[width=3in]{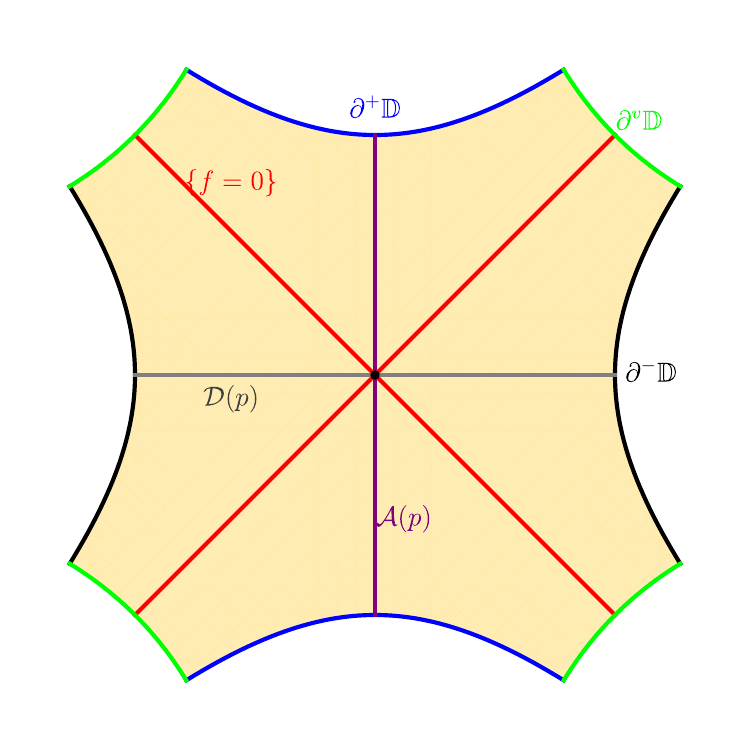}
		\caption{The standard neighborhood $\mathbb{D}$ in the case $k=1,n=2$.}
		\label{dfig}
	\end{figure}
\end{center}	

This set $\mathbb{D}$ is a smooth submanifold with corners of $\mathbb{R}^n$, with the closures of its codimension-one boundary strata comprising \[ \partial^+\mathbb{D}=\{(\vec{x},\vec{y})\in\mathbb{D}|\,-\|\vec{x}\|^2+\|\vec{y}\|^2=2\},\quad  \partial^-\mathbb{D}=\{(\vec{x},\vec{y})\in\mathbb{D}|\,-\|\vec{x}\|^2+\|\vec{y}\|^2=-2\},\] and \[ \partial^v\mathbb{D}=\left\{\left.(\vec{x},\vec{y})\in\mathbb{D}\right|\,\|\vec{x}\|\|\vec{y}\|=2\right\}.\]

One has $\mathbb{D}\cap\partial^{\pm}\mathbb{X}=\partial^{\pm}\mathbb{D}$ with $\partial^-\mathbb{D}$ and $\partial^+\mathbb{D}$ diffeomorphic to $S^{k-1}\times D^{n-k}$ and to $D^{k}\times S^{n-k-1}$, respectively, and there is a diffeomorphism \begin{align}\nonumber \psi\co \partial^v\mathbb{D} &\to S^{k-1}\times S^{n-k-1}\times I \\ (\vec{x},\vec{y}) &\mapsto \left(\frac{\vec{x}}{\|\vec{x}\|},\frac{\vec{y}}{\|\vec{y}\|},-\|\vec{x}\|^2+\|\vec{y}\|^2   \right). \label{dvdiffeo}\end{align}

As $f$ has no critical points outside of $\mathbb{D}$ and the flow of $v$ is tangent to $\partial^v\mathbb{D}$, one may use the flow to construct a diffeomorphism $\overline{\mathbb{X}\setminus \mathbb{D}}\cong Y\times I$, where $Y=\overline{\partial^-\mathbb{X}\setminus\partial^-\mathbb{D}}$ is a smooth $(n-1)$-manifold with boundary $\partial Y=\partial^v\mathbb{D}\cap \partial^-\mathbb{D}\cong S^{k-1}\times S^{n-k-1}$.  

In the other direction, we can recover $\mathbb{X}$, up to diffeomorphism respecting the Morse function $f$, by starting with a smooth $(n-1)$-manifold with boundary $Y$ and a diffeomorphism $\phi\co S^{k-1}\times S^{n-k-1}\to \partial Y$, and then forming $\mathbb{X}$ by gluing $\mathbb{D}$ to $Y\times I$ along $\partial^v\mathbb{D}$ and $(\partial Y)\times I$: \begin{equation}\label{xfromy} \mathbb{X}=\frac{\mathbb{D}\coprod (Y\times I)}{\psi^{-1}(\phi^{-1}(y),t)\sim (y,t)\mbox{ for }y\in\partial Y,\,t\in I} \end{equation}   	where $\psi$ is the map from (\ref{dvdiffeo}).  The function $f$ is simply given by projection to the $I$ factor on $(\partial Y)\times I$, and by $(\vec{x},\vec{y})\mapsto -\|\vec{x}\|^2+\|\vec{y}\|^2$ on $\mathbb{D}$.

Accordingly, for the rest of this discussion we will regard the input data for an $n$-dimensional elementary cobordism of index $k$ to consist of a compact $(n-1)$-manifold with boundary $Y$ together with a diffeomorphism $\phi\co S^{k-1}\times S^{n-k-1}\to \partial Y$.  We will assume that every connected component of $Y$ has nonempty boundary; this corresponds to assuming that the manifold $\mathbb{X}$ resulting from (\ref{xfromy}) is connected. (The general case can be obtained from this by just adding trivial cobordisms as in Example \ref{trivmorse}).

Choosing the gradient-like vector field appropriately, the descending manifold of the unique critical point $p=(\vec{0},\vec{0})\in\mathbb{D}$ is the $k$-disk $\mathcal{D}(p)=\mathbb{D}\cap(\R^k\times\{\vec{0}\})$
and the ascending manifold is the $(n-k)$-disk $\mathcal{A}(p)=\mathbb{D}\cap (\{\vec{0}\}\times\R^{n-k})$.  As in, \emph{e.g.}, \cite[Theorem 3.14]{Milhcob}, one has deformation retractions of $\mathbb{X}$ to either $\partial^-\mathbb{X}\cup \mathcal{D}(p)$ or to $\partial^+\mathbb{X}\cup \mathcal{A}(p)$, which restrict to  deformation retractions of $\mathbb{D}$ to $\partial^-\mathbb{D}\cup \mathcal{D}(p)$ or to $\partial^+\mathbb{D}\cup\mathcal{A}(p)$.

In this context, the map $\psi_{\uparrow}\co CM_*(f,v;\kappa)\to S_*(\mathbb{X},\partial\mathbb{X};\kappa)$ sends the unique generator $p$ of $CM_*(f,v;\kappa)$ to a relative cycle whose homology class in $H_*(\mathbb{X},\partial\mathbb{X};\kappa)$ is the same\footnote{modulo signs which depend on arbitrary choices and do not affect the conclusions} as that of the descending disk $\mathcal{D}(p)$; likewise $\psi^{\downarrow}\co CM_*(-f,-v;\kappa)\to S_*(\mathbb{X},\partial\mathbb{X};\kappa)$ sends the generator to a representative of the class of the ascending disk.  We denote these classes in $H_*(\mathbb{X},\partial\mathbb{X};\kappa)$ by $[\mathcal{D}(p)]$ and $[\mathcal{A}(p)]$, respectively.  The algebraic classification of $\mathcal{M}(\mathbb{X},f,v;\kappa)$ depends upon whether these classes are nontrivial and, if so, whether they are linearly dependent.  (The classes of $\mathcal{D}(p)$ and $\mathcal{A}(p)$ are always nontrivial in $H_k(\mathbb{X},\partial^-\mathbb{X};\kappa)$ and $H_{n-k}(\mathbb{X},\partial^+\mathbb{X};\kappa)$, respectively, but for homology relative to the full boundary $\partial\mathbb{X}$ the situation is more variable.)

\begin{ex}\label{k0n} A somewhat degenerate case occurs when either $k=0$ or $k=n$.  Since we are assuming both that $\partial Y\cong S^{k-1}\times S^{n-k-1}$ and that all components of $Y$ have nonempty boundary, and since $S^{-1}=\varnothing$, it follows that $Y$ must be the empty manifold.  Thus $\mathbb{X}=\mathbb{D}$, which in this case is the disk in $\mathbb{R}^n$ of radius $\sqrt{2}$, with $f$ being given by either $\vec{x}\mapsto \|\vec{x}\|^2$ or $\vec{x}\mapsto -\|\vec{x}\|^2$ depending on whether $k=0$ or $n$.
	
	If $k=0$, then $\partial^-\mathbb{X}=\varnothing$ and $\mathbb{X}$ coincides with the ascending manifold $\mathcal{A}(p)$.  Then $H_*(\mathbb{X},\partial\mathbb{X};\kappa)=H_*(\mathbb{X},\partial^+\mathbb{X};\kappa)$ is trivial in degrees other than $n$ and is generated by $[\mathcal{A}(p)]$ in degree $n$.  The descending manifold in this case is a single point, which\footnote{We are assuming $n\neq 0$ here; the reader can make the appropriate modifications in the very degenerate case that $n=0$.} vanishes in $H_*(\mathbb{X},\partial\mathbb{X};\kappa)$.  So since the generators for both $CM_{*}(f,v;\kappa)$ and $CM_*(-f,-v;\kappa)$ have filtration level $0$, it follows that $\mathcal{M}(\mathbb{X},f,v;\kappa)$ is isomorphic to $(\uparrow_{0}^{\infty})_0\oplus (\searrow^0)_n$ when $k=0$.
	
	If $k=n$ the situation is reversed: we have $\partial^+\mathbb{X}=\varnothing$ and $\mathbb{X}$ coincides with $\mathcal{D}(p)$, and the map $\psi_{\uparrow}\co CM_*(f,v;\kappa)\to S_*(\mathbb{X},\partial\mathbb{X};\kappa)$ induces an isomorphism on homology while $\psi^{\downarrow}$ induces the zero map.  So in this case $\mathcal{M}(\mathbb{X},f,v;\kappa)\cong (\nearrow_0)_n\oplus (\downarrow^{0}_{-\infty})_0$.
\end{ex}

Having dispensed with Example \ref{k0n}, assume from now on that $0<k<n$.  Given the diffeomorphism $\phi\co S^{k-1}\times S^{n-k-1}\to Y$, choose basepoints $\ast$ in $S^{k-1}$ and $S^{n-k-1}$, and introduce the spheres \begin{equation} S_-=\phi(S^{k-1}\times\{\ast\}),\qquad S_+=\phi(\{\ast\}\times S^{n-k-1}) \label{spmdef}\end{equation} in $\partial Y$.  
We can think of $S_-$ as a pushoff to $\partial Y\cong \partial^v\mathbb{D}\cap \partial^-\mathbb{D}$ of the boundary $\partial\mathcal{D}(p)\subset \partial^-\mathbb{D}$ of the descending disk $\mathcal{D}(p)$, and likewise of $S_+$ as a pushoff of $\partial\mathcal{A}(p)$ to $\partial^v\mathbb{D}\cap \partial^+\mathbb{X}$.  

Recalling the diffeomorphism $\psi\co \partial^v\mathbb{D}\to S^{k-1}\times S^{n-k-1}\times I$, we have cylinders $C_-=\psi^{-1}(S^{k-1}\times\{\ast\}\times I)$ and $C_+=\psi^{-1}(\{\ast\}\times S^{n-k-1}\times I)$ in $\partial^v\mathbb{D}$ having boundary on $\partial^v\mathbb{D}\cap(\partial^+\mathbb{D}\cup\partial^-\mathbb{D})$.  By capping off one boundary component of $C_-$ by a disk in $\partial^+\mathbb{D}$ one sees that, up to sign, $C_-$ represents the same class in $H_{k}(\mathbb{D},\partial^-\mathbb{D}\cup \partial^+\mathbb{D};\kappa)$ as does the descending manifold $\mathcal{D}(p)$.  Likewise $C_+$ represents the same class, modulo sign, as $\mathcal{A}(p)$ in $H_{n-k}(\mathbb{D},\partial^-\mathbb{D}\cup\partial^+\mathbb{D};\kappa)$.  Note that under the gluing (\ref{xfromy}) that forms $\mathbb{X}$, the cylinders $C_{\pm}$ correspond to $S_{\pm}\times I\subset (\partial Y)\times I$.

\begin{prop}\label{dacrit} Assume either that $\mathrm{char}\,\kappa=2$ or that $\mathbb{X}$ is orientable,\footnote{To give a criterion in terms of the input data $Y$ and $\phi$, $\mathbb{X}$ is orientable iff $Y$ admits an orientation whose induced orientation on the boundary makes the diffeomorphism $\phi\co S^{k-1}\times S^{n-k-1}\to \partial Y$ orientation-reversing, where $S^{k-1}\times S^{n-k-1}$ carries the standard product orientation induced by the orientations of $S^{k-1}$ and $S^{n-k-1}$ as the boundaries of $D^k$ and $D^{n-k}$.  In case $k=1$ or $k=n-1$, note that the orientation of $S^0$ as the boundary of $D^1$ gives opposite signs to the two points of $S^0$. If $1<k<n-1$, so that $\partial Y$ is connected, it is equivalent to just require that $Y$ be orientable.} and let $\iota\co \partial Y\to Y$ be the inclusion. Then: \begin{itemize} \item[(i)] $[\mathcal{D}(p)]=0\in H_k(\mathbb{X},\partial\mathbb{X};\kappa)$ iff $\iota_*[S_-]=0\in H_{k-1}(Y;\kappa)$.
		\item[(ii)] $[\mathcal{A}(p)]=0\in H_{n-k}(\mathbb{X},\partial\mathbb{X};\kappa)$ iff $\iota_*[S_+]=0\in H_{n-k-1}(Y;\kappa)$.
		\item[(iii)] $\mathrm{span}\{[\mathcal{D}(p)],[\mathcal{A}(p)]\}$ is always a one-dimensional subspace of $H_*(\mathbb{X},\partial\mathbb{X};\kappa)$. \end{itemize}
\end{prop}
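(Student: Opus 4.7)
The plan is to prove parts (i) and (ii) by a Mayer--Vietoris argument for the decomposition $\mathbb{X}=\mathbb{D}\cup(Y\times I)$ from (\ref{xfromy}), and then to deduce part (iii) from (i) and (ii) using the half-lives-half-dies principle for the $(n-1)$-manifold-with-boundary $Y$.

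For (i): the identification $[\mathcal{D}(p)]=[C_-]$ in $H_k(\mathbb{D},\partial^-\mathbb{D}\cup\partial^+\mathbb{D};\kappa)$, noted just before the proposition, together with the gluing identification $C_-\leftrightarrow S_-\times I\subset(\partial Y)\times I\subset Y\times I$, gives $[\mathcal{D}(p)]=j_*[S_-\times I]$ in $H_k(\mathbb{X},\partial\mathbb{X};\kappa)$, where $j\co (Y\times I,Y\times\partial I)\hookrightarrow(\mathbb{X},\partial\mathbb{X})$ is the inclusion. Under the suspension isomorphism $H_k(Y\times I,Y\times\partial I;\kappa)\cong H_{k-1}(Y;\kappa)$ the class $[S_-\times I]$ corresponds to $\iota_*[S_-]$. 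I then apply Mayer--Vietoris to the cover $(\mathbb{X},\partial\mathbb{X})=(\mathbb{D},\partial^-\mathbb{D}\cup\partial^+\mathbb{D})\cup(Y\times I,Y\times\partial I)$ with intersection $((\partial Y)\times I,(\partial Y)\times\partial I)$; after applying suspension isomorphisms on the intersection and on the $Y\times I$ piece, the relevant map in the MV long exact sequence is
\[ H_{k-1}(\partial Y;\kappa)\xrightarrow{(\iota_*,\,g_*)} H_{k-1}(Y;\kappa)\oplus H_k(\mathbb{D},\partial^-\mathbb{D}\cup\partial^+\mathbb{D};\kappa)\to H_k(\mathbb{X},\partial\mathbb{X};\kappa). \]
A K\"unneth computation (using $\partial^-\mathbb{D}\simeq S^{k-1}$ and $\partial^+\mathbb{D}\simeq S^{n-k-1}$) shows that $g_*[S_-]$ is a generator of the $\tilde H_{k-1}(\partial^-\mathbb{D};\kappa)\cong\kappa$ summand of $H_k(\mathbb{D},\partial^-\mathbb{D}\cup\partial^+\mathbb{D};\kappa)$; in particular $g_*$ is injective on $\kappa\cdot[S_-]$.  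Since in the generic dimension range $H_{k-1}(\partial Y;\kappa)$ is one-dimensional and spanned by $[S_-]$, an exact-sequence chase then yields $[\mathcal{D}(p)]=j_*[S_-\times I]=0$ if and only if $\iota_*[S_-]=0$. Part (ii) follows by applying the same argument to $-f$, which swaps $\partial^\pm\mathbb{X}$, $\mathcal{D}$ with $\mathcal{A}$, and $S_-$ with $S_+$.

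For (iii), the key input is the half-lives-half-dies principle: for the compact orientable (or, in characteristic $2$, not necessarily orientable) manifold-with-boundary $Y$, the kernel of $\iota_*\co H_*(\partial Y;\kappa)\to H_*(Y;\kappa)$ is a Lagrangian subspace of $H_*(\partial Y;\kappa)$ with respect to the intersection pairing. On $\partial Y\cong S^{k-1}\times S^{n-k-1}$ the classes $[S_-]$ and $[S_+]$ form a hyperbolic pair (pairing to a generator of the top class), so this Lagrangian must contain exactly one of the two. Combined with (i) and (ii), this forces exactly one of $[\mathcal{D}(p)]$ and $[\mathcal{A}(p)]$ to vanish in $H_*(\mathbb{X},\partial\mathbb{X};\kappa)$. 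When $k\neq n-k$ the two classes lie in different gradings and so the span is automatically one-dimensional; when $k=n-k$ an intersection computation using that $\mathcal{D}(p)\cap\mathcal{A}(p)=\{p\}$ transversely shows that any two nonzero classes must be scalar multiples of one another, again yielding a one-dimensional span.

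The main technical obstacle is handling the low-dimensional degenerate cases $k=1$, $k=n-1$, and $n=2k$, where the K\"unneth decomposition of $H_*(\partial Y;\kappa)$ picks up additional summands (\emph{e.g.}\ $H_0(\partial Y;\kappa)$ has rank two when $k=1$) and the computation of $g_*$ and of the half-lives-half-dies Lagrangian must be performed componentwise; in each case the same ingredients apply, but the bookkeeping for the MV chase and the dimension count requires a little more care.
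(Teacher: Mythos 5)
Your skeleton is essentially the paper's: the same Mayer--Vietoris sequence for $\mathbb{X}=\mathbb{D}\cup(Y\times I)$ glued along $(\partial Y)\times I$, suspension isomorphisms, and a Poincar\'e--Lefschetz-type duality input about $Y$ for part (iii). The chase you describe for (i)/(ii) is fine in the generic range, but both genuine difficulties sit exactly in the cases you defer as ``bookkeeping,'' and your sketch does not contain the extra ingredient they need. For (i) with $k=n-1$ (and symmetrically (ii) with $k=1$, and $n=2$ where both degeneracies occur), the relevant group $H_{k-1}(\partial Y;\kappa)$ also contains $[\partial Y]$, and $[\partial Y]$ lies in $\ker g_*$ (it is the boundary of the fundamental class of the orientable piece $\mathbb{D}\subset\R^n$). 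Your exact-sequence chase then only yields $\iota_*[S_-]\in\kappa\cdot\iota_*[\partial Y]$, so you must know $\iota_*[\partial Y]=0$ in $H_{n-2}(Y;\kappa)$, i.e.\ that $(Y,\partial Y)$ carries a fundamental class --- this is precisely where the orientability/$\mathrm{char}\,\kappa=2$ hypothesis enters, packaged in the paper as Claim \ref{jdjy} ($\ker j^{D}_{*}\subset\ker j^{Y}_{*}$). Your proposal never uses the hypothesis in (i)/(ii) at all, and it is not optional: in Example \ref{k1n2}(iii) with $\mathrm{char}\,\kappa\neq 2$ one has $[\mathcal{D}(p)]=0$ while $\iota_*[S_-]\neq 0$, so the degenerate cases are where the essential content lives, not mere bookkeeping.

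Part (iii) has a sharper problem when $n=2k$. A Lagrangian need not contain either member of a hyperbolic pair --- the line spanned by $[S_-]+\lambda[S_+]$ is isotropic for every $\lambda$ --- so ``the Lagrangian must contain exactly one of the two'' is false, and the conclusion you draw from it, that exactly one of $[\mathcal{D}(p)]$, $[\mathcal{A}(p)]$ vanishes, is also false in the case $n=2k$: both can be nonzero and linearly dependent (see the Remark following the proposition and the $(>_0^0)_k$ summands in Corollary \ref{morsesummary} and Examples \ref{k1n2}(iii), \ref{k2n4}). What (iii) actually requires is a vanishing criterion for an arbitrary combination $x[\mathcal{D}(p)]+y[\mathcal{A}(p)]$, which does not follow from (i) and (ii) taken separately; the paper obtains it from the same Mayer--Vietoris argument (via Claim \ref{jdjy}), showing this vanishing is equivalent, up to signs, to $\iota_*(x[S_-]+y[S_+])=0$, and then applies Poincar\'e--Lefschetz duality degreewise (your half-lives-half-dies, together with $\iota_*[pt]\neq 0$ and $\iota_*[\partial Y]=0$) to see that $\ker\iota_*$ meets $\mathrm{span}\{[S_-],[S_+]\}$ in exactly a line. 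Your fallback ``intersection computation using $\mathcal{D}(p)\cap\mathcal{A}(p)=\{p\}$ transversely'' does not establish that two nonzero classes are proportional --- a single transverse intersection point is evidence of a nondegenerate pairing, not of linear dependence --- so the $n=2k$ case of (iii) is unproved as written.
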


\begin{remark}
	If $n\neq 2k$ then Proposition \ref{dacrit}(iii) clearly implies that exactly one of $[\mathcal{D}(p)]$ and $[\mathcal{A}(p)]$ is nonzero, since these classes are in different gradings.  If $n=2k$, however, it can happen that $[\mathcal{D}(p)]$ and $[\mathcal{A}(p)]$ are both nonzero, and are linearly dependent.
\end{remark}

\begin{proof}[Proof of Proposition \ref{dacrit}]
	Consider the diagram \begin{equation}\label{mvdy} 
	\xymatrix{ H_*((\partial Y)\times I,(\partial Y)\times \partial I;\kappa)\ar[r]^<<<<<{(j^{D}_*,j^{Y}_*)} & H_*(\mathbb{D},\partial^-\mathbb{D}\cup\partial^+\mathbb{D};\kappa)\oplus H_*(Y\times I,Y\times\partial I;\kappa)\ar[r]& H_*(\mathbb{X},\partial\mathbb{X};\kappa) \\ H_{*-1}(\partial Y;\kappa)\ar[u]^{\sigma}_{\cong}
	} 	\end{equation} where the top row is part of the Mayer-Vietoris sequence (with $j^D,j^Y$ the inclusions of $\partial Y\times I\cong \partial^v\mathbb{D}$ into $\mathbb{D}$ and $Y\times I$, respectively) and $\sigma$ is the suspension isomorphism.  As in the proposition, we write $[\mathcal{D}(p)]$ and $[\mathcal{A}(p)]$ for the relative homology classes of the descending and ascending manifolds in $H_*(\mathbb{X},\partial\mathbb{X};\kappa)$; let us introduce the notation $[\mathcal{D}(p)]_{\mathbb{D}}$ and $[\mathcal{A}(p)]_{\mathbb{D}}$ for the class of the descending and ascending manifolds within $H_*(\mathbb{D},\partial^-\mathbb{D}\cup\partial^+\mathbb{D};\kappa)$.   The discussion before the proposition shows that \begin{equation}\label{jsig} j^{D}_{*}\sigma[S_-]=\ep_1[\mathcal{D}(p)]_{\mathbb{D}}\quad\mbox{and}\quad j^{D}_{*}\sigma[S_+]=\ep_2[\mathcal{A}(p)]_{\mathbb{D}}\end{equation} for appropriate choices of $\ep_1,\ep_2\in \{-1,1\}$.

	It will be helpful to establish the following:
	\begin{claim}\label{jdjy}
		$\ker j^{D}_{*}\subset \ker j^{Y}_{*}$.
	\end{claim}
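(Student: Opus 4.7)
The plan is to translate both $j^D_*$ and $j^Y_*$ into maps on $H_{*-1}(\partial Y;\kappa)$ via suspension isomorphisms, compute the resulting kernel explicitly using the product structure $\partial Y\cong S^{k-1}\times S^{n-k-1}$, and then close with a fundamental class argument. Write $\sigma\colon H_{*-1}(\partial Y;\kappa)\xrightarrow{\cong} H_*(\partial Y\times I,\partial Y\times\partial I;\kappa)$ for the cross product with $[I,\partial I]$; naturality identifies $j^Y_*\circ\sigma$ with $\sigma_Y\circ\iota^Y_*$, where $\iota^Y\colon\partial Y\hookrightarrow Y$ is the inclusion and $\sigma_Y$ is the analogous suspension for $Y$. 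Since $\mathbb{D}$ is contractible, the connecting homomorphism $\partial\colon H_*(\mathbb{D},\partial^-\mathbb{D}\cup\partial^+\mathbb{D};\kappa)\xrightarrow{\cong}\tilde H_{*-1}(\partial^-\mathbb{D}\cup\partial^+\mathbb{D};\kappa)$ is an isomorphism, and a routine chain-level computation with the suspension formula yields $\partial\circ j^D_*\circ\sigma([c])=\iota^+_*[c]-\iota^-_*[c]$, where $\iota^\pm\colon\partial Y\hookrightarrow\partial^\pm\mathbb{D}$ come from $\partial Y\times\{\pm 2\}\subset\partial^v\mathbb{D}\cap\partial^\pm\mathbb{D}$. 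The claim thus reduces to the following: whenever $\iota^+_*[c]-\iota^-_*[c]$ vanishes in $\tilde H_{*-1}(\partial^-\mathbb{D}\cup\partial^+\mathbb{D};\kappa)$, one has $\iota^Y_*[c]=0$ in $H_{*-1}(Y;\kappa)$.

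Next I will determine the kernel of $\iota^+_*-\iota^-_*$. The deformation retractions $\partial^-\mathbb{D}=S^{k-1}\times D^{n-k}\simeq S^{k-1}$ and $\partial^+\mathbb{D}=D^k\times S^{n-k-1}\simeq S^{n-k-1}$ identify $\iota^-_*$ and $\iota^+_*$ with the two projections of $\partial Y=S^{k-1}\times S^{n-k-1}$ to its factors. Under the Künneth decomposition $H_*(\partial Y;\kappa)\cong H_*(S^{k-1};\kappa)\otimes H_*(S^{n-k-1};\kappa)$, $\iota^-_*$ kills those Künneth basis classes whose $S^{n-k-1}$-factor is the fundamental class, while $\iota^+_*$ symmetrically kills those whose $S^{k-1}$-factor is the fundamental class. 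Inspecting the (at most) four Künneth basis classes shows that the common kernel consists of exactly the top class $[\partial Y]^{\mathrm{prod}}:=[S^{k-1}]\otimes [S^{n-k-1}]\in H_{n-2}(\partial Y;\kappa)$.

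It therefore remains to show $\iota^Y_*[\partial Y]^{\mathrm{prod}}=0\in H_{n-2}(Y;\kappa)$. Under either hypothesis of the proposition, the compact $(n-1)$-manifold $Y$ admits a $\kappa$-fundamental class $[Y,\partial Y]\in H_{n-1}(Y,\partial Y;\kappa)$: this is automatic when $\mathrm{char}\,\kappa=2$, while if $\mathbb{X}$ is orientable, $Y\subset\partial^-\mathbb{X}$ inherits an orientation. The boundary map sends $[Y,\partial Y]$ to the induced boundary fundamental class $[\partial Y]^{\mathrm{ind}}$, which by the orientation convention of the proposition's footnote (making $\phi$ orientation-reversing in the oriented case) agrees with $[\partial Y]^{\mathrm{prod}}$ up to an overall sign. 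Hence $[\partial Y]^{\mathrm{prod}}$ lies in the image of $\partial\colon H_{n-1}(Y,\partial Y;\kappa)\to H_{n-2}(\partial Y;\kappa)$, and thus in $\ker\iota^Y_*$ by exactness of the long exact sequence of the pair $(Y,\partial Y)$.

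The main obstacle will be the edge cases $k\in\{1,n-1\}$, where $\partial Y$ and/or $\partial^\pm\mathbb{D}$ becomes disconnected and the Künneth analysis must be reworked using $[S^0]=[+1]-[-1]$ together with reduced homology of disjoint unions (the very low case $n=2$ also collapses the top degree to degree $0$); the upshot is still that $\ker(\iota^+_*-\iota^-_*)=\kappa\cdot[\partial Y]^{\mathrm{prod}}$ sits in degree $n-2$, but confirming this uniformly takes some bookkeeping. Tracking the orientation signs in the final step is likewise delicate in the oriented case, but is dictated by the footnoted convention and is trivialized in characteristic~$2$.
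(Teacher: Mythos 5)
Your argument is correct, and it is a genuinely different route from the one in the paper. The paper never identifies $\ker j^{D}_{*}$ explicitly: it computes that $H_*(\mathbb{D},\partial^-\mathbb{D}\cup\partial^+\mathbb{D};\kappa)$ is three-dimensional, observes that $j^{D}_{*}\circ\sigma$ is injective on the span of $[S_-],[S_+],[pt]$, and then uses the fundamental class $[\mathbb{X},\partial\mathbb{X}]$ together with exactness of the Mayer--Vietoris sequence of $\mathbb{X}$ to produce a complementary one-dimensional subspace (spanned by $\delta[\mathbb{X},\partial\mathbb{X}]=\pm\sigma[\partial Y]$) killed by both $j^{D}_{*}$ and $j^{Y}_{*}$; a dimension count then gives the claim uniformly in $k$, with no orientation or K\"unneth bookkeeping. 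You instead transfer both maps to $H_{*-1}(\partial Y;\kappa)$ via the (injective) connecting homomorphism of $(\mathbb{D},\partial^-\mathbb{D}\cup\partial^+\mathbb{D})$ and naturality of the suspension, pin down $\ker j^{D}_{*}$ exactly as the suspension of the product top class of $S^{k-1}\times S^{n-k-1}$, and kill that class using $\partial[Y,\partial Y]$ in the long exact sequence of $(Y,\partial Y)$, channeling the hypothesis through the footnote's criterion (existence of an orientation of $Y$ making $\phi$ orientation-reversing) rather than through $[\mathbb{X},\partial\mathbb{X}]$ directly. What your version buys is transparency: it exhibits the kernel explicitly and makes visible exactly why the hypothesis is needed (e.g., why the claim fails for the punctured $\mathbb{R}P^2$ of Example \ref{k1n2}(iii) in characteristic $\neq 2$); what it costs is the case analysis you flag. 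Two small remarks: the edge cases $k\in\{1,n-1\}$ and $n=2$ do work out exactly as you assert (with $[S^0]=[+1]-[-1]$ the kernel of the difference map is again one-dimensional, spanned by the product class, and $\partial[Y,\partial Y]$ hits it under the footnote's orientation convention or in characteristic $2$), so the flagged obstacle is bookkeeping rather than a gap; and strictly speaking the relevant kernel is that of $\iota^+_*-\iota^-_*$ valued in $\tilde H_{*-1}(\partial^-\mathbb{D}\sqcup\partial^+\mathbb{D};\kappa)$ rather than the literal common kernel of $\iota^\pm_*$ --- these coincide because the targets are disjoint, but in degree $0$ one needs the reduced homology of the disjoint union, which your setup already uses. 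Note also that the footnote's orientability criterion is asserted without proof in the paper, so your argument leans on a (standard, easily verified) fact the paper does not itself establish.
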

	\begin{proof}[Proof of Claim \ref{jdjy}]
		We have a Mayer-Vietoris sequence \[  
		H_{*}(\mathbb{D},\partial^-\mathbb{D};\kappa)\oplus H_{*}(\mathbb{D},\partial^-\mathbb{D};\kappa)\to H_*(\mathbb{D},\partial^-\mathbb{D}\cup\partial^+\mathbb{D};\kappa)\to H_{*-1}(\mathbb{D};\kappa)\to H_{*-1}(\mathbb{D},\partial^-\mathbb{D};\kappa)\oplus H_{*-1}(\mathbb{D},\partial^-\mathbb{D};\kappa), \] where
		$H_{*}(\mathbb{D},\partial^-\mathbb{D};\kappa)$ and $H_{*}(\mathbb{D},\partial^-\mathbb{D};\kappa)$ are one-dimensional and generated by the classes of the descending and ascending manifolds, respectively.  It follows that $H_*(\mathbb{D},\partial^-\mathbb{D}\cup\partial^+\mathbb{D};\kappa)$ (summed over all gradings) is three-dimensional, with a basis consisting of $[\mathcal{D}(p)]_{\mathcal{D}}$ in degree $k$, $[\mathcal{A}(p)]_{\mathcal{D}}$ in degree $n-k$, and the class of any path in $\mathbb{D}$ connecting $\partial^-\mathbb{D}$ to $\partial^+\mathbb{D}$ in degree $1$.   These generators are, up to sign, the images under $j^{D}_{*}\circ \sigma$ of the classes $[S_-],[S_+],$ and $[pt]$ in $H_*(\partial Y;\kappa)$.  In particular, $j^{D}_{*}\circ\sigma\co H_{*-1}(\partial Y;\kappa)\to H_*(\mathbb{D},\partial^-\mathbb{D}\cup\partial^+\mathbb{D};\kappa)$ restricts injectively to the span of these three classes.  Now, due to our assumption that either $\mathrm{char}\,\kappa=2$ or $\mathbb{X}$ is orientable, we have a fundamental class $[\mathbb{X},\partial \mathbb{X}]\in H_{n}(\mathbb{X},\partial\mathbb{X};\kappa)$.  As $H_n(\mathbb{D},\partial^-\mathbb{D}\cup\partial^+\mathbb{D};\kappa)=H_n(Y\times I,Y\times\partial I;\kappa)=\{0\}$, the connecting homomorphism $\delta\co H_{n}(\mathbb{X},\partial\mathbb{X};\kappa)\to  H_{n-1}((\partial Y)\times I,(\partial Y)\times\partial I;\kappa)$ maps $[\mathbb{X},\partial \mathbb{X}]$ to a nonzero element of   $H_{n-1}((\partial Y)\times I,(\partial Y)\times\partial I;\kappa)$, which is then annihilated by $(j^{*}_{D},j^{*}_{Y})$.  (In fact, $\delta[\mathbb{X},\partial \mathbb{X}]=\pm\sigma[\partial Y]$.)  
		
		So $H_*((\partial Y)\times I,(\partial Y)\times \partial I;\kappa)$, which is four-dimensional since it is isomorphic to $H_{*-1}(S^{k-1}\times S^{n-k-1};\kappa)$,  has  a three-dimensional subspace on which $j_{*}^{D}$ is injective and a one-dimensional subspace on which both $j_{*}^{D}$ and $j_{*}^{Y}$ vanish.  It follows that $\ker j_{*}^{D}\subset \ker j_{*}^{Y}$.     
	\end{proof}

Returning now to the main body of the proof, it follows from the exactness of the top row of (\ref{mvdy}) that a general linear combination $x[\mathcal{D}(p)]_{\mathbb{D}}+y[\mathcal{A}(p)]_{\mathbb{D}}\in H_*(\mathbb{D},\partial^-\mathbb{D}\cup \partial^+\mathbb{D};\kappa)$ vanishes under the inclusion-induced map to $H_*(\mathbb{X},\partial\mathbb{X};\kappa)$ if and only if $\left(x[\mathcal{D}(p)]_{\mathbb{D}}+y[\mathcal{A}(p)]_{\mathbb{D}},0\right)$ lies in $\mathrm{Im}(j^D_*,j^Y_*)$. 
Now Claim \ref{jdjy} implies than an element of $\Img(j^D_*,j^Y_*)$ is uniquely determined by its component in $H_*(\mathbb{D},\partial^-\mathbb{D}\cup \partial^+\mathbb{D};\kappa)$.   So in view of (\ref{jsig}), the unique element of $\Img(j^D_*,j^Y_*)$ having $H_*(\mathbb{D},\partial^-\mathbb{D}\cup \partial^+\mathbb{D};\kappa)$-component equal to $x[\mathcal{D}(p)]_{\mathbb{D}}+y[\mathcal{A}(p)]_{\mathbb{D}}$ is $\left(x[\mathcal{D}(p)]_{\mathbb{D}}+y[\mathcal{A}(p)]_{\mathbb{D}},j^{Y}_{*}\sigma(\ep_1x[S_-]+\ep_2y[S_+])\right)$ for certain universal signs $\ep_1,\ep_2$.   So $x[\mathcal{D}(p)]_{\mathbb{D}}+y[\mathcal{A}(p)]_{\mathbb{D}}$ vanishes on inclusion to $(\mathbb{X},\partial\mathbb{X})$ iff $j^{Y}_{*}\sigma(\ep_1x[S_-]+\ep_2y[S_+])=0$.  But $j^{Y}_{*}\sigma(\ep_1x[S_-]+\ep_2y[S_+])$ coincides with the image under the suspension isomorphism $H_{*-1}(Y;\kappa)\cong  H_*(Y\times I,Y\times\partial I;\kappa)$ of $\iota_*(\ep_1x[S_-]+\ep_2y[S_+])\in H_{*-1}(Y;\kappa)$.

Item (i) of the proposition follows directly from specializing the above discussion to the case $x=1$ and $y=0$, and Item (ii) follows from the case that $x=0$ and $y=1$.  The above discussion also shows that \[\dim\left(\left\{(x,y)\in \kappa^2|x[\mathcal{D}(p)]+y[\mathcal{A}(p)]=0\right\}\right)=\dim\left(\ker \iota_*|_{\mathrm{span}\{[S_-,S_+]\}}\right).\]  So Item (iii) of the proposition is equivalent to the statement that $\iota_*[S_-]$ and $\iota_*[S_+]$ span a one-dimensional subspace of $H_*(Y;\kappa)$.  We conclude the proof by establishing this statement.

If $k=1$ and $n=2$ this claim holds by a brief case analysis which we defer to Example \ref{k1n2}.  If $k=1$ and $n>2$, so that $\partial Y\cong S^0\times S^{n-2}$ has two components, then $S_-$ consists of one point on each of these two components and $S_+$ is equal to one component of $\partial Y$.  Then $\iota_*[S_-]=0$ if and only if $Y$ is connected.  In that case, the kernel of $\iota_{*}\co H_{n-2}(\partial Y;\kappa)\to H_{n-2}(Y;\kappa)$ is spanned by the fundamental class of $\partial Y$ (as this is the image under the boundary map of the fundamental class of $(Y,\partial Y)$, which generates $H_{n-1}(Y;\kappa)$), any representative of which is supported in both components of $\partial Y$.  So if $\iota_*[S_-]=0$ then $\iota_*[S_+]\neq 0$.  On the other hand, if $\iota_*[S_-]\neq 0$, so that $Y$ is disconnected, then $S_+$ is the boundary of a connected component of $Y$ and so $\iota_*[S_+]=0$  So if $k=1$ and $n>2$ then exactly one of $\iota_*[S_-]$ and $\iota_*[S_+]$ is nonzero.  The same conclusion holds if $k=n-1$ and $n>2$, which corresponds to reversing the roles of $S_-$ and $S_+$ in the above argument.

So assume for the rest of the proof that $1<k<n-1$, which has the effect of ensuring that both factors of $S^{k-1}\times S^{n-k-1}\cong \partial Y$ are postive-dimensional.  Then, from the connection provided by Poincar\'e-Lefschetz duality between the homology and cohomology sequences of the pair $(Y,\partial Y)$ (see, \emph{e.g}, \cite[Theorem VI.9.2]{Bre}), it follows that the rank of $\iota^*\co H^{k-1}(Y;\kappa)\to H^{k-1}(\partial Y;\kappa)$ (equivalently, the rank of $\iota_*\co H_{k-1}(\partial Y;\kappa)\to H_{k-1}(Y;\kappa)$) coincides with the dimension of the kernel of $\iota_*\co H_{n-k-1}(\partial Y;\kappa)\to H_{n-k-1}(Y;\kappa)$.  If $n\neq 2k$, so that $H_{k-1}(\partial Y;\kappa)$ and $H_{n-k-1}(\partial Y;\kappa)$ are one-dimensional, generated by $[S_-]$ and $[S_+]$, respectively, this implies that $\iota_*[S_-]=0$ if and only if $\iota_*[S_+]\neq 0$, as in the above $k=1$ case.  If instead $n=2k$, so that $H_{k-1}(\partial Y;\kappa)=H_{n-k-1}(\partial Y;\kappa)=\mathrm{span}\{[S_-],[S_+]\}$, it follows that the image and kernel of $i_*\co H_{k-1}(\partial Y;\kappa)\to H_{k-1}(Y;\kappa)$ both have dimension one.  So in any event the conclusion of (iii) in the statement of the proposition holds.

\end{proof}

\begin{cor}\label{morsesummary}
Let $f\co\mathbb{X}\to [-2,2]$ be a Morse function on a smooth connected compact $n$-dimensional manifold $\mathbb{X}$, having just one critical point $p$, such that $f(p)=0$ and $p$ has index $k\in\{1,\ldots,n-1\}$. If $\mathrm{char}\,\kappa\neq 2$, assume that $\mathbb{X}$ is orientable.  Let $S_-,S_+$ be the spheres from (\ref{spmdef}). Then, for a generic gradient-like vector field $v$, the Morse filtered cospan $\mathcal{M}(\mathbb{X},f,v;\kappa)$ is isomorphic to the direct sum of an appropriate\footnote{Specifically, the number of copies of $\square_j$ is $\dim H_j(\mathbb{X},\partial\mathbb{X};\kappa)-1$ for $j=n-k$ in case (i) or $j=k$ in cases (ii) or (iii), and $\dim H_j(\mathbb{X},\partial\mathbb{X};\kappa)$ for all other $j$.  To relate this to the $(n-1)$-manifold with boundary $Y$, the analysis of the sequence (\ref{mvdy}) in the proof of Proposition \ref{dacrit}, along with the suspension isomorphism $H_j(Y\times I,Y\times \partial I;\kappa)\cong H_{j-1}(Y;\kappa)$, imply that $H_j(\mathbb{X},\partial\mathbb{X};\kappa)\cong H_{j-1}(Y;\kappa)$ if $j<n$, while $H_j(\mathbb{X},\partial\mathbb{X};\kappa)\cong\kappa$.} number of copies of the various $\square_j$ for $1\leq j\leq n$ together with either:
\begin{itemize}
	\item[(i)] $(\uparrow_{0}^{\infty})_k\oplus (\searrow^0)_{n-k}$, in case $\iota_*[S_-]=0$ and $\iota_*[S_+]\neq 0$;
	\item[(ii)] $(\nearrow_0)_k\oplus (\downarrow_{-\infty}^{0})_{n-k}$, in case $\iota_*[S_-]\neq 0$ and $\iota_*[S_+]=0$; or 
	\item[(iii)] $(>_0^0)_k$ in case $\iota_*[S_-]$ and $\iota_*[S_+]$ are both nonzero, which can occur only if $n=2k$.
\end{itemize}
\end{cor}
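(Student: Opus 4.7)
The strategy is to combine the general decomposition machinery (Proposition \ref{coah}, Corollary \ref{decompexists}) with the geometric computation of Proposition \ref{dacrit}. Since $C_{\uparrow *} = \mathcal{E}_k(0,\infty)_\uparrow$ and $C^\downarrow_{*} = \mathcal{E}_{n-k}(0,-\infty)^\downarrow$ already have zero differential, Proposition \ref{coah} tells us that $\mathcal{M}(\mathbb{X},f,v;\kappa)$ is isomorphic in $\mathsf{HCO}^\Lambda(\kappa)$ to the perfect filtered cospan
\[
\xymatrix{ \mathcal{E}_{n-k}(0,-\infty)^{\downarrow} \ar[rd]^{\psi^{\downarrow}_{*}} & \\ & (H_*(\mathbb{X},\partial\mathbb{X};\kappa),0) \\ \mathcal{E}_{k}(0,\infty)_{\uparrow} \ar[ru]_{\psi_{\uparrow *}} & }
\]
By construction of $\psi_\uparrow$ and $\psi^\downarrow$ in Example \ref{morseex} (via the Pajitnov equivalences $\mathcal{E}(\pm f, \pm v)$), the induced maps $\psi_{\uparrow *}$ and $\psi^\downarrow_{*}$ send the respective generators to the classes $[\mathcal{D}(p)] \in H_k(\mathbb{X},\partial\mathbb{X};\kappa)$ and $[\mathcal{A}(p)] \in H_{n-k}(\mathbb{X},\partial\mathbb{X};\kappa)$.

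Next, I will apply Proposition \ref{dacrit} to read off which of the three scenarios holds. By part (iii) of that proposition, $\mathrm{span}\{[\mathcal{D}(p)],[\mathcal{A}(p)]\}$ is always one-dimensional, so exactly one of the following occurs: $[\mathcal{D}(p)]=0$ and $[\mathcal{A}(p)] \neq 0$ (part (i) of the proposition says this is equivalent to $\iota_*[S_-]=0$, $\iota_*[S_+]\neq 0$); $[\mathcal{D}(p)]\neq 0$ and $[\mathcal{A}(p)]=0$ (part (ii): $\iota_*[S_-]\neq 0$, $\iota_*[S_+]=0$); or both are nonzero and $\kappa$-linearly dependent, which forces them to lie in the same grading, hence $n=2k$.

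Now I will decompose the perfect cospan grade by grade using the six-term splitting  (\ref{sixsummands}) and the indecomposable pieces in (\ref{sixblocks}). In degrees other than $k$ and $n-k$, the ascending and descending complexes vanish, so the decomposition contributes only copies of $\square_j$, one for each basis element of $H_j(\mathbb{X},\partial\mathbb{X};\kappa)$. In the non-trivial degree(s), I split off the kernel summand (of the form (\ref{sixsummands})(ii) or (i), yielding $(\uparrow_0^\infty)_k$ or $(\downarrow^0_{-\infty})_{n-k}$) or the image summand (of the form (\ref{sixsummands})(iv) or (v), yielding $(\searrow^0)_{n-k}$ or $(\nearrow_0)_k$) depending on which of $\psi_{\uparrow *}, \psi^\downarrow_{*}$ is zero. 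In case (iii), both maps are nonzero and their images coincide in $H_k(\mathbb{X},\partial\mathbb{X};\kappa)$ (up to scalar), so the pair contributes a summand of type (\ref{sixsummands})(iii), which by Corollary \ref{mixdecomp} gives a single $(>_0^0)_k$ (both filtration levels equal to $0$). In every case, the complementary subspace $W_j$ to $\mathrm{Im}(\psi_{\uparrow *} + \psi^\downarrow_{*})$ in $H_j$ contributes $\dim W_j$ copies of $\square_j$, which accounts for the square summands as described in the footnote.

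The only real subtlety is bookkeeping: verifying that, after the nontrivial summand is split off, the remaining contribution in each grading $j$ is exactly $\square_j^{\oplus (\dim H_j(\mathbb{X},\partial\mathbb{X};\kappa) - \epsilon_j)}$ with the appropriate $\epsilon_j \in \{0,1\}$. This is immediate once one observes that the domains of $\psi_{\uparrow *}$ and $\psi^\downarrow_{*}$ are each one-dimensional, so the image is one-dimensional in the relevant grading when the corresponding map is nonzero, and in case (iii) the two images coincide so only a single dimension is ``absorbed.'' No step requires anything beyond a direct application of the decomposition results already established; the genuine geometric input is entirely concentrated in Proposition \ref{dacrit}.
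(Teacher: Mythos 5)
Your proposal is correct and follows essentially the same route as the paper: reduce to the perfect cospan with $H_*(\mathbb{X},\partial\mathbb{X};\kappa)$ in the middle (you via Proposition \ref{coah}, the paper via composing $\psi_{\uparrow},\psi^{\downarrow}$ with a homotopy equivalence onto homology), feed in Proposition \ref{dacrit} as the sole geometric input, and split off the one-dimensional span of $[\mathcal{D}(p)]$ and $[\mathcal{A}(p)]$ from a complement contributing the $\square_j$ summands. Your grade-by-grade appeal to (\ref{sixsummands}) and Corollary \ref{mixdecomp} is just a slightly more systematic phrasing of the paper's direct $V\oplus W$ splitting, so there is no substantive difference.
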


\begin{proof}
By composing the maps $\psi_{\uparrow},\psi^{\downarrow}$ from Example \ref{morseex} with a chain homotopy equivalence between $S_*(\mathbb{X},\partial\mathbb{X};\kappa)$ and its homology (with the zero differential), we obtain an isomorphism of $\mathcal{M}(\mathbb{X},f,v;\kappa)$ with the filtered cospan \[ \xymatrix{ \mathcal{E}_{n-k}(0,-\infty)^{\downarrow} \ar[rd]^{\bar{\psi}^{\downarrow}} & \\ & H_*(\mathbb{X},\partial\mathbb{X};\kappa) \\ \mathcal{E}_k(0,\infty)_{\uparrow} \ar[ur]_{\bar{\psi}_{\uparrow}} &  }     \] where all complexes involved have zero differential and where the maps $\bar{\psi}^{\downarrow}$ and $\bar{\psi}_{\uparrow}$ have images spanned by the classes $[\mathcal{A}(p)]$ and $[\mathcal{D}(p)]$, respectively. By Proposition \ref{dacrit}, $V:=\mathrm{span}\{[\mathcal{A}(p)],[\mathcal{D}(p)]\}$ is one-dimensional, so, by taking a graded vector space complement $W$ to $V$ in $H_k(\mathbb{X},\partial\mathbb{X};\kappa)$ we obtain a further decomposition of $\mathcal{M}(\mathbb{X},f,v;\kappa)$ as a direct sum of \[ \xymatrix{ \mathcal{E}_{n-k}(0,-\infty)^{\downarrow} \ar[rd]^{\bar{\psi}^{\downarrow}} & & & 0 \ar[rd] & \\ & V &  \mbox{and} & & W.  \\ \mathcal{E}_k(0,\infty)_{\uparrow} \ar[ur]_{\bar{\psi}_{\uparrow}} &  &  & 0\ar[ru]      & }\]  The second of these is isomorphic to a direct sum of various $\square_j$, and the classification of the first of these follows directly from Proposition \ref{dacrit}.
\end{proof}

To rephrase the topological situation somewhat, we may begin with a smooth $(n-1)$-manifold $Y$ equipped with a parametrization of its boundary by $S^{k-1}\times S^{n-k-1}$, singling out spheres $S_-\cong S^{k-1}\{\ast\}$ and $S_+\cong \{\ast\}\times S^{n-k-1}$ on $\partial Y$.  The resulting cobordism $\mathbb{X}$ has boundary components \[ \partial^-\mathbb{X}\cong (S^{k-1}\times D^{n-k})\cup_{\partial}Y,\qquad \partial^+\mathbb{X}\cong (D^k\times S^{n-k-1})_{\partial}Y.\]  In the language of surgery theory, one has an attaching sphere $S^{k-1}\times\{0\}\subset S^{k-1}\times D^{n-k}\subset \partial^-\mathbb{X}$ and a belt sphere $\{0\}\times S^{n-k-1}\subset D^k\times S^{ n-k-1}\subset \partial^+\mathbb{X}$, each with trivialized normal bundles; our cobordism is the one associated to framed surgery on the attaching sphere.  The spheres $S_{\pm}\subset\partial Y$ are pushoffs of the attaching and belt spheres to boundaries of their tubular neighborhoods, in directions constant with respect to the trivializations  of their normal bundles.  In terms of the Morse function $f$, the attaching and belt spheres are the boundaries of the descending and ascending disks $\mathcal{D}(p)$ and $\mathcal{A}(p)$, respectively, and their framings are obtained up to isotopy by restricting trivializations of the normal bundles to   $\mathcal{D}(p)$ and $\mathcal{A}(p)$ to their boundaries.

\begin{ex}\label{k1n2}
Consider the case that $k=1$ and $n=2$.  Then $Y$ is to be a compact $1$-manifold whose boundary is parametrized by the four-point space $S^0\times S^0$, with the property that each component of $Y$ has nonempty boundary.  Up to diffeomorphism, the only possible choice of $Y$ is then a disjoint union of two closed intervals $I_1\sqcup I_2$.  We have $S^0\times S^0=\{(1,1),(1,-1),(-1,1),(-1,1)\}$; we may relabel such that the parametrization $\phi\co S^0\times S^0\to \partial Y$ satisfies $\phi(1,1)\in \partial I_1$, and then we have three possible cases depending upon which of the other three points $(1,-1),(-1,1),(-1,-1)$ is sent to the other boundary point of $I_1$.

Now the standard orientation of $S^0\times S^0$ assigns a positive sign to the points $(1,1)$ and $(-1,-1)$ and a negative sign to $(1,-1)$ and $(-1,1)$, and we will obtain an orientable $2$-manifold $\mathbb{X}$ after gluing iff $\phi^{-1}(\partial I_1)$ (and hence also $\phi^{-1}(\partial I_2)$) consists of two points of opposite sign.  Thus, for compatibility with the assumptions of Proposition \ref{dacrit}, if $\phi(-1,-1)\in\partial I_1$ then we should assume that $\mathrm{char}\,\kappa=2$, while in the remaining two cases  this restriction is not necessary.

In $H_0(\partial Y;\kappa)$ we have $S_-=[\phi(1,1)]-[\phi(-1,1)]$ and $S_+=[\phi(1,1)]-[\phi(1,-1)]$.  So if $\phi(-1,1)\in \partial I_1$ then $[S_-]$ vanishes under the inclusion-induced map to $H_0(Y;\kappa)$ and $[S_+]$ does not, while the reverse is true if $\phi(1,-1)\in \partial I_1$.  In the remaining case that $\phi(-1,-1)\in\partial I_1$, both $\iota_*[S_-]$ and $\iota_*[S_+]$ are nonzero in $H_0(Y;\kappa)$, while $\iota_*[S_+]-\iota_*[S_-]=0$ since $\phi(1,-1)$ and $\phi(-1,1)$ are then the two boundary points of $I_2$.  This confirms the statement in the proof of Proposition \ref{dacrit} that $\mathrm{span}\{\iota_*[S_-],\iota_*[S_+]\}$ is always one-dimensional (which was proved therein for all other choices of $k$ and $n$ but was deferred to this example when $n=2$ and $k=1$).  So the conclusions of Proposition \ref{dacrit} and Corollary \ref{morsesummary} do apply to the present situation.

It then follows from Corollary \ref{morsesummary} that  
$\mathcal{M}(\mathbb{X},f,v;\kappa)$ decomposes as:
\begin{itemize}\item[(i)] $\square_1\oplus \square_2\oplus (\uparrow_{0}^{\infty})_{1}\oplus (\searrow^0)_1$ if $\phi^{-1}(\partial I_1)=\{(1,1),(-1,1)\}$; \item[(ii)]  $\square_1\oplus \square_2\oplus (\nearrow_{0})_{1}\oplus (\downarrow^0_{-\infty})_1$ if $\phi^{-1}(\partial I_1)=\{(1,1),(1,-1)\}$; and \item[(iii)]  $\square_1\oplus \square_2\oplus (>_0^0)_1$ if $\phi^{-1}(\partial I_1)=\{(1,1),(-1,-1)\}$ and $\mathrm{char}\,\kappa=2$. \end{itemize}

The three cases correspond to the three ways of gluing two bands $I_1\times I$ and $I_2\times I$ to the standard neighborhood $\mathbb{D}$ in Figure \ref{dfig}, matching $\cup_j(\partial I_j)\times I$ to $\partial^v\mathbb{D}$, and $\cup_j(\partial I_j)\times \{\pm 2\}$ to $\partial^v\mathbb{D}\cap \partial^{\pm}\mathbb{D}$. In case (i), this yields a  pair-of-pants cobordism from $\partial^-\mathbb{X}\cong S^1$ to  $\partial^+\mathbb{X}\cong S^1\sqcup S^1$, and in case (ii) one gets a pair-of-pants cobordism from  $S^1\sqcup S^1$ to $S^1$.  In case (iii), both $\partial^{+}\mathbb{X}$ and $\partial^-\mathbb{X}$ are circles, and $\mathbb{X}$ is diffeomorphic to the complement of two disks in $\mathbb{R}P^2$.  See Figure \ref{k1n2fig}.  
\begin{center}
	\begin{figure}
		\includegraphics[width=5.5in]{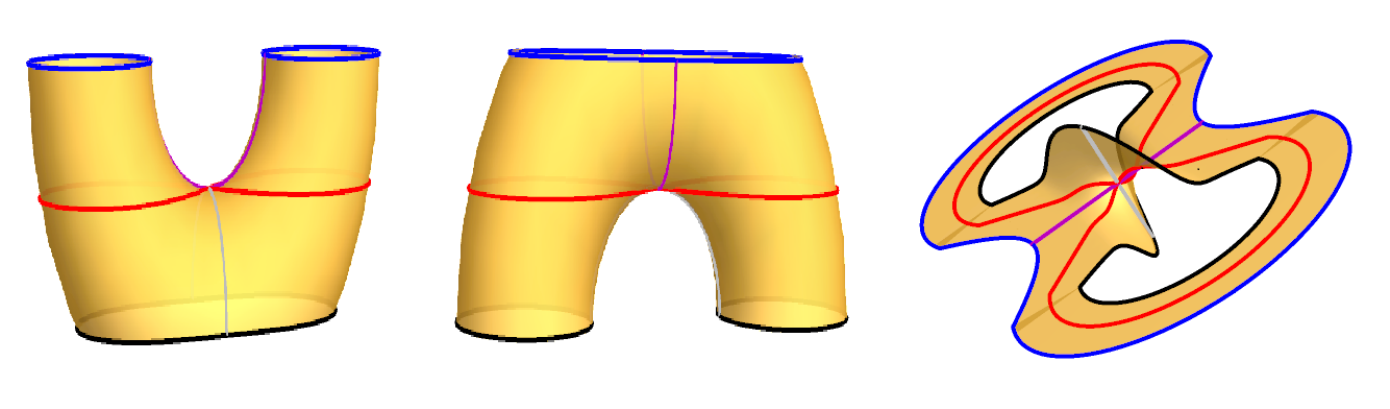}
		\caption{The three types of two-dimensional, index-one elementary cobordisms. As in Figure \ref{dfig}, the level sets at $-2,0,$ and $2$ are shown in black, red, and blue, respectively; the ascending manifold is purple and the descending manifold is light gray.}
		\label{k1n2fig}
	\end{figure}
	\end{center}	

Given Theorem \ref{morseiso} and Proposition \ref{barclass}, our decompositions correspond to the statement that the level set barcode of $f$ consists of the following intervals:\begin{itemize} \item in case (i),  $(0,2)$ and $(-2,2)$ in degree zero, and $[0,2)$ and $(-2,2)$ in degree one; \item in case (ii), $(-2,0)$ and $(-2,2)$ in degree zero, and $(-2,0]$ and $(-2,2)$ in degree one; \item in case (iii), assuming $\mathrm{char}\,\kappa=2$, only $(-2,2)$ in degree zero and $(-2,2)$ and $[0,0]$ in degree $1$.
\end{itemize}  This is reflected in Figure \ref{k1n2fig}.   If one considers case (iii) over a field of characteristic other than 2, then Proposition \ref{dacrit} does not apply due to the use of the fundamental class of $\mathbb{X}$ in the proof of Claim \ref{jdjy}.  In this case $H_*(\mathbb{X},\partial \mathbb{X};\kappa)$ is one-dimensional, supported in degree one, and both $[\mathcal{D}(p)]$ and $[\mathcal{A}(p)]$ are zero in $H_1(\mathbb{X},\partial\mathbb{X};\kappa)$ because neither $\mathcal{D}(p)$ nor $\mathcal{A}(p)$ intersects both boundary components of $\mathbb{X}$.  Consequently  $\mathcal{M}(\mathbb{X},f,v;\kappa)$ decomposes as $\square_1\oplus (\uparrow_{0}^{\infty})_1\oplus (\downarrow^{0}_{-\infty})_1$, corresponding to a level set barcode consisting of $(-2,2)$ in degree zero and $(-2,0]$ and $[0,-2)$ in degree one.
\end{ex}

\begin{ex}\label{k2n4}
	Finally, let us consider a family of cases where $k=2$ and $n=4$.  Thus $Y$ should be a $3$-manifold with torus boundary; specifically, we shall take $Y=S^1\times D^2$.  There is of course a standard parametrization of $\partial Y$ by $S^1\times S^1$, but we get a range of different cobordisms by considering different parametrizations $\phi\co S^1\times S^1\to \partial Y$ represented (with respect to the standard identification of $S^1$ with $\mathbb{R}/\mathbb{Z}$)  by matrices $\left(\begin{array}{cc} p & r \\ q & s\end{array}\right)\in SL(2;\Z)$.  The resulting manifold $\mathbb{X}$ will always be orientable, so we can apply Proposition \ref{dacrit} with an arbitrary coefficient field $\kappa$. 
	
	In view of the discussion before Example \ref{k1n2}, the bottom part $\partial^-\mathbb{X}$ of the boundary of $\mathbb{X}$ is formed by gluing another copy of $S^1\times D^2$ to $Y$ in such a way that the meridian of the other copy is sent to a curve in the class $r[S^1\times\{pt\}]+s[\{pt\}\times \partial D^2]\in H_1(\partial Y;\Z)$.  Thus $\partial^-\mathbb{X}$ is diffeomorphic to the lens space $L(r,s)$; see, \emph{e.g.}, \cite[Chapter 9]{Rol}.  Similarly, as $\partial^+\mathbb{X}$ is formed by gluing $D^2\times S^1$ to $Y$ via $\phi$, $\partial^+\mathbb{X}$ is diffeomorphic to $L(p,q)$.
	
	Now, in the notation of Corollary \ref{morsesummary}, we have $\iota_*[S_-]=0$ if and only $\phi$ maps $S^1\times\{pt\}$ to a multiple (in $H_1(\partial Y;\kappa)$) of the meridian of $S^1\times \partial D^2$, and $\iota_*[S_+]=0$ if and only if $\phi$ maps $\{pt\}\times S^1$ to a multiple of the meridian of $S^1\times \partial D^2$.  These conditions amount to the statements that $p=0\in\kappa$ and $r=0\in\kappa$, respectively.  So we conclude that our elementary cobordism $\mathbb{X}$ from $L(r,s)$ to $L(p,q)$ has $\mathcal{M}(\mathbb{X},f,v;\kappa)$ isomorphic to: \begin{itemize} \item $\square_1\oplus\square_4\oplus (\uparrow_0^{\infty})_2\oplus (\searrow^0)_2$ if $p=0\in\kappa$; \item $\square_1\oplus \square_4\oplus (\nearrow_0)_2\oplus (\downarrow^{0}_{-\infty})_2$ if $r=0\in\kappa$; \item $\square_1\oplus \square_4\oplus(>_0^0)_2$ if $pr\neq 0\in\kappa$.
\end{itemize}
These correspond to level set barcodes $\{(-2,2)_0,(0,2)_1,[0,2)_2,(-2,2)_3\}$, $\{(-2,2)_0,(-2,0)_1,(-2,0]_2,(-2,2)_3\}$, and $\{(-2,2)_0,[0,0]_2,(-2,2)_3\}$, respectively, where ``$I_j$'' refers to an interval $I$ in grading $j$.
\end{ex}

\section{From filtered cospans to $\mathbb{M}$-indexed persistence modules}\label{funsect}

This section is concerned with the construction and properties of a functor $\mathcal{F}\co \mathsf{HCO}^{\Lambda}(\kappa)\to \mathsf{K}(\kappa)^{\mathbb{M}}$. Here $\mathsf{HCO}^{\Lambda}(\kappa)$ is the category of  $\Lambda$-bounded filtered cospans from Section \ref{catsect}, $\mathsf{K}(\kappa)$ is the homotopy category of chain complexes of $\kappa$-vector spaces, and, following \cite{BBF20} (who, in our language, generally take $\Lambda=\frac{\pi}{2}$), \[ \mathbb{M}=\{(x,y)\in \R^2|-2\Lambda\leq x+y\leq 2\Lambda\},\] regarded as a category with a unique morphism $(x,y)\to (x',y')$ whenever $x\geq x'$ and $y\leq y'$, and no morphisms $(x,y)\to (x',y')$ otherwise. The notation $\mathsf{K}(\kappa)^{\mathbb{M}}$ then refers to the category of functors from $\mathbb{M}$ to $\mathsf{K}(\kappa)$, with morphisms given by natural transformations.  (In other words, $\mathsf{K}(\kappa)^{\mathbb{M}}$ is the category of persistence modules in $\mathsf{K}(\kappa)$ indexed by the sub-poset $\mathbb{M}$ of $\mathbb{R}^{\circ}\times \R$, where $\R^{\circ}$ denotes the set of real numbers regarded as a poset whose order is reversed from the usual order.)  The functor $\mathcal{F}$, whose construction requires several steps, will allow us to relate the properties of $\mathsf{HCO}(\kappa)$ discussed up to this point in the paper with interlevel persistence.

\subsection{The strip $\mathbb{M}$}\label{stripsect}

Before defining our functor $\mathsf{HCO}^{\Lambda}(\kappa)\to \mathsf{K}(\kappa)^{\mathbb{M}}$ we introduce conventions regarding regions in \cite{BBF20}'s strip $\mathbb{M}=\{(x,y)\in \R^2|-2\Lambda\leq x+y\leq2\Lambda\}$  Recall that this strip is regarded as a partially ordered set with respect to the order $\preceq$  given by $(x,y)\preceq (x',y')$ iff $x\geq x'$ and $y\leq y'$.  Following \cite{BBF20}, there is an important strictly increasing, order-preserving map \begin{align*} T & \co \mathbb{M}\to \mathbb{M} \\ (x,y) &\mapsto (-2\Lambda-y,2\Lambda-x).\end{align*}  This is a glide-reflection, given by first reflecting across the line $\{x+y=0\}$ and then translating by $(-2\Lambda,2\Lambda)$. This map $T$ generates a $\Z$-action on $\mathbb{M}$, with quotient a M\"obius strip.  As a fundamental domain for this action we shall use the set \[ \mathfrak{D}=S\sqcup L\sqcup A\] where: \begin{align*} S&=\left\{(x,y)\in\mathbb{R}^2\left|-\Lambda<x\leq \Lambda,\,-\Lambda\leq y<\Lambda\right.\right\};  \\ L&=\left\{(x,y)\in \R^2\left|x+y\geq -2\Lambda,\,x\leq -\Lambda,\,y<\Lambda\right.\right\}; \\ A&=\left\{(x,y)\in \R^2 \left| x+y \leq 2\Lambda,\,x>-\Lambda,\,y\geq \Lambda \right.\right\}.    \end{align*} (Here ``$S$'' stands for ``square,'' ``$L$'' for ``left,'' and ``$A$'' for ``above.'')  We leave to the reader the verification that $\mathbb{M}$ is the disjoint union of the sets $T^k(\mathfrak{D})$ as $k$ varies through $\Z$. See Figure \ref{stripfig}. Note that the fundamental domain used in most of \cite{BBF20},\cite{BBF21} is different from $\mathfrak{D}$.   

\begin{center}
	\begin{figure}
		\includegraphics[width=3in]{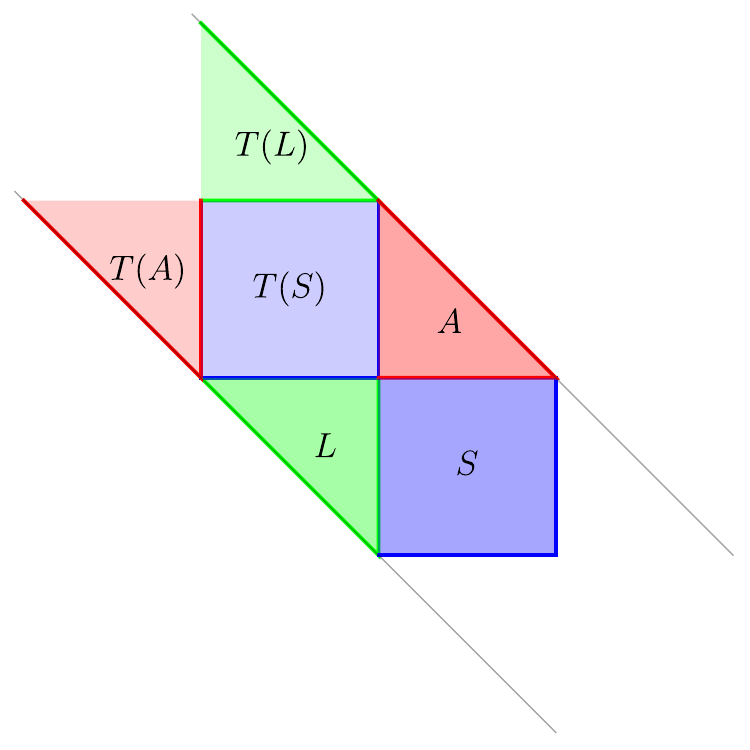}
	\caption{The strip $\mathbb{M}$, showing the regions $S,L,A$ that comprise the fundamental domain $\mathfrak{D}$ for the action generated by $T$, as well as the images of these regions under $T$.}
	\label{stripfig}
	\end{figure}
\end{center}

\subsection{The functors $\mathcal{F}_{(s,t)}^{0}$}\label{f0sec}

In this subsection, for each $(s,t)\in\mathbb{M}$ we shall define a functor \[ \mathcal{F}^{0}_{(s,t)}\co \mathsf{CO}^{\Lambda}(\kappa)\to \mathsf{Ch}(\kappa). \]  
First, we set up a bit more notation.

For an object of $\mathsf{CO}^{\Lambda}(\kappa)$, given by the data $\left((C_{\uparrow *},\partial_{\uparrow}^C,\ell_{\uparrow}^C),(C^{\downarrow}_{*},\partial^{\downarrow}_C,\ell^{\downarrow}_C),(D_*,\partial_D),\psi_{\uparrow},\psi^{\downarrow}\right)$ as in Definition \ref{codfn} and for $t\in [-\Lambda,\Lambda]$, recall the notations for the subcomplexes \begin{equation}\label{subcxs} C_{\uparrow}^{\leq t}=(\ell_{\uparrow}^{C})^{-1}((-\infty,t]),\quad C^{\downarrow}_{\geq t}=(\ell^{\downarrow}_{C})^{-1}([t,\infty]).  \end{equation}
Because objects of $\mathsf{CO}^{\Lambda}(\kappa)$ are by assumption $\Lambda$-bounded, we have \[ C_{\uparrow}^{\leq -\Lambda}=C^{\downarrow}_{\geq \Lambda}=\{0\}.\]

For an object $C=(C_*,\partial_C)$ of $\mathsf{Ch}(\kappa)$ and for $k\in \Z$, $C[k]$ will denote the chain complex given by setting $C[k]_n=C_{n+k}$ and $\partial_{C[k]}=(-1)^k\partial_C$.  The assignment $C\mapsto C[k]$ can be enriched to a dg-functor $\mathsf{Ch}(\kappa)\to\mathsf{Ch}(\kappa)$ by taking its action on $\mathrm{Hom}_{\mathsf{Ch}(\kappa)}^{m}(C,D)$ to be multiplication by $(-1)^{mk}$.

 The formula for  $\mathcal{F}^{0}_{(s,t)}$ depends on where $(s,t)$ lies with respect to the subsets introduced in Section \ref{stripsect}, noting that \begin{equation}\label{mdecomp} \mathbb{M}=\sqcup_{k\in \Z}T^k(S\sqcup L\sqcup A);\end{equation} indeed we shall set, for $k\in \Z$ and $(x,y)\in\mathfrak{D}=S\sqcup L\sqcup A$, \begin{equation}\label{f0dfn} \mathcal{F}_{T^k(x,y)}^{0}=\left\{\begin{array}{ll} \mathcal{S}^{k}_{x,y} & \mbox{if }(x,y)\in S \\ \mathcal{L}^{k}_{x,y} & \mbox{if }(x,y)\in L \\ \mathcal{A}^{k}_{x,y} & \mbox{ if }(x,y)\in A  \end{array}\right. \end{equation} where $\mathcal{S}^{k}_{x,y},\mathcal{L}^{k}_{x,y},$ $\mathcal{A}^{k}_{x,y}$, are defined as follows.  Given an object $\mathcal{C}=\left((C_{\uparrow *},\partial_{\uparrow}^C,\ell_{\uparrow}^C),(C^{\downarrow}_{*},\partial^{\downarrow}_C,\ell^{\downarrow}_C),(D_*,\partial_D),\psi_{\uparrow},\psi^{\downarrow}\right)$ as in Definition \ref{codfn}, with notation as in (\ref{subcxs}) we set: \begin{align*}
 	\mathcal{S}^{k}_{x,y}(\mathcal{C}) &= \mathrm{Cone}(-\psi^{\downarrow}+\psi_{\uparrow}\co C^{\downarrow}_{\geq x}\oplus C_{\uparrow}^{\leq y}\to D)[-k+1] \\
 	\mathcal{L}^{k}_{x,y}(\mathcal{C}) &= \mathrm{Cone}(C_{\uparrow}^{\leq -2\Lambda-x}\hookrightarrow C_{\uparrow}^{\leq y})[-k] \\ \mathcal{A}^{k}_{x,y}(\mathcal{C})&= \mathrm{Cone}(C^{\downarrow}_{\geq 2\Lambda-y}\hookrightarrow C^{\downarrow}_{\geq x})[-k] .
 	\end{align*}  

(In the last two lines above, $\hookrightarrow$ denotes inclusion; these inclusions are well-defined since $x\leq 2\Lambda-y$ on $A$, and $y\geq -2\Lambda-x$ on $L$.)

Now given two objects $\mathcal{C}=\left((C_{\uparrow *},\partial_{\uparrow}^C,\ell_{\uparrow}^C),(C^{\downarrow}_{*},\partial^{\downarrow}_C,\ell^{\downarrow}_C),(D_*,\partial_D),\psi_{\uparrow},\psi^{\downarrow}\right)$    and\\ $\mathcal{X}=\left((X_{\uparrow *},\partial_{\uparrow}^X,\ell_{\uparrow}^{X}),(X^{\downarrow}_{*},\partial^{\downarrow}_{X},\ell^{\downarrow}_{X}),(Y_*,\partial_Y),\phi^{\downarrow},\phi_{\uparrow}\right)$ of $\mathsf{CO}^{\Lambda}(\kappa)$, and given a degree-$m$ morphism $\mathfrak{a}=(\alpha^{\downarrow},\alpha_{\uparrow},\alpha,K^{\downarrow},K_{\uparrow})\in \mathrm{Hom}_{\mathsf{CO}^{\Lambda}(\kappa)}^m(\mathcal{C},\mathcal{X})$, we take the actions of our functors on $\mathfrak{a}$ to be given in block form by: \begin{equation}\label{sfun} \mathcal{S}^{k}_{x,y}(\mathfrak{a})=(-1)^{m(k-1)}\left(\begin{array}{ccc} \alpha^{\downarrow} & 0 & 0 \\ 0 & \alpha_{\uparrow} & 0 \\ -K^{\downarrow} & K_{\uparrow} & \alpha  \end{array} \right)\end{equation}
(as a graded map $C^{\downarrow}_{\geq x}[-k]\oplus C_{\uparrow}^{\leq y}[-k]\oplus D[-k+1]\to X^{\downarrow}_{\geq x}[-m-k]\oplus X_{\uparrow}^{\leq y}[-m-k]\oplus Y[-m-k+1]$); 
\begin{equation}\label{lfun} \mathcal{L}^{k}_{x,y}(\mathfrak{a})= (-1)^{mk}\left(\begin{array}{cc} \alpha_{\uparrow} & 0 \\ 0 & (-1)^{m}\alpha_{\uparrow} \end{array} \right)  \end{equation} (as a graded map $C_{\uparrow}^{\leq -2\Lambda-x}[-k-1]\oplus C_{\uparrow}^{\leq y}[-k]\to X_{\uparrow}^{\leq -2\Lambda-x}[-m-k-1]\oplus X_{\uparrow}^{\leq y}[-m-k]$);
and \begin{equation} \label{afun} \mathcal{A}^{k}_{x,y}(\mathfrak{a})= (-1)^{mk}\left(\begin{array}{cc} \alpha^{\downarrow} & 0 \\ 0 & (-1)^{m}\alpha^{\downarrow} \end{array} \right)  \end{equation} (as a graded map $C^{\downarrow}_{\geq 2\Lambda-y}[-k-1]\oplus C^{\downarrow}_{\geq x}[-k]\to X^{\downarrow}_{\geq 2\Lambda-y}[-m-k-1]\oplus X^{\downarrow}_{\geq x}[-m-k]$).

It is a routine matter to check that each of $\mathcal{S}^{k}_{x,y},\mathcal{L}^{k}_{x,y},\mathcal{A}^{k}_{x,y}$ respect compositions and intertwine the differentials $\delta$, so they each define dg-functors, and thus induce functors $\mathsf{HCO}^{\Lambda}(\kappa)\to \mathsf{K}(\kappa)$ between homotopy categories. For $(s,t) \in\mathbb{M}$, we denote by $\underline{\mathcal{F}}^{0}_{(s,t)}$ the functor on homotopy categories induced by, respectively, $\mathcal{S}^{k}_{x,y},\mathcal{L}^{k}_{x,y},$ or $\mathcal{A}^{k}_{x,y}$ if $(s,t)=T^k(x,y)$ where $(x,y)$ lies in, respectively $S,L,$ or $A$. 

\subsection{Transformations between the $\mathcal{F}^{0}_{(s,t)}$}\label{transf}

For an object $\mathcal{C}=\left((C_{\uparrow *},\partial_{\uparrow}^C,\ell_{\uparrow}^C),(C^{\downarrow}_{*},\partial^{\downarrow}_C,\ell^{\downarrow}_C),(D_*,\partial_D),\psi_{\uparrow},\psi^{\downarrow}\right)$ of $\mathsf{CO}^{\Lambda}(\kappa)$, let us write \begin{align*} \mathcal{S}^k(\mathcal{C})&=\mathrm{Cone}(-\psi^{\downarrow}+\psi_{\uparrow}\co C^{\downarrow}\oplus C_{\uparrow}\to D)[-k+1] \\ \mathcal{L}^{k}(\mathcal{C})&=\mathrm{Cone}(C_{\uparrow}\to C_{\uparrow})[-k] \\ \mathcal{A}^k(\mathcal{C})&=\mathrm{Cone}(C^{\downarrow}\to C^{\downarrow})[-k] \end{align*}
where the maps in the cones defining $\mathcal{L}^k(\mathcal{C}),\mathcal{A}^k(\mathcal{C})$ are the respective identities.  Thus the various $\mathcal{S}^{k}_{x,y}(\mathcal{C}),\mathcal{L}^{k}_{x,y}(\mathcal{C}),$ and $\mathcal{A}^{k}_{x,y}(\mathcal{C})$ are certain filtered subcomplexes of, respectively, $\mathcal{S}^{k}(\mathcal{C}),\mathcal{L}^{k}(\mathcal{C}),$ and $\mathcal{A}^k(\mathcal{C})$.

We define the following maps, written in block matrix forms in the obvious way:

\begin{align*}  \phi_{T^kS}^{T^kL}(\mathcal{C})&=\left(\begin{array}{ccc} 0 & 0 & 0 \\ 0 & 1_{C_{\uparrow}[-k]} & 0    \end{array}\right)\co \mathcal{S}^k(\mathcal{C})\to \mathcal{L}^k(\mathcal{C})   \\ \phi_{T^kS}^{T^kA}(\mathcal{C})&=\left(\begin{array}{ccc} 0 & 0 & 0 \\  1_{C^{\downarrow}[-k]} & 0 & 0    \end{array}\right)\co \mathcal{S}^k(\mathcal{C})\to \mathcal{A}^k(\mathcal{C}) \\ \phi_{T^kL}^{T^{k+1}S}(\mathcal{C}) &= \left(\begin{array}{cc} 0 & 0 \\ 1_{C_{\uparrow}[-k-1]} & 0 \\ 0 & \psi_{\uparrow}\end{array}\right) \co \mathcal{L}^{k}(\mathcal{C})\to \mathcal{S}^{k+1}(\mathcal{C})  
\\ \phi_{T^kA}^{T^{k+1}S}(\mathcal{C}) &= \left(\begin{array}{cc} -1_{C^{\downarrow}[-k-1]} & 0 \\  0 & 0 \\ 0 & \psi^{\downarrow}\end{array}\right) \co \mathcal{A}^{k}(\mathcal{C})\to \mathcal{S}^{k+1}(\mathcal{C})   	
	\end{align*} 

Routine computation shows that each of these maps are chain maps.  They are intended to provide the structure maps corresponding to passing between adjacent regions in Figure \ref{stripfig} for the $\mathbb{M}$-indexed persistence module associated to $\mathcal{C}$.   

Let us record some computations concerning compositions of the above maps. Recall that the differentials $\delta_{C,D}$ on the complexes $\mathrm{Hom}_{\mathsf{Ch}(\kappa)}(C,D)$ are given by, for $\psi\in\mathrm{Hom}^m_{\mathsf{Ch}(\kappa)}(C,D)$,  $\delta_{C,D}\psi=\partial_D\psi-(-1)^m\psi\partial_C$.  We have: \begin{equation}\label{lazero}
	\phi_{T^{k+1}S}^{T^{k+1}A}(\mathcal{C})\phi_{T^kL}^{T^{k+1}S}(\mathcal{C})=0; \qquad \phi_{T^{k+1}S}^{T^{k+1}L}(\mathcal{C})\phi_{T^kA}^{T^{k+1}S}(\mathcal{C})=0; 	
\end{equation}
\begin{equation}\label{llhtopy}
	\phi_{T^{k+1}S}^{T^{k+1}L}(\mathcal{C})\phi_{T^kL}^{T^{k+1}S}(\mathcal{C})=\left(\begin{array}{cc} 0 & 0 \\ 1 & 0 \end{array}\right)=\delta_{\mathcal{L}^k(\mathcal{C}),\mathcal{L}^{k+1}(\mathcal{C})}\left(\begin{array}{cc} 0 & 0 \\ 0 & (-1)^k\end{array}\right);
\end{equation}
\begin{equation}\label{aahtopy}
		\phi_{T^{k+1}S}^{T^{k+1}A}(\mathcal{C})\phi_{T^kA}^{T^{k+1}S}(\mathcal{C})=\left(\begin{array}{cc} 0 & 0 \\ -1 & 0 \end{array}\right)=\delta_{\mathcal{A}^k(\mathcal{C}),\mathcal{A}^{k+1}(\mathcal{C})}\left(\begin{array}{cc} 0 & 0 \\ 0 & (-1)^{k+1}\end{array}\right);	
\end{equation}
\begin{equation}\label{sls}
		\phi^{T^{k+1}S}_{T^{k}L}(\mathcal{C})\phi_{T^kS}^{T^{k}L}(\mathcal{C})=\delta_{\mathcal{S}^{k}(\mathcal{C}),\mathcal{S}^{k+1}(\mathcal{C})}\left(\begin{array}{ccc} 0 & 0 & 0 \\ 0 & (-1)^k & 0 \\ 0 & 0 & 0 \end{array}\right);
	\end{equation}
\begin{equation}\label{sas}
	\phi^{T^{k+1}S}_{T^{k}A}(\mathcal{C})\phi_{T^kS}^{T^{k}A}(\mathcal{C})=\delta_{\mathcal{S}^{k}(\mathcal{C}),\mathcal{S}^{k+1}(\mathcal{C})}\left(\begin{array}{ccc} (-1)^{k+1} & 0 & 0 \\ 0 & 0 & 0 \\ 0 & 0 & 0 \end{array}\right); \mbox{ and }
\end{equation}
\begin{equation}\label{ssdiff}
	\phi^{T^{k+1}S}_{T^{k}A}(\mathcal{C})\phi_{T^kS}^{T^{k}A}(\mathcal{C})-	\phi^{T^{k+1}S}_{T^{k}L}(\mathcal{C})\phi_{T^kS}^{T^{k}L}(\mathcal{C})=\delta_{\mathcal{S}^{k}(\mathcal{C}),\mathcal{S}^{k+1}(\mathcal{C})}\left(\begin{array}{ccc} 0 & 0 & 0 \\ 0 &  0 & 0 \\ 0 & 0 & (-1)^k \end{array}\right).
	\end{equation}

Relating to the naturality of the maps $\phi_{T^kS}^{T^{k}L},\phi_{T^kS}^{T^kA},\phi_{T^kL}^{T^{k+1}S},\phi_{T^kA}^{T^{k+1}S}$, suppose that $\mathcal{X}=\left((X_{\uparrow *},\partial_{\uparrow}^X,\ell_{\uparrow}^{X}),(X^{\downarrow}_{*},\partial^{\downarrow}_{X},\ell^{\downarrow}_{X}),(Y_*,\partial_Y),\phi^{\downarrow},\phi_{\uparrow}\right)$ is another object of $\mathsf{CO}^{\Lambda}(\kappa)$ and that $\mathfrak{a}=(\alpha^{\downarrow},\alpha_{\uparrow},\alpha,K^{\downarrow},K_{\uparrow})\in \mathrm{Hom}_{\mathsf{CO}^{\Lambda}(\kappa)}^{m}(\mathcal{C},\mathcal{X})$.  Write $\mathcal{S}^k(\mathfrak{a})\in \mathrm{Hom}_{\mathsf{Ch}(\kappa)}^{m}(\mathcal{S}^k(\mathcal{C},\mathcal{X}))$ for the morphism given by the right hand side of (\ref{sfun}) (but now considered as a map between the full unfiltered complexes $\mathcal{S}^k(\mathcal{C})$ and $\mathcal{S}^k(\mathcal{X})$), and similarly let  $\mathcal{L}^k(\mathfrak{a})$ and $\mathcal{A}^k(\mathfrak{a})$ be given  by the right hand sides of (\ref{lfun}) and (\ref{sfun}).  We find: 
\begin{equation}\label{natkk}
	\phi_{T^kS}^{T^kL}(\mathcal{X})\mathcal{S}^k(\mathfrak{a})=\mathcal{L}^k(\mathfrak{a})\phi_{T^kS}^{T^kL}(\mathcal{C});\qquad 	\phi_{T^kS}^{T^kA}(\mathcal{X})\mathcal{S}^k(\mathfrak{a})=\mathcal{A}^k(\mathfrak{a})\phi_{T^kA}^{T^kL}(\mathcal{C});
\end{equation}
\begin{equation}\label{lsnat}
	\phi_{T^kL}^{T^{k+1}S}(\mathcal{X})\mathcal{L}^k(\mathfrak{a})-\mathcal{S}^{k+1}(\mathfrak{a})\phi_{T^kL}^{T^{k+1}S}(\mathcal{C})=\delta_{\mathcal{L}^{k}(\mathcal{C}),\mathcal{S}^{k+1}(\mathcal{X})}(\mathcal{K}_{\uparrow}\mathfrak{a})+\mathcal{K}_{\uparrow}(\delta_{\mathcal{C},\mathcal{X}}\mathfrak{a});\mbox{ and}
\end{equation}
\begin{equation}\label{asnat}
	\phi_{T^kA}^{T^{k+1}S}(\mathcal{X})\mathcal{A}^k(\mathfrak{a})-\mathcal{S}^{k+1}(\mathfrak{a})\phi_{T^kA}^{T^{k+1}S}(\mathcal{C})=\delta_{\mathcal{A}^{k}(\mathcal{C}),\mathcal{S}^{k+1}(\mathcal{X})}(\mathcal{K}^{\downarrow}\mathfrak{a})+\mathcal{K}^{\downarrow}(\delta_{\mathcal{C},\mathcal{X}}\mathfrak{a}),
\end{equation}
where we define $\mathcal{K}_{\uparrow}\co \mathrm{Hom}^m_{\mathsf{CO}^{\Lambda}(\kappa)}(\mathcal{C},\mathcal{X})\to \mathrm{Hom}^{m-1}_{\mathsf{Ch}(\kappa)}(\mathcal{L}^k(\mathcal{C}),\mathcal{S}^{k+1}(\mathcal{X}))$ and  $\mathcal{K}^{\downarrow}\co \mathrm{Hom}^m_{\mathsf{CO}^{\Lambda}(\kappa)}(\mathcal{C},\mathcal{X})\to \mathrm{Hom}^{m-1}_{\mathsf{Ch}(\kappa)}(\mathcal{A}^k(\mathcal{C}),\mathcal{S}^{k+1}(\mathcal{X}))$
by \[ \mathcal{K}_{\uparrow}(\alpha^{\downarrow},\alpha_{\uparrow},\alpha,K^{\downarrow},K_{\uparrow})=(-1)^{(m-1)(k-1)}\left(\begin{array}{ccc} 0 & 0 \\ 0 & 0 \\ 0 & K_{\uparrow}\end{array}\right)\quad\mbox{and}\]\[ \mathcal{K}^{\downarrow}(\alpha^{\downarrow},\alpha_{\uparrow},\alpha,K^{\downarrow},K_{\uparrow})=(-1)^{(m-1)(k-1)}\left(\begin{array}{ccc} 0 & 0 \\ 0 & 0 \\ 0 & K^{\downarrow}\end{array}\right)\]

Equations (\ref{natkk}) show that $\phi_{T^kS}^{T^kL}$ defines a natural transformation between the functors $\mathcal{S}^k,\mathcal{L}^k\co \mathsf{CO}^{\Lambda}(\kappa)\to \mathsf{Ch}(\kappa)$, and likewise that $\phi_{T^kS}^{T^kA}$ defines a natural transformation between $\mathcal{S}^k$ and $\mathcal{A}^k$.  Equations (\ref{lsnat}) and (\ref{asnat}), on the other hand, show that the transformations $\phi_{T^kL}^{T^{k+1}S}$ and $\phi_{T^kA}^{T^{k+1}S}$  do not give natural transformations between functors from $\mathsf{CO}^{\Lambda}(\kappa)$ to $\mathsf{Ch}(\kappa)$,  but that they induce natural transformations $\uph_{T^kL}^{T^{k+1}S}$ and $\uph_{T^kA}^{T^{k+1}S}$ between the functors $\mathsf{HCO}^{\Lambda}(\kappa)\to \mathsf{K}(\kappa)$ that are induced by $\mathcal{S}^k,\mathcal{L}^k,\mathcal{A}^k$ at the level of homotopy categories.

Our interest will be not so much on the actions of these transformations as maps between the full complexes $\mathcal{S}^k(\mathcal{C}),\mathcal{L}^k(\mathcal{C}),$ and $\mathcal{A}^{k}(\mathcal{C})$, but rather as maps between the filtered subcomplexes $\mathcal{S}_{x,y}^{k}(\mathcal{C}),\mathcal{L}_{x,y}^{k}(\mathcal{C}),$ and $\mathcal{A}_{x,y}^{k}(\mathcal{C})$ that are used to define the $\mathcal{F}_{(s,t)}^{0}(\mathcal{C})$ via (\ref{f0dfn})\footnote{We will continue to write general elements of the strip $\mathbb{M}$ as $(s,t)$, reserving $(x,y)$ for elements that are known or chosen to belong to the fundamental domain $\mathfrak{D}=S\cup L\cup A$.}, considered up to homotopy.  

One may verify that, if $(s,t),(s',t')\in\mathbb{M}$ with $(s,t)\preceq (s',t')$,  and either $(s,t)\in T^kS$ and $(s',t')\in T^kL$, or $(s,t)\in T^kS$ and $(s',t')\in T^kA$, or $(s,t)\in T^kL$ and $(s',t')\in T^{k+1}S$, or $(s,t)\in T^kA$ and $(s',t')\in T^{k+1}S$, then the appropriate map from among $\phi_{T^kS}^{T^kL}(\mathcal{C}), \phi_{T^kS}^{T^kA}(\mathcal{C}), \phi_{T^kL}^{T^{k+1}S}(\mathcal{C}),$ and  $\phi_{T^kA}^{T^{k+1}S}(\mathcal{C})$ restricts as a map from $\mathcal{F}_{(s,t)}^{0}(\mathcal{C})$ to $\mathcal{F}_{(s',t')}^{0}(\mathcal{C})$.  Let us confirm this in the third case, where  $(s,t)\in T^kL$ and $(s',t')\in T^{k+1}S$ (still assuming $(s,t)\preceq (s',t')$); the other cases are similar or simpler.  Write $(s,t)=T^k(x,y)$ and $(s',t')=T^{k+1}(x',y')=T^k(-2\Lambda-y',2\Lambda-x')$ where $(x,y)\in L$ and $(x',y')\in S$.  Then (as graded $\kappa$-vector spaces) we have \[ \mathcal{F}_{(s,t)}^{0}(\mathcal{C})=C_{\uparrow}^{\leq -2\Lambda-x}[-k-1]\oplus C_{\uparrow}^{\leq y}[-k],\quad \mathcal{F}_{(s',t')}^{0}(\mathcal{C})=C^{\downarrow}_{\geq x'}[-k-1]\oplus C_{\uparrow}^{\leq y'}[-k-1]\oplus D[-k].\]  
By definition, $\phi_{T^kL}^{T^{k+1}S}(\mathcal{C})$ maps $(a,b)\in C_{\uparrow}[-k-1]\oplus C_{\uparrow}[-k]$ to $(0,a,\psi_{\uparrow}b)\in C^{\downarrow}[-k-1]\oplus C_{\uparrow}[-k-1]\oplus D[-k]$.  That this restricts to a map $\mathcal{F}_{(s,t)}^{0}(\mathcal{C})\to \mathcal{F}_{(s',t')}^{0}(\mathcal{C})$ is equivalent to the condition that $C_{\uparrow}^{\leq -2\Lambda-x}\subset C_{\uparrow}^{\leq y'}$, \emph{i.e.} that $-2\Lambda-x\leq y'$. This  does indeed hold under the assumption that $(s,t)\preceq (s',t')$, since the map $T$ is strictly increasing with respect to $\preceq$ and so $T^{-k}(s,t)\preceq T^{-k}(s',t')$, \emph{i.e.}, $(x,y)\preceq (-2\Lambda-y',2\Lambda-x')$, whence $x\geq -2\Lambda-y'$.

On a similar note, if $(s,t)\in T^kA$ and $(s',t')\in T^{k+1}A$, say $(s,t)=T^k(x,y)$ and $(s',t')=T^{k+1}(x',y')=T^k(-2\Lambda-y',2\Lambda-x')$, and if $(s,t)\preceq (s',t')$, then the map \[ \left(\begin{array}{cc} 0 & 0 \\ -1 & 0\end{array}\right)\co C^{\downarrow}[-k-1]\oplus C^{\downarrow}[-k] \to C^{\downarrow}[-k-2]\oplus C^{\downarrow}[-k-1],   \] which appears in (\ref{aahtopy}), restricts as a map $\mathcal{F}_{(s,t)}^{0}(\mathcal{C})\to \mathcal{F}_{(s',t')}^{0}(\mathcal{C})$.  Indeed, this is equivalent to the statement that $C^{\downarrow}_{\geq 2\Lambda-y}\subset C^{\downarrow}_{\geq x'}$, \emph{i.e.} that $2\Lambda-y\geq x'$, and this inequality is implied by the assumption that $T^k(x,y)\preceq T^{k}(-2\Lambda-y',2\Lambda-x')$.  Similarly, the map $\left(\begin{array}{cc} 0 & 0 \\ 1 & 0\end{array}\right)\co C_{\uparrow}[-k-1]\oplus C_{\uparrow}[-k]\to C_{\uparrow}[-k-2]\oplus C_{\uparrow}[-k-1]$ appearing in  (\ref{llhtopy}) restricts as a map $\mathcal{F}_{(s,t)}^{0}(\mathcal{C})\to \mathcal{F}_{(s',t')}^{0}(\mathcal{C})$ in the case that $(s,t)\in T^kL$ and $(s',t')\in T^{k+1}L$ with $(s,t)\preceq (s',t')$.  

Now Equations (\ref{llhtopy}) and (\ref{aahtopy}) show that these maps $\left(\begin{array}{cc} 0 & 0 \\ -1 & 0\end{array}\right)$ and $\left(\begin{array}{cc} 0 & 0 \\ 1 & 0\end{array}\right)$ are nullhomotopic when considered as maps $\mathcal{L}^k(\mathcal{C})\to \mathcal{L}^{k+1}(\mathcal{C})$ and $\mathcal{A}^k(\mathcal{C})\to \mathcal{A}^{k+1}(\mathcal{C})$ on the full complexes (as well they must be, since $\mathcal{A}^k(\mathcal{C})$ and $\mathcal{L}^k(\mathcal{C})$ are zero objects in the homotopy category).  However, when these maps are restricted to maps $\mathcal{F}_{(s,t)}^{0}(\mathcal{C})\to \mathcal{F}_{(s',t')}^{0}(\mathcal{C})$ on the filtered subcomplexes, the only case in which (\ref{llhtopy}) or (\ref{aahtopy}) implies that they are nullhomotopic is when the map $\pm\left(\begin{array}{cc} 0 & 0 \\ 0 & 1\end{array}\right)$ appearing on the right of (\ref{llhtopy}) or (\ref{aahtopy}) restricts as a map between these subcomplexes.  

The compositions $\phi_{T^kL}^{T^{k+1}S}(\mathcal{C})\phi_{T^kS}^{T^{k+1}L}(\mathcal{S})(\mathcal{C})$ and $\phi_{T^kA}^{T^{k+1}L}(\mathcal{C})\phi_{T^kL}^{T^{k+1}A}(\mathcal{S})(\mathcal{C})$, mapping $\mathcal{S}^k(\mathcal{C})$ to $\mathcal{S}^{k+1}(\mathcal{C})$, each have image in the summand $D[-k]$ of $\mathcal{S}^{k+1}(\mathcal{C})=C^{\downarrow}[-k-1]\oplus C_{\uparrow}[-k-1]\oplus D[-k]$.  As this summand is contained in every filtered subcomplex $\mathcal{S}^{k+1}_{x,y}(\mathcal{C})$, it follows that these maps restrict as maps $\mathcal{S}^k_{x,y}(\mathcal{C})\to\mathcal{S}^{k+1}_{x',y'}(\mathcal{C})$ for any choice of $(x,y),(x',y')\in S$.  Moreover, again because the summand $D[-k]$ is common to each filtered subcomplex of $\mathcal{S}^{k+1}(\mathcal{C})$, (\ref{ssdiff}) shows that $\phi_{T^kL}^{T^{k+1}S}(\mathcal{C})\phi_{T^kS}^{T^{k+1}L}(\mathcal{S})(\mathcal{C})$ and $\phi_{T^kA}^{T^{k+1}L}(\mathcal{C})\phi_{T^kL}^{T^{k+1}A}(\mathcal{S})(\mathcal{C})$ are homotopic as maps $\mathcal{S}^k_{x,y}(\mathcal{C})\to\mathcal{S}^{k+1}_{x',y'}(\mathcal{C})$ for any choice of $(x,y),(x',y')$.  (Meanwhile, (\ref{sas}) and (\ref{sls}) show that these maps are nullhomotopic just for certain choices of $(x,y),(x',y')$.)

\subsection{The functor $\mathcal{F}\co \mathsf{HCO}^{\Lambda}(\kappa)\to \mathsf{K}(\kappa)^{\mathbb{M}}$} \label{fsect}

For an object $\mathcal{C}=\left((C_{\uparrow *},\partial_{\uparrow}^C,\ell_{\uparrow}^C),(C^{\downarrow}_{*},\partial^{\downarrow}_C,\ell^{\downarrow}_C),(D_*,\partial_D),\psi_{\uparrow},\psi^{\downarrow}\right)$ of $\mathsf{CO}^{\Lambda}(\kappa)$, and for $(s,t),(s',t')\in\mathbb{M}$ with $(s,t)\preceq (s',t')$, we have objects $\mathcal{F}^{0}_{(s,t)}(\mathcal{C}),\mathcal{F}^{0}_{(s',t')}$ of the dg-category $\mathsf{Ch}(\kappa)$; these are thus also objects of the homotopy category $\mathsf{K}(\kappa)$.  We then define the morphism $\Phi_{(s,t)}^{(s',t')}(\mathcal{C})\in \mathsf{Hom}_{\mathsf{K}(\kappa)}( \mathcal{F}^{0}_{(s,t)}(\mathcal{C}),\mathcal{F}^{0}_{(s',t')}(\mathcal{C}))$ as follows:

\begin{enumerate}
	\item[\namedlabel{(a)}{(a)}] If $(s,t)$ and $(s',t')$ either both lie in $T^kS$, or both lie in $T^kL$, or both lie in $T^{k}A$, so that $\mathcal{F}^{0}_{(s,t)}(\mathcal{C})$ is a subcomplex of $\mathcal{F}^{0}_{(s',t')}(\mathcal{C})$, then  $\Phi_{(s,t)}^{(s',t')}(\mathcal{C})$  is the homotopy class of the inclusion.
	\item[\namedlabel{(b)}{(b)}] If $(s,t)\in T^kS$ and $(s',t')$ in $T^kL$, then  $\Phi_{(s,t)}^{(s',t')}(\mathcal{C})$  is the homotopy class of the restriction to $\mathcal{F}^{0}_{(s,t)}(\mathcal{C})$ of  $\phi_{T^kS}^{T^kL}(\mathcal{C})$.
	\item[\namedlabel{(c)}{(c)}] If $(s,t)\in T^kS$ and $(s',t')$ in $T^kA$, then  $\Phi_{(s,t)}^{(s',t')}(\mathcal{C})$  is the homotopy class of the restriction to $\mathcal{F}^{0}_{(s,t)}(\mathcal{C})$ of  $\phi_{T^kS}^{T^kA}(\mathcal{C})$.
	\item[\namedlabel{(d)}{(d)}] If $(s,t)\in T^kL$ and $(s',t')$ in $T^{k+1}S$, then  $\Phi_{(s,t)}^{(s',t')}(\mathcal{C})$  is the homotopy class of the restriction to $\mathcal{F}^{0}_{(s,t)}(\mathcal{C})$ of  $\phi_{T^kL}^{T^{k+1}S}(\mathcal{C})$.
	\item[\namedlabel{(e)}{(e)}] If $(s,t)\in T^kA$ and $(s',t')$ in $T^{k+1}S$, then  $\Phi_{(s,t)}^{(s',t')}(\mathcal{C})$  is the homotopy class of the restriction to $\mathcal{F}^{0}_{(s,t)}(\mathcal{C})$ of  $\phi_{T^kA}^{T^{k+1}S}(\mathcal{C})$.
	\item[\namedlabel{(f)}{(f)}] If $(s,t)\in T^kL$ and $(s',t')\in T^{k+1}L$, then   $\Phi_{(s,t)}^{(s',t')}(\mathcal{C})$  is the homotopy class of the restriction to $\mathcal{F}^{0}_{(s,t)}(\mathcal{C})$ of $\phi_{T^{k+1}S}^{T^{k+1}L}(\mathcal{C})\circ \phi_{T^kL}^{T^{k+1}S}(\mathcal{C})=\left(\begin{array}{cc}0 & 0 \\ 1 & 0\end{array}\right)$.
	\item[\namedlabel{(g)}{(g)}] If $(s,t)\in T^kA$ and $(s',t')\in T^{k+1}A$, then   $\Phi_{(s,t)}^{(s',t')}(\mathcal{C})$  is the homotopy class of the restriction to $\mathcal{F}^{0}_{(s,t)}(\mathcal{C})$ of $\phi_{T^{k+1}S}^{T^{k+1}A}(\mathcal{C})\circ \phi_{T^kA}^{T^{k+1}S}(\mathcal{C})=\left(\begin{array}{cc}0 & 0 \\ -1 & 0\end{array}\right)$.
	\item[\namedlabel{(h)}{(h)}] If $(s,t)\in T^kS$ and $(s',t')\in T^{k+1}S$, then $\Phi_{(s,t)}^{(s',t')}(\mathcal{C})$  is the homotopy class of the restriction to $\mathcal{F}^{0}_{(s,t)}(\mathcal{C})$ of $\phi_{T^kL}^{T^{k+1}S}(\mathcal{C})\circ \phi_{T^kS}^{T^{k}L}(\mathcal{C})=\left(\begin{array}{ccc}0 & 0 & 0 \\ 0 & 0 & 0 \\ 0 & \psi_{\uparrow} & 0\end{array}\right)$, or equivalently of  $\phi_{T^kA}^{T^{k+1}S}(\mathcal{C})\circ \phi_{T^kS}^{T^{k}A}(\mathcal{C})=\left(\begin{array}{ccc}0 & 0 & 0 \\ 0 & 0 & 0 \\ \psi^{\downarrow} & 0  & 0\end{array}\right)$.
	\item[\namedlabel{(i)}{(i)}] In all other cases, $\Phi_{(s,t)}^{(s',t')}(\mathcal{C})=0$.
\end{enumerate}

\begin{prop}\label{mainnat}
	The morphisms  $\Phi_{(s,t)}^{(s',t')}(\mathcal{C})$ satisfy the following properties:
	\begin{itemize}
		\item[(I)] If $(s,t)\preceq (s',t')$ with either $s'\leq -2\Lambda-t$ or $t'\geq 2\Lambda-s$, then $\Phi_{(s,t)}^{(s',t')}(\mathcal{C})=0$.
		\item[(II)] If $(s,t)\preceq (s',t')\preceq (s'',t'')$ then $\Phi_{(s',t')}^{(s'',t'')}(\mathcal{C})\circ \Phi_{(s,t)}^{(s',t')}(\mathcal{C})=\Phi_{(s,t)}^{(s'',t'')}(\mathcal{C})$.
		\item[(III)] If $\mathcal{X}$ is another object of $\mathsf{HCO}^{\Lambda}(\kappa)$, if $\mathfrak{a}\in \mathrm{Hom}_{\mathsf{CO}^{\Lambda}(\kappa)}(\mathcal{C},\mathcal{X})$ has $\delta_{\mathcal{C},\mathcal{X}}\mathfrak{a}=0$, inducing $\underline{\mathfrak{a}}\in\mathrm{Hom}_{\mathsf{HCO}^{\Lambda}(\kappa)}(\mathcal{C},\mathcal{X})$, then $\Phi_{(s,t)}^{(s',t')}(\mathcal{X})\circ \underline{\mathcal{F}}_{(s,t)}^{0}(\underline{\mathfrak{a}})= \underline{\mathcal{F}}_{(s',t')}^{0}(\underline{\mathfrak{a}})\circ \Phi_{(s,t)}^{(s',t')}(\mathcal{C})$.
	\end{itemize}
\end{prop}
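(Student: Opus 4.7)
The plan is to verify parts (I), (II), (III) by case analysis on which of the regions $T^k S, T^k L, T^k A$ contain the points $(s,t),(s',t'),(s'',t'')$ involved, exploiting the chain-level identities (\ref{lazero})--(\ref{asnat}) derived in Section \ref{transf}. I will address part (III) first, then (II), then (I); only the last involves substantially new ideas.

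For part (III), the naturality identity is strict at the chain level in cases (a), (b), (c) by the identities (\ref{natkk}), since the relevant $\phi$-maps intertwine with $\mathfrak{a}$ on the nose and restrict to the appropriate filtered subcomplexes. In cases (d), (e), the chain-level identities (\ref{lsnat}) and (\ref{asnat}) show that the defect $\Phi_{(s,t)}^{(s',t')}(\mathcal{X})\circ \mathcal{F}^0_{(s,t)}(\mathfrak{a})-\mathcal{F}^0_{(s',t')}(\mathfrak{a})\circ \Phi_{(s,t)}^{(s',t')}(\mathcal{C})$ is a coboundary of the form $\delta(\mathcal{K}_{\uparrow}\mathfrak{a})$ or $\delta(\mathcal{K}^\downarrow\mathfrak{a})$, using $\delta_{\mathcal{C},\mathcal{X}}\mathfrak{a}=0$; this coboundary vanishes upon passing to $\mathsf{K}(\kappa)$. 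Cases (f), (g), (h) then follow by concatenating two instances of the preceding cases, since $\Phi$ in these cases is defined as a composition of two already-handled $\phi$-maps.

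For part (II), the composition identity is verified according to how $(s,t),(s',t'),(s'',t'')$ are distributed among the regions. When all three lie in a single $T^k\mathfrak{D}$, the identity reduces to strict composition of restricted $\phi$-maps, the only nontrivial check being that e.g.\ the composition $T^kS\to T^kL\to T^{k+1}S$ (rule (b) followed by rule (d)) agrees with the rule (h) definition---which holds by construction. Compositions that fall outside any regime (a)--(h), such as $T^kL\to T^{k+1}S\to T^{k+1}A$ or $T^kL\to T^{k+2}S$, are shown to vanish in $\mathsf{K}(\kappa)$ using the identities (\ref{lazero}), (\ref{llhtopy}), (\ref{aahtopy}): the $\phi$-map compositions are either identically zero at the chain level or become coboundaries once restricted to the relevant filtered subcomplexes (the restriction of the chain homotopies in (\ref{llhtopy}), (\ref{aahtopy}) being well-defined precisely in the configurations where the zero composition arises). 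Compositions that fall under rule (h) but approach it via two different routes---one through $T^kL$ and the other through $T^kA$---are reconciled by (\ref{ssdiff}), whose chain homotopy preserves the $D[-k]$ summand common to every filtered subcomplex of $\mathcal{S}^{k+1}(\mathcal{C})$.

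Part (I) is the main technical point. The key observation is that the hypothesis $s'\leq -2\Lambda-t$ is equivalent to the filtered inclusion $C_\uparrow^{\leq t}\subseteq C_\uparrow^{\leq -2\Lambda-s'}$ (after unwinding the appropriate power of $T$), while $t'\geq 2\Lambda-s$ provides $C^{\downarrow}_{\geq s}\subseteq C^{\downarrow}_{\geq 2\Lambda-t'}$. In each of the eight cases (a)--(h), at least one of these inclusions produces an explicit chain-level null-homotopy for the restricted $\phi$-map (or composition) defining $\Phi_{(s,t)}^{(s',t')}(\mathcal{C})$. For example, in case (b) with $k=0$, the assignment $(c^{\downarrow},c_\uparrow,d)\mapsto (c_\uparrow,0)$ is a well-defined null-homotopy of $\phi_{S}^{L}$ restricted to $\mathcal{S}^0_{x,y}(\mathcal{C})\to \mathcal{L}^0_{x',y'}(\mathcal{C})$, using precisely $y\leq -2\Lambda-x'$; in case (h) with $k=0$, the hypothesis $s'\leq -2\Lambda-t$ unwinds to $y'\geq y$ and gives the null-homotopy $(c^{\downarrow},c_\uparrow,d)\mapsto (0,c_\uparrow,0)$ of $\phi_{TS}^{L}\phi_{S}^{L}$, while $t'\geq 2\Lambda-s$ unwinds to $x'\leq x$ and gives $(c^{\downarrow},c_\uparrow,d)\mapsto (c^{\downarrow},0,0)$ as a null-homotopy of the equivalent composition $\phi_{TS}^{A}\phi_{S}^{A}$. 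Analogous nullhomotopies are available in cases (c), (d), (e), (f), (g); in the $L\to L$ and $A\to A$ cases (f), (g), a similar lifting into the other cone summand produces $K(a,b)=(0,b)$ or $K(a,b)=(a,0)$ depending on which hypothesis holds.

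The primary obstacle, beyond sign and shift bookkeeping induced by the $[-k+1]$ convention and by the conjugation relating ascending and descending complexes, is the verification in case (h) that \emph{both} equivalent formulas for $\Phi$ admit null-homotopies under \emph{each} of the two hypotheses of (I)---since, a priori, one hypothesis is more directly adapted to one formula than the other, requiring an appeal to (\ref{ssdiff}) to transfer a null-homotopy from one presentation to the other.
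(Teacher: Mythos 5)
Your proposal is correct in substance, but it reaches (I) and (II) by a genuinely different route than the paper. For (I), the paper does not build case-by-case null-homotopies from the hypothesis inclusions; instead it observes that the hypothesis forces a point of the form $T^k(x,2\Lambda-x)$ or $T^k(-2\Lambda-y,y)$ to lie between $(s,t)$ and $(s',t')$, that $\mathcal{F}^0$ at such a point is the cone of an identity map and hence a zero object of $\mathsf{K}(\kappa)$, and that $\Phi_{(s,t)}^{(s',t')}(\mathcal{C})$ either falls under case \ref{(i)} or factors (by chain-level instances of the composition rules) through this zero object. For (II), the paper introduces ``proximal'' and ``strongly proximal'' pairs: proximal outer pairs are handled by the strict matrix identities you also use, all chains containing a non--strongly-proximal pair are killed outright by (I), and the one residual configuration (inner pairs strongly proximal, outer pair not proximal) is disposed of by factoring through a boundary-adjacent point such as $(-2\Lambda-t,t')$ and invoking (I) again. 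What the paper's route buys is a drastic reduction in explicit homotopy and sign bookkeeping, at the cost of proving (I) before (II) and leaning on it; what your route buys is explicit, self-contained null-homotopies in each of the cases \ref{(a)}--\ref{(h)} (which I checked are available exactly as you describe, with the (\ref{ssdiff}) transfer needed in case \ref{(h)} as you note), and a proof of (II) that does not pass through (I). Your treatment of (III) coincides with the paper's.

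One imprecision worth fixing in your (II): for chains whose outer pair falls under case \ref{(i)} but whose two-step composite is not identically zero at the chain level (e.g.\ $T^kL\to T^{k+1}L\to T^{k+2}S$ using the $L$-presentation), the homotopies of (\ref{llhtopy}) and (\ref{aahtopy}) themselves need \emph{not} restrict to the filtered subcomplexes under the given order relations; what restricts is their post-composition with the next $\phi$-map, which lands in the $D$ summand common to every $\mathcal{S}^{k+2}_{x,y}(\mathcal{C})$ (equivalently, one can use the homotopies of (\ref{sls}), (\ref{sas}), (\ref{ssdiff}) directly). With that adjustment your argument closes.
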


As at the end of Section \ref{f0sec}, $\underline{\mathcal{F}}_{(s,t)}^{0}$ denotes the functor on homotopy categories induced by $\mathcal{F}_{(s,t)}^{0}$.

This proposition allows us to make the following definition:

\begin{dfn}\label{fdef}
	For an object $\mathcal{C}$ of $\mathsf{HCO}^{\Lambda}(\kappa)$, we let $\mathcal{F}(\mathcal{C})$ denote the functor $\mathbb{M}\to \mathsf{K}(\kappa)$ which sends an object $(s,t)\in\mathbb{M}$ to $\mathcal{F}^{0}_{(s,t)}(\mathcal{C})$, and, for $(s,t)\preceq (s',t')$, the unique morphism $(s,t)\to (s',t')$ to $\Phi_{(s,t)}^{(s',t')}(\mathcal{C})$.  When the object $\mathcal{C}$ is understood we may just write $\Phi_{(s,t)}^{(s',t')}$.
	
	For a morphism $\underline{\mathfrak{a}}\in \mathrm{Hom}_{\mathsf{HCO}^{\Lambda}(\kappa)}(\mathcal{C},\mathcal{X})$, we let $\mathcal{F}(\underline{\mathfrak{a}})$ be the natural transformation of functors $\mathcal{F}(\mathcal{C})\to\mathcal{F}(\mathcal{X})$ given by, for each $(s,t)\in\mathbb{M}$, $\mathcal{F}(\underline{\mathfrak{a}})(s,t)=\underline{\mathcal{F}}_{(s,t)}^{0}(\underline{\mathfrak{a}})\in \mathrm{Hom}_{\mathsf{K}(\kappa)}(\mathcal{F}^{0}_{(s,t)}(\mathcal{C}),\mathcal{F}^{0}_{(s,t)}(\mathcal{C}))$.
\end{dfn}

Indeed, Proposition \ref{mainnat}(II) implies that $\mathcal{F}(\mathcal{C})$ is  a functor from $\mathbb{M}$ to $\mathsf{K}(\kappa)$, and Proposition \ref{mainnat}(III) implies that $\mathcal{F}(\underline{\mathfrak{a}})$ is a natural transformation of functors.

\begin{proof}[Proof of Proposition \ref{mainnat}]
	Property (III) follows directly from the definitions together with (\ref{natkk}),(\ref{lsnat}), and (\ref{asnat}), noting that the maps $\left(\begin{array}{cc} 0 & 0\\ 0 & 0 \\ 0 & K_{\uparrow}\end{array}\right)$ and 
$\left(\begin{array}{cc} 0 & 0\\ 0 & 0 \\ 0 & K^{\downarrow}\end{array}\right)$ that appear in the definitions of $\mathcal{K}_{\uparrow}$ and $\mathcal{K}^{\downarrow}$ each have image contained in the summand $Y[-k]$ that is common to all of the $\mathcal{S}^{k+1}_{x,y}(\mathcal{X})$ for $(x,y)\in S$.

To prove (I), let $(s,t)=T^k(x,y)$ where $(x,y)\in S\cup L\cup A$, and let $(s',t')=T^k(w,z)$.\footnote{We write $(w,z)$ instead of $(x',y')$ because this point might not be in $S\cup L\cup A$.}  The condition that $(s,t)\preceq (s',t')$ with either $s'\leq -2\Lambda-t$ or $t'\geq 2\Lambda-s$ is equivalent to the condition that $(x,y)\preceq (w,z)$ with either $w\leq -2\Lambda-y$ or $z\geq 2\Lambda-x$.  Let us assume that $z\geq 2\Lambda-x$; the case that $w\leq -2\Lambda-y$ is similar. (One considers $(-2\Lambda-y,y)$ instead of $(x,2\Lambda-x)$.)

If $(x,y)\in S\cup A$, then $(x,2\Lambda-x)\in A$, and $\mathcal{F}^{0}_{T^k(x,2\Lambda-x)}(\mathcal{C})=\mathrm{Cone}(C^{\downarrow}_{x}\to C^{\downarrow}_{x})[-k]$ is (as the  cone of an identity map) isomorphic to zero in $\mathsf{K}(\kappa)$.  Since $ (x,y)\preceq (w,z)$ with $z\geq 2\Lambda-x$ we have $(x,2\Lambda-x)\preceq (w,z)$, and hence $(w,z)\in A\cup\bigcup_{j\geq 1}T^j(S\cup L\cup A)$.  If either $(x,y)\in S$ and $(w,z)\notin A\cup T(S)$, or if $(x,y)\in S$ and $(w,z)\notin A\cup T(S)\cup T(A)$, then the map $\Phi_{(s,t)}^{(s',t')}(\mathcal{C})=\Phi_{T^k(x,y)}^{T^k(w,z)}(\mathcal{C})$ falls into the final case \ref{(i)} of our prescription and so is zero by definition.  In each of the remaining cases (where $(x,y)\in S$ and $(w,z)\in A\cup T(S)$, or $(x,y)\in A$ and $(w,z)\in A\cup T(S)\cup T(A)$), since $(x,y)\preceq (x,2\Lambda-x)\preceq (w,z)$ with $(x,2\Lambda-x)\in A$, one sees from some combination of cases \ref{(a)}, \ref{(c)}, \ref{(e)}, \ref{(g)}, and \ref{(h)} that we have a factorization $\Phi_{(s,t)}^{(s',t')}(\mathcal{C})=\Phi_{T^k(x,2\Lambda-x)}^{(s',t')}(\mathcal{C})\circ \Phi_{(s,t)}^{T^k(x,2\Lambda-x)}(\mathcal{C})$.  This composition vanishes since it factors through the zero object $\mathcal{F}^{0}_{T^k(x,2\Lambda-x)}(\mathcal{C})$.  This completes the proof of (I) in case $(x,y)\in S\cup A$ and $z\geq 2\Lambda-x$.
	
If instead $(x,y)\in L$ with $z\geq 2\Lambda-x$, then we have $(x,y)\preceq (x,2\Lambda-x)\preceq (w,z)$ with $(x,2\Lambda-x)\in T(L)$.  Hence $(w,z)\in T(L)\cup T(S)\cup\bigcup_{j\geq 2}T^j(S\cup L\cup A)$. If $(w,z)\notin T(L)$ then $\Phi_{(s,t)}^{(s',t')}(\mathcal{C})$ falls into case \ref{(i)} and so vanishes by definition.  If $(w,z)\in T(L)$, so that $(s,t)\in T^k(L)$, $T^k(x,2\Lambda-x)\in T^{k+1}(L)$, and $T(s',t')\in T^{k+1}(L)$, then by cases \ref{(a)} and \ref{(f)} of our definition the map $\Phi_{(s,t)}^{(s',t')}(\mathcal{C})$ factors through the zero object $\mathcal{F}^{0}_{T^k(x,2\Lambda-x)}(\mathcal{C})$ and so is zero.  We have now proven statement (I) of the proposition in all cases where $z\geq 2\Lambda-x$. As noted earlier, the alternative case in which $w\leq -2\Lambda-y$ follows by a very similar analysis which we leave to the reader.

Having dispensed with (I), we turn to (II).  Let $(s,t),(s',t'),(s'',t'')\in \mathbb{M}$ with $(s,t)\preceq (s',t')\preceq (s'',t'')$.  As we have already used in the proof of (I), there are many cases in which the relation $\Phi_{(s',t')}^{(s'',t'')}(\mathcal{C}) \circ
  \Phi_{(s,t)}^{(s',t')}(\mathcal{C})=\Phi_{(s,t)}^{(s'',t'')}(\mathcal{C})$ is immediate from the construction.  Let us say that a pair $\left((s_0,t_0),(s_1,t_1)\right)\in\mathbb{M}\times\mathbb{M}$ with $(s_0,t_0)\preceq (s_1,t_1)$ is \textbf{proximal} if the location of $(s_0,t_0)$ and $(s_1,t_1)$ with respect to the decomposition $\mathbb{M}=\cup_k(T^kS\cup T^kL\cup T^kA)$ are such that the construction of $\Phi_{(s_0,t_0)}^{(s_1,t_1)}(\mathcal{C})$ is dictated by one of the cases (a)-(h). If $\left((s,t),(s'',t'')\right)$ is proximal in this sense, the so too are both pairs $\left((s,t),(s',t')\right)$ and $\left((s',t'),(s'',t'')\right)$ and $(s'',t'')$, and inspection of the appropriate formulas from cases \ref{(a)}-\ref{(h)} shows that the identity $\Phi_{(s',t')}^{(s'',t'')}(\mathcal{C}) \circ
  \Phi_{(s,t)}^{(s',t')}(\mathcal{C})=\Phi_{(s,t)}^{(s'',t'')}(\mathcal{C})$holds in this case.  
  
  Let us say that the pair $\left((s_0,t_0),(s_1,t_1)\right)\in \mathbb{M}\times\mathbb{M}$ is \textbf{strongly proximal} if $(s_1,t_1)\in (-2\Lambda-t_0,s_0]\times [t_0,2\Lambda-s_0)$. It is easy to see that a strongly proximal pair is also proximal, and that if $\left((s,t),(s'',t'')\right)$ is strongly proximal and if $(s,t)\preceq (s',t')\preceq (s'',t'')$ then both  $\left((s,t),(s',t')\right)$ and $\left((s',t'),(s'',t'')\right)$ are strongly proximal.  Now part (I) of the proposition asserts that if $\left((s_0,t_0),(s_1,t_1)\right)$ is \emph{not} strongly proximal then $\Phi_{(s_0,t_0)}^{(s_1,t_1)}(\mathcal{C})=0$.  Hence if either $\left((s,t),(s',t')\right)$ or  $\left((s',t'),(s'',t'')\right)$ is not strongly proximal then the identity $\Phi_{(s',t')}^{(s'',t'')}(\mathcal{C}) \circ
  \Phi_{(s,t)}^{(s',t')}(\mathcal{C})=\Phi_{(s,t)}^{(s'',t'')}(\mathcal{C})$
holds because both sides are zero.

The only remaining cases are those in which both $\left((s,t),(s',t')\right)$ and $\left((s',t),(s'',t'')\right)$ are strongly proximal but $\left((s,t),(s'',t'')\right)$ is not proximal (and hence also not strongly proximal).  In this case $\Phi_{(s,t)}^{(s'',t'')}(\mathcal{C})=0$ by defnition, and so we need to check that $\Phi_{(s',t')}^{(s'',t'')}(\mathcal{C}) \circ
\Phi_{(s,t)}^{(s',t')}(\mathcal{C})=0$. Now since $((s,t),(s'',t''))$ is not strongly proximal, we have either $s''\leq -2\Lambda-t$ or $t''\geq 2\Lambda-s$.  Let us suppose that $s''\leq -2\Lambda-t$.  Since $((s,t),(s',t'))$ is strongly proximal, $s'>-2\Lambda-t$ and so we get a chain $(s,t)\preceq(s',t')\preceq (-2\Lambda-t,t')\preceq (s'',t'')$.  Now the pair $((s,t),(-2\Lambda-t,t'))$ is proximal\footnote{Indeed, pairs $((s,t),T(s,t))$ are always proximal, and $(s,t)\preceq (-2\Lambda-t,t')\preceq T(s,t)$.} but not strongly proximal; by what we have already shown, the fact that it is proximal implies that $\Phi_{(s',t')}^{(-2\Lambda-t,t')}(\mathcal{C})\circ \Phi_{(s,t)}^{(s',t')}(\mathcal{C})=\Phi_{(s,t)}^{(-2\Lambda-t,t')}(\mathcal{C})$, and the fact that it is not strongly proximal implies that $\Phi_{(s,t)}^{(-2\Lambda-t,t')}(\mathcal{C})=0$.  Meanwhile, we also have $\Phi_{(s',t')}^{(s'',t'')}(\mathcal{C})=\Phi_{(-2\Lambda-t,t')}^{(s'',t'')}(\mathcal{C})\circ \Phi_{(s',t')}^{(-2\Lambda-t,t')}(\mathcal{C})$.  Hence \begin{align*}
	\Phi_{(s',t')}^{(s'',t'')}(\mathcal{C}) &\circ
	\Phi_{(s,t)}^{(s',t')}(\mathcal{C})= \left(\Phi_{(-2\Lambda-t,t')}^{(s'',t'')}(\mathcal{C})\circ \Phi_{(s',t')}^{(-2\Lambda-t,t')}(\mathcal{C})\right)\circ \Phi_{(s,t)}^{(s',t')}(\mathcal{C}) \\ & = \Phi_{(-2\Lambda-t,t')}^{(s'',t'')}(\mathcal{C})\circ \left(\Phi_{(s',t')}^{(-2\Lambda-t,t')}(\mathcal{C})\circ \Phi_{(s,t)}^{(s',t')}(\mathcal{C}) \right)=0.
\end{align*}
In the final case that $t''\geq 2\Lambda-s$, we similarly deduce that $\Phi_{(s',t')}^{(s'',t'')}(\mathcal{C}) \circ
\Phi_{(s,t)}^{(s',t')}(\mathcal{C})=0$ by using the chain $(s,t)\preceq (s',t')\preceq (s',2\Lambda-s)\preceq (s'',t'')$.  Thus in all cases where both $\left((s,t),(s',t')\right)$ and $\left((s',t),(s'',t'')\right)$ are strongly proximal but $\left((s,t),(s'',t'')\right)$ is not proximal we have $\Phi_{(s',t')}^{(s'',t'')}(\mathcal{C}) \circ
\Phi_{(s,t)}^{(s',t')}(\mathcal{C})=\Phi_{(s,t)}^{(s'',t'')}(\mathcal{C})=0$, completing the proof of (II).

\end{proof}

\subsection{Distinguished triangles}\label{dist}

For chain complexes $(C,\partial_C),(D,\partial_D)$ and a chain map $\alpha\co C\to D$, we have chain maps $j\co D\to \mathrm{Cone}(\alpha)$ and $p\co \mathrm{Cone}(\alpha)\to C[-1]$ given respectively by inclusion and projection; these can be placed into a sequence \begin{equation}\label{coneseq} \xymatrix{ C\ar[r]^{\alpha} & D \ar[r]^<<<<{j} & \mathrm{Cone}(\alpha)\ar[r]^{p} & C[-1].    } \end{equation}  

Recall that a sequence of morphisms in the homotopy category $\mathsf{K}(\kappa)$ of the form \[ \xymatrix{ X\ar[r]^{u} & Y \ar[r]^v & Z \ar[r]^{w} & X[-1]   } \] is said to be a \emph{distinguished triangle} if and only if it is isomorphic, as a diagram in $\mathsf{K}(\kappa)$, to one of the form (\ref{coneseq}).  As is well-known (see, \emph{e.g.} \cite[Proposition 10.2.4]{Wei}), $\mathsf{K}(\kappa)$ satisfies the axioms of a triangulated category using this notion of distinguished triangles.

\begin{theorem}\label{distmain}
	Let $\mathcal{C}$ be an object of $\mathsf{HCO}^{\Lambda}(\kappa)$, let $(s,t),(u,v)\in\mathbb{M}$ with $(s,t)\preceq (u,v)\preceq T(s,t)$ and let $k$ be the integer such that $(s,t)\in T^k(S\cup L\cup A)$.  Then the following sequence is a distinguished triangle in $\mathsf{K}(\kappa)$:
	\begin{equation}\label{distmaineq}
	\xymatrix{ \mathcal{F}^0_{(s,t)}(\mathcal{C})\ar[rr]^-{(-1)^k\left(\Phi_{(s,t)}^{(u,t)}\oplus \Phi_{(s,t)}^{(s,v)}\right)} & & \mathcal{F}^0_{(u,t)}(\mathcal{C})\oplus \mathcal{F}^0_{(s,v)}(\mathcal{C})   \ar[rr]^-{-\Phi_{(u,t)}^{(u,v)}+\Phi_{(s,v)}^{(u,v)}} & & \mathcal{F}^0_{(u,v)}(\mathcal{C})\ar[r]^{\Phi_{(u,v)}^{T(s,t)}} & \mathcal{F}^0_{T(s,t)}(\mathcal{C})
	}
	\end{equation}
\end{theorem}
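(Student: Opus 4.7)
The plan is to prove the sequence is distinguished in $\mathsf{K}(\kappa)$ by identifying it, up to isomorphism, with the canonical distinguished triangle attached to the first map $f=(-1)^k(\Phi_{(s,t)}^{(u,t)}\oplus\Phi_{(s,t)}^{(s,v)})$. A first preliminary observation is that, by direct inspection of (\ref{f0dfn}), one has the literal equality of chain complexes $\mathcal{F}^0_{T(s,t)}(\mathcal{C})=\mathcal{F}^0_{(s,t)}(\mathcal{C})[-1]$, since the homological shift $[-k]$ versus $[-k-1]$ (or $[-k+1]$ versus $[-k]$) is the only change on passing from $(s,t)$ to $T(s,t)$. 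So the shape of (\ref{distmaineq}) is exactly that of a distinguished triangle $X\to Y\to Z\to X[-1]$. The remaining task is to show that $\mathcal{F}^0_{(u,v)}(\mathcal{C})$ is homotopy equivalent to $\mathrm{Cone}(f)$, with the maps in the theorem statement realizing (under that equivalence) the canonical maps $Y\to\mathrm{Cone}(f)\to X[-1]$. The proof proceeds by case analysis on the positions of $(s,t), (u,t), (s,v), (u,v)$ in the tiling $\mathbb{M}=\bigsqcup_{\ell\in\mathbb{Z}}T^\ell(S\sqcup L\sqcup A)$; the constraint $(s,t)\preceq(u,v)\preceq T(s,t)$ allows only a limited list of configurations.

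In the ``same-region'' configurations, where all four corners lie in a common region $T^k\mathcal{R}$ with $\mathcal{R}\in\{S,L,A\}$, the four complexes are filtered subcomplexes of a common ambient complex ($\mathcal{S}^k(\mathcal{C})$, $\mathcal{L}^k(\mathcal{C})$, or $\mathcal{A}^k(\mathcal{C})$). Direct verification on filtered pieces (using identities like $C^{\downarrow}_{\geq s}\cap C^{\downarrow}_{\geq u}=C^{\downarrow}_{\geq s}$ and $C^{\downarrow}_{\geq s}+C^{\downarrow}_{\geq u}=C^{\downarrow}_{\geq u}$ when $s\geq u$) yields $\mathcal{F}^0_{(s,t)}=\mathcal{F}^0_{(u,t)}\cap\mathcal{F}^0_{(s,v)}$ and $\mathcal{F}^0_{(u,v)}=\mathcal{F}^0_{(u,t)}+\mathcal{F}^0_{(s,v)}$. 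The associated Mayer--Vietoris short exact sequence $0\to\mathcal{F}^0_{(s,t)}\to\mathcal{F}^0_{(u,t)}\oplus\mathcal{F}^0_{(s,v)}\to\mathcal{F}^0_{(u,v)}\to 0$ produces the distinguished triangle, and an explicit snake-lemma computation shows that its connecting map matches $\Phi_{(u,v)}^{T(s,t)}$ as given by case~\ref{(a)} or~\ref{(h)}. In the ``boundary-crossing'' configurations, where the corners distribute across two or more regions (the most intricate being $(s,t)\in T^kS$ and $(u,v)\in T^{k+1}S$, so that $(u,t)\in T^kL$ and $(s,v)\in T^kA$), I would instead construct the required homotopy equivalence $\mathrm{Cone}(f)\simeq\mathcal{F}^0_{(u,v)}$ directly: expanding $\mathrm{Cone}(f)$ as a graded vector space reveals ``identity-cone'' acyclic subcomplexes built from pairs of summands of the form $\{C_{\uparrow}^{\leq t}[-1],C_{\uparrow}^{\leq t}[0]\}$ (coming respectively from $\mathcal{F}^0_{(s,t)}[-1]$ and $\mathcal{F}^0_{(u,t)}$, linked through the identity component of $f$) and $\{C^{\downarrow}_{\geq s}[-1],C^{\downarrow}_{\geq s}[0]\}$ (similarly from $\mathcal{F}^0_{(s,t)}[-1]$ and $\mathcal{F}^0_{(s,v)}$). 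Splitting off these acyclic summands by a Gaussian-elimination-style argument leaves exactly the graded vector space of $\mathcal{F}^0_{(u,v)}$ with the correct cone differential, yielding the desired quasi-isomorphism.

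The main obstacle will be the sign and index bookkeeping: the block-form formulas (\ref{sfun})--(\ref{afun}), combined with cone differentials and the signs in (\ref{lazero})--(\ref{ssdiff}), must be propagated through to match the $(-1)^k$ factor on the first map and the relative sign $(-1)$ versus $(+1)$ between the two components of the second map. A secondary challenge is to check, in each boundary-crossing case, that the standard cone projection $\mathrm{Cone}(f)\to\mathcal{F}^0_{(s,t)}[-1]$ matches $\Phi_{(u,v)}^{T(s,t)}$ after the identification $\mathcal{F}^0_{(s,t)}[-1]=\mathcal{F}^0_{T(s,t)}$; for this, functoriality of $\Phi$ (Proposition~\ref{mainnat}(II)) provides useful factorizations of $\Phi_{(u,v)}^{T(s,t)}$ through intermediate points on region boundaries, which reduce the verification to the already-established same-region case together with a sequence of diagram chases.
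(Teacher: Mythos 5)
Your proposal is correct in outline and follows the same global strategy as the paper: identify (\ref{distmaineq}) with the canonical triangle on the cone of its first map, using the equality $\mathcal{F}^0_{T(s,t)}(\mathcal{C})=\mathcal{F}^0_{(s,t)}(\mathcal{C})[-1]$ and a case analysis over the tiling of $\mathbb{M}$; the paper does this uniformly in all ten configurations by writing down explicit block-matrix maps $f,g$ and a homotopy $h$ (Table \ref{bigtable}) and checking the comparison diagram directly. Where you differ is in execution. For the three same-region configurations you replace the explicit homotopy data by the Mayer--Vietoris short exact sequence coming from the intersection/sum relations among the filtered subcomplexes; this is a genuine simplification, but note two points you should make explicit: a short exact sequence of complexes gives a distinguished triangle in $\mathsf{K}(\kappa)$ (rather than only in the derived category) because over the field $\kappa$ it is degreewise split, and the connecting morphism produced by a chosen splitting must be identified with $\Phi_{(u,v)}^{T(s,t)}$ up to \emph{chain homotopy}, not merely on homology as the phrase ``snake-lemma computation'' suggests (the chain-level check with the obvious splitting does come out to exactly the matrix in case \ref{(h)}, resp.\ \ref{(f)}, \ref{(g)}). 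For the boundary-crossing configurations your Gaussian-elimination of identity-cone pairs is essentially a conceptual repackaging of the paper's explicit $f$, $g$, $h$: cancelling the pairs $\{C_{\uparrow}^{\leq t}[-1],C_{\uparrow}^{\leq t}\}$ and $\{C^{\downarrow}_{\geq s}[-1],C^{\downarrow}_{\geq s}\}$ produces the correction terms $\pm\psi_{\uparrow},\pm\psi^{\downarrow}$ that assemble the $(u,v)$-cone differential, and it yields an honest homotopy equivalence together with explicit comparison maps, so the remaining commutativity checks are direct; in particular, in the hardest case $(s,t)\in T^kS$, $(u,v)\in T^{k+1}S$ the third map is just an inclusion (case \ref{(a)}), so the proposed factorization of $\Phi_{(u,v)}^{T(s,t)}$ through intermediate boundary points is unnecessary. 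Finally, rather than propagating the $(-1)^k$ through every case by hand, you would save considerable sign bookkeeping by adopting the paper's reduction (Remark \ref{signdist}): it suffices to treat $k=0$, the sign on the first map being exactly what is needed for compatibility of distinguished triangles with the $k$-fold shift.
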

\begin{remark}\label{signdist}
	As explanation for the sign $(-1)^k$ on the first map, note that, in general, the map $\Phi_{T^k(s_0,t_0)}^{T^k(s_1,t_1)}\co \mathcal{F}^{0}_{T^k(s_0,t_0)}(\mathcal{C})\to \mathcal{F}^{0}_{T^k(s_1,t_1)}(\mathcal{C})$ is formed by applying the functor $[-k]$ to the map $\Phi_{(s_0,t_0)}^{(s_1,t_1)}\co \mathcal{F}^{0}_{(s_0,t_0)}(\mathcal{C})\to \mathcal{F}^{0}_{(s_1,t_1)}(\mathcal{C}) $.  In general, applying the $k$-fold iterate of the shift functor to a distinguished triangle in a triangulated category results in a distinguished triangle only after multiplying (any) one of the maps by $(-1)^k$.  (On the level of chain complexes this can be understood in terms of the fact that $\mathrm{Cone}(f)[-k]$ is the cone of (the shift of) the map $(-1)^kf$.)  This remark also shows that it suffices to prove Theorem \ref{distmain} in the case $k=0$, \emph{i.e.} the case that $(s,t)\in S\cup L\cup A$.
\end{remark}

\begin{proof}
	As noted in Remark \ref{signdist}, it suffices to prove the result when $k=0$. We construct  chain maps $f,g$ in the diagram \begin{equation}\label{diagiso}
\xymatrix{	\mathcal{F}^0_{(s,t)}(\mathcal{C})\ar[r] \ar@{=}[d] & \mathcal{F}^0_{(u,t)}(\mathcal{C})\oplus \mathcal{F}^0_{(s,v)}(\mathcal{C})   \ar[r] \ar@{=}[d] & \mathrm{Cone}(\Phi_{(s,t)}^{(u,t)}(\mathcal{C})\oplus \Phi_{(s,t)}^{(s,v)}(\mathcal{C})) \ar[r] \ar@<-1ex>[d]_{g}  & \mathcal{F}^0_{T(s,t)}(\mathcal{C}) \ar@{=}[d]
 \\ \mathcal{F}^0_{(s,t)}(\mathcal{C})\ar[r]  & \mathcal{F}^0_{(u,t)}(\mathcal{C})\oplus \mathcal{F}^0_{(s,v)}(\mathcal{C})   \ar[r] & \mathcal{F}^0_{(u,v)}(\mathcal{C})\ar[r] \ar@<-1ex>[u]_{f}  & \mathcal{F}^0_{T(s,t)}(\mathcal{C})  
 }\end{equation}
which are homotopy inverses to each other and cause the diagram to commute up to homotopy.  Here the horizontal maps in the first row are those of (\ref{coneseq}) (with $\alpha=\Phi_{(s,t)}^{(u,t)}(\mathcal{C})\oplus \Phi_{(s,t)}^{(s,v)}(\mathcal{C})$), noting that $\mathcal{F}^0_{T(s,t)}(\mathcal{C})=\mathcal{F}^0_{(s,t)}(\mathcal{C})[-1]$, and those in the second row are as in (\ref{distmaineq}).  There are in all ten cases to check (see Figure \ref{rects}), and in Table \ref{bigtable} we specify the maps $f$ and $g$ in each case, along with a degree-$1$ map $h\co \mathrm{Cone}(\Phi_{(s,t)}^{(u,t)}(\mathcal{C})\oplus \Phi_{(s,t)}^{(s,v)}(\mathcal{C}))\to \mathrm{Cone}(\Phi_{(s,t)}^{(u,t)}(\mathcal{C})\oplus \Phi_{(s,t)}^{(s,v)}(\mathcal{C})) $ for which $\partial h+h\partial=1-fg$.  The other composition $gf$ is in each case equal to the identity (without even passing to homotopy), and it is routine to check that in each case the diagram commutes in $\mathsf{K}(\kappa)$.  

\begin{center}
	\begin{figure}
		\includegraphics[width=6in]{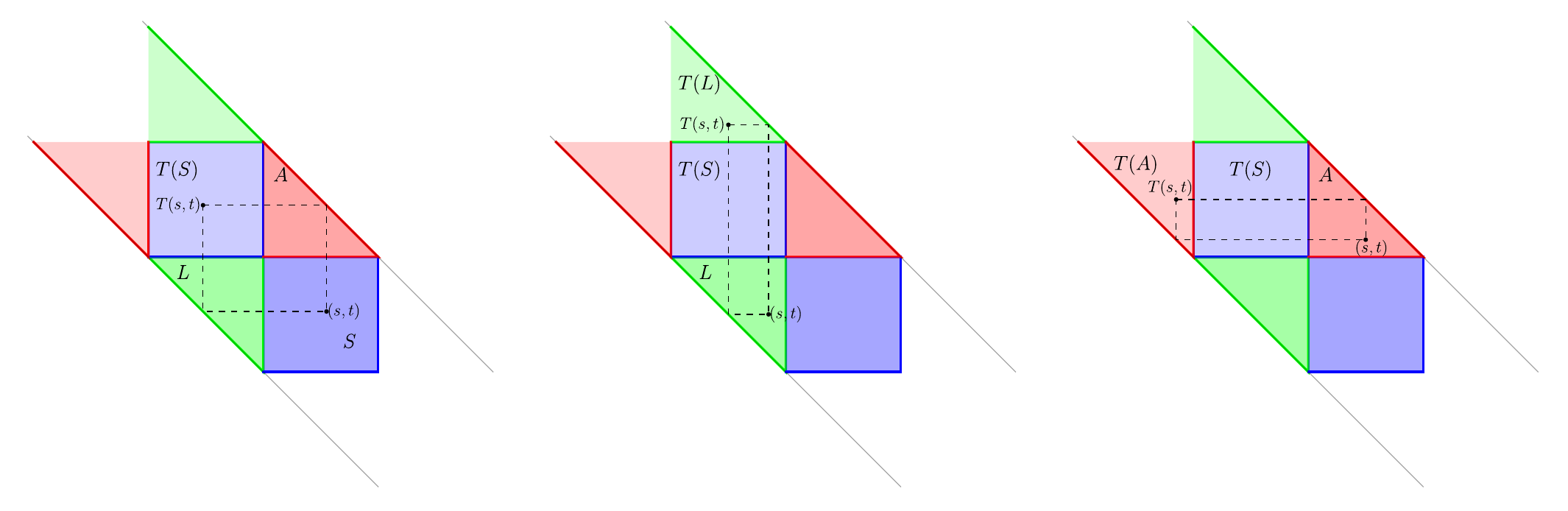}
		\caption{\label{rects} If $(s,t)\in S\cup L\cup A$, the condition that $(s,t)\preceq (u,v)\preceq T(s,t)$, is equivalent to $(u,v)$ lying in the appropriate dashed rectangle shown above.  This leads to the ten cases listed in Table \ref{bigtable}.}
	\end{figure}
\end{center}

To illustrate what calculations are involved in these routine checks, and perhaps also to clarify the meanings of the block matrices in Table \ref{bigtable}, let us give more extensive details for a representative choice of one of the ten cases, namely the sixth one in Table \ref{bigtable}, where $(s,t)\in L$ and $(u,v)\in T(S)$, still with $(s,t)\preceq (u,v)\preceq T(s,t)$.  Then $(u,t)\in L$ and $(s,v)\in T(S)$, 
so we have \begin{align*}
	\mathcal{F}_{(s,t)}^{0}(\mathcal{C}) &=\mathcal{L}_{s,t}^{0}(\mathcal{C})=\mathrm{Cone}(C_{\uparrow}^{\leq -2\Lambda-s}\hookrightarrow C_{\uparrow}^{\leq t}), \\ \mathcal{F}_{(u,t)}^{0}(\mathcal{C})&=\mathrm{Cone}(C_{\uparrow}^{\leq -2\Lambda-u}\hookrightarrow C_{\uparrow}^{\leq t}), \\ \mathcal{F}_{(s,v)}^{0}(\mathcal{C})&=\mathcal{S}^{1}_{2\Lambda-v,-2\Lambda-s}(\mathcal{C})=\mathrm{Cone}(-\psi^{\downarrow}+\psi_{\uparrow}\co C^{\downarrow}_{\geq 2\Lambda-v}\oplus C_{\uparrow}^{\leq -2\Lambda-s}\to D), \\ 
	\mathcal{F}_{(u,v)}^{0}(\mathcal{C})&=\mathrm{Cone}(-\psi^{\downarrow}+\psi_{\uparrow}\co C^{\downarrow}_{\geq 2\Lambda-v}\oplus C_{\uparrow}^{\leq -2\Lambda-u}\to D).	
\end{align*}

The map $\Phi_{(s,t)}^{(u,t)}\oplus \Phi_{(s,t)}^{(s,v)}$ that appears as the first map of both rows in (\ref{diagiso}) is then the map \[ C_{\uparrow}^{\leq -2\Lambda-s}[-1]\oplus C_{\uparrow}^{\leq t}\to C_{\uparrow}^{\leq -2\Lambda-u}[-1]\oplus C_{\uparrow}^{\leq t}\oplus C^{\downarrow}_{\geq 2\Lambda-v}[-1]\oplus C_{\uparrow}^{\leq -2\Lambda-s}[-1]\oplus D \] given in block form by the matrix (with each ``$1$'' denoting an identity or an inclusion map) \[ \left( \begin{array}{cc} 1 & 0 \\ 0 & 1 \\ 0 & 0 \\ 1 & 0 \\ 0 & \psi_{\uparrow} \end{array}\right). \]  The differential on $\mathrm{Cone}(\Phi_{(s,t)}^{(u,t)}(\mathcal{C})\oplus \Phi_{(s,t)}^{(s,v)}(\mathcal{C}))$ is given with respect to the splitting $
C_{\uparrow}^{\leq -2\Lambda-s}[-2]\oplus C_{\uparrow}^{\leq t}[-1]\oplus C_{\uparrow}^{\leq -2\Lambda-u}[-1]\oplus C_{\uparrow}^{\leq t}\oplus C^{\downarrow}_{\geq 2\Lambda-v}[-1]\oplus C_{\uparrow}^{\leq -2\Lambda-s}[-1]\oplus D$ by \[ \left(\begin{array}{ccccccc}\partial^{C}_{\uparrow} & 0 & 0 & 0 & 0 & 0 & 0 \\ -1 & -\partial_{\uparrow}^{C} & 0 & 0 & 0 & 0 & 0 \\  1 & 0 & -\partial_{\uparrow}^{C} & 0 & 0 & 0 & 0 \\ 0 & 1 & 1 & \partial_{\uparrow}^{C} & 0 & 0 & 0 \\ 0  & 0 & 0 & 0 & -\partial^{\downarrow}_{C} & 0 & 0 \\ 1 & 0 & 0 & 0 & 0 & -\partial_{\uparrow}^{C} & 0 \\ 0 & \psi_{\uparrow} & 0 & 0 & -\psi^{\downarrow} & \psi_{\uparrow} & \partial_D   \end{array}\right),  \] while that on $\mathcal{F}_{(u,v)}^{0}(\mathcal{C})$ is given with respect to the splitting $C^{\downarrow}[-1]\oplus C_{\uparrow}[-1]\oplus D$ by $\left(\begin{array}{ccc} -\partial^{\downarrow}_{C} & 0 & 0 \\ 0 & \partial_{\uparrow}^{C} & 0 \\ -\psi^{\downarrow} & -\psi_{\uparrow} & \partial_D \end{array}\right)$.  Multiplying matrices, one checks that the maps $f$ and $g$ from the sixth row of Table \ref{bigtable} intertwine these two differentials, that $gf$ is the identity on $\mathcal{F}_{(u,v)}(\mathcal{C})$, and that the map $h$ from the sixth row of Table \ref{bigtable} gives a homotopy between $fg$ and the identity. Note also that this map $h$ is well-defined as a degree-one map on $\mathrm{Cone}(\Phi_{(s,t)}^{(u,t)}(\mathcal{C})\oplus \Phi_{(s,t)}^{(s,v)}(\mathcal{C}))$, since its only nonzero components are the identity maps $C_{\uparrow}^{\leq -2\Lambda-s}[-1]\to C_{\uparrow}^{\leq -2\Lambda-s}[-2]$ and $C_{\uparrow}^{\leq t}\to C_{\uparrow}^{\leq t}[-1]$ (viewed as maps of degree one).

It remains to check that that (\ref{diagiso}) commutes in $\mathsf{K}(\kappa)$.  That the left square in (\ref{diagiso}) commutes is a tautology, as the first maps in the two rows are the same.  There are two middle squares and two right squares in (\ref{diagiso}), depending upon whether one uses the map $f$ or $g$.  However, since we have already seen that $f$ and $g$ are inverses to each other in $\mathsf{K}(\kappa)$, if one of the two middle squares commutes in $\mathsf{K}(\kappa)$ then the other will as well, and likewise for the two right squares.   

The map $\mathcal{F}^{0}_{(u,t)}(\mathcal{C})\oplus \mathcal{F}^{0}_{(s,v)}(\mathcal{C})\to \mathcal{F}^{0}_{(u,v)}(\mathcal{C})$ in the bottom row of (\ref{diagiso}) is given as a map $C_{\uparrow}^{\leq -2\Lambda-u}[-1]\oplus C_{\uparrow}^{\leq t}\oplus C^{\downarrow}_{\geq 2\Lambda-v}[-1]\oplus C_{\uparrow}^{\leq -2\Lambda-s}[-1]\oplus D\to C^{\downarrow}_{\geq 2\Lambda-v}[-1]\oplus C_{\uparrow}^{\leq -2\Lambda-u}[-1]\oplus D$ by the block matrix $\left(\begin{array}{ccccc} 0 & 0 & 1 & 0 & 0 \\ -1 & 0 & 0 & 0 & 0 \\ 0 & -\psi_{\uparrow} & 0 & 0 & 1\end{array}\right)$.  Note that this coincides with the last five columns of the matrix representing $g$.  Since the second map in the top row of (\ref{diagiso}) is given by the inclusion associated to the mapping cone, it follows that the version of the middle square of (\ref{diagiso}) involving $g$ commutes (both in $\mathsf{Ch}(\kappa)$ and in $\mathsf{K}(\kappa)$).  Hence the version of the middle square of (\ref{diagiso}) involving $f$ also commutes in $\mathsf{K}(\kappa)$.  

The map $\mathcal{F}^{0}_{(u,v)}(\mathcal{C})\to \mathcal{F}^{0}_{T(s,t)}(\mathcal{C})$ in the bottom row of (\ref{diagiso}) is, according to Case (b) at the start of Section \ref{fsect}, given by restricting the map $\phi_{TS}^{TL}(\mathcal{C})$ to $\mathcal{S}^1_{2\Lambda-v,-2\Lambda-u}(\mathcal{C})$; thus it is given in block form by $\left(\begin{array}{ccc} 0 & 0 & 0 \\ 0 & 1 & 0\end{array}\right)$ from $C^{\downarrow}_{\geq 2\Lambda-v}[-1]\oplus C_{\uparrow}^{\leq -2\Lambda-u}[-1]\oplus D\to C_{\uparrow}^{\leq -2\Lambda-s}[-2]\oplus C_{\uparrow}^{\leq t}[-1]$.  Since the last map in the top row of (\ref{diagiso}) is the projection associated to the mapping cone, and since the above $2\times 3$ matrix coincides with the first two rows of the matrix representing $f$ in Table \ref{bigtable}, it follows that the version of the right square in (\ref{diagiso}) that involves $f$ commutes; hence the version involving $g$ commutes in $\mathsf{K}(\kappa)$ since $f$ and $g$ are inverses in $\mathsf{K}(\kappa)$.  

This concludes the verification that (\ref{diagiso}) is an isomorphism of diagrams in the case that $(s,t)\in L$ and $(u,v)\in T(S)$.  The other nine cases can be verified by arguments with the same outline as the one just provided; in each of these cases the main subtlety is to find the appropriate maps $f,g,h$, and these are supplied in Table \ref{bigtable}.

\begin{table}
	\caption{ \label{bigtable} The ten cases for the maps 
		$f$ and $g$ and the homotopy $h$ between $fg$ and the identity in (\ref{diagiso})}
\begin{tabular}{c|c|c|c|c} 
	$(s,t)$ & $(u,v)$ & $f$ & $g$ & $h$  \\ \hline 
	$L$ & $L$ & $\left(\begin{array}{cc} 0 & 0 \\ 1 & 0 \\ -1 & 0 \\ 0 & 0 
	\\ 0 & 0 \\ 0 & 1\end{array}\right)$ & $\left(\begin{array}{cccccc} 0 & 0 & -1 & 0 & 1 & 0 \\ 0 & 0 & 0 & -1 & 0 & 1\end{array}\right)$ & $\left(\begin{array}{cccccc} 0 & 0 & 0 & 0 & 1 & 0 \\ 0 & 0 & 0 & 1 & 0 & 0 \\ 0 & 0 & 0 & 0 & 0 & 0 \\ 0 & 0 & 0 & 0 & 0 & 0\\ 0 & 0 & 0 & 0 & 0 & 0\\ 0 & 0 & 0 & 0 & 0 & 0\end{array}\right)$ 
	\\ \hline $A$ & $A$ & $\left(\begin{array}{cc} 0 & 0 \\ -1 & 0 \\ 0 & 0 \\ 0 & -1 \\ 1 & 0 \\ 0 & 0 \end{array}\right)$  & $\left(\begin{array}{cccccc} 0 & 0 & -1 & 0 & 1 & 0 \\ 0 & 0 & 0 & -1 & 0 & 1  \end{array}\right)$ & $\left(\begin{array}{cccccc} 0 & 0 & 1 & 0 & 0 & 0 \\ 0 & 0 & 0 & 0 & 0 & 1 \\ 0 & 0 & 0 & 0 & 0 & 0  \\ 0 & 0 & 0 & 0 & 0 & 0 \\ 0 & 0 & 0 & 0 & 0 & 0 \\ 0 & 0 & 0 & 0 & 0 & 0   \end{array}\right)$ \\ \hline 
	$S$ & $S$ & $\left(\begin{array}{ccc} 0 & 0 & 0 \\ 0 & 0 & 0 \\ \psi^{\downarrow} & 0 & 0 \\ -1 & 0 & 0 \\ 0 & 0 & 0 \\ 0 & 0 & 0 \\ 0 & 0 & 0 \\ 0 & 1 & 0 \\ 0 & 0 & 1 \end{array}\right)$ & $\left(\begin{array}{ccccccccc} 0 & 0 & 0 & -1 & 0 & 0 & 1 & 0 & 0 \\ 0 & 0 & 0 & 0 &-1 & 0 & 0 & 1 & 0 \\ 0 & 0 & 0 & 0 & 0 & -1 & 0 & 0 & 1 \end{array}\right)$  & $\left(\begin{array}{ccccccccc} 0 & 0 & 0 & 0 & 0 & 0 & 1 & 0 & 0 \\ 0 & 0 & 0 & 0 & 1 & 0 & 0 & 0 & 0 \\ 0 & 0 & 0 & 0 & 0 & 1 & 0 & 0 & 0  \\ 0 & 0 & 0 & 0 & 0 & 0 & 0 & 0 & 0  \\ 0 & 0 & 0 & 0 & 0 & 0 & 0 & 0 & 0 \\ 0 & 0 & 0 & 0 & 0 & 0 & 0 & 0 & 0 \\ 0 & 0 & 0 & 0 & 0 & 0 & 0 & 0 & 0 \\ 0 & 0 & 0 & 0 & 0 & 0 & 0 & 0 & 0 \\ 0 & 0 & 0 & 0 & 0 & 0 & 0 & 0 & 0 \end{array}\right)$  \\ \hline
	$S$ & $L$ & $\left(\begin{array}{cc} 0 & 0 \\ 1 & 0 \\ 0 & \psi_{\uparrow} \\ -1 & 0 \\ 0 & 0 \\ 0 & 0 \\ 0 & 1 \\ 0 & 0\end{array}\right)$ & $\left(\begin{array}{cccccccc} 0 & 0 & 0 & -1 & 0 & 0 & 0 & 0 \\ 0 & 0 & 0 & 0 & -1 & 0 & 1 & 0\end{array}\right)$ & $\left( \begin{array}{cccccccc} 0 & 0 & 0 & 0 & 0 & 1 & 0 & 0 \\ 0 & 0 & 0 & 0 & 1 & 0 & 0 & 0 \\ 0 & 0 & 0 & 0 & 0 & 0 & 0 & 1 \\ 0 & 0 & 0 & 0 & 0 & 0 & 0 & 0 \\ 0 & 0 & 0 & 0 & 0 & 0 & 0 & 0 \\ 0 & 0 & 0 & 0 & 0 & 0 & 0 & 0 \\ 0 & 0 & 0 & 0 & 0 & 0 & 0 & 0 \\ 0 & 0 & 0 & 0 & 0 & 0 & 0 & 0 \end{array}\right)$  \\ \hline
	$S$ & $A$  &  $\left(\begin{array}{cc} -1 & 0 \\ 0 & 0 \\ 0 & \psi^{\downarrow} \\ 0 & -1 \\ 0 & 0 \\ 0 & 0 \\ 1 & 0 \\ 0 & 0\end{array}\right)$ &  $\left(\begin{array}{cccccccc} 0 & 0 & 0 & 0 & 0 & 0 & 1 & 0 \\ 0 & 0 & 0 & -1 & 0 & 0 & 0 & 1\end{array}\right)$ & $\left( \begin{array}{cccccccc} 0 & 0 & 0 & 0 & 0 & 0 & 0 & 1 \\ 0 & 0 & 0 & 0 & 1 & 0 & 0 & 0 \\ 0 & 0 & 0 & 0 & 0 & 1 & 0 & 0 \\ 0 & 0 & 0 & 0 & 0 & 0 & 0 & 0 \\ 0 & 0 & 0 & 0 & 0 & 0 & 0 & 0 \\ 0 & 0 & 0 & 0 & 0 & 0 & 0 & 0 \\ 0 & 0 & 0 & 0 & 0 & 0 & 0 & 0 \\ 0 & 0 & 0 & 0 & 0 & 0 & 0 & 0 \end{array}\right)$ \\ \hline
	$L$ & $T(S)$ & $\left(\begin{array}{ccc} 0 & 0 & 0 \\ 0 & 1 & 0 \\ 0 & -1 & 0 \\ 0 & 0 & 0 \\ 1 & 0 & 0 \\ 0 & 0 & 0 \\ 0 & 0 & 1\end{array}\right)$  & $\left(\begin{array}{ccccccc} 0 & 0 & 0 & 0 & 1 & 0 & 0 \\ 0 & 0 & -1 & 0 & 0 & 1 & 0 \\ 0 & 0 & 0 & -\psi_{\uparrow} & 0 & 0 & 1\end{array}\right)$ & $\left(\begin{array}{ccccccc} 0 & 0 & 0 & 0 & 0 & 1 & 0 \\ 0 & 0 & 0 & 1 & 0 & 0 & 0 \\ 0 & 0 & 0 & 0 & 0 & 0 & 0  \\ 0 & 0 & 0 & 0 & 0 & 0 & 0 \\ 0 & 0 & 0 & 0 & 0 & 0 & 0 \\ 0 & 0 & 0 & 0 & 0 & 0 & 0 \\ 0 & 0 & 0 & 0 & 0 & 0 & 0 \end{array}\right)$
	\\ 
	\end{tabular}\end{table}\begin{table}\begin{tabular}{c|c|c|c|c} 
	$(s,t)$ & $(u,v)$ & $f$ & $g$ & $h$\\  \hline $A$ & $T(S)$ & $\left(\begin{array}{ccc} 0 & 0 & 0 \\ 1 & 0 & 0 \\ 0 & 0 & 0 \\ 0 & -1 & 0 \\ 0 & 0 & -1 \\ -1 & 0 & 0 \\ 0 & 0 & 0\end{array}\right)$ & $\left(\begin{array}{ccccccc} 0 & 0 & -1 & 0 & 0 & -1 & 0 \\ 0 & 0 & 0 & -1 & 0 & 0 & 0 \\ 0 & 0 & 0 & 0 & -1 & 0 & \psi^{\downarrow}\end{array}\right)$ & $\left(\begin{array}{ccccccc} 0 & 0 & -1 & 0 & 0 & 0 & 0 \\ 0 & 0 & 0 & 0 & 0 & 0 & 1 \\ 0 & 0 & 0 & 0 & 0 & 0 & 0 \\ 0 & 0 & 0 & 0 & 0 & 0 & 0 \\ 0 & 0 & 0 & 0 & 0 & 0 & 0 \\ 0 & 0 & 0 & 0 & 0 & 0 & 0 \\ 0 & 0 & 0 & 0 & 0 & 0 & 0\end{array}\right)$ \\ \hline 
	$S$ & $T(S)$ & $\left(\begin{array}{ccc} 1 & 0 & 0 \\ 0 & 1 & 0 \\ 0 & 0 & 1 \\ 0 & -1 & 0 \\ 0 & 0 & 0 \\ -1 & 0 & 0 \\ 0 & 0 & 0 \end{array}\right)$ & $\left(\begin{array}{ccccccc} 0 & 0 & 0 & 0 & 0 & -1 & 0 \\ 0 & 0 & 0 & -1 & 0 & 0 & 0 \\ 0 & 0 & 1 & 0 & -\psi^{\downarrow} &  0 & \psi_{\uparrow}\end{array}\right)$ & $\left(\begin{array}{ccccccc} 0 & 0 & 0 & 0 & 0 & 0 & 1 \\ 0 & 0 & 0 & 0 & 1 & 0 & 0 \\ 0 & 0 & 0 & 0 & 0 & 0 & 0 \\ 0 & 0 & 0 & 0 & 0 & 0 & 0 \\ 0 & 0 & 0 & 0 & 0 & 0 & 0 \\ 0 & 0 & 0 & 0 & 0 & 0 & 0 \\ 0 & 0 & 0 & 0 & 0 & 0 & 0 \end{array}\right)$	\\ \hline $L$ & $T(L)$ & $\left(\begin{array}{cc} 1 & 0 \\ 0 & 1 \\ 0 & -1 \\ 0 & 0 \\ 1 & 0 \\ 0 & 0 \end{array}\right)$ & $\left(\begin{array}{cccccc} 0 & 0 & 0 & 0 & 1 & 0 \\ 0 & 0 & -1 & 0 & 0 & 1\end{array}\right)$ & $\left(\begin{array}{cccccc} 0 & 0 & 0 & 0 & 0 & 1 \\ 0 & 0 & 0 & 1 & 0 & 0 \\ 0 & 0 & 0 & 0 & 0 & 0 \\ 0 & 0 & 0 & 0 & 0 & 0 \\ 0 & 0 & 0 & 0 & 0 & 0 \\ 0 & 0 & 0 & 0 & 0 & 0\end{array}\right)$ \\ \hline $A$ & $T(A)$ & $\left(\begin{array}{cc} 1 & 0 \\ 0 & 1 \\ -1 & 0 \\ 0 & 0 \\ 0 & -1 \\ 0 & 0 \end{array}\right)$ & $\left(\begin{array}{cccccc} 0 & 0 & -1 & 0 & 0 & 0 \\ 0 & 0 & 0 & -1 & -1 & 0 \end{array}\right)$ & $\left(\begin{array}{cccccc} 0 & 0 & 0 & -1 & 0 & 0 \\ 0 & 0 & 0 & 0 & 0 & 1 \\ 0 & 0 & 0 & 0 & 0 & 0 \\ 0 & 0 & 0 & 0 & 0 & 0 \\ 0 & 0 & 0 & 0 & 0 & 0 \\ 0 & 0 & 0 & 0 & 0 & 0\end{array}\right)$	
\end{tabular}
\end{table}
\end{proof}

Given a functor $\mathcal{G}\co \mathbb{M} \to \mathsf{K}(\kappa)$, one obtains functors $H_0(\mathcal{G})\co \mathbb{M}\to \mathsf{Vect}_{\kappa}$ and $H^0(\mathcal{G}): \mathbb{M}^{op} \to \mathsf{Vect}_{\kappa}$,

 respectively, by composing with the zeroth homology and cohomology functors $\mathsf{K}(\kappa)\to\mathsf{Vect}_{\kappa}$.

\begin{cor}\label{cohfunct} For any object $\mathcal{C}$ of $\mathsf{HCO}^{\Lambda}(\kappa)$, the objects $H_0(\mathcal{F}(\mathcal{C}))$ of $\mathsf{Vect}_{\kappa}^{\mathbb{M}}$ and $H^0(\mathcal{F}(\mathcal{C}))$ of $\mathsf{Vect}_{\kappa}^{\mathbb{M}^{op}}$ are, respectively, homological and cohomological functors in the sense of \cite[Section 3]{BBF21}; that is, for any $(s,t),(u,v)\in \mathbb{M}$ with $(s,t)\preceq (u,v)\preceq T(s,t)$ we have exact sequences  
	\[ \xymatrix{ & \qquad\qquad\qquad\qquad\qquad \cdots\ar[r] &   H_0(\mathcal{F}(\mathcal{C}))(T^{-1}(u,v))\ar[dll]  \\ H_0(\mathcal{F}(\mathcal{C}))(s,t)\ar[r]  & H_0(\mathcal{F}(\mathcal{C}))(u,t)\oplus H_0(\mathcal{F}(\mathcal{C}))(s,v) \ar[r] &  H_0(\mathcal{F}(\mathcal{C}))(u,v) \ar[dll]\\ H_0(\mathcal{F}(\mathcal{C}))(T(s,t))\ar[r] & \cdots\qquad\qquad\qquad\qquad\qquad &  }\]
	and 	\[ \xymatrix{ & \qquad\qquad\qquad\qquad\qquad \cdots\ar[r] &   H^0(\mathcal{F}(\mathcal{C}))(T(s,t))\ar[dll]  \\ H^0(\mathcal{F}(\mathcal{C}))(u,v)\ar[r]  & H^0(\mathcal{F}(\mathcal{C}))(u,t)\oplus H^0(\mathcal{F}(\mathcal{C}))(s,v) \ar[r] &  H^0(\mathcal{F}(\mathcal{C}))(s,t) \ar[dll]\\ H^0(\mathcal{F}(\mathcal{C}))(T(u,v))\ar[r] & \cdots,\qquad\qquad\qquad\qquad\qquad &  }\] the maps being induced on (co)homology by (\ref{distmaineq}).  Moreover, both functors $H_0(\mathcal{F}(\mathcal{C}))$ and $H^0(\mathcal{F}(\mathcal{C}))$ restrict to the boundary $\partial\mathbb{M}=\{(s,t)\in \R^2||s+t|=2\Lambda\}$ as zero.
\end{cor}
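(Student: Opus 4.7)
The plan is to derive both assertions directly from results already in place. For the exact sequences, the key structural observation, immediate from the definitions in Section \ref{f0sec}, is that for every $(s,t)\in\mathbb{M}$ one has $\mathcal{F}^0_{T(s,t)}(\mathcal{C})=\mathcal{F}^0_{(s,t)}(\mathcal{C})[-1]$: this is the precise sense in which the $\Z$-action generated by $T$ on $\mathbb{M}$ corresponds to the shift functor on $\mathsf{K}(\kappa)$. The four-term sequence from Theorem \ref{distmain} is therefore a distinguished triangle $X\to Y\to Z\to X[-1]$ in the triangulated category $\mathsf{K}(\kappa)$, so applying the covariant homological functor $H_0$ (resp.\ the contravariant cohomological functor $H^0$) and iterating the rotation axiom will yield the standard doubly-infinite long exact sequence of (co)homology groups. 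Using the identification $H_n(\mathcal{F}^0_{(s,t)}(\mathcal{C}))=H_0(\mathcal{F}^0_{T^{-n}(s,t)}(\mathcal{C}))$ to reindex, this long exact sequence translates directly into the cyclic sequence displayed in the statement; the $H^0$ version is formally identical with arrows reversed.

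For the vanishing on $\partial\mathbb{M}$, the plan is to show that $\mathcal{F}^0_{(s,t)}(\mathcal{C})$ is contractible in $\mathsf{K}(\kappa)$ for every $(s,t)\in\partial\mathbb{M}$, after which the conclusion for both $H_0$ and $H^0$ is immediate. Because $T(x,y)=(-2\Lambda-y,\,2\Lambda-x)$ sends the line $x+y=c$ to the line $x+y=-c$ and thus stabilizes $\partial\mathbb{M}$, it suffices to treat $(s,t)\in\partial\mathbb{M}\cap\mathfrak{D}$. A quick inspection of the defining inequalities will show $\partial\mathbb{M}\cap S=\varnothing$, while $\partial\mathbb{M}\cap L$ consists exactly of the points with $y=-2\Lambda-x$ and $\partial\mathbb{M}\cap A$ of those with $x=2\Lambda-y$. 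In the $L$-boundary case these relations force the two filtered subcomplexes used in the definition of $\mathcal{L}^0_{x,y}(\mathcal{C})$ to coincide, so $\mathcal{L}^0_{x,y}(\mathcal{C})$ is the mapping cone of an identity map and is therefore contractible; the $A$-boundary case is treated symmetrically via $\mathcal{A}^0_{x,y}(\mathcal{C})$.

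Neither step presents a genuine obstacle. The first is a formal consequence of Theorem \ref{distmain} and the (co)homological nature of $H_0,H^0$ on the triangulated category $\mathsf{K}(\kappa)$, while the second reduces to the observation that on the boundary lines the two subcomplexes bounding the relevant mapping cone literally agree. The only mildly delicate step will be the bookkeeping of shifts and $T$-translations when transferring between the rotation axiom and the cyclic form of the exact sequence.
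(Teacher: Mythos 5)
Your proposal is correct and follows essentially the same route as the paper: exactness is deduced from the distinguished triangles of Theorem \ref{distmain} together with the fact that $H_0$ and $H^0$ send distinguished triangles in $\mathsf{K}(\kappa)$ to exact sequences, and vanishing on $\partial\mathbb{M}$ comes from observing that at boundary points the value of $\mathcal{F}(\mathcal{C})$ is (up to shift) the cone of an identity map, hence zero in $\mathsf{K}(\kappa)$. Your explicit identification $\mathcal{F}^0_{T(s,t)}(\mathcal{C})=\mathcal{F}^0_{(s,t)}(\mathcal{C})[-1]$ and the reindexing via $T$ just spell out bookkeeping the paper leaves implicit.
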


\begin{proof}
	If $(s,t)\in \partial\mathbb{M}$, then if $(x,y)\in\mathfrak{D}$ is chosen such that $T^k(x,y)=(s,t)$ for some $k\in \Z$, we will have either $(x,y)\in L$ with $y=-2\Lambda-x$, or $(x,y)\in A$ with $x=2\Lambda-y$.  In either case, the object $\mathcal{F}(C)(s,t)$ of $\mathsf{K}(\kappa)$ is (up to grading shift) the cone of an identity map, so it is isomorphic to zero in $\mathsf{K}(\kappa)$ and in particular its zeroth homology and cohomology are both zero.
	
	The fact that the sequences in the corollary are exact follows immediately from Theorem \ref{signdist} and the general fact (see, \emph{e.g.}, \cite[Proposition IV.1.3]{GM}, applied with $U$ equal to a complex given by $\kappa$ in degree zero and $0$ in all other degrees) that applying $H_0$ or $H^0$ to a distinguished triangle in $\mathsf{K}(\kappa)$ yields an exact sequence.	
\end{proof}

\subsection{The functor and the standard elementary summands} \label{elemfun}

In Section \ref{cospan-decomp}, we have shown that every filtered cospan over a field $\kappa$ that satisfies an admissibility hypothesis is isomorphic in $\mathsf{HCO}^{\Lambda}(\kappa)$ to a direct sum of standard elementary summands, enumerated in (\ref{basicacyc}) and (\ref{sixblocks}).  If $\mathcal{C}$ is $\Lambda$-bounded, then the parameters $a$ and $b$ arising in (\ref{basicacyc}) and (\ref{sixblocks}) will all lie in the open interval $(-\Lambda,\Lambda)$.  Correspondingly, \cite[Theorem 3.21]{BBF21} asserts that any homological functor $\mathbb{M}\to \mathsf{Vect}_{\kappa}$ that vanishes on $\partial\mathbb{M}$ and satisfies a ``sequential continuity'' hypothesis decomposes as a direct sum of ``blocks'' $B^v$.  Here, if $v=(v_1,v_2)$, the functor $B^v\co \mathbb{M}\to\mathsf{Vect}_{\kappa}$ is given by, for $(w_1,w_2)\in\mathbb{M}$, \begin{equation}\label{bvdef}
B^v(w_1,w_2)=\left\{\begin{array}{ll}\kappa & \mbox{if }-2\Lambda-v_2<w_1\leq v_1\mbox{ and }v_2\leq w_2<2\Lambda-v_1 \\ 0 & \mbox{otherwise}\end{array}  \right. 
\end{equation} with structure maps $B^v(w_1,w_2)\to B^v(w'_1,w'_2)$ given by the identity when both domain and codomain are equal to $\kappa$ and by $0$ otherwise.

Accordingly, we make the following definition:

\begin{dfn}\label{persrish}
	Suppose that $F\in \mathsf{Vect}_{\kappa}^{\mathbb{M}}$ is a homological functor that vanishes on $\partial\mathbb{M}$ and is sequentially continuous.  The \textbf{persistence diagram} of $F$, denoted $\mathcal{D}(F)$, is the multiset of elements of $\mathbb{M}$ with the property that $F\cong \oplus_{v\in\mathcal{D}(F)}B^v$.
\end{dfn}

It is not hard to check that if $\mathcal{C}$ is a tame filtered cospan then $H_0(\mathcal{F}(\mathcal{C}))$ is sequentially continuous in the sense of \cite{BBF21}.  Indeed, if $(s_n,t_n)$ is a decreasing sequence in $\mathbb{M}$ that converges to $(s,t)$, then the definition of $\mathcal{F}$ and the admissibility of $\mathcal{C}$ imply that the complexes $\mathcal{F}(\mathcal{C})(s_n,t_n)$ and $\mathcal{F}(\mathcal{C})(s,t)$ are eventually equal to each other.  Moreover, by Corollary \ref{cohfunct}, $H_0(\mathcal{F}(\mathcal{C}))$ is homological and vanishes on $\partial\mathbb{M}$.  So on general grounds, $H_0(\mathcal{F}(\mathcal{C}))$ decomposes as a direct sum of blocks $B^v$. 

We may therefore likewise define the persistence diagram of an admissible filtered cospan:

\begin{dfn}
	Let $\mathcal{C}$ be a $\Lambda$-bounded admissible filtered cospan.  The \textbf{persistence diagram} of $\mathcal{C}$, denoted $\mathcal{D}(\mathcal{C})$, is the persistence diagram (as defined in Definition \ref{persrish}) of $H_0(\mathcal{F}(\mathcal{C}))$.	
\end{dfn}

To see how to determine the persistence diagram of an admissible $\Lambda$-bounded admissible filtered cospan from a direct sum decomposition as in Section \ref{cospan-decomp}, we should work out the effect of $H_0\circ\mathcal{F}$ on the standard elementary summands  from (\ref{basicacyc}) and (\ref{sixblocks}).  (Note that the functor $H_0\circ \mathcal{F}$ is clearly additive, so a decomposition in $\mathsf{HCO}^{\Lambda}(\kappa)$ carries over to one in $\mathsf{Vect}_{\kappa}^{\mathbb{M}}$.)  

\begin{prop}\label{blockclass}
If the $\Lambda$-bounded filtered cospan $\mathcal{C}$ is one of the indecomposables from (\ref{basicacyc}) or (\ref{sixblocks}), then $H_0(\mathcal{F}(\mathcal{C}))$ is isomorphic to the block $B^{v(\mathcal{C})}$, where $v(\mathcal{C})\in \mathbb{M}$ is given by the following table:
\begin{center}
\begin{tabular}{c|c} $\mathcal{C}$ & $v(\mathcal{C})$  \\ 	\hline $\left(\uparrow_{a}^{b}\right)_k$ (for $a<b<\Lambda$) & $T^{-k-1}(-2\Lambda-a,b)$  \\ \hline $\left(\uparrow_{a}^{\infty}\right)_k$ & $T^{-k}(\Lambda,a)$ \\ 
\hline $\left(\nearrow_{a}\right)_k$ & $T^{-k}(-\Lambda,a)$ \\ \hline $\left(\downarrow^{a}_{b}\right)_{k}$ (for $a>b>-\Lambda$) & $T^{-k-1}(b,2\Lambda-a)$ \\ \hline 
 $\left(\downarrow^{a}_{-\infty}\right)_{k}$ & $T^{-k}(a,-\Lambda)$ \\ \hline $\left(\searrow^{a}\right)_k$ & $T^{-k}(a,\Lambda)$ \\ \hline $\left(>_{b}^{a}\right)_k$ & $T^{-k}(a,b)$ \\ \hline $\square_k$ & $T^{-k+1}(\Lambda,-\Lambda)$ 
\end{tabular}
\end{center}
\end{prop}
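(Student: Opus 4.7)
The plan is to prove the proposition by a direct case-by-case computation of $H_0(\mathcal{F}^{0}_{(s,t)}(\mathcal{C}))$ for each of the eight families of standard elementary summands and each $(s,t)\in\mathbb{M}$, then matching the resulting pointwise data with the block $B^{v(\mathcal{C})}$ defined by (\ref{bvdef}) and verifying that the transition maps from Section \ref{fsect} restrict to the identity on $H_0$ wherever both source and target are $\kappa$.

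Two structural observations will cut the work substantially. First, the shift functors $[-k]$ and $[-k+1]$ that appear inside the definition (\ref{f0dfn}) of $\mathcal{F}^{0}_{T^k(x,y)}$ mean that passing from a summand $\mathcal{C}_0$ of degree $0$ to its grade-shifted analogue $\mathcal{C}_k$ has the effect of translating the locus where $H_0\circ\mathcal{F}^{0}$ is nonzero by $T^{-k}$ (or by $T^{-k-1}$ when an extra shift arises from the two-term structure of the cone in $\mathcal{S}^{k}$); this is exactly the form of the $T^{-k}$ and $T^{-k-1}$ factors recorded in the table, so it suffices to treat each family in a single degree. Second, the tautological ``ascending $\leftrightarrow$ descending'' symmetry of Definition \ref{codfn} interchanges $\mathcal{C}_{\uparrow}$ with $\mathcal{C}^{\downarrow}$, $\psi_{\uparrow}$ with $\psi^{\downarrow}$, and swaps the dg-functors $\mathcal{L}^{k}$ and $\mathcal{A}^{k}$ while reflecting $\mathbb{M}$ across the antidiagonal $\{x+y=0\}$. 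This pairs $(\uparrow_a^{\infty})_k$ with $(\downarrow^a_{-\infty})_k$, $(\nearrow_a)_k$ with $(\searrow^a)_k$, and $(\uparrow_a^b)_k$ with $(\downarrow^a_b)_k$, so that only four genuinely distinct computations remain beyond $\square_k$, namely for $(>_a^b)_k$, $(\uparrow_a^b)_k$, $(\uparrow_a^{\infty})_k$, and $(\nearrow_a)_k$.

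In each remaining case, the constituent chain complexes have at most two generators in at most two adjacent degrees, so the subcomplexes $C_{\uparrow}^{\leq y}$ and $C^{\downarrow}_{\geq x}$ are piecewise constant in $x,y$ with transitions only at the filtration parameters $a,b$, and the three mapping cones $\mathcal{S}^{k}_{x,y}(\mathcal{C}),\,\mathcal{L}^{k}_{x,y}(\mathcal{C}),\,\mathcal{A}^{k}_{x,y}(\mathcal{C})$ are at most four-dimensional and can be evaluated by inspection. For $(>_a^b)_0$, for example, $\mathcal{S}^{0}_{x,y}$ reduces to the cone of $-\psi^{\downarrow}+\psi_{\uparrow}$ between one-dimensional spaces and one finds $H_0=\kappa$ precisely when $y<a$ and $x>b$, which is $B^{(a,b)}\cap S$; the analogous computation on the fundamental-domain regions $T^jL$, $T^jA$ for various $j$ produces vanishing $H_0$ everywhere, consistent with the table. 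The standard summands whose blocks sit on an edge or vertex of a square $T^jS$ (namely $(\uparrow_a^{\infty})_k,(\downarrow^a_{-\infty})_k,(\nearrow_a)_k,(\searrow^a)_k$, and $\square_k$) are computed similarly, using that $\square_k$ has all three chain complexes zero except for a copy of $\kappa$ in $D_*$, so only the $D$-component of $\mathcal{S}^{k+1}$ contributes.

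Once the pointwise computation is complete in each case, I would check that whenever $(s,t)\preceq(s',t')$ lies in the support of $B^{v(\mathcal{C})}$, the transition map $\Phi^{(s',t')}_{(s,t)}$ defined in items \ref{(a)}--\ref{(h)} at the start of Section \ref{fsect} restricts to the identity on the one-dimensional $H_0$; this amounts to tracking a single generator through an inclusion of subcomplexes or through a map built from $\psi_{\uparrow}$ or $\psi^{\downarrow}$, which in each standard summand acts nontrivially on the chosen generator. The main obstacle I expect is the bookkeeping required when the support of $B^{v(\mathcal{C})}$ straddles the boundary between two adjacent regions $T^jS$, $T^jL$, $T^jA$: there the formula for $\mathcal{F}^{0}_{(s,t)}$ changes, and one must match the two computations along the shared boundary and check that they assemble into the single rectangle $(-2\Lambda-v_2,v_1]\times[v_2,2\Lambda-v_1)$ predicted by (\ref{bvdef}). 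The edge-of-square cases $(\uparrow_a^{\infty})_k, (\nearrow_a)_k$ and their conjugates, together with $\square_k$, are the delicate ones, since there the support of the block touches both an $S$-region and one of $L$ or $A$ (or two copies of $S$ across a $T$-shift) and the change of formulas must be reconciled.
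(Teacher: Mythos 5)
Your overall strategy is the same as the paper's: a pointwise computation of $H_0(\mathcal{F}^0_{(s,t)}(\mathcal{C}))$ over the fundamental domain, followed by a check that the structure maps from Section \ref{fsect} are isomorphisms wherever source and target are nonzero. The symmetry reduction you add (swapping $\mathcal{C}_{\uparrow}$ with the conjugate of $\mathcal{C}^{\downarrow}$, which exchanges $\mathcal{L}^k$ and $\mathcal{A}^k$ and acts on $\mathbb{M}$ by the order-preserving reflection $(x,y)\mapsto(-y,-x)$, commuting with $T$) is legitimate and not used in the paper, though you must track that conjugation negates filtration levels, so $(\uparrow_a^{\infty})_k$ pairs with $(\downarrow^{-a}_{-\infty})_k$, etc.

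However, the one computation you actually carry out is wrong, and the error is not peripheral. For $(>_a^b)_0$ (ascending generator at level $a$, descending at level $b$) and $(x,y)\in S$, $\mathcal{S}^0_{x,y}$ is $\mathrm{Cone}\bigl(-\psi^{\downarrow}+\psi_{\uparrow}\colon C^{\downarrow}_{\geq x}\oplus C_{\uparrow}^{\leq y}\to D\bigr)[1]$, so $H_0$ of it is $H_1$ of the cone, i.e.\ the kernel of the degree-zero map. This is $\kappa$ exactly when \emph{both} $C^{\downarrow}_{\geq x}$ and $C_{\uparrow}^{\leq y}$ are nonzero, i.e.\ $x\leq b$ and $y\geq a$ --- the opposite of your claimed region $\{y<a,\ x>b\}$. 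The region where both filtered pieces vanish contributes instead through $H_0(\mathrm{Cone})$, which is picked up at the shifted parameter $T(x,y)\in T(S)$; that is the portion of the block lying in the next square, and conflating the two evaluations is exactly the degree/region bookkeeping your plan postpones. Worse, your assertion that the computations on the $L$ and $A$ regions give vanishing $H_0$ "consistent with the table" is false and in fact inconsistent with the statement being proved: on $L$ one has $\mathcal{L}^0_{x,y}=\mathrm{Cone}(C_{\uparrow}^{\leq-2\Lambda-x}\hookrightarrow C_{\uparrow}^{\leq y})$, whose $H_0$ is $\kappa$ whenever $-2\Lambda-x<a\leq y$, and similarly on $A$; the block $B^{T^{-k}(\,\cdot\,,\,\cdot\,)}$ anchored in the interior of a square necessarily has support meeting $T^{-k}L$, $T^{-k}A$ and $T^{-k+1}S$. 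So the "straddling" cases you defer to the end are not a residual check but the main content of the argument (the paper's own treatment of $(>_b^a)_k$, $(\uparrow_a^b)_k$ and $(\searrow^a)_k$ is devoted precisely to them), and as written your worked case would not assemble into $B^{v(\mathcal{C})}$. The plan is salvageable, but each case must be computed on all of $S$, $L$, $A$ and across the relevant $T$-shifts before the matching with (\ref{bvdef}) can be made.
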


\begin{proof}
	These follow by routine computation; we will discuss the cases of $\left(\uparrow_{a}^{b}\right)_k$, $(\searrow^a)_k$, $\left(>_{b}^{a}\right)_k$, and $\square_k$ as representative examples and leave the others to the reader.
	
\paragraph{\textbf{Case} $\mathcal{C}=\square_k$}	We address this case first since it is the simplest.  Clearly $\mathcal{F}(\square_k)(T^{j}(x,y))=0$ whenever $(x,y)\in L\cup A$ and $j\in \Z$, while if $(x,y)\in S$ and $j\in \Z$ we have \[ \mathcal{F}(\square_k)(T^{j}(x,y))=\mathrm{Cone}(0\to\kappa_{k})[-j+1]=(\kappa_k)[-j+1],\] so for $(x,y)\in S$, \[ H_0(\mathcal{F}(\square_k)(T^{j}(x,y)))=H_0((\kappa_k)[-j+1])=H_{-j+1}(\kappa_k)=\left\{\begin{array}{ll}\kappa &  \mbox{if }j=-k+1 \\ 0 & \mbox{otherwise} \end{array}\right.\]  Now $B^{T^{-k+1}(\Lambda,-\Lambda)}$ is likewise given by $\kappa$ on $T^{-k+1}(S)$ and $0$ elsewhere.  So since the structure maps for $\mathcal{F}(\square_k)$ within $T^{-k+1}(S)$ are  given by inclusion of subcomplexes (which in this case is just the identity on $\mathrm{Cone}(0\to \kappa_{k})[k]$), we indeed have $H_0(\mathcal{F}(\mathcal{C}))=B^{T^{-k+1}(\Lambda,-\Lambda)}$ as functors $\mathbb{M}\to \mathsf{Vect}_{\kappa}$.

\paragraph{\textbf{Case} $\mathcal{C}=\left(\uparrow_{a}^{b}\right)_k$ with $a<b<\Lambda$} In this case, the complexes $C^{\downarrow}_{*}$ and $D_*$ are both zero while $C_{\uparrow*}=\mathcal{E}_k(a,b)_{\uparrow}$.  As before, we will consider the values of $\mathcal{F}\left(\left(\uparrow_{a}^{b}\right)_k\right)$ on $T^{j}(x,y)$ for $j\in \Z$ and $(x,y)\in\mathfrak{D}=S\cup L\cup A$.  Clearly, if $(x,y)\in A$, then $\mathcal{F}\left(\left(\uparrow_{a}^{b}\right)_k\right)(T^j(x,y))=0$.  If $(x,y)\in S$, we have \[ \mathcal{F}\left(\left(\uparrow_{a}^{b}\right)_k\right)(T^j(x,y))=\mathrm{Cone}\left(\mathcal{E}_k(a,b)_{\uparrow}^{\leq y}\to 0\right)[-j+1]=\mathcal{E}_k(a,b)_{\uparrow}^{\leq y}[-j]. \] So for $(x,y)\in S$, \begin{equation}\label{upabS} H_0\left(\mathcal{F}\left(\left(\uparrow_{a}^{b}\right)_k\right)(T^j(x,y)) \right)=H_{-j}\left(\mathcal{E}_k(a,b)_{\uparrow}^{\leq y}\right)=\left\{\begin{array}{ll}\kappa & \mbox{if }j=-k\mbox{ and } a\leq y<b \\ 0 & \mbox{otherwise}\end{array}\right. \end{equation}
Lastly, if $(x,y)\in L$ (so in particular $-2\Lambda-x\leq y<\Lambda$) then \begin{equation}\label{upabcone} H_0\left(\mathcal{F}\left(\left(\uparrow_{a}^{b}\right)_k\right)(T^j(x,y))\right) = H_{-j}\left(\mathrm{Cone}\left(\mathcal{E}_k(a,b)_{\uparrow}^{\leq -2\Lambda-x}\hookrightarrow \mathcal{E}_k(a,b)_{\uparrow}^{\leq y} \right)\right). \end{equation}
If $a> -2\Lambda-x$ then $\mathcal{E}_{k}(a,b)_{\uparrow}^{\leq -2\Lambda-x}=0$, so (\ref{upabcone}) is $H_{-j}\left(\mathcal{E}_k(a,b)_{\uparrow}^{\leq y}\right)$, which is $\kappa$ if $j=-k$ and $a\leq y<b$, and $0$ otherwise.  

If $b\leq y$, then  $\mathcal{E}_{k}(a,b)_{\uparrow}^{\leq y}$ is acyclic, so (\ref{upabcone}) yields $H_{-j}(\mathcal{E}_{k}(a,b)_{\uparrow}^{\leq -2\Lambda-x}[-1])$, which is $\kappa$ if $j=-k-1$ and $a\leq -2\Lambda-x<b$, and $0$ otherwise.  

In the remaining case that $a\leq -2\Lambda-x$ and $b>y$, the inclusion $\mathcal{E}_k(a,b)_{\uparrow}^{\leq-2\Lambda-x}\hookrightarrow \mathcal{E}_{k}(a,b)_{\uparrow}^{\leq y}$ is an isomorphism, so (\ref{upabcone}) is zero.

Summing up, if $(x,y)\in L$ and $j\in \Z$, \begin{equation}\label{upabL} H_0\left(\mathcal{F}\left(\left(\uparrow_{a}^{b}\right)_k\right)(T^j(x,y)) \right)=\left\{\begin{array}{ll}\kappa & \mbox{if }j=-k,\, a\leq y<b,\mbox{ and }x>-2\Lambda-a \\ \kappa & \mbox{if } j=-k-1 ,\,-2\Lambda-b<x\leq -2\Lambda-a,\mbox{ and }y\geq b\\ 0 & \mbox{otherwise}\end{array}\right.   \end{equation}  Reparametrizing slightly, we obtain from (\ref{upabS}) and (\ref{upabL}) that, for $(s,t)\in\mathbb{M}$, \[
H_0\left(\mathcal{F}\left(\left(\uparrow_{a}^{b}\right)_k\right)(T^{-k-1}(s,t))\right)=\left\{\begin{array}{ll}\kappa & \mbox{if }(s,t)\in L,\,-2\Lambda-b<s\leq -2\Lambda-a,\mbox{ and }t\geq b  \\ \kappa & \mbox{if }(s,t)\in T(S)\mbox{ and }-2\Lambda-b<s\leq -2\Lambda-a \\ \kappa & \mbox{if }(s,t)\in T(L),\,-2\Lambda-b<s\leq -2\Lambda-a, \mbox{ and }t<4\Lambda+a \\ 0 & \mbox{otherwise}\end{array} \right. \]

It is easy to see that the above agrees with the values of the block $B^{T^{-k-1}(-2\Lambda-a,b)}$.  Moreover, inspection of the above computation together with cases (a),(b),(d), and (f) of the definition of the morphisms $\Phi_{(s,t)}^{(s',t')}(\mathcal{C})$ in Section \ref{fsect} shows that the induced maps
 $H_0\left(\mathcal{F}\left(\left(\uparrow_{a}^{b}\right)_k\right)(T^{-k-1}(s,t))\right)\to H_0\left(\mathcal{F}\left(\left(\uparrow_{a}^{b}\right)_k\right)(T^{-k-1}(s',t'))\right)$ 
 are isomorphisms whenever both the domain and the codomain are nonzero.  So
  $H_0\left(\mathcal{F}\left(\left(\uparrow_{a}^{b}\right)_k\right)\right)$ is indeed isomorphic to $B^{T^{-k-1}(-2\Lambda-a,b)}$.

\paragraph{\textbf{Case} $\mathcal{C}=(\searrow^a)_k$} In this case $C_{\uparrow *}=0$, so $\mathcal{F}((\searrow^a)_k)(T^{j}(x,y))=0$ if $(x,y)\in L$.  If $(x,y)\in A$, then \begin{align*} H_0\left(\mathcal{F}((\searrow^a)_k)(T^{j}(x,y)) \right)&= H_{-j}\left(\mathrm{Cone}(\mathcal{E}_k(a,-\infty)^{\downarrow}_{\geq 2\Lambda-y}\hookrightarrow \mathcal{E}_k(a,-\infty)^{\downarrow}_{\geq x})\right)\\&=\left\{\begin{array}{ll}\kappa & \mbox{if }j=-k\mbox{ and }2\Lambda-y>a\geq x \\ 0 & \mbox{otherwise} \end{array}\right. 
\end{align*}
If instead $(x,y)\in S$, then $ H_0\left(\mathcal{F}((\searrow^a)_k)(T^{j}(x,y)) \right)$ is the homology in degree $-j+1$ of the cone of a map $\psi_x\co \mathcal{E}_k(a,-\infty)^{\downarrow}_{\geq x}\to\kappa_k$.  The domain of $\psi_x$ is zero if $x>a$ (so in this case $H_0\left(\mathcal{F}((\searrow^a)_k)(T^{j}(x,y)) \right)$ agrees with $H_{-j+1}(\kappa_k)$), while if $x\leq a$ then 
$\mathcal{E}_k(a,-\infty)^{\downarrow}_{\geq x}=\kappa_k$ and $\psi_x$ is multiplication by $-1$, which has acyclic mapping cone. Using the reparametrization
 $(s,t)=T(x,y)=(-2\Lambda-y,2\Lambda-x)\in T(S)$, we see that if $(s,t)\in T(S)$ then 
 \[ H_0\left(\mathcal{F}((\searrow^a)_k)(T^{j}(s,t))\right)=\left\{ \begin{array}{ll} \kappa &\mbox{if }j=-k\mbox{ and }t<2\Lambda-a \\ 0 & \mbox{otherwise} \end{array}\right.  \]  Now the block $B^{T^{-k}(a,\Lambda)}$ is nontrivial precisely at those points of form $T^{-k}(s,t)$ where either $(s,t)\in A$ with $s\leq a$ and $t<2\Lambda-a$ or $(s,t)\in T(S)$ with $t<2\Lambda-a$.  So $B^{T^{-k}(a,\Lambda)}$ coincides pointwise with $H_0\left(\mathcal{F}((\searrow^a)_k)\right)$, and the definition of $\phi_{T^{-k}A}^{T^{-k+1}(S)}$ from Section \ref{transf} implies that the structure maps coincide.

\paragraph{\textbf{Case} $\mathcal{C}=(>_{b}^{a})_k$} Here $C_{\uparrow*},C^{\downarrow}_{*}$ and $D_*$ are zero in all degrees other than $k$, and are one-dimensional in degree $k$, with the generators for $C_{\uparrow*}$ and $C^{\downarrow}_{*}$ having filtration levels $b$ and $a$ respectively.  So if $(x,y)\in L$, the inclusion $C_{\uparrow}^{\leq -2\Lambda-x}\hookrightarrow C_{\uparrow}^{\leq y}$ is an isomorphism unless $-2\Lambda-x<b\leq y$, in which case it is the map from $0$ to $\kappa_k$; hence
 \[ \mbox{for }(x,y)\in L,\,H_0\left(\mathcal{F}((>_{b}^{a})_k)(T^{j}(x,y))\right) = \left\{\begin{array}{ll}\kappa & j=-k,\,x>-2\Lambda-b,\mbox{ and }y\geq b \\ 0 & \mbox{otherwise}\end{array}\right..\]  
 Similar consideration of the inclusions $C^{\downarrow}_{\geq 2\Lambda-y}\hookrightarrow C^{\downarrow}_{\geq x}$ shows that \[ \mbox{for }(x,y)\in A,\,H_0\left(\mathcal{F}((>_{b}^{a})_k)(T^{j}(x,y))\right) = \left\{\begin{array}{ll}\kappa & j=-k,\,x\leq a,\mbox{ and }y<2\Lambda-a \\ 0 & \mbox{otherwise}\end{array}\right..\] 

Now if $(x,y)\in S$, $\mathcal{F}((>_{b}^{a})_k)(T^{j}(x,y))$ is a shift of the cone of a map $\psi_{x,y}\co C^{\downarrow}_{\geq x}\oplus C_{\uparrow}^{\leq y}\to D_*$ which is an epimorphism with one-dimensional kernel in degree $k$ if $a\geq x$ and $b\leq y$, a monomorphism with one-dimensional cokernel in degree $k$ if $a<x$ and $b>y$, and an isomorphism otherwise.  So, for $(x,y)\in S$, \[ H_0\left(\mathcal{F}((>_{b}^{a})_k)(T^{j}(x,y))\right)=H_{-j+1}(\mathrm{Cone}(\psi_{x,y}))\cong \left\{\begin{array}{ll}\kappa & \mbox{if }j=-k,\,x\leq a,\mbox{ and }y\geq b \\ \kappa & \mbox{if }j=-k+1,\, x>a,\mbox{ and }y<b \\ 0 & \mbox{otherwise}  \end{array}\right.. \]  
Now since $a$ and $b$ are between $-\Lambda$ and $\Lambda$, a point $(s,t)\in\mathbb{M}$ lies in the rectangle $(-2\Lambda-b,a]\times [b,2\Lambda-a)$ iff either $(s,t)\in S$ with $s\leq a$ and $t\geq b$, or $(s,t)\in L$ with $s>-2\Lambda-b$ and $y\geq b$, or $(s,t)\in A$ with $s\leq a$ and $t<2\Lambda-a$, or $(s,t)=T(x,y)$ where $(x,y)\in S$ with $x>a$ and $y<b$.  So the above calculations imply that  $H_0\left(\mathcal{F}((>_{b}^{a})_k)\right)$ is nonzero at exactly the same points as the block $B^{T^{-k}(a,b)}$.  Moreover the structure maps defined in Section \ref{fsect} induce isomorphisms from $H_0\left(\mathcal{F}((>_{b}^{a})_k)(s,t)\right)$ to $H_0\left(\mathcal{F}((>_{b}^{a})_k)(s',t')\right)$ whenever both are nontrivial, completing the proof in this case.  

The remaining cases $(\uparrow_{a}^{\infty})_k$, $(\nearrow_a)_k$, $(\downarrow_{b}^{a})_k$, and  $(\downarrow^{a}_{-\infty})_k$, of Proposition \ref{blockclass} follow by the same methods as those above, so we omit the details.

\end{proof}

We see from Proposition \ref{blockclass} that $H_0\circ \mathcal{F}$ establishes a \emph{bijection} between our standard elementary summands from (\ref{basicacyc}) and (\ref{sixblocks}) and the blocks $B^v$ of \cite{BBF20} as $v$ varies through $\mathbb{M}\setminus\partial\mathbb{M}$; the inverse of this bijection is depicted in Figure \ref{blockfig}.

\begin{center}
	\begin{figure}
		\includegraphics[width=5in]{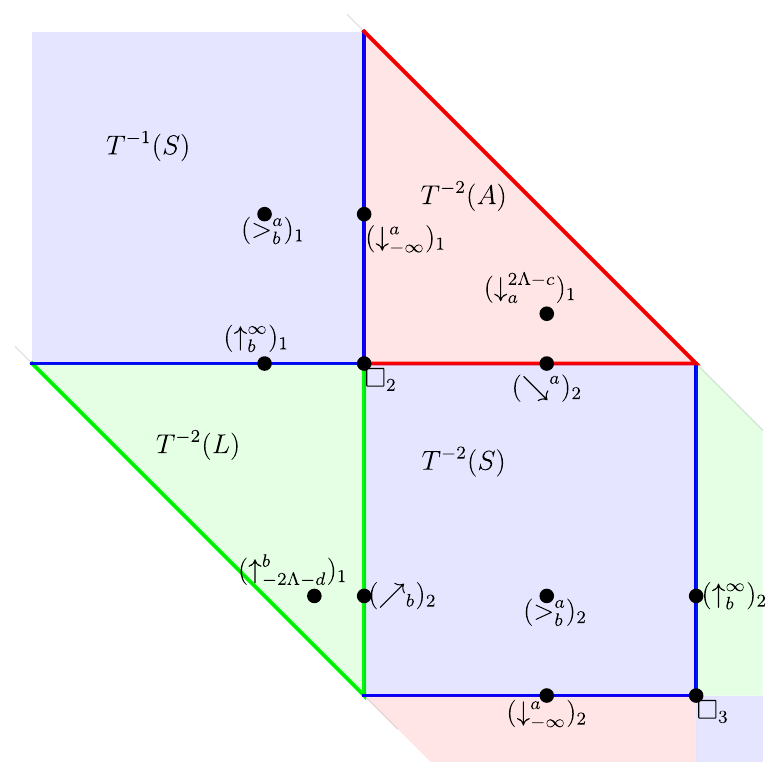}
		\caption{Various points $v$ of $\mathbb{M}$, labeled by the standard elementary summand $\mathcal{C}$ in $\mathsf{HCO}^{\Lambda}(\kappa)$ for which $H_0(\mathcal{F}(\mathcal{C}))$ is isomorphic to the block $B^v$ according to Proposition \ref{blockclass}.  The points labeled by $(>_{b}^{a})_2$, $(\uparrow_{-2\Lambda-d}^{b})_1$, and $(\downarrow_{a}^{2\Lambda-c})_1$ are equal respectively to $T^{-2}(a,b)$, $T^{-2}(d,b)$, and $T^{-2}(a,c)$, where $(a,b)\in S$, $(d,b)\in L$, and $(a,c)\in A$.}\label{blockfig}
	\end{figure}
\end{center}

This allows for a quick proof of the following uniqueness result for the decompositions of Section \ref{cospan-decomp}.

\begin{cor}\label{decompunique}
	Let $\mathcal{C}$ be a $\Lambda$-bounded filtered cospan that decomposes up to isomorphism in $\mathsf{HCO}^{\Lambda}(\kappa)$ as $\bigoplus (\mathcal{C}_{\alpha})^{\oplus n(\mathcal{C}_{\alpha})}$ where the $\mathcal{C}_{\alpha}$ are distinct blocks from among those in (\ref{basicacyc}) and (\ref{sixblocks}), and $n(\mathcal{C}_{\alpha})\in \N$.  Then the numbers $n(\mathcal{C}_{\alpha})$ are uniquely determined by $\mathcal{C}$.
\end{cor}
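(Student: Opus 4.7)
The plan is to deduce uniqueness of the multiplicities $n(\mathcal{C}_\alpha)$ from Proposition~\ref{blockclass} by translating the decomposition into one of $\mathbb{M}$-indexed persistence modules, where blocks have well-behaved endomorphism rings.

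First, I would apply the additive functor $H_0\circ\mathcal{F}\co \mathsf{HCO}^{\Lambda}(\kappa)\to \mathsf{Vect}_{\kappa}^{\mathbb{M}}$ to the assumed isomorphism $\mathcal{C}\cong \bigoplus_\alpha (\mathcal{C}_\alpha)^{\oplus n(\mathcal{C}_\alpha)}$. Since $\mathcal{F}$ is built out of mapping cones and direct summands of the constituent chain complexes, and $H_0$ commutes with direct sums, this functor preserves arbitrary direct sums. Proposition~\ref{blockclass} then yields an isomorphism
\[
H_0(\mathcal{F}(\mathcal{C}))\cong \bigoplus_\alpha \left(B^{v(\mathcal{C}_\alpha)}\right)^{\oplus n(\mathcal{C}_\alpha)}
\]
in $\mathsf{Vect}_\kappa^{\mathbb{M}}$. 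Inspection of the eight entries in the table of Proposition~\ref{blockclass} shows that the assignment $\mathcal{C}_\alpha\mapsto v(\mathcal{C}_\alpha)$ is \emph{injective}: distinct standard elementary summands land on distinct points of $\mathbb{M}\setminus\partial\mathbb{M}$. Hence the proof reduces to showing that in any decomposition of an object of $\mathsf{Vect}_\kappa^{\mathbb{M}}$ as a direct sum of blocks $B^v$ (for varying $v\in\mathbb{M}\setminus\partial\mathbb{M}$), the multiplicity of each block is intrinsically determined.

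To settle this reduced statement, I would first observe that each $B^v$ is indecomposable with endomorphism ring $\kappa$: by (\ref{bvdef}) every natural transformation $B^v\to B^v$ must commute with the structure maps of $B^v$, and since those maps are identities of $\kappa$ wherever nonzero, any such transformation is a single scalar multiple of the identity. As $\kappa$ is a field and hence a local ring, Azumaya's generalization of the Krull--Schmidt theorem applies and gives uniqueness of the block multiplicities up to permutation, which in turn forces the $n(\mathcal{C}_\alpha)$ to be uniquely determined. A more concrete alternative, which I would spell out in the pointwise finite-dimensional case, is the inclusion--exclusion rank formula
\[
\dim F(v_1,v_2) - \dim F(v_1+\epsilon,v_2) - \dim F(v_1,v_2-\epsilon) + \dim F(v_1+\epsilon,v_2-\epsilon) = \sum_{v'\in [v_1,v_1+\epsilon)\times(v_2-\epsilon,v_2]} m_{v'},
\]
valid for small $\epsilon>0$ by direct inspection of (\ref{bvdef}), from which $m_v$ is recovered by letting $\epsilon\to 0^+$.

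The main obstacle I anticipate is that the indexing set of $\alpha$'s may be infinite and not locally finite in $\mathbb{M}$, in which case the rank formula requires either an explicit monotone limit argument or pointwise finite-dimensionality of the persistence module that is not part of the hypotheses. The Azumaya route sidesteps this entirely by working at the categorical level with indecomposables having local endomorphism rings, which is why I would prefer it as the cleanest path to the conclusion.
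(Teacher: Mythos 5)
Your argument is correct, and its first half is exactly the paper's: apply the additive functor $H_0\circ\mathcal{F}$ to the decomposition, invoke Proposition \ref{blockclass} together with the injectivity of $\mathcal{C}_\alpha\mapsto v(\mathcal{C}_\alpha)$, and thereby reduce to the uniqueness of block multiplicities in $\mathsf{Vect}_\kappa^{\mathbb{M}}$. Where you diverge is in the last step. The paper settles it by citing the observation of Bauer--Botnan--Fluhr that the multiplicity of $B^{v}$ in any block decomposition is recovered intrinsically as the dimension of the quotient of $H_0(\mathcal{F}(\mathcal{C}))(v)$ by the sum of the images of $H_0(\mathcal{F}(\mathcal{C}))(w)$ over points $w$ strictly below $v$; this is elementary, constructive, and needs no finiteness hypotheses (unlike your inclusion--exclusion rank formula, whose limitations you correctly flag). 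Your preferred route via Azumaya's generalization of Krull--Schmidt is also valid and likewise free of finiteness assumptions, but it shifts the burden to checking that $\mathrm{End}(B^v)\cong\kappa$; your one-line justification implicitly uses that the support of $B^v$ is directed under $\preceq$ (any two points of the half-open rectangle have a common upper bound inside it and inside $\mathbb{M}$), so that the locally defined scalars are forced to coincide -- worth a sentence if you write this up. In short: your approach buys a purely categorical uniqueness statement at the cost of the local-endomorphism verification, while the paper's (borrowed) formula gives the multiplicities directly as dimensions of explicit quotients, which is also what makes them computable.
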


\begin{proof}
	By Proposition \ref{blockclass}, $H_0(\mathcal{F}(\mathcal{C}))\cong \bigoplus (B^{v(\mathcal{C}_{\alpha})})^{\oplus n(\mathcal{C}_{\alpha})}$ for certain $ v(\mathcal{C}_{\alpha})\in \mathbb{M}\setminus\partial\mathbb{M}$, 
	with the assignment $\mathcal{C}_{\alpha}\mapsto v(\mathcal{C}_{\alpha})$ being one-to-one, so that the various $v(\mathcal{C}_{\alpha})$ appearing in the sum are all distinct.  
	But, as observed in \cite[Section 2.2]{BBF20}, the multiplicities $n(\mathcal{C}_{\alpha})$ are then uniquely determined by  $H_0(\mathcal{F}(\mathcal{C}))$ (and hence by $\mathcal{C}$), as the dimension of the quotient of
	 $H_0(\mathcal{F}(\mathcal{C}))(v(\mathcal{C}_{\alpha}))$ by the sum of the images in $H_0(\mathcal{F}(\mathcal{C}))(v(\mathcal{C}_{\alpha}))$  of the various   $H_0(\mathcal{F}(\mathcal{C}))(v)$ for $v\preceq v(\mathcal{C}_{\alpha})$ under the appropriate structure maps.
\end{proof}
	
\section{The isomorphism with relative interlevel set homology}\label{rishsect}

Let us call a continuous function $f\co \mathbb{X}\to [-\Lambda,\Lambda]$ \emph{tame} if, for every $y\in (-\Lambda,\Lambda)$, there is a number $\ep=\ep(y)$ with $0<\ep<\Lambda-y$ and a strong deformation retraction from $f^{-1}([-\Lambda,y+\ep))$ to $f^{-1}([-\Lambda,y])$.  (For example, this holds if $f$ is a Morse function on a smooth manifold with boundary such that $\partial\mathbb{X}$ is the union of the regular level sets $f^{-1}(\{-\Lambda\})$ and $f^{-1}(\{\Lambda\})$ as in Example \ref{morseex}, or if $f$ is a simplexwise linear function on  a simplicial complex.)  Following \cite{BBF20}, to any tame function $f\co \mathbb{X}\to [-\Lambda,\Lambda]$ one may associate an object $h(f)$ of $\mathsf{Vect}_{\kappa}^{\mathbb{M}}$, called the \emph{relative interlevel set homology of $f$}.\footnote{As both the tameness of $f$ and the object $h(f)$ only depend on the restriction of $f$ to $f^{-1}((-\Lambda,\Lambda))$, one could instead notate this as $h(f|_{\mathbb{X}\setminus\partial\mathbb{X}})$ where $\partial\mathbb{X}:=f^{-1}(\{-\Lambda,\Lambda\})$. (The case $\partial\mathbb{X}=\varnothing$ is of course allowed, and is indeed the case that is considered in \cite{BBF20}.) To compare more precisely to \cite{BBF20}, their input is a function $g\co \mathbb{X}\to \R$, which is then composed it with the arctangent function to obtain a function with values in $(-\Lambda,\Lambda)$ with $\Lambda=\frac{\pi}{2}$; thus our $f$ should be identified with their $\arctan\circ g$, and what we denote by $h(f)$ would be $h(\tan\circ g)$ in their notation.}  At the same time,  we have a pinned singular filtered cospan $\mathcal{S}_{\partial}(\mathbb{X},f;\kappa)$ as in Example \ref{pinsing}. Applying the functor $\mathcal{F}$ to this object of $\mathsf{HCO}^{\Lambda}(\kappa)$ yields an object of $\mathsf{K}(\kappa)^{\mathbb{M}}$; composing with the zeroth homology functor $\mathsf{K}(\kappa)\to \mathsf{Vect}_{\kappa}$ then yields the object $H_0\circ \mathcal{F}(\mathcal{S}_{\partial}(\mathbb{X},f;\kappa))$.  After recalling the definition of $h(f)$, this section will be devoted to the proof of:

\begin{theorem} \label{H0iso}
	For any tame function $f\co \mathbb{X}\to [-\Lambda,\Lambda]$, there is an isomorphism $h\left(f\right)\cong H_0\circ \mathcal{F}(\mathcal{S}_{\partial}(\mathbb{X},f;\kappa))$.  
\end{theorem}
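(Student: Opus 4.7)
The plan is to identify both $h(f)$ and $H_0 \circ \mathcal{F}(\mathcal{S}_\partial(\mathbb{X},f;\kappa))$ explicitly on each region of the decomposition $\mathbb{M} = \bigsqcup_{k \in \Z} T^k(S \sqcup L \sqcup A)$ from Section \ref{stripsect}, and then check that the structure maps agree. The tameness hypothesis enters in one essential place: it ensures that triads of the form $\bigl(\mathbb{X};\, f^{-1}([-\Lambda, y+\epsilon)),\, f^{-1}((x-\epsilon, \Lambda])\bigr)$ are excisive, and that the sublevel and superlevel subcomplexes of the singular chain complex genuinely compute the singular homology of the closed preimages they approximate.

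First I would compute the left-hand functor region by region, using the explicit form of $\mathcal{S}_\partial(\mathbb{X},f;\kappa)$ from Example \ref{pinsing}. On $T^k L$, $\mathcal{F}^0_{T^k(x,y)}(\mathcal{S}_\partial(\mathbb{X},f;\kappa))$ is, up to a grading shift, the mapping cone of an inclusion of sublevel subcomplexes of $S_*(\mathbb{X}\setminus\partial^+\mathbb{X}, \partial^-\mathbb{X};\kappa)$, so that $H_0$ computes, via the long exact sequence of the cone, a relative singular homology of the pair $(f^{-1}[-\Lambda,y], f^{-1}[-\Lambda,-2\Lambda-x])$, in a homological degree dictated by $k$ (the $\partial^-\mathbb{X}$-pinning cancels when one passes to the quotient). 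The $T^k A$ region is handled symmetrically using superlevel sets. On $T^k S$, the functor value is a shifted mapping cone of $-\psi^\downarrow + \psi_\uparrow\colon C^\downarrow_{\ge x} \oplus C_\uparrow^{\le y} \to D_*$; applying the relative Mayer--Vietoris sequence to the excisive triad provided by tameness identifies this cone, up to the appropriate shift, with a complex computing a relative homology of the interlevel preimage $f^{-1}([x,y])$ when $x \le y$, and with a corresponding relative homology of the complementary open region when $x > y$. By direct comparison with the construction in \cite{BBF20}, each of these values agrees with the corresponding value of $h(f)$.

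The remaining task is to verify that the structure maps match. Within a single region, or for transitions $T^k S \to T^k L$ and $T^k S \to T^k A$, this is immediate because both sides are induced either by subcomplex inclusions or by the pinning chain maps $\psi_\uparrow, \psi^\downarrow$. The interesting cases are the transitions $T^k L \to T^{k+1} S$ and $T^k A \to T^{k+1} S$, where the $\mathcal{F}$-side structure map involves $\phi^{T^{k+1}S}_{T^k L}$ and $\phi^{T^{k+1}S}_{T^k A}$ and must be identified with Mayer--Vietoris connecting homomorphisms in $h(f)$. This identification is an instance of the standard comparison between the connecting morphism of the mapping-cone long exact sequence and that of the Mayer--Vietoris sequence for an excisive triad. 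The main obstacle will be the careful bookkeeping required to verify that the signs in (\ref{sfun})--(\ref{afun}) and in the transition maps of Section \ref{transf} reproduce exactly the signs in the Mayer--Vietoris connecting maps underlying $h(f)$; once agreement of values and of these transition maps is established for one representative rectangle $(s,t) \preceq (u,v) \preceq T(s,t)$ in each of the ten cases of Theorem \ref{distmain}, naturality on all of $\mathbb{M}$ propagates from the shared hexagonal exactness of both functors (Corollary \ref{cohfunct}) and the distinguished-triangle structure established in Theorem \ref{distmain}.
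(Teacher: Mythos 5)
Your overall strategy (identify the two functors pointwise on the regions of $\mathbb{M}$ and then match generating structure maps) is the same as the paper's, and your identification of where tameness enters is essentially right. But there is a genuine gap in the structure-map part: you have mislocated the Mayer--Vietoris comparison. In $h(f)$ the maps out of $T^{k}L$ and $T^{k}A$ into the next square are \emph{inclusion-induced} maps of relative homology pairs (both the source and target are of the form $H_k(A,B)$ with $A\subset A'$, $B\subset B'$), so the transitions you single out as "the interesting cases" requiring sign bookkeeping against connecting maps are in fact handled by direct commutativity of inclusion/projection diagrams. The Mayer--Vietoris connecting homomorphism occurs instead \emph{inside} a single square $T^kS$, when the structure map crosses the diagonal from the region where $x>y$ (value $H_{k+1}(\mathbb{X},\mathbb{X}\setminus f^{-1}((y,x));\kappa)$) to the region where $x\le y$ (value $H_k(f^{-1}([x,y]);\kappa)$). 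Your proposal dismisses all within-region maps as "immediate because both sides are induced by subcomplex inclusions or by the pinning chain maps," which is false precisely for this diagonal-crossing map: on the $\mathcal{F}$ side it is just an inclusion of filtered subcomplexes of the mapping cone, while on the $h(f)$ side it is the connecting homomorphism of the Mayer--Vietoris sequence for the excisive (by tameness) triad. This is the one case that genuinely requires an argument; the paper supplies it via an algebraic compatibility lemma (Lemma \ref{coneids}(iii)) relating the cone-inclusion map to the connecting homomorphism of the short exact sequence $0\to\ker(\phi')\to C'_*/C_*\to\mathrm{coker}(\phi)\to 0$, applied to a small-chain model of the Mayer--Vietoris sequence, combined with the quasi-isomorphisms $\mathrm{Cone}(\phi)\simeq\mathrm{coker}(\phi)$ and $\ker(\phi')\simeq\mathrm{Cone}(\phi')[1]$ and with Proposition \ref{htopyker} (which also needs tameness in a more delicate way than excision alone: a deformation retraction, the theorem of small chains, and the fact that the resulting chain homotopy produces degenerate simplices on $f^{-1}([x,y])$).

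A secondary problem is your closing claim that, once a few transitions are checked, "naturality on all of $\mathbb{M}$ propagates from the shared hexagonal exactness of both functors" and from Theorem \ref{distmain}. Exactness of the two functors does not force a family of pointwise isomorphisms commuting with some structure maps to commute with all of them; what actually suffices is that every structure map factors as a composition of the generating cases, so it is enough to verify each generating case directly --- but your list of generating cases must then include the diagonal-crossing map inside $T^kS$ discussed above, which is exactly the case your argument omits.
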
	

This isomorphism is constructed fairly concretely below and depends naturally on the pair $(\mathbb{X},f)$ in a suitable sense which we leave to the reader to formulate precisely.
  
Before starting the proof of Theorem \ref{H0iso} we make precise the definition of $h(f)$ for a tame function $f\co \mathbb{X}\to [-\Lambda,\Lambda]$.  First, recalling that $S=(-\Lambda,\Lambda]\times[-\Lambda,\Lambda)$, let \[ S^+=\{(x,y)\in S|y\geq x\}\quad \mbox{and}\quad S^-=\{(x,y)\in (x,y)\in S|y<x\}. \]  Then, for $(x,y)\in \mathfrak{D}=S^+\sqcup S^-\sqcup L\sqcup A$ and $k\in \Z$, set: 
\[ 
h(f)_{T^{-k}(x,y)}=\left\{\begin{array}{ll} H_k(f^{-1}([x,y];\kappa)) & \mbox{if }(x,y)\in S^+ \\ H_{k+1}(\mathbb{X},\mathbb{X}\setminus f^{-1}((y,x));\kappa) & \mbox{if }(x,y)\in S^- \\ H_k(f^{-1}([-\Lambda,y]),f^{-1}([-\Lambda,-2\Lambda-x]);\kappa) & \mbox{if }(x,y)\in L \\ H_k(f^{-1}([x,\Lambda]),f^{-1}([2\Lambda-y,\Lambda]);\kappa) & \mbox{if }(x,y)\in A \end{array}\right.
\]

To describe the structure maps $h(f)_{(s,t)}\to h(f)_{(s',t')}$ of the $\mathbb{M}$-indexed persistence module $h(f)$ (for $(s,t)\preceq (s',t')$), it suffices to restrict to the following cases, as arbitrary structure maps are necessarily given by compositions of ones corresponding to one of these cases:
\begin{enumerate} \item \label{same} Both $(s,t)$ and $(s',t')$ belong to the same one of $T^{-k}(S^+)$, or $T^{-k}(S^-)$, or $T^{-k}(L)$, or $T^{-k}(A)$;
	\item \label{S+L} $(s,t)\in T^{-k}(S^+)$ and $(s',t')\in T^{-k}(L)$; \item \label{S+A} $(s,t)\in T^{-k}(S^+)$ and $(s',t')\in T^{-k}(A)$;
	\item \label{LS-} $(s,t)\in T^{-k}(L)$ and $(s',t')\in T^{-k+1}(S^-)$;
	\item \label{AS-} $(s,t)\in T^{-k}(A)$ and $(s',t')\in T^{-k+1}(S^-)$; 
	\item \label{S-S+} $(s,t)\in T^{-k}(S^-)$ and $(s',t')\in T^{-k}(S^+)$
\end{enumerate} 
 
 In each of cases (\ref{same})-(\ref{AS-}), the assumption that $(s,t)\preceq (s',t')$ implies that $h(f)_{(s,t)}$ and $h(f)_{(s',t')}$ can be expressed, respectively, as relative homologies $H_k(A,B;\kappa)$ and $H_k(A',B';\kappa)$ for certain subsets $A,B,A',B'$ of $\mathbb{X}$ with $A\subset A'$ and $B\subset B'$, so the map $h(f)_{(s,t)}\to h(f)_{(s',t')}$ is just taken to be the map on homology associated to the inclusion of pairs $(A,B)\subset (A',B')$.  In case (vi), writing $(s,t)=T^{-k}(x,y)$ and $(s',t')=T^{-k}(x',y')$ with $x\geq x'$, $y\leq y'$, $y<x$, and $y'\geq x'$, we have \[ h(f)_{(s,t)}=H_{k+1}(\mathbb{X},\mathbb{X}\setminus f^{-1}((y,x));\kappa),\qquad h(f)_{(s',t')}=H_k(f^{-1}([x',y']);\kappa).\]  The map $h(f)_{(s,t)}\to h(f)_{(s',t')}$ is taken in this case to be the connecting homomorphism in the Mayer-Vietoris sequence associated as in \cite[Section 10.7]{tD} to the pair of excisive triads $\left(\mathbb{X}\setminus f^{-1}((y,x));f^{-1}([-\Lambda,y]),f^{-1}([x,\Lambda])\right)$ and $\left(\mathbb{X};f^{-1}([-\Lambda,y']),f^{-1}([x',\Lambda])\right)$. (The tameness of $f$ ensures that the second triad is always excisive for singular homology, including in the case that $x'=y'$.)
 
This connecting homomorphism $\delta$ can be formulated as follows.  Continuing to assume that $x'\leq y'$, let $S_{*}^{sm,x'
,y'}(\mathbb{X})$ denote the subcomplex of $S_*(\mathbb{X})$ generated by singular simplices having image in either $f^{-1}([-\Lambda,y'+\ep(y')))$ or $f^{-1}([x',\Lambda])$. (Here the small positive parameter $\ep(y')$ is as in the definition of tameness.)  By the theorem of small chains (\cite[Proposition 2.21]{Ha}), the inclusion $S_{*}^{sm,x'
,y'}(\mathbb{X};\kappa)\to S_*(\mathbb{X};\kappa)$ is a chain homotopy equivalence.  We then have a commutative diagram \begin{equation} \xymatrix{  S_*(f^{-1}([x',y']);\kappa) \ar[r]^<<<<<<<<<{{\tiny \left[\begin{array}{c} 1 \\ 1\end{array}\right]}} \ar[d] & \frac{S_*(f^{-1}([x',\Lambda]);\kappa)}{S_*(f^{-1}([x,\Lambda]);\kappa)}\oplus \frac{S_*(f^{-1}([-\Lambda,y']);\kappa)}{S_*(f^{-1}([-\Lambda,y]);\kappa)} \ar[d] \ar[r]^<<<<<<{{\tiny [-1\,\,\,1]}} & \frac{S_*(\mathbb{X};\kappa)}{
S_*(\mathbb{X}\setminus f^{-1}((y,x));\kappa)} \\
 S_*(f^{-1}([x',y'+\ep(y'));\kappa) \ar[r]^<<<{{\tiny \left[\begin{array}{c} 1\\ 1\end{array}\right]} } & \frac{S_*(f^{-1}([x',\Lambda]);\kappa)}{S_*(f^{-1}([x,\Lambda]);\kappa)}\oplus \frac{S_*(f^{-1}([-\Lambda,y'+\ep(y')));\kappa)}{S_*(f^{-1}([-\Lambda,y]);\kappa)}  \ar[r]^<<<<<{{\tiny [-1 \,\,\, 1] }} & \frac{S_{*}^{sm,x',y'}(\mathbb{X};\kappa)}{S_*(\mathbb{X}\setminus f^{-1}((y,x));\kappa)}  \ar[u]   } \label{mvdiag}
\end{equation} where we use ``$1$'' to denote a map induced by inclusion. Here the vertical arrows are inclusions which are chain homotopy equivalences, and the bottom row is short exact.  The connecting homomorphism of the long exact sequence on homology associated to the bottom row then induces, via the above homotopy equivalences, the connecting homomorphism $\delta\co H_{k+1}(\mathbb{X},\mathbb{X}\setminus f^{-1}((y,x));\kappa)\to H_k(f^{-1}([x',y']);\kappa)$ which we use as a structure map for $h(f)$.
 
 The following algebraic lemma will help us relate $h(f)$ to the various complexes arising in $\mathcal{F}(\mathcal{S}_{\partial}(\mathbb{X},f;\kappa))$.

\begin{lemma}\label{coneids}
\begin{itemize} \item[(i)] Let $\phi\co C_*\to D_*$ be an injective chain map between two chain complexes.  Then the map $\alpha_{\phi}\co \mathrm{Cone}(\phi)\to \mathrm{coker}(\phi)$ defined by $\alpha_{\phi}(c,d)=\left(d \mod \Img(\phi)\right)$ is a quasi-isomorphism.
	\item[(ii)] Let $\phi'\co C'_{*}\to D_*$ be a surjective map between two chain complexes.  Then the map $\beta_{\phi'}\co \ker(\phi')\to \mathrm{Cone}(\phi')[1]$ defined by $\beta_{\phi'}(c')=(c',0)$ is a quasi-isomorphism.
\item[(iii)]  Suppose that $C_*$ is a subcomplex of $C'_*$, and that the surjective chain map $\phi'\co C'_*\to D_*$ restricts to an \emph{injective} chain map $\phi=\phi'|_{C_*}\co C_*\to D_*$, thus inducing a short exact sequence of chain complexes \begin{equation}\label{quotses}
\xymatrix{	0\ar[r] & \ker(\phi')\ar[r] & \frac{C'_*}{C_*}\ar[r]^<<<<{\phi'} & \mathrm{coker}(\phi)\ar[r]& 0. }\end{equation}
Then for each $j\in \Z$ there is a commutative diagram \[ \xymatrix{ H_{j}(\mathrm{Cone}(\phi)) \ar[r]\ar[d]_{\alpha_{\phi}} & H_{j}(\mathrm{Cone}(\phi'))=H_{j-1}(\mathrm{Cone}(\phi')[1]) \\ H_{j}(\mathrm{coker}(\phi)) \ar[r]^{\Delta} & H_{j-1}(\ker(\phi')) \ar[u]_{\beta_{\phi'}}   }\]  where $\Delta$ is the connecting homomorphism of the long exact sequence associated to (\ref{quotses}) and where the top arrow is induced by the inclusion.
\end{itemize}
\end{lemma}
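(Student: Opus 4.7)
\medskip

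My plan is to prove each of the three parts in turn, with (i) and (ii) handled by exhibiting the relevant map as one side of a short exact sequence of chain complexes whose third term is acyclic (being a mapping cone of an identity map), and (iii) handled by a direct chain-level diagram chase.

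For (i), I would first observe that $\alpha_\phi$ is surjective, with kernel in degree $k$ consisting of pairs $(c,d)\in C_{k-1}\oplus D_k$ for which $d\in\phi(C_k)$. Since $\phi$ is injective, the assignment $(c,c')\mapsto (c,\phi(c'))$ defines a $\kappa$-linear isomorphism $C_{k-1}\oplus C_k\to\ker(\alpha_\phi)_k$. A short computation shows that under this isomorphism the differential inherited from $\mathrm{Cone}(\phi)$ takes the form $(c,c')\mapsto(-\partial c,\,c+\partial c')$, which is precisely the differential of $\mathrm{Cone}(\mathrm{id}_{C_*})$. This cone is acyclic, so the long exact sequence associated to
\[
0\to \ker(\alpha_\phi)\to \mathrm{Cone}(\phi)\xrightarrow{\alpha_\phi}\mathrm{coker}(\phi)\to 0
\]
shows that $\alpha_\phi$ is a quasi-isomorphism.

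For (ii), the map $\beta_{\phi'}$ is manifestly injective, so I would look at its cokernel. In degree $n$, $\mathrm{Cone}(\phi')[1]_n=C'_n\oplus D_{n+1}$, and quotienting by $\ker(\phi')_n\oplus 0$ gives, via $(c',d)\mapsto(\phi'(c'),d)$, a well-defined isomorphism to $D_n\oplus D_{n+1}$ (using the surjectivity of $\phi'$). A direct computation with the sign conventions for shifts shows that the induced differential is $(e,d)\mapsto(\partial e,\,-e-\partial d)$, which coincides with the differential on $\mathrm{Cone}(\mathrm{id}_{D_*})[1]$; this is acyclic, and the long exact sequence associated to $0\to\ker(\phi')\xrightarrow{\beta_{\phi'}}\mathrm{Cone}(\phi')[1]\to\mathrm{coker}(\beta_{\phi'})\to 0$ yields the conclusion.

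For (iii), I plan to chase a class in $H_j(\mathrm{Cone}(\phi))$ around both paths of the diagram. Starting with a cycle $(c,d)\in C_{j-1}\oplus D_j$ satisfying $\partial c=0$ and $\partial d=-\phi(c)$, the class $\alpha_\phi[(c,d)]=[\bar d]\in H_j(\mathrm{coker}(\phi))$ is lifted under $\phi'\co C'_*/C_*\to\mathrm{coker}(\phi)$ by choosing $c'\in C'_j$ with $\phi'(c')=d$, and the element $\partial c'+c\in C'_{j-1}$ lies in $\ker(\phi')$ (using injectivity of $\phi$ and $\partial d=-\phi(c)$); hence $\Delta[\bar d]=[\partial c'+c]$. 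Applying $\beta_{\phi'}$ gives the class of $(\partial c'+c,0)$ in $\mathrm{Cone}(\phi')[1]_{j-1}=\mathrm{Cone}(\phi')_j$. The difference $(c,d)-(\partial c'+c,0)=(-\partial c',d)$ then equals $\partial_{\mathrm{Cone}(\phi')}(c',0)$, witnessing commutativity.

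The proof is essentially bookkeeping, and I do not anticipate a genuine obstacle; the only real points requiring care are tracking the sign convention for the shift in (ii) (so that $\beta_{\phi'}$ comes out to be a chain map and the cokernel identification with $\mathrm{Cone}(\mathrm{id}_{D_*})[1]$ is correct), and, in (iii), verifying that the identification of the abstract kernel $\ker(\phi'\co C'_*/C_*\to \mathrm{coker}(\phi))$ with the subcomplex $\ker(\phi')\subset C'_*$ (via $\ker(\phi')\cap C_*=\ker(\phi)=0$) is compatible with the standard recipe for the connecting map.
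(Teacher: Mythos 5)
Your argument is correct. For parts (i) and (ii) you take a slightly different route from the paper: you exhibit $\alpha_{\phi}$ as a surjection whose kernel is isomorphic (via injectivity of $\phi$) to $\mathrm{Cone}(\mathrm{id}_{C_*})$, and $\beta_{\phi'}$ as an injection whose cokernel is isomorphic (via surjectivity of $\phi'$) to $\mathrm{Cone}(\mathrm{id}_{D_*})[1]$, and then invoke the long exact sequence of a single short exact sequence with an acyclic third term. The paper instead compares the tautological cone sequence $0\to D_*\to\mathrm{Cone}(\phi)\to C_*[-1]\to 0$ (resp.\ its shifted analogue for $\phi'$) with the sequence $0\to C_*\to D_*\to\mathrm{coker}(\phi)\to 0$ (resp.\ $0\to\ker(\phi')\to C'_*\to D_*\to 0$) by a commutative ladder and applies the five-lemma to the associated long exact sequences. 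Both are elementary; your version avoids the five-lemma and the sign bookkeeping in the ladder at the cost of the explicit identification of the kernel/cokernel with a cone of an identity map, which you have carried out correctly (including the sign from the shift convention $\partial_{C[1]}=-\partial_C$, which is exactly what makes $\beta_{\phi'}$ a chain map). For part (iii) your chain-level chase is essentially identical to the paper's: both take a cycle $(c,d)$ with $\partial c=0$, $\partial d=-\phi(c)$, choose $c'$ with $\phi'(c')=d$, identify $\beta_{\phi'}\Delta\alpha_{\phi}[(c,d)]$ with $[(c+\partial c',0)]$, and observe that $(c,d)-(c+\partial c',0)=\partial_{\mathrm{Cone}(\phi')}(c',0)$. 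Your closing caveat about compatibility of the identification of $\ker(\phi'\co C'_*/C_*\to\mathrm{coker}(\phi))$ with $\ker(\phi')$ is indeed settled by your explicit verification that $\partial c'+c$ lies in $\ker(\phi')$ and reduces to $\overline{\partial c'}$ modulo $C_{j-1}$.
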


\begin{proof}
	(i) and (ii) are well-known and can be proven by starting from the commutative diagrams \[ \xymatrix{ & 0 \ar[r] & D_* \ar[r] \ar@{=}[d] & \mathrm{Cone}(\phi) \ar[r] \ar[d]_{\alpha_{\phi}} & C_*[-1] \ar[r] & 0 \\ 0 \ar[r] & C_* \ar[r]^{\phi} & D_* \ar[r] & \mathrm{coker}(\phi) \ar[r] & 0 &  }  \]
	and \[  \xymatrix{ & 0 \ar[r] & \ker(\phi') \ar[r] \ar[d]_{\beta_{\phi'}} & C'_* \ar[r] \ar@{=}[d]&  D_* \ar[r] & 0 \\ 0 \ar[r] & D_*[1] \ar[r] & \mathrm{Cone}(\phi')[1] \ar[r] & C'_* \ar[r] & 0 &  }\] and applying the five-lemma to the resulting diagrams of long exact sequences (which commmute up to sign). 
	
	 For (iii), if $(c,d)$ is a cycle in $\mathrm{Cone}(\phi)_j$ (so necessarily $\partial c=0$ and $\partial d=-\phi(c)$), then the homology class of $(c,d)$ is mapped by $\beta_{\phi'}\circ \Delta\circ \alpha_{\phi}$ to the homology class of the element $(c+\partial c',0)\in Cone(\phi')_{j}$, for any choice of $c'\in C'_{j}$ such that $\phi'(c')=d$.  Since the boundary of $(c',0)$ in $Cone(\phi')$ is then $(-\partial c',d)$, the elements $(c,d)$ and $(c+\partial c',0)$ of $Cone(\phi')_{j}$ are homologous, so $\beta_{\phi'}\circ \Delta\circ \alpha_{\phi}\co H_j(\mathrm{Cone}(\phi))\to H_j(\mathrm{Cone}(\phi'))$ indeed coincides with the  map induced by  inclusion.
\end{proof}

It is essentially immediate from the constructions in Section \ref{f0sec} and \ref{fsect} that, with notation as in Example \ref{singular}, the appropriate quasi-isomorphisms $\alpha_{\phi}$ as in Lemma \ref{coneids}(i) induce isomorphisms $H_0(\mathcal{F}(\mathcal{S}_{\partial}(\mathbb{X},f;\kappa)(s,t)))\cong h(f)_{(s,t)}$ in the cases that $(s,t)=T^{-k}(x,y)$ for $(x,y)\in S^-\cup L\cup A$.  For example, if $(x,y)\in S^-$, then $ \mathcal{F}(\mathcal{S}_{\partial}(\mathbb{X},f;\kappa)(s,t))$ is \[ \mathrm{Cone}\left(\xymatrix{\frac{S_*(f^{-1}([x,\Lambda]);\kappa)}{S_*(\partial^+\mathbb{X};\kappa)}\oplus \frac{S_*(f^{-1}([-\Lambda,y]);\kappa)}{S_*(\partial^-\mathbb{X};\kappa)}\ar[r]^<<<<<{{\tiny [-1 \,\,\, 1] }} & \frac{S_{*}(\mathbb{X};\kappa)}{S_*(\partial\mathbb{X};\kappa)},   }\right)[k+1]\] which is the cone of an injection since $y<x$ and since $\partial\mathbb{X}$ is the disjoint union of $\partial^-\mathbb{X}$ and $\partial^+\mathbb{X}$. So Lemma \ref{coneids}(i) identifies $H_0\left(\mathcal{F}(\mathcal{S}(\mathbb{X},f;\kappa)(s,t))\right)$ with \begin{align*} H_0\left(\mathrm{coker}\left({\tiny [-1 \,\,\, 1] } \right)[k+1]\right)&=H_{k+1}\left(\frac{S_*(\mathbb{X};\kappa)/S_*(\partial\mathbb{X};\kappa)}{S_*(f^{-1}([-\Lambda,y]\cup[x,\Lambda]);\kappa)/S_*(\partial\mathbb{X};\kappa)}   \right)  \\ &\cong H_{k+1}\left(\frac{S_*(\mathbb{X};\kappa)}{S_*(f^{-1}([-\Lambda,y]\cup[x,\Lambda]);\kappa)}\right)=h(f)_{(s,t)}. \end{align*}

The case that $(x,y)\in S^+$ requires perhaps a bit more explanation, particularly when $y=x$ in which case the tameness hypothesis on $f$ is needed.  In the next proposition, note that if $(x,y)\in S^+$ then both $x$ and $y$ lie in the open interval $(-\Lambda,\Lambda)$ (since $x>-\Lambda$, $y<\Lambda$, and $y\geq x$), so $S_*(f^{-1}([x,y];\kappa))$ can be regarded as a subcomplex both of $\frac{S_*(f^{-1}([x,\Lambda]);\kappa)}{S_*(\partial^+\mathbb{X};\kappa)}$ and of   $\frac{S_*(f^{-1}([-\Lambda,y]);\kappa)}{S_*(\partial^-\mathbb{X};\kappa)}$.

\begin{prop}\label{htopyker}
	For $(x,y)\in S^+$, we have a quasi-isomorphism \[ 
S_*(f^{-1}([x,y]);\kappa) \xrightarrow{\beta^{x,y}}  \mathrm{Cone}\left(\xymatrix{\frac{S_*(f^{-1}([x,\Lambda]);\kappa)}{S_*(\partial^+\mathbb{X};\kappa)}\oplus \frac{S_*(f^{-1}([-\Lambda,y]);\kappa)}{S_*(\partial^-\mathbb{X};\kappa)}\ar[r]^<<<<<{{\tiny [-1 \,\,\, 1] }} & \frac{S_{*}(\mathbb{X};\kappa)}{S_*(\partial\mathbb{X};\kappa)}   }\right)[1]	\] defined by $c\mapsto(i_1c,i_2c,0)$ where $i_1$ and $i_2$ are the inclusions of $ S_*(f^{-1}([x,y]);\kappa)$ into $\frac{S_*(f^{-1}([x,\Lambda]);\kappa)}{S_*(\partial^+\mathbb{X};\kappa)}$ and    $\frac{S_*(f^{-1}([-\Lambda,y]);\kappa)}{S_*(\partial^-\mathbb{X};\kappa)}$, respectively.
\end{prop}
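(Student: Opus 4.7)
The plan is to realize $\beta^{x,y}$ as the composite of a quasi-isomorphism produced by Lemma~\ref{coneids}(ii) with a change-of-target induced by the theorem of small chains. Set $A = f^{-1}([x,\Lambda])$ and $B = f^{-1}([-\Lambda,y])$, and let $S^{A+B}_*(\mathbb{X};\kappa)$ be the subcomplex of $S_*(\mathbb{X};\kappa)$ generated by simplices with image contained in $A$ or in $B$. Since $(x,y) \in S^+ \subset S$ we have $-\Lambda < x \leq y < \Lambda$, so $A \cap B = f^{-1}([x,y])$ is disjoint from $\partial\mathbb{X}$, while $\partial^+\mathbb{X} \subset A$ and $\partial^-\mathbb{X} \subset B$ with $\partial^+\mathbb{X} \cap \partial^-\mathbb{X} = \varnothing$. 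From these observations, together with the resulting direct-sum decomposition $S_*(\partial\mathbb{X};\kappa) = S_*(\partial^+\mathbb{X};\kappa) \oplus S_*(\partial^-\mathbb{X};\kappa)$, one verifies directly that
\begin{equation*}
0 \to S_*(f^{-1}([x,y]);\kappa) \xrightarrow{c \mapsto (i_1 c,\,i_2 c)} \frac{S_*(A;\kappa)}{S_*(\partial^+\mathbb{X};\kappa)} \oplus \frac{S_*(B;\kappa)}{S_*(\partial^-\mathbb{X};\kappa)} \xrightarrow{\phi'_{sm}} \frac{S^{A+B}_*(\mathbb{X};\kappa)}{S_*(\partial\mathbb{X};\kappa)} \to 0
\end{equation*}
is a short exact sequence of chain complexes, where $\phi'_{sm}$ denotes the restriction of $[-1\,\,\,1]$. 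Applying Lemma~\ref{coneids}(ii) to $\phi'_{sm}$ yields a quasi-isomorphism $\beta^{x,y}_{sm}\co S_*(f^{-1}([x,y]);\kappa) \to \mathrm{Cone}(\phi'_{sm})[1]$ that is given precisely by $c \mapsto (i_1 c, i_2 c, 0)$.

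It then remains to show that the natural map of mapping cones $\mathrm{Cone}(\phi'_{sm})[1] \to \mathrm{Cone}(\phi')[1]$ induced by the inclusion $\frac{S^{A+B}_*(\mathbb{X};\kappa)}{S_*(\partial\mathbb{X};\kappa)} \hookrightarrow \frac{S_*(\mathbb{X};\kappa)}{S_*(\partial\mathbb{X};\kappa)}$ is a quasi-isomorphism, since postcomposing $\beta^{x,y}_{sm}$ with it yields $\beta^{x,y}$. By the five-lemma applied to the long exact sequences of the two cones this reduces to showing that the latter inclusion is a quasi-isomorphism, which in turn (again by the five-lemma, applied to the long exact sequences of pairs $(S^{A+B}_*(\mathbb{X};\kappa), S_*(\partial\mathbb{X};\kappa))$ and $(S_*(\mathbb{X};\kappa), S_*(\partial\mathbb{X};\kappa))$) reduces to the statement that $S^{A+B}_*(\mathbb{X};\kappa) \hookrightarrow S_*(\mathbb{X};\kappa)$ is a quasi-isomorphism. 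When $x < y$ this is immediate from the theorem of small chains (\cite[Proposition 2.21]{Ha}), since the open sets $f^{-1}((x,\Lambda]) \subset A^\circ$ and $f^{-1}([-\Lambda,y)) \subset B^\circ$ cover $\mathbb{X}$.

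For the boundary case $x = y$, the interiors of $A$ and $B$ fail to cover $\mathbb{X}$ (missing precisely $f^{-1}(\{x\})$), and the tameness hypothesis is essential. The plan is to replace $B$ by the slightly larger set $B_\ep = f^{-1}([-\Lambda, y+\ep))$, where $\ep = \ep(y)$ is as in the definition of tameness. Because the strong deformation retract from $B_\ep$ to $B$ fixes $\partial^-\mathbb{X}$ pointwise, the inclusion $\frac{S_*(B;\kappa)}{S_*(\partial^-\mathbb{X};\kappa)} \hookrightarrow \frac{S_*(B_\ep;\kappa)}{S_*(\partial^-\mathbb{X};\kappa)}$ is a chain homotopy equivalence, so the functorially induced map of cones $\mathrm{Cone}(\phi') \to \mathrm{Cone}(\phi'_\ep)$ is a quasi-isomorphism. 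The pair $(A, B_\ep)$ has covering interiors, so the argument of the previous paragraph applies verbatim to give a quasi-isomorphism $S_*(f^{-1}([x,x+\ep));\kappa) \to \mathrm{Cone}(\phi'_\ep)[1]$. The main technical obstacle is the final comparison: to conclude that $\beta^{x,y}$ is a quasi-isomorphism one must show that the inclusion $f^{-1}(\{x\}) \hookrightarrow f^{-1}([x, x+\ep))$ induces an isomorphism on singular homology. I expect this to follow by exploiting the restriction of the retraction homotopy $H$ to $f^{-1}([x,x+\ep)) \subset B_\ep$ together with the intermediate value theorem applied to the continuous function $t \mapsto f(H(p,t))$ (which passes from a value $\geq x$ at $t = 0$ to a value $\leq x$ at $t = 1$, and equals $x$ identically on $f^{-1}(\{x\})$), to build a homotopy inverse; making this step rigorous is where the subtlety of the one-sided tameness hypothesis is genuinely felt, and is the part of the proof that will require the most delicate analysis.
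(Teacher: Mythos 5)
Your reorganization of the argument is sound up to the last step, and it genuinely differs from the paper's proof in a useful way. For $x<y$ you work with the subcomplex generated by chains small with respect to the closed cover $\{f^{-1}([x,\Lambda]),f^{-1}([-\Lambda,y])\}$ (whose interiors cover $\mathbb{X}$ exactly because $x<y$), identify the kernel of the restricted map $[-1\,\,1]$ with $S_*(f^{-1}([x,y]);\kappa)$, and apply Lemma \ref{coneids}(ii); for $x=y$ you enlarge $f^{-1}([-\Lambda,y])$ to $f^{-1}([-\Lambda,y+\ep))$ \emph{in the cone} rather than retracting the larger set onto the smaller one. Because all of your comparison maps are honest inclusions, the relevant squares commute on the nose, so you avoid the chain-homotopy correction term that appears in the paper's proof (the map $j\circ K$ there), and hence you have no need for the acyclicity of the complex of degenerate simplices; you also isolate the use of tameness to the single case $x=y$, consistent with the paper's remark that open-interval versions require no tameness. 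This part of your argument is correct.

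The gap is the final step, and it is genuine as written. You reduce everything to the claim that the inclusion $f^{-1}(\{x\})\hookrightarrow f^{-1}([x,x+\ep))$ induces an isomorphism on singular homology, and you propose to obtain a homotopy inverse by applying the intermediate value theorem to $t\mapsto f(H(t,p))$. That sketch does not work: the crossing time is neither unique nor continuously selectable in $p$ (for instance $\inf\{t\,|\,f(H(t,p))\leq x\}$ need not depend continuously on $p$), so it does not yield a continuous map $f^{-1}([x,x+\ep))\to f^{-1}(\{x\})$, and nothing in the literal tameness hypothesis forces the retraction $r=H(1,\cdot)$ to carry $\{f\geq x\}$ into $\{f\geq x\}$ at all. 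The paper closes the corresponding point differently: it uses that the strong deformation retraction $H$ furnished by tameness is stationary on $f^{-1}([-\Lambda,y])$ and restricts to a deformation retraction of $f^{-1}([x,y+\ep))$ onto $f^{-1}([x,y])$, which immediately gives the needed isomorphism on homology. If you replace your IVT construction by that restriction statement (i.e., use the tameness retraction itself, rather than trying to build a new retraction onto the level set), your argument is complete; without it, the key case $x=y$ of the proposition remains unproved.
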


\begin{proof}
	By the tameness assumption on $f$, there is $\ep=\ep(y)\in(0,\Lambda-y)$ and a homotopy $H\co [0,1]\times f^{-1}([-\Lambda,y+\ep))\to f^{-1}([-\Lambda,y+\ep))$  from the identity to a map $r$ having image in $f^{-1}([-\Lambda,y])$, such that $H(t,\cdot)$ restricts to  $f^{-1}(([-\Lambda,y])$ as the identity for every $t$.  Letting $\iota\co S_*(f^{-1}([-\Lambda,y]);\kappa)\to S_*(f^{-1}([-\Lambda,y+\ep));\kappa)$ denote the inclusion, $r_*\circ\iota$ is thus the identity, while there is a map $K\co S_*(f^{-1}([-\Lambda,y+\ep));\kappa)\to S_{*+1}(f^{-1}([-\Lambda,y+\ep));\kappa)$, constructed by the procedure described in, \emph{e.g.}, \cite[p. 112]{Ha}, such that $\partial K+K\partial=1_{S_*(f^{-1}([-\Lambda,y+\ep));\kappa)}-\iota\circ r_*$. Given that our homotopy $H$ is stationary on  $F^{-1}([-\Lambda,y])$, the chain homotopy $K$ has the following property that will be relevant later: for any singular simplex $\sigma\co \Delta^k\to f^{-1}([-\Lambda,y+\ep))$ with image contained in $f^{-1}([-\Lambda,y])$, the singular $(k+1)$-chain $K\sigma$ is a linear combination of \emph{degenerate} singular simplices, each having the same image as $\sigma$.  (Recall that a singular $k+1$-simplex $\tau\co \Delta^{k+1}\to Z$ is degenerate if it factors as a composition $\Delta^{k+1}\to\Delta^k\to Z$ where the first map is one of the standard degeneracy maps given in barycentric coordinates, for some $i$, by $(t_0,\ldots,t_{k+1})\mapsto(t_0,\ldots,t_{i-1},t_i+t_{i+1},t_{i+2},\ldots,t_{k+1})$.) 
	
	This chain homotopy $K$ induces a map on quotients $\frac{S_*(f^{-1}([-\Lambda,y+\ep));\kappa)}{S_*(\partial^-\mathbb{X};\kappa)}\to \frac{S_{*+1}(f^{-1}([-\Lambda,y+\ep));\kappa)}{S_{*+1}(\partial^-\mathbb{X};\kappa)}$ which we continue to denote by $K$.
	Also, 
	throughout the rest of the proof, we frequently either denote an inclusion or a quotient projection by ``$1$,'' or suppress the notation for the composition of an inclusion or quotient projection with some other map.
	
	As in (\ref{mvdiag}), let $S_{*}^{sm,x,y}(\mathbb{X};\kappa)$ denote the subcomplex of $S_*(\mathbb{X};\kappa)$ generated by simplices each of whose image lies either in $f^{-1}([x,\Lambda])$ or in $f^{-1}([-\Lambda,y+\ep))$; thus the inclusion $S_{*}^{sm,x,y}(\mathbb{X};\kappa)\to S_*(\mathbb{X};\kappa)$ is a chain homotopy equivalence, which descends to a chain homotopy equivalence between the quotients $\frac{S_{*}^{sm,x,y}(\mathbb{X};\kappa)}{S_*(\partial\mathbb{X};\kappa)}$ and $\frac{S_{*}(\mathbb{X};\kappa)}{S_*(\partial\mathbb{X};\kappa)}$.  We then have a diagram \begin{equation}\label{kerdiag} \xymatrix{  \frac{S_*(f^{-1}([x,\Lambda]);\kappa)}{S_*(\partial^+\mathbb{X};\kappa)}\oplus \frac{S_*(f^{-1}([-\Lambda,y+\ep));\kappa)}{S_*(\partial^-\mathbb{X};\kappa)} \ar[d]_{{\tiny\left[   \begin{array}{cc} 1 & 0 \\ 0 & r_* \end{array}\right]}} \ar[r]^<<<<<{{\tiny [-1 \,\,\, 1] }} & \frac{S_{*}^{sm,x,y}(\mathbb{X};\kappa)}{S_*(\partial\mathbb{X};\kappa)} \ar[d]^{1} \\ \frac{S_*(f^{-1}([x,\Lambda]);\kappa)}{S_*(\partial^+\mathbb{X};\kappa)}\oplus \frac{S_*(f^{-1}([-\Lambda,y]);\kappa)}{S_*(\partial^-\mathbb{X};\kappa)} \ar[r]^<<<<<<{{\tiny [-1 \,\,\, 1] }} & \frac{S_{*}(\mathbb{X};\kappa)}{S_*(\partial\mathbb{X};\kappa)}  } \end{equation} which commutes up to a homotopy given by (the composition of the inclusion with) the map that acts on the second factor by $K$.  Let us denote the top horizontal arrow in (\ref{kerdiag}) by $p_{\ep}$, and the bottom horizontal arrow by $p$.  As the vertical arrows of (\ref{kerdiag}) are both quasi-isomorphisms, the homotopy-commutativity of the diagram implies that we have a quasi-isomorphism \[ \left[\begin{array}{ccc} 1 & 0 & 0 \\ 0 & r_* & 0 \\ 0 & K & 1 \end{array}\right]\co \mathrm{Cone}(p_{\ep})\xrightarrow{\simeq}\mathrm{Cone}(p). \]  Now the diagonal inclusion ${\tiny \left[\begin{array}{c}1 \\ 1\end{array}\right]}\co S_*(f^{-1}([x,y+\ep));\kappa)\to\frac{S_*(f^{-1}([x,\Lambda]);\kappa)}{S_*(\partial^+\mathbb{X};\kappa)}\oplus \frac{S_*(f^{-1}([-\Lambda,y+\ep));\kappa)}{S_*(\partial^-\mathbb{X};\kappa)}$ is an injection whose image is $\mathrm{ker}(p_{\ep})$, so Lemma \ref{coneids}
gives a quasi-isomorphism $S_*(f^{-1}([x,y+\ep));\kappa)\to \mathrm{Cone}(p_{\ep})[1]$, expressible in block form as ${\tiny \left[\begin{array}{c}1 \\ 1 \\ 0\end{array}\right]}$.  Moreover, our strong deformation retraction $H$ restricts to a deformation retraction of $f^{-1}([x,y+\ep))$ to $f^{-1}([x,y])$, so the inclusion of the latter into the former induces an isomorphism on homology.  Combining the above, we have a sequence of quasi-isomorphisms 

\[ \xymatrix{ S_*(f^{-1}([x,y]);\kappa)\ar[r]^1 & S_*(f^{-1}([x,y+\ep));\kappa) \ar[r]^<<<<{{\tiny \left[\begin{array}{c}1 \\ 1 \\ 0\end{array}\right]}} & \mathrm{Cone}(p_{\ep})[1]\ar[rr]^{{\tiny \left[\begin{array}{ccc} 1 & 0 & 0 \\ 0 & r_* & 0 \\ 0 & K & 1 \end{array}\right]} } & & Cone(p)[1].    }.\] Since the map $r$ restricts to $f^{-1}([x,y])$ as the identity, this composition can be expressed as $\beta^{x,y}+j\circ K$ where $\beta^{x,y}$ is the map from the statement of the proposition, $j\co \frac{S_*(\mathbb{X};\kappa)}{S_*(\partial\mathbb{X};\kappa)}[1]\to Cone(p)[1]$ is the inclusion (which is of course a chain map), and $K\co S_*(f^{-1}([x,y]);\kappa)\to \frac{S_*(\mathbb{X};\kappa)}{S_*(\partial\mathbb{X};\kappa)}[1]$ is the composition of the previously-discussed chain homotopy with the appropriate inclusions and projections.  Since we are restricting $K$ here to chains in $f^{-1}([x,y])$, on which the retraction $r$ acts as the identity, this restricted $K$ anti-commutes with the singular boundary operator, and so is a chain map when regarded, as here, as a map $S_*(f^{-1}([x,y]);\kappa)\to \frac{S_*(\mathbb{X};\kappa)}{S_*(\partial\mathbb{X};\kappa)}[1]$ (since the sign of the boundary operator on the codomain is reversed by the degree shift).     In fact, this chain map acts as zero on homology, by the observation at the start of the proof that its image consists of degenerate singular simplices in $\mathbb{X}$, and the complex of degenerate simplices in $\mathbb{X}$ is acyclic (see \cite[(10.6)]{EZ} or \cite[Proof of Theorem 8.3.8]{Wei}).  

We have thus shown both that $\beta^{x,y}+j\circ K$ is a quasi-isomorphism and that $K$ (and hence also $j\circ K$) acts as zero on homology.  So $\beta^{x,y}$ is a quasi-isomorphism, as desired. 
\end{proof}

  The foregoing discussion shows that, for any $(s,t)=T^{-k}(x,y)\in\mathbb{M}$ (with $(x,y)\in \mathfrak{D}$), we have an isomorphism \[ h(F)_{(s,t)}\cong H_0\left(\mathcal{F}(\mathcal{S}_{\partial}(\mathbb{X},f;\kappa))(s,t) \right), \] induced on homology by a map $\alpha$ as in Lemma \ref{coneids}(ii) if $(x,y)\in S^-\cup L\cup A$ and by the map $\beta^{x,y}$ of Proposition \ref{htopyker} if $(x,y)\in S^+$.  To complete the proof of Theorem \ref{H0iso}, we need to check that these isomorphisms are compatible with the structure maps $h(f)_{(s,t)}\to h(f)_{(s',t')}$ recalled at the start of this section and the structure maps $\mathcal{F}(\mathcal{S}_{\partial}(\mathbb{X},f;\kappa))(s,t)\to \mathcal{F}(\mathcal{S}_{\partial}(\mathbb{X},f;\kappa))(s',t')$  defined in Section \ref{fsect}.  As these structure maps are functorial with respect to the pairs $(s,t)$, it suffices to check this in each of the Cases (\ref{same}) through (\ref{S-S+}).  Case (\ref{same}) is clear since the maps $\alpha$ and $\beta^{x,y}$ are natural with respect to inclusions.  For Case (\ref{S+L}), where $(x,y)\in S^+$ and $(x',y')\in L$, note that the composition  $\phi_{T^{-k}S}^{T^{-k}L}\circ \beta^{x,y}$ sends $S_*(f^{-1}([x,y]);\kappa)[k]$ to the second factor of $\mathrm{Cone}\left(\frac{S_*(f^{-1}([-\Lambda,-2\Lambda-x']);\kappa)}{S_*(\partial^-\mathbb{X};\kappa)}\to \frac{S_*(f^{-1}([-\Lambda,y']);\kappa)}{S_*(\partial^-\mathbb{X};\kappa)} \right)[k]$ by inclusion, and so the composition of this map with $\alpha$ induces on $H_0$ the inclusion-induced map $H_k(F^{-1}([x,y]);\kappa)\to H_k(F^{-1}((-\infty,y']),F^{-1}((-\infty,-2\Lambda-x']);\kappa)$, as desired.  Case (\ref{S+A}) is entirely similar to Case (\ref{S+L}).  Cases (\ref{LS-}) and (\ref{AS-}) are likewise essentially the same as each other; the latter follows from the commutativity of the diagram \[ \xymatrix{
  	\mathrm{Cone}\left(\frac{S_*(f^{-1}([2\Lambda-y,\Lambda]);\kappa)}{S_*(\partial^+\mathbb{X};\kappa)}\to \frac{S_*(f^{-1}([x,\Lambda]);\kappa)}{S_*(\partial^+\mathbb{X};\kappa)}\right) \ar[r]^<<<<<<<{{\tiny[ 0 \,\, 1 ]}} \ar[d]_{\phi_{T^{-k}A}^{T^{-k+1}S}={\tiny \left[\begin{array}{cc} -1 & 0 \\ 0 & 0 \\ 0 & 1 \end{array}\right] }} & \frac{S_*(f^{-1}([x,
  	\Lambda];\kappa)/S_*(\partial^+\mathbb{X};\kappa)  }{S_*(f^{-1}([2\Lambda-y,\Lambda]);\kappa)/S_*(\partial^+\mathbb{X};\kappa)  } \ar[d]^{1} \\  \mathrm{Cone}\left( \frac{S_*(F^{-1}([x',\Lambda];\kappa)}{S_*(\partial^+\mathbb{X};\kappa)}\oplus \frac{S_*(f^{-1}([-\Lambda,y']);\kappa)}{S_*(\partial^-\mathbb{X};\kappa)}\to \frac{S_*(\mathbb{X};\kappa)}{S_*(\partial\mathbb{X};\kappa)}\right) \ar[r]^<<<<<{{\tiny [0\,\,0\,\,1]}} & \frac{S_*(\mathbb{X};\kappa)/S_*(\partial\mathbb{X};\kappa)}{S_{*}(\mathbb{X}\setminus f^{-1}((y',x'));\kappa)/S_*(\partial\mathbb{X};\kappa)}
  }
  \] where we continue denoting various combinations of inclusions and projections just by ``1.''  
  
  We finally consider the remaining case (\ref{S-S+}), in which the structure map for $\mathcal{F}(\mathcal{S}_{\partial}(\mathbb{X},f;\kappa))$ is the inclusion \begin{align*} \mathrm{Cone}&\left(\frac{S_{*}(f^{-1}([x,\Lambda]);\kappa)}{S_*(\partial^+\mathbb{X};\kappa)}\oplus \frac{S_*(f^{-1}([-\Lambda,y]);\kappa)}{S_*(\partial^-\mathbb{X};\kappa)}\to \frac{S_*(\mathbb{X};\kappa)}{S_*(\partial\mathbb{X};\kappa)}\right)[k+1] \\ & \hookrightarrow \mathrm{Cone}\left(\frac{S_{*}(f^{-1}([x',\Lambda]);\kappa)}{S_*(\partial^+\mathbb{X};\kappa)}\oplus \frac{S_*(f^{-1}([-\Lambda,y']);\kappa)}{S_*(\partial^-\mathbb{X};\kappa)}\to \frac{S_*(\mathbb{X};\kappa)}{S_*(\partial\mathbb{X};\kappa)}\right)[k+1]  
\end{align*} while that for $h(f)$ is the connecting homomorphism in a Mayer-Vietoris sequence.  The relation between these is provided by Lemma \ref{coneids}(iii).  
Indeed, applying that lemma with $\phi'$ equal to the map $\frac{S_*(f^{-1}([x',\Lambda]);\kappa)}{S_*(\partial^+\mathbb{X};\kappa)}\oplus \frac{S_*(f^{-1}([-\Lambda,y'+\ep(y
	)));\kappa)}{S_*(\partial^-\mathbb{X};\kappa)}\to \frac{S^{sm,x',y'}(\mathbb{X};\kappa)}{S_*(\partial\mathbb{X};\kappa)}$ that induces the bottom row of (\ref{mvdiag}),
	and with $\phi$ equal to the restriction of $\phi'$ to $\frac{S_*(f^{-1}([x,\Lambda]);\kappa)}{S_*(\partial^+\mathbb{X};\kappa)}\oplus \frac{S_*(f^{-1}([-\Lambda,y]));\kappa)}{S_*(\partial^-\mathbb{X};\kappa)}$, 
we get a commutative diagram \[ \xymatrix{ H_0(\mathcal{F}(\mathcal{S}_{\partial}(X,f;\kappa))(s,t)) \ar[r] \ar[d]_{\alpha_{\phi}} & H_{k+1}(\mathrm{Cone}(\phi')) & H_0(\mathcal{F}(\mathcal{S}_{\partial}(X,f;\kappa)(s',t')) \ar[l] \\ h(f)_{s,t} \ar[r]^<<<<<<<<{\Delta} & H_k(f^{-1}([x',y'+\ep(y')));\kappa)\ar[u]_{{\tiny \left[\begin{array}{c}1\\1\\0\end{array}\right]_*}} & h(f)_{s',t'}\ar[u]_{\beta^{x',y'}}\ar[l]   } \] where the unmarked horizontal arrows are all induced by inclusions and the two left-pointing arrows are both isomorphisms by the tameness of $f$.  The structure maps $H_0(\mathcal{F}(\mathcal{S}_{\partial}(X,f;\kappa))(s,t))\to H_0(\mathcal{F}(\mathcal{S}_{\partial}(X,f;\kappa)(s',t'))$ and $h(f)_{s,t}\to h(f)_{s',t'}$ are respectively given by inverting the two left-pointing arrows in the above diagram and then taking the compositions of the top and bottom rows; hence these structure maps indeed coincide under our isomorphisms $\alpha_{\phi}$ and $\beta^{x',y'}$. This completes the proof of Theorem \ref{H0iso}.

\subsection{Relation to level set barcodes}
As is discussed (with cohomology in place of homology) in \cite[Section 3.2.1]{BBF21}, if the relative interlevel set homology $h(f)$ for a tame function $f\co \mathbb{X}\to [-\Lambda,\Lambda]$ decomposes as a direct sum of blocks $B^v$ as defined in (\ref{bvdef}), then the level set barcode of \cite{CDM} of $f|_{\mathbb{X}\setminus\partial\mathbb{X}}$ can be read off as follows: for any $v$ in the interior of
 $\mathbb{M}$, there is a unique integer $j(v)$ with the property that the support of $B^v$ intersects the segment $T^{-j}(\{(t,t)|-\Lambda<t<\Lambda\})$ in a nonempty interval $I(v)$, and then the contribution of $B^v$ to the level set barcode is precisely the interval $I(v)$ in the degree $j(v)$. The nature of the interval $I(v)$ (\emph{e.g.}, whether it is open, half-open, or closed, and whether its endpoints include $-\Lambda$ and/or $\Lambda$) depend upon where $v$ lies with respect to the regions $T^{k}(S^+),T^k(S^-),T^k(L)$, and $T^k(A)$; by going through the various cases in Proposition \ref{blockclass}, one deduces the following from that Proposition and from Theorem \ref{H0iso}:

\begin{prop}\label{barclass}
	Let $f\co \mathbb{X}\to [-\Lambda,\Lambda]$ be a tame function, let $\kappa$ be a field, and suppose that the pinned singular filtered cospan $\mathcal{S}_{\partial}(\mathbb{X},f;\kappa)$ is isomorphic to a direct sum of standard elementary summands.  The level set barcode of $f|_{\mathbb{X}\setminus\partial{X}}$ (with coefficients in $\kappa$) then consists of one interval for each of these standard elementary summands, associated via the following table (where we denote an interval $I$ in grading $j$ by $I_j$):
	\begin{center}
		\begin{tabular}{c|c} $\mathcal{C}$ & $I(v(\mathcal{C}))_{j(v(\mathcal{C}))}$  \\ 	\hline $\left(\uparrow_{a}^{b}\right)_k$ for $a<b<\Lambda$ & $[a,b)_k$  \\ \hline $\left(\uparrow_{a}^{\infty}\right)_k$ &  $[a,\Lambda)_k$\\ 
			\hline $\left(\nearrow_{a}\right)_k$ & $(-\Lambda,a)_{k-1}$ \\ \hline $\left(\downarrow^{a}_{b}\right)_{k}$ for $a>b>-\Lambda$ & $(b,a]_k$ \\ \hline 
			$\left(\downarrow^{a}_{-\infty}\right)_{k}$ &  $(-\Lambda,a]_k$ \\ \hline $\left(\searrow^{a}\right)_k$ &  $(a,\Lambda)_{k-1}$ \\ \hline $\left(>_{b}^{a}\right)_k$ for $-\Lambda<b\leq a<\Lambda$ & $[b,a]_k$ \\ 
			\hline $\left(>_{b}^{a}\right)_k$ for $-\Lambda<a< b<\Lambda$ & $(a,b)_{k-1}$ \\ \hline $\square_k$ & $(-\Lambda,\Lambda)_{k-1}$
		\end{tabular}
	\end{center}
	\end{prop}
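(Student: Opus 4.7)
The plan is to combine Theorem \ref{H0iso}, Proposition \ref{blockclass}, and the recipe (recalled just before the statement) for extracting the level set barcode of $f|_{\mathbb{X}\setminus\partial\mathbb{X}}$ from a block decomposition of $h(f)$. First, by Theorem \ref{H0iso} we have $h(f)\cong H_0\circ\mathcal{F}(\mathcal{S}_\partial(\mathbb{X},f;\kappa))$, and since $H_0\circ\mathcal{F}$ is additive, the hypothesized decomposition of $\mathcal{S}_\partial(\mathbb{X},f;\kappa)$ into standard elementary summands $\mathcal{C}$ gives, via Proposition \ref{blockclass}, a decomposition of $h(f)$ into blocks $B^{v(\mathcal{C})}$ with $v(\mathcal{C})$ read off from that proposition's table.

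Next, for each standard elementary summand $\mathcal{C}$, I would locate $v(\mathcal{C})\in\mathbb{M}\setminus\partial\mathbb{M}$, write down the support of the block $B^{v(\mathcal{C})}$ (the rectangle $(-2\Lambda-v_2,v_1]\times[v_2,2\Lambda-v_1)$ from (\ref{bvdef})), and determine the unique integer $j(v(\mathcal{C}))$ such that this rectangle meets the embedded diagonal segment $T^{-j}(\{(t,t)\mid -\Lambda<t<\Lambda\})$ in a nonempty interval. By the recipe recalled in the paragraph before the statement, that interval in degree $j(v(\mathcal{C}))$ is precisely the contribution of $B^{v(\mathcal{C})}$ to the level set barcode, so comparison with the nine rows of the table in the statement is the required verification.

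The case analysis is organized by which of the regions $T^{-k}(S^+),T^{-k}(S^-),T^{-k}(L),T^{-k}(A)$ contains $v(\mathcal{C})$: the ``ordinary'' summands $(\uparrow_a^b)_k$ land in $T^{-k-1}(L)$, the ``relative'' summands $(\downarrow^a_b)_k$ in $T^{-k-1}(A)$, the ``extended'' summands $(>_b^a)_k$ in $T^{-k}(S^-)$ when $b\leq a$ and in $T^{-k}(S^+)$ when $a<b$, while the boundary-associated summands $(\uparrow_a^\infty)_k,(\downarrow^a_{-\infty})_k,(\nearrow_a)_k,(\searrow^a)_k$ and $\square_k$ land on the various edges and the non-boundary vertex of the square $T^{-k}(\overline{S})$. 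In each case the rectangular support meets exactly one translate of the diagonal, and whether the endpoints of the resulting interval are open or closed is dictated by the open/closed half-plane conventions in (\ref{bvdef}). The main (but routine) obstacle will be the bookkeeping with the glide-reflection $T$: for $v(\mathcal{C})\in T^{-k}(\mathfrak{D})$ the relevant diagonal is either $T^{-k}(\{(t,t)\})$ itself or the adjacent translate $T^{-k+1}(\{(t,t)\})$ depending on which side of the segment $\{y=x\}$ within the fundamental square the block sits; this accounts for the grading shift by one between, e.g., $(\uparrow_a^\infty)_k$ (interval $[a,\Lambda)$ in degree $k$) and $(\nearrow_a)_k$ (interval $(-\Lambda,a)$ in degree $k-1$), and likewise between the two subcases of $(>_b^a)_k$. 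Once this shift is tracked correctly, the table reduces to nine elementary intersection computations.
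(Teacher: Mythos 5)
Your proposal is correct and follows essentially the same route as the paper: invoke Theorem \ref{H0iso} and the additivity of $H_0\circ\mathcal{F}$, convert each standard elementary summand to a block $B^{v(\mathcal{C})}$ via Proposition \ref{blockclass}, and then apply the diagonal-intersection recipe from \cite[Section 3.2.1]{BBF21} recalled before the statement, checking the nine cases (with the open/closed endpoints dictated by (\ref{bvdef})). The only quibble is cosmetic: for $(>^a_b)_k$ with $b=a$ the point $v(\mathcal{C})$ lies in $T^{-k}(S^+)$ rather than $T^{-k}(S^-)$, but the resulting interval is still the closed singleton $[a,a]_k$, so the table is unaffected.
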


\section{Interleavings and stability}\label{stabsect}

As the input in our motivating examples is a function $f\co \mathbb{X}\to [-\Lambda,\Lambda]$ for a fixed value $\Lambda$, and elements in the corresponding barcodes are then subintervals of $(-\Lambda,\Lambda)$,  a bit of care is required in setting up the usual considerations regarding interleaving and stability, because the action of translations does not preserve the interval $[-\Lambda,\Lambda]$.  However, on the principle that our function $f$ can  be regarded as a composition $\varphi\circ g$ where $g\co \mathbb{X}\to [-\infty,\infty]$ and $\varphi\co [-\infty,\infty]\to [-\Lambda,\Lambda]$ is an increasing homeomorphism, we can expect some kind of stability for $f$ arising from the standard types of stability results for $g=\varphi^{-1}\circ f$.

Without explicitly invoking any such results, we can adapt the language of interleavings of categories with a flow from \cite{dSMS} to work directly with $[-\Lambda,\Lambda]$-valued functions, their associated $\Lambda$-bounded filtered cospans, and their corresponding diagrams (which take values in the strip $\mathbb{M}$, whose construction and interpretation benefit from taking a finite, fixed value of $\Lambda$). 

Let us choose and fix in what follows an increasing homeomorphism $\varphi\co [-\infty,\infty]\to [-\Lambda,\Lambda]$. This determines, for any $s\in \R$, what will function as a version of  ``translation by s'' on the interval $[-\Lambda,\Lambda]$, namely the map $\rho^{\varphi}_s\co [-\Lambda,\Lambda]\to[-\Lambda,\Lambda]$ defined by \[ \rho^{\varphi}_{s}(t)=\varphi\left(s+\varphi^{-1}(t)\right).\] (Here, of course, $s+\infty=\infty$ and $s+(-\infty)=-\infty$, so $\rho^{\varphi}_{s}(\pm\Lambda)=\pm\Lambda$.)  Evidently \begin{equation}\label{rhofunct} \rho^{\varphi}_{s+t}=\rho^{\varphi}_{s}\circ\rho^{\varphi}_{t},\qquad \rho^{\varphi}_{0}=\mathbf{1}_{[-\Lambda,\Lambda]},\end{equation} and \begin{equation}\label{monotone} \mbox{if }s\leq s',\mbox{ then }\rho^{\varphi}_{s}(t)\leq \rho_{s'}^{\varphi}(t)\mbox{ for all }t.\end{equation}

\subsection{Flows}
Following \cite{dSMS}, a \textbf{strict flow} on a category $\mathsf{C}$ is defined to be a monoidal functor $\mathcal{T}\co\ep\mapsto \mathcal{T}_{\ep}$ from the poset category $[0,\infty)$ (with monoidal structure given by addition) to the category $\mathbf{End}(\mathsf{C})$ of endofunctors of $\mathsf{C}$.  Thus $\mathcal{T}_0$ is the identity endofunctor of $\mathsf{C}$, we have $\mathcal{T}_{s+t}=\mathcal{T}_s\circ\mathcal{T}_t$, and for $s\leq t$ we have a natural transformation $\mathcal{T}_{(s\leq t)}\co \mathcal{T}_s\Rightarrow \mathcal{T}_t$.  If $X$ and $Y$ are two objects of $\mathsf{C}$ and $\ep\geq 0$, an $\ep$-\textbf{interleaving} $(\alpha,\beta)$ of $X$ and $Y$ is then defined to consist of morphisms $\alpha\co X\to \mathcal{T}_{\ep}Y$ and $\beta\co Y\to \mathcal{T}_{\ep}X$ such that \begin{equation}\label{intlvcrit} (\mathcal{T}_{\ep}\beta)\circ \alpha=\mathcal{T}_{(0\leq 2\ep),X},\qquad (\mathcal{T}_{\ep}\beta)\circ \alpha=\mathcal{T}_{(0\leq 2\ep),Y}.\end{equation}
This yields an interleaving distance on $\mathsf{C}$: for objects $X,Y$ of $\mathsf{C}$ we set \begin{equation}\label{intergen} d_{\mathrm{int}}^{\mathcal{T}}(X,Y)=\inf\left\{\ep >0|\mbox{there is an $\ep$-interleaving between $X$ and $Y$}\right\}\end{equation} where the infimum of the empty set is $\infty$.

We also borrow the following terminology from \cite{U22}

\begin{dfn}\label{implantdfn}
	Given a strict flow $\mathcal{T}$ on a category $\mathsf{C}$, two objects $X$ and $Y$ of $\mathsf{C}$, and $\ep>0$, an $\ep$-\textbf{implantation} of $X$ into $Y$ consists of morphisms $\alpha\co X\to\mathcal{T}_{\ep}Y$ and $\beta\co Y\to \mathcal{T}_\ep X$ such that $(\mathcal{T}_{\ep}\beta)\circ\alpha=\mathcal{T}_{(0\leq 2\ep),X}$.
\end{dfn}

Thus we omit the second condition in (\ref{intlvcrit}).  While an $\ep$-interleaving is a sort of approximate isomorphism between $X$ and $Y$, an $\ep$-implantation expresses $X$ as approximately a retract of $Y$.

We now describe how the system of translations $\rho_{s}^{\varphi}$ of $[-\Lambda,\Lambda]$, associated to the increasing homeomorphism $\varphi\co [-\infty,\infty]\to [-\Lambda,\Lambda]$, induces strict flows on various categories in our story.  

\subsubsection{The flow on filtered cospans}\label{cospanflow}

First, on the  homotopy category of filtered cospans $\mathsf{HCO}^{\Lambda}(\kappa)$, we get a strict flow $\mathcal{T}^{\varphi}$ by taking, on objects \[ \mathcal{T}^{\varphi}_{\ep}\left((C_{\uparrow *},\partial_{\uparrow}^{C},\ell_{\uparrow}^{C}),(C^{\downarrow}_{*},\partial^{\downarrow}_{C},\ell^{\downarrow}_{C}),(D_*,\partial_D),\psi_{\uparrow},\psi^{\downarrow} \right)=\left((C_{\uparrow *},\partial_{\uparrow}^{C},\rho_{-\ep}^{\varphi}\circ\ell_{\uparrow}^{C}),(C^{\downarrow}_{*},\partial^{\downarrow}_{C},\rho_{\ep}^{\varphi}\circ\ell^{\downarrow}_{C}),(D_*,\partial_D),\psi_{\uparrow},\psi^{\downarrow} \right) \] (so the only effect of $\mathcal{T}_{\ep}^{\varphi}$ is to ``translate'' the ascending filtration function by $-\ep$ and the descending filtration function by $\ep$), and by taking $\mathcal{T}_{\ep}^{\varphi}$ to act as the identity on morphisms,  noting that the conditions (\ref{quint}) and (\ref{filtpres}) for a tuple $(\alpha^{\downarrow},\alpha_{\uparrow},\alpha,K^{\downarrow},K_{\uparrow})$ to define a morphism $\mathcal{C}\to\mathcal{X}$ are unchanged if $\mathcal{C}$ and $\mathcal{X}$ are simultaneously replaced by $\mathcal{T}^{\varphi}_{\ep}\mathcal{C}$ and $\mathcal{T}^{\varphi}_{\ep}\mathcal{X}$.  The natural transformations $\mathcal{T}_{(s\leq t)}^{\varphi}$, applied to a general object $\mathcal{C}$ as above, are given by taking the identity maps on $C_{\uparrow *},C^{\downarrow}_*,D_*$ and taking the maps $K_{\uparrow}$ and $K^{\downarrow}$ as in (\ref{quint}) to be zero.  ((\ref{filtpres}) is satisfied due to (\ref{monotone}).)

\begin{ex} \label{function-interleave}
 We apply this to the pinned singular filtered cospans $\mathcal{S}_{\partial}(\mathbb{X},f;\kappa)$ of Example \ref{pinsing}.  So we fix a topological space $\mathbb{X}$ with a subset $\partial\mathbb{X}=\partial^-\mathbb{X}\sqcup \partial^+\mathbb{X}$ and consider functions $f\co \mathbb{X}\to [-\Lambda,\Lambda]$ that are required to have $f^{-1}(\{\pm\Lambda\})=\partial^{\pm}\mathbb{X}$.
 
 If $f,g\co\mathbb{X}\to [-\Lambda,\Lambda]$ are two such functions, the complexes $C_{\uparrow *},C^{\downarrow}_{*},D_*$ prescribed by Example \ref{pinsing} are the same for $f$ as for $g$, while the filtration functions $\ell_{\uparrow}$ and $\ell^{\downarrow}$ differ.  One may easily check that, if  $\sup_{\mathbb{X}\setminus\partial\mathbb{X}}|\varphi^{-1}\circ f-\varphi^{-1}\circ g|\leq \ep$, then the behavior of the identity maps on $C_{\uparrow *},C^{\downarrow}_{*},D_*$ with respect to these filtrations is such that they induce morphisms in $\mathsf{HCO}^{\Lambda}(\kappa)$ from $\mathcal{S}_{\partial}(\mathbb{X},f;\kappa)$ to 
 $\mathcal{T}_{\ep}^{\varphi}\mathcal{S}_{\partial}(\mathbb{X},g;\kappa)$ and from $\mathcal{S}_{\partial}(\mathbb{X},g;\kappa)$ to 
 $\mathcal{T}_{\ep}^{\varphi}\mathcal{S}_{\partial}(\mathbb{X},f;\kappa)$  (with the chain homotopy maps $K_{\uparrow}$ and $K^{\downarrow}$ taken to be zero in both cases).  
 Since the natural transformations $\mathcal{T}^{\varphi}_{(0\leq 2\ep)}$ in $\mathcal{C}$ are likewise given by the identities on the various chain complexes, we conclude that, if   $\sup_{\mathbb{X}\setminus\partial\mathbb{X}}|\varphi^{-1}\circ f-\varphi^{-1}\circ g|\leq \ep$, then there is an $\ep$-interleaving between $\mathcal{S}_{\partial}(\mathbb{X},f;\kappa)$ and $\mathcal{S}_{\partial}(\mathbb{X},g;\kappa)$. (Compare \cite[Theorem 4.4]{BBF21}.)
\end{ex}

\subsubsection{The flow on $\mathbb{M}$}\label{mflowsect}

Our homeomorphism $\varphi\co [-\infty,\infty]\to[-\Lambda,\Lambda]$ also induces a strict flow $\mathcal{T}_{\mathbb{M}}^{\varphi}$ on the strip $\mathbb{M}$ (regarded as a poset category) by the following prescription with respect to the decomposition of $\mathbb{M}$ from (\ref{mdecomp}): for $(x,y)\in\mathfrak{D}=S\sqcup L\sqcup A$, for $k\in \Z$, and for $\ep\geq 0$ we set: \begin{equation}\label{mflow} (\mathcal{T}_{\mathbb{M}}^{\varphi})_{\ep}\left(T^k(x,y)\right)=\left\{\begin{array}{ll} T^k\left(\rho^{\varphi}_{-\ep}(x),\rho^{\varphi}_{\ep}(y)\right) & \mbox{if }(x,y)\in S \\ T^k\left(-2\Lambda-\rho^{\varphi}_{\ep}(-2\Lambda-x), \rho^{\varphi}_{\ep}(y)\right) & \mbox{if }(x,y)\in L\\ T^k\left(\rho^{\varphi}_{-\ep}(x),2\Lambda-\rho^{\varphi}_{-\ep}(2\Lambda-y)\right) & \mbox{if }(x,y)\in A    \end{array}\right.\end{equation}

(Since there is at most one morphism between any two objects of $\mathbb{M}$, the action of $(\mathcal{T}_{\mathbb{M}}^{\varphi})_{\ep}$ on morphisms is determined by its action on objects.)

Note that the maps $(\mathcal{T}_{\mathbb{M}}^{\varphi})_{\ep}\co\mathbb{M}\to\mathbb{M}$ commute with the shift $T$.  Also, each of the subdomains $T^kS, T^kL, T^kA$ are preserved by the $(\mathcal{T}_{\mathbb{M}}^{\varphi})_{\ep}$, as are each of the edges $T^k(\{\pm \Lambda\}\times (-\Lambda,\Lambda))$ and $T^k((-\Lambda,\Lambda)\times\{\pm\Lambda\})$ as well as  the corners $T^k(\{(\pm\Lambda,\pm\Lambda)\})$ of the squares $T^kS$.

\subsubsection{The flow on $\mathsf{C}^{\mathbb{M}}$}

If $\mathsf{C}$ is any category then the above strict flow $\mathcal{T}_{\mathbb{M}}^{\varphi}$ on $\mathbb{M}$ induces, as in \cite[Section 3]{dSMS},  a strict flow $\mathcal{T}_{\mathsf{C}^{\mathbb{M}}}^{\varphi}$ on the functor category $\mathsf{C}^{\mathbb{M}}$ by setting, for any functor $f\co\mathbb{M}\to\mathsf{C}$ and $\ep\geq 0$, \[ (\mathcal{T}_{\mathsf{C}^{\mathbb{M}}}^{\varphi})_{\ep}f=f\circ(\mathcal{T}_{\mathbb{M}}^{\varphi})_{\ep}  \] and using the obvious corresponding action on natural transformations.

Upon unraveling the definitions, one  finds that the functors $\mathcal{F}^{0}_{(s,t)}\co \mathsf{CO}^{\Lambda}(\mathbb{R})\to \mathsf{Ch}(\kappa)$, defined in (\ref{f0dfn}) for each $(s,t)\in\mathbb{M}$, satisfy $\mathcal{F}^{0}_{(s,t)}(\mathcal{T}^{\varphi}_{\ep}(\mathcal{C}))=\mathcal{F}^{0}_{(\mathcal{T}^{\varphi}_{\mathbb{M}})_{\ep}(s,t)}(\mathcal{C})$ for each $\ep\in[0,\infty)$ and each object $\mathcal{C}$ of $\mathsf{CO}^{\Lambda}(\kappa)$.   Hence:

\begin{prop}\label{flowfunctor}
	The functor $\mathcal{F}\co \mathsf{HCO}^{\Lambda}(\kappa)\to \mathsf{K}(\kappa)^{\mathbb{M}}$ of Definition \ref{fdef} satisfies $\mathcal{F}\circ \mathcal{T}^{\varphi}_{\ep}=(\mathcal{T}^{\varphi}_{\mathsf{K}(\kappa)^{\mathbb{M}}})_{\ep}\circ \mathcal{F}$.  
\end{prop}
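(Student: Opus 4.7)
The plan is to unpack the definitions and verify the equality pointwise, reducing the statement to the identity $\mathcal{F}^{0}_{(s,t)}(\mathcal{T}^{\varphi}_{\ep}\mathcal{C})=\mathcal{F}^{0}_{(\mathcal{T}^{\varphi}_{\mathbb{M}})_{\ep}(s,t)}(\mathcal{C})$ mentioned immediately before the statement, and then checking that the structure maps in (a)--(i) of Section \ref{fsect} match up. The key observation driving everything is that, if $\mathcal{C}=((C_{\uparrow *},\partial_{\uparrow}^{C},\ell_{\uparrow}^{C}),(C^{\downarrow}_{*},\partial^{\downarrow}_{C},\ell^{\downarrow}_{C}),(D_*,\partial_D),\psi_{\uparrow},\psi^{\downarrow})$, then since $\mathcal{T}^{\varphi}_{\ep}$ replaces $\ell^{C}_{\uparrow}$ by $\rho^{\varphi}_{-\ep}\circ\ell^{C}_{\uparrow}$ and $\ell^{\downarrow}_{C}$ by $\rho^{\varphi}_{\ep}\circ\ell^{\downarrow}_{C}$ while leaving the underlying chain complexes and maps unchanged, we have $(\mathcal{T}^{\varphi}_{\ep}\mathcal{C})^{\leq y}_{\uparrow}=C^{\leq \rho^{\varphi}_{\ep}(y)}_{\uparrow}$ and $(\mathcal{T}^{\varphi}_{\ep}\mathcal{C})^{\downarrow}_{\geq x}=C^{\downarrow}_{\geq \rho^{\varphi}_{-\ep}(x)}$, using that $\rho^{\varphi}_{-\ep}$ is the inverse of $\rho^{\varphi}_{\ep}$.

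First, I would verify the pointwise identity by going through the three cases of the definition (\ref{f0dfn}). For $(x,y)\in S$, the above observation gives
\[
\mathcal{F}^{0}_{T^k(x,y)}(\mathcal{T}^{\varphi}_{\ep}\mathcal{C})=\mathrm{Cone}\bigl(-\psi^{\downarrow}+\psi_{\uparrow}\co C^{\downarrow}_{\geq \rho^{\varphi}_{-\ep}(x)}\oplus C^{\leq \rho^{\varphi}_{\ep}(y)}_{\uparrow}\to D\bigr)[-k+1],
\]
which is exactly $\mathcal{F}^{0}_{T^k(\rho^{\varphi}_{-\ep}(x),\rho^{\varphi}_{\ep}(y))}(\mathcal{C})=\mathcal{F}^{0}_{(\mathcal{T}^{\varphi}_{\mathbb{M}})_{\ep}(T^k(x,y))}(\mathcal{C})$ by (\ref{mflow}). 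For $(x,y)\in L$, the sublevel subcomplex $(\mathcal{T}^{\varphi}_{\ep}\mathcal{C})_{\uparrow}^{\leq -2\Lambda-x}$ equals $C_{\uparrow}^{\leq \rho^{\varphi}_{\ep}(-2\Lambda-x)}$, and writing $x'=-2\Lambda-\rho^{\varphi}_{\ep}(-2\Lambda-x)$, this matches the subcomplex $C_{\uparrow}^{\leq -2\Lambda-x'}$ appearing in $\mathcal{L}^{k}_{x',y'}(\mathcal{C})$ with $y'=\rho^{\varphi}_{\ep}(y)$; the case $(x,y)\in A$ is entirely symmetric.

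Second, I would observe that on morphisms both $\mathcal{T}^{\varphi}_{\ep}$ and $(\mathcal{T}^{\varphi}_{\mathbb{M}})_{\ep}$ act trivially in a strong sense: the former leaves the underlying quintuples $(\alpha^{\downarrow},\alpha_{\uparrow},\alpha,K^{\downarrow},K_{\uparrow})$ untouched (checking (\ref{filtpres}) still holds after translating the filtrations on both source and target is immediate), while the latter acts on morphisms in $\mathsf{K}(\kappa)^{\mathbb{M}}$ just by precomposition with $(\mathcal{T}^{\varphi}_{\mathbb{M}})_{\ep}$. Consequently the formulas (\ref{sfun}), (\ref{lfun}), (\ref{afun}) defining $\mathcal{F}^{0}_{(s,t)}$ on morphisms automatically intertwine the two actions.

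Third, I would verify that the structure maps $\Phi^{(s',t')}_{(s,t)}(\mathcal{T}^{\varphi}_{\ep}\mathcal{C})$ and $\Phi^{(\mathcal{T}^{\varphi}_{\mathbb{M}})_{\ep}(s',t')}_{(\mathcal{T}^{\varphi}_{\mathbb{M}})_{\ep}(s,t)}(\mathcal{C})$ agree under the identification just established. Here the main point is that $(\mathcal{T}^{\varphi}_{\mathbb{M}})_{\ep}$ commutes with $T$ and preserves each of the regions $T^k S$, $T^k L$, $T^k A$ (as noted in Section \ref{mflowsect}), so the case among (a)--(i) of Section \ref{fsect} in which a given pair $(s,t)\preceq (s',t')$ falls is the same as the case for the translated pair; since each of the formulas in (a)--(i) is built either from inclusions of the filtered subcomplexes $C^{\leq \bullet}_{\uparrow},C^{\downarrow}_{\geq \bullet}$ or from $\psi_{\uparrow},\psi^{\downarrow}$, both of which are unaffected by the substitutions in the first paragraph, the maps literally coincide on the nose (not merely up to homotopy). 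Finally, the natural transformations $\mathcal{T}^{\varphi}_{(s\leq s')}$ are all represented by quintuples with $\alpha^{\downarrow},\alpha_{\uparrow},\alpha$ being identity maps and $K^{\downarrow}=K_{\uparrow}=0$, and functoriality of $\mathcal{F}$ together with the preceding computation shows that $\mathcal{F}$ sends them to the corresponding transformations $(\mathcal{T}^{\varphi}_{\mathsf{K}(\kappa)^{\mathbb{M}}})_{(s\leq s')}$. I do not expect any real obstacle; the only mildly delicate point is keeping straight the directions of the various translations (signs of $\ep$ and the reflections $-2\Lambda-\bullet$, $2\Lambda-\bullet$ that appear in (\ref{mflow})), but the composition identity $\rho^{\varphi}_{-\ep}\circ \rho^{\varphi}_{\ep}=\mathbf{1}$ resolves these in every case.
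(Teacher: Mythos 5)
Your proposal is correct and follows essentially the same route as the paper, which simply observes that unraveling the definitions yields the pointwise identity $\mathcal{F}^{0}_{(s,t)}(\mathcal{T}^{\varphi}_{\ep}\mathcal{C})=\mathcal{F}^{0}_{(\mathcal{T}^{\varphi}_{\mathbb{M}})_{\ep}(s,t)}(\mathcal{C})$ and deduces the proposition from it; your case-by-case verification via $(\mathcal{T}^{\varphi}_{\ep}\mathcal{C})_{\uparrow}^{\leq y}=C_{\uparrow}^{\leq \rho^{\varphi}_{\ep}(y)}$, $(\mathcal{T}^{\varphi}_{\ep}\mathcal{C})^{\downarrow}_{\geq x}=C^{\downarrow}_{\geq \rho^{\varphi}_{-\ep}(x)}$, together with the checks on morphisms and structure maps, is exactly the content the paper leaves implicit.
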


That is, $\mathcal{F}$ is a strict $[0,\infty)$-equivariant functor in the sense of \cite[Section 4]{dSMS}.  It follows immediately that $H_0\circ\mathcal{F}\co \mathsf{HCO}^{\Lambda}(\kappa)\to \mathsf{Vect}_{\kappa}^{\mathbb{M}}$ is likewise a strict $[0,\infty)$-equivariant functor with respect to the flows $\mathcal{T}^{\varphi}$ on $\mathsf{HCO}^{\Lambda}(\kappa)$ and $\mathcal{T}^{\varphi}_{\mathsf{Vect}_{\kappa}^{\mathbb{M}}}$ on $\mathsf{Vect}_{\kappa}^{\mathbb{M}}$.

\subsection{Metrics}

\subsubsection{Embeddings and matchings}

Let $(X,d_X)$ be an extended pseudometric space\footnote{An extended pseudometric on $X$ is a function $d_X\co X\times X\to [0,\infty]$ that is symmetric and satisfies the triangle inequality and the property that $d(x,x)=0$ for all $x$.  The word ``extended'' refers to the fact that $d$ may take the value $\infty$.} and let $\Delta\subset X$.  Given two multisets\footnote{Formally, a multiset of elements of $X$ can be defined as a set $S$ together with a function $\pi\co S\to X$; our notation will conflate an element of $S$ with its image under this map.  For $x\in X$, the multiplicity of $x$ in the multiset $S$ is the cardinality of $\pi^{-1}\{x\}$.} $S$ and $T$ of elements of $X$, and given $\ep\geq 0$, an $\ep$-\textbf{matching between $S$ and $T$ relative to} $\Delta$ consists of the data of: \begin{itemize} 
	\item submultisets $S'\subset S$ and $T'\subset T$ such that for all $x\in (S\setminus S')\cup (T\setminus T')$ there exists $y\in \Delta$ such that $d_X(x,y)\leq \ep$; and
	\item a bijection $f\co S'\to T'$ such that $d_X(x,f(x))\leq\ep$ for all $x\in S'$.
	\end{itemize}

We define an $\ep$-\textbf{embedding of $S$ into $T$ relative to }$\Delta$ to consist of the data of:
\begin{itemize}\item a submultiset $S'\subset S$ such that for all $x\in S\setminus S'$ there exists $y\in \Delta$ such that $d_X(x,y)\leq \ep$; and 
\item an injection $f\co S'\to T$ such that $d_X(x,f(x))\leq\ep$ for all $x\in S'$.
\end{itemize}

The familiar \textbf{bottleneck distance} between multisets $S$ and $T$ of elements of $X$ (relative to $\Delta$) is defined to be \[ d_{\infty}^{\Delta,d_X}(S,T)=\inf\{\ep\geq 0|\mbox{There exists an $\ep$-matching between $S$ and $T$ relative to $\Delta$}\}. \] (See \cite{CEH07} for the original formulation in the context of persistence diagrams and \cite{CGGM} for a general treatment.)  Let us define the bottleneck \textbf{hemidistance from $S$ to} $T$ relative to $\Delta$ to be \begin{equation}\label{hemidfn} \delta^{\Delta,d_X}_{\infty}(S,T)=\inf\{ \ep\geq 0|\mbox{There exists an $\ep$-embedding from $S$ into $T$ relative to $\Delta$}  \}.\end{equation}  Clearly $\delta^{\Delta,d_X}_{\infty}$ is not symmetric, hence the term hemidistance.  

\begin{prop}\label{symm} For any multisets $S$ and $T$ of elements $X$ we have \[ d_{\infty}^{\Delta,d_X}(S,T)=\max\{\delta^{\Delta,d_X}_{\infty}(S,T),\delta^{\Delta,d_X}_{\infty}(T,S)\}.\]
\end{prop}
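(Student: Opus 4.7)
The claim splits into two inequalities. The inequality $d_\infty^{\Delta,d_X}(S,T)\geq\max\{\delta^{\Delta,d_X}_\infty(S,T),\delta^{\Delta,d_X}_\infty(T,S)\}$ is easy: given any $\ep$-matching $(S',T',f\colon S'\to T')$ between $S$ and $T$ relative to $\Delta$, the data $(S',f)$ defines an $\ep$-embedding of $S$ into $T$ relative to $\Delta$, and $(T',f^{-1})$ an $\ep$-embedding of $T$ into $S$. Taking infima yields the desired inequality.

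The substance of the proposition is the reverse inequality, which I would prove by showing that for any $\ep>\max\{\delta^{\Delta,d_X}_\infty(S,T),\delta^{\Delta,d_X}_\infty(T,S)\}$ one can construct an $\ep$-matching of $S$ with $T$ relative to $\Delta$. Fix $\ep$-embeddings $f\colon S_1\to T$ and $g\colon T_1\to S$ (so $S\setminus S_1$ and $T\setminus T_1$ each consist of points within distance $\ep$ of $\Delta$). My plan is to form a bipartite multigraph $G$ with vertex set $S\sqcup T$ and edge set $E_f\cup E_g$, where $E_f=\{\{s,f(s)\}:s\in S_1\}$ and $E_g=\{\{g(t),t\}:t\in T_1\}$; each of $E_f,E_g$ is a matching in $G$ (since $f,g$ are injective), so every vertex has degree $\leq 2$, and hence every connected component of $G$ is either a finite cycle, a finite path, a semi-infinite path, or a bi-infinite path. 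Every edge of $G$ has $d_X$-length at most $\ep$.

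Within each component, I would select a sub-matching as follows. Cycles and bi-infinite paths admit two natural alternating perfect matchings; take either one. A semi-infinite path beginning at some $v_0$ admits a perfect matching that covers $v_0$; take that one. A finite path of odd edge-length admits a unique perfect matching. The only subtle case is a finite path of even edge-length, which has one more vertex than it has matching edges, and whose two endpoints lie on the same side (both in $S$ or both in $T$). Here I would use the crucial observation that at every interior vertex of a path, both edges must be present, which forces one to be an $f$-edge and the other a $g$-edge, so the path strictly alternates between $E_f$- and $E_g$-edges. A short case analysis on whether the first edge of such a path is in $E_f$ or $E_g$ then shows that at least one endpoint $v$ lies in $S\setminus S_1$ (if the endpoints are in $S$) or in $T\setminus T_1$ (if the endpoints are in $T$); in either case $v$ is within $\ep$ of $\Delta$. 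Choose the maximum matching that leaves this endpoint $v$ uncovered.

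Taking the union of the chosen matchings over all components yields an injective partial bijection between submultisets of $S$ and $T$ whose every edge has length $\leq \ep$, and whose uncovered vertices all lie in $S\setminus S_1$, $T\setminus T_1$, or are the distinguished even-path endpoints identified above---so every uncovered vertex is within $\ep$ of $\Delta$. This is an $\ep$-matching of $S$ with $T$ relative to $\Delta$, giving $d_\infty^{\Delta,d_X}(S,T)\leq\ep$; letting $\ep$ tend to $\max\{\delta^{\Delta,d_X}_\infty(S,T),\delta^{\Delta,d_X}_\infty(T,S)\}$ completes the proof. The main obstacle, and the only place where the argument uses more than bookkeeping, is the even-length-path case: one must verify that the alternation of edge types along such a path forces at least one endpoint into $S\setminus S_1$ or $T\setminus T_1$, so that the inevitable unmatched vertex can be arranged to lie close to $\Delta$.
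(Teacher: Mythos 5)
Your proof is correct, but it follows a different route from the paper's. The paper adapts a standard proof of the Schr\"oder--Bernstein theorem: starting from $\ep$-embeddings $f\co S'\to T$ and $g\co T'\to S$, it iterates the sets $B_0=T\setminus \Img(f)$, $A_n=g(T'\cap B_{n-1})$, $B_n=f(S'\cap A_n)$ and defines the matching piecewise as $g^{-1}$ on $\cup_n A_n$ and $f$ elsewhere on $S'$, then checks injectivity and that $T'$ lies in the image; the unmatched points are automatically confined to $S\setminus S'$ and $T\setminus T'$, and the construction produces an $\ep$-matching from $\ep$-embeddings directly (no passage to $\ep'>\ep$, though for the statement about infima this makes no difference). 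You instead take the orbit/chain point of view: the union of the two partial matchings $E_f\cup E_g$ decomposes into paths and cycles, and you extract a sub-matching component by component. The one genuinely new point your route must supply---absent from the classical Schr\"oder--Bernstein setting because there $f$ and $g$ are total---is the finite path of even edge-length, and your parity argument is sound: an endpoint has degree one, so if its unique incident edge is a $g$-edge (for an endpoint in $S$) it cannot lie in $S_1$, and alternation of edge types along the path together with the parity of the length forces the edge types at the two endpoints to differ, so at least one endpoint lies in $S\setminus S_1$ (resp.\ $T\setminus T_1$) and may safely be left unmatched; isolated vertices are the degenerate case of this and are likewise near $\Delta$. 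What each approach buys: the paper's iterative definition is shorter to write down and verifies the required properties with two quick checks (injectivity of $h$ and $T'\subset\Img(h)$), while your decomposition makes the structure of the matching completely explicit and localizes the only delicate point in a single transparent parity lemma, at the cost of a longer case analysis (cycles, finite odd/even paths, semi-infinite and bi-infinite paths, plus attention to parallel edges in the multigraph).
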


\begin{proof}
	As an $\ep$-matching between $S$ and $T$ clearly gives rise to $\ep$-embeddings both from $S$ into $T$ and from $T$ into $S$, the inequality ``$\geq$'' is immediate.  For the reverse inequality we shall show that, given $\ep$-embeddings $f\co S'\to T$ and $g\co T'\to S$, there may be constructed an $\ep$-matching between $S$ and $T$.
	
	This may be achieved by adapting a standard proof of the Schr\"oder-Bernstein theorem (the statement that if each of two sets can each be injected into the other then there is a bijection between them). Set $B_0=T\setminus \Img(f)$ and inductively define, for $n\geq 1$, $A_n=g(T'\cap B_{n-1})\subset S$ and $B_n=f(S'\cap A_{n})\subset T$.  Let $S''=S'\cup\left(\cup_{n=1}^{\infty}A_n\right)$ and define $h\co S''\to T$ by \[ h(a)=\left\{\begin{array}{ll} g^{-1}(a) & \mbox{if }a\in A_n\mbox{ for some }n\geq 1 \\ f(a) & \mbox{if $x\in S'\setminus\cup_{n=1}^{\infty}A_n$}  \end{array}\right..  \]  (This is well-defined since $g$ is injective and since each $A_n$ is contained in $\Img g$.)   If $x_1\in A_n$ and $x_2\in S'\setminus(\cup_n A_n)$, the element $h(x_1)=g^{-1}(x_1)\in B_{n-1}$ cannot equal $f(x_2)$: indeed, if $n=1$ this follows because $B_0\cap \Img(f)=0$, while if $n>1$ it follows because $B_{n-1}\subset f(A_{n-1})$ while $x_2\notin A_{n-1}$ and $f$ is injective.  From this, together with the evident fact that the restrictions of $h$ to $\cup_n A_n$ and $S''\setminus(\cup_nA_n)$ are  each  injective, it follows that $h$ is injective.   
	
	Let us show that the image of $h$ contains $T'$.	
	Given $y\in T'$, if we have $y\in B_n$ for some $n\geq 0$, then  $g(y)\in A_{n+1}$ and $y=h(g(y))$.  If instead $y\in T'\setminus\cup_{n\geq 0}B_n$, then the fact that $y\notin B_0$ implies that $y\in \Img(f)$, and the fact that $y\notin \cup_{n\geq 1}B_n$ implies that $y\notin f(S'\cap A_n)$ for any $n$. So the element $x$ of $S'$ such that $f(x)=y$ does not belong to any $A_n$, whence $h(x)=y$.  So indeed $T'\subset \Img(h)$.
	
	Setting $T''=\Img(h)$, we thus have a bijection $h\co S''\to T''$ where $S'\subset S''\subset S$ and $T'\subset T''\subset T$, with the additional property that if $y=h(x)$ then either $y=f(x)$ or $x=g(y)$.  The fact that $f\co S'\to T$ and $g\co T'\to S$ are $\ep$-embeddings relative to $\Delta$ then immediately implies that $h$ is an $\ep$-matching relative to $\Delta$.
	\end{proof}

\subsubsection{Metrics from flows}\label{flowmetrics}

Now let $(P,\preceq)$ be any partially ordered set, and let $\Delta\subset P$.  For the purposes of this section, one should imagine that one has some notion of persistence diagrams which are multisets of elements of $P\setminus \Delta$. (The case most relevant to this paper is that $P=\mathbb{M}$ and $\Delta=\partial\mathbb{M}$, corresponding to Proposition \ref{blockclass}, but we will mention some simpler examples for context.) Regarding $P$ as a category in the usual way, suppose that we have a strict flow $\mathcal{T}\co [0,\infty)\to\mathrm{End}(P)$. This induces an interleaving distance $d_{\mathrm{int}}^{\mathcal{T}}$ on $P$ by the general prescription in (\ref{intergen}), given in the present context by \[ d_{\mathrm{int}}^{\mathcal{T}}(x,y)=\inf\{\ep>0|x\preceq \mathcal{T}_{\ep}y\mbox{ and }y\leq \mathcal{T}_{\ep}x\}.  \]  For example, if $P=\R^n$ with its standard partial order and if $\mathcal{T}_{\ep}\co P\to P$ is given by $(x_1,\ldots,x_n)\mapsto (x_1+\ep,\ldots,x_n+\ep)$ then, as pointed out in \cite[Theorem 3.9]{dSMS}, $d_{\mathrm{int}}^{\mathcal{T}}$ coincides with the usual $\ell^{\infty}$ distance on $\R^n$.

Regarding $P$ now as an extended pseudometric space with respect to the distance $d_{\mathrm{int}}^{\mathcal{T}}$, we then obtain a bottleneck distance \[ d_{B,\mathcal{T}}^{P,\Delta}:=d^{\Delta,d_{\mathrm{int}}^{\mathcal{T}}}_{\infty} \] on multisets of elements of $P$.  For example, one may take \[ P=\{(a,b)\in (-\infty,\infty)\times (-\infty,\infty]|a\leq b\}\mbox{ and }\Delta=\{(a,b)\in P|a=b\},\] with the flow $\mathcal{T}_{\ep}(a,b)=(a+\ep,b+\ep)$, and then one obtains the usual bottleneck distance on persistence diagrams from \cite{CEH07}.  Likewise one obtains a bottleneck hemidistance (see (\ref{hemidfn})) $\delta_{B,\mathcal{T}}^{P,\Delta}:=\delta^{\Delta,d_{\mathrm{int}}^{\mathcal{T}}}_{\infty}$.

The persistence diagrams in this paper are multisets of elements of $\mathbb{M}\setminus\partial\mathbb{M}$, so the bottleneck distance of interest for us will be $d_{B,\mathcal{T}^{\varphi}_{\mathbb{M}}}^{\mathbb{M},\partial\mathbb{M}}$, derived from the extended pseudometric $d_{\mathrm{int}}^{\mathcal{T}_{\mathbb{M}}^{\varphi}}$. on $\mathbb{M}$.  Let us describe some features of   $d_{\mathrm{int}}^{\mathcal{T}_{\mathbb{M}}^{\varphi}}$.

Write $S^{\circ},L^{\circ}$, and $A^{\circ}$ for the interiors of the subsets $S,L,$ and $A$ respectively (with respect to the usual topology on $\R^2$, so for example $S^{\circ}=(-\Lambda,\Lambda)\times(-\Lambda,\Lambda)$).  Suppose that $R$ is any of the following subsets of the fundamental domain $\mathfrak{D}$ (whose union is all of $\mathfrak{D}$): \begin{align} \label{regionlist}\nonumber & S^{\circ},\,L^{\circ},\, A^{\circ},\,(-\Lambda,\Lambda)\times\{-\Lambda\},\,(-\Lambda,\Lambda)\times\{\Lambda\},\\ & \{-\Lambda\}\times (-\Lambda,\Lambda),\,\{\Lambda\}\times (-\Lambda,\Lambda),\,\{(\Lambda,-\Lambda)\},\mbox{ and } \mathfrak{D}\cap \partial\mathbb{M}.\end{align}  Then one may verify from (\ref{mflow}) that, for each $k\in\Z$ and each $\ep\geq 0$, $(\mathcal{T}_{\mathbb{M}}^{\varphi})_{\ep}(T^kR)=T^kR$. Moreover, each of the regions $R$ from (\ref{regionlist}), \emph{except} for $L^{\circ},A^{\circ}$, and $\mathfrak{D}\cap \partial\mathbb{M}$, has the property that if $v,w\in T^kR$ with $v\preceq w$, then the entire interval $[v,w]=\{u\in\mathbb{M}:v\preceq u\preceq w\}$ is contained in $T^kR$.   We deduce that:

\begin{prop}\label{sepregions} If $v\in \mathbb{M}\setminus\partial\mathbb{M}$ and $w\in\mathbb{M}$ with
	 $d_{\mathrm{int}}^{\mathcal{T}_{\mathbb{M}}^{\varphi}}(v,w)<\infty$, 
	 and if $v\in T^kR$ where $R$ is one of the regions from (\ref{regionlist}), then either $w\in T^kR$, or $R$ is equal to $L^{\circ}$ or $A^{\circ}$ and $w\in\partial\mathbb{M}$.
\end{prop}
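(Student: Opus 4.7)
The plan is to unwind the hypothesis of finite interleaving distance into an order-theoretic chain of inequalities and then use the ``interval convexity'' properties of the regions already recorded in the paper.

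First, by the definition of $d_{\mathrm{int}}^{\mathcal{T}_{\mathbb{M}}^{\varphi}}$, the assumption $d_{\mathrm{int}}^{\mathcal{T}_{\mathbb{M}}^{\varphi}}(v,w)<\infty$ produces some $\ep\geq 0$ with $v\preceq (\mathcal{T}_{\mathbb{M}}^{\varphi})_{\ep}w$ and $w\preceq (\mathcal{T}_{\mathbb{M}}^{\varphi})_{\ep}v$. Applying the order-preserving map $(\mathcal{T}_{\mathbb{M}}^{\varphi})_{\ep}$ to the second inequality and combining gives $v\preceq (\mathcal{T}_{\mathbb{M}}^{\varphi})_{\ep}w\preceq (\mathcal{T}_{\mathbb{M}}^{\varphi})_{2\ep}v$, so $(\mathcal{T}_{\mathbb{M}}^{\varphi})_{\ep}w$ lies in the interval $[v,(\mathcal{T}_{\mathbb{M}}^{\varphi})_{2\ep}v]$. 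Since the flow preserves $T^{k}R$, both endpoints of this interval belong to $T^{k}R$.

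Next I would handle the regions in (\ref{regionlist}) other than $L^{\circ}$, $A^{\circ}$, and $\mathfrak{D}\cap\partial\mathbb{M}$ by directly invoking the interval-convexity statement from the paragraph preceding the proposition: the interval $[v,(\mathcal{T}_{\mathbb{M}}^{\varphi})_{2\ep}v]$ is contained in $T^{k}R$, so $(\mathcal{T}_{\mathbb{M}}^{\varphi})_{\ep}w\in T^{k}R$. The case $R=\mathfrak{D}\cap\partial\mathbb{M}$ is vacuous because it would force $v\in \partial\mathbb{M}$, contrary to hypothesis. To pass from $(\mathcal{T}_{\mathbb{M}}^{\varphi})_{\ep}w\in T^{k}R$ to $w\in T^{k}R$, I use that (\ref{mflow}) exhibits $(\mathcal{T}_{\mathbb{M}}^{\varphi})_{\ep}$ as a bijection of each $T^{j}R'$ onto itself (inverted by negating $\ep$ in the formula), and the sets $T^{j}R'$ partition $\mathbb{M}$; hence the $T^{k}R$-preimage of $T^{k}R$ under the flow is $T^{k}R$ itself.

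The main remaining step, and the one that takes genuine verification, is the two cases $R=L^{\circ}$ and $R=A^{\circ}$. For $R=L^{\circ}$, I want to show the interval $[v,(\mathcal{T}_{\mathbb{M}}^{\varphi})_{2\ep}v]$ lies in $T^{k}L^{\circ}\cup\partial\mathbb{M}$. Conjugating by $T^{-k}$, it suffices to treat the case $k=0$; writing $v=(x_{0},y_{0})\in L^{\circ}$ and $(\mathcal{T}_{\mathbb{M}}^{\varphi})_{2\ep}v=(x_{1},y_{1})$ a short computation using $\rho_{2\ep}^{\varphi}(t)\geq t$ (with equality only at $\pm\Lambda$) shows $x_{1}<-\Lambda$ and $y_{1}<\Lambda$. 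Therefore any point $(x,y)$ in the interval satisfies $x\leq x_{0}<-\Lambda$ and $y\leq y_{1}<\Lambda$, while membership in $\mathbb{M}$ forces $x+y\geq -2\Lambda$; thus $(x,y)\in L$, and $(x,y)\in L^{\circ}$ precisely when $x+y>-2\Lambda$, the alternative being $(x,y)\in\partial\mathbb{M}$. Consequently $(\mathcal{T}_{\mathbb{M}}^{\varphi})_{\ep}w$ lies either in $T^{k}L^{\circ}$ or in $\partial\mathbb{M}$, and the bijectivity argument above (applied separately to the pieces of $\partial\mathbb{M}=\bigsqcup_{j}T^{j}(\mathfrak{D}\cap\partial\mathbb{M})$) yields the corresponding dichotomy for $w$. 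The case $R=A^{\circ}$ is entirely symmetric under interchange of the two coordinates and roles of $L$ and $A$, completing the proof.
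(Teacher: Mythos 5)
Your argument is correct and follows essentially the same route as the paper's proof: the same reduction to $(\mathcal{T}_{\mathbb{M}}^{\varphi})_{\ep}w\in[v,(\mathcal{T}_{\mathbb{M}}^{\varphi})_{2\ep}v]$, the same appeal to the flow-invariance and interval-convexity remarks preceding the proposition for all regions other than $L^{\circ}$ and $A^{\circ}$, and the same observation (which the paper phrases as the interval being the intersection of $\mathbb{M}$ with an axis-aligned rectangle whose corners lie in $T^kL^{\circ}$, and which you verify in coordinates) handling the $L^{\circ}$ and $A^{\circ}$ cases. Your explicit partition/invariance argument for passing from $(\mathcal{T}_{\mathbb{M}}^{\varphi})_{\ep}w$ back to $w$ just spells out a step the paper leaves implicit.
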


\begin{proof}
	That $d^{\mathcal{T}_{\mathbb{M}}^{\varphi}}_{\mathrm{int}}(v,w)<\infty$ means that there is $\ep>0$ such that both $v\preceq (\mathcal{T}_{\mathbb{M}}^{\varphi})_{\ep}w$ and $w\preceq (\mathcal{T}_{\mathbb{M}}^{\varphi})_{\ep}v$, and hence $ (\mathcal{T}_{\mathbb{M}}^{\varphi})_{\ep}w\in [v,(\mathcal{T}_{\mathbb{M}}^{\varphi})_{2\ep}v ]$.  Since we have assumed that $v\notin\partial\mathbb{M}$, if $R$ is neither $L^{\circ}$ nor $A^{\circ}$ then the remarks before the proposition then imply that  $ (\mathcal{T}_{\mathbb{M}}^{\varphi})_{\ep}w\in T^k R$, and hence also that $w\in T^k R$.  
	
	If instead $v\in T^kL^{\circ}$, then the interval $[v,(\mathcal{T}_{\mathbb{M}}^{\varphi})_{2\ep}v ]$ (which is the intersection of $\mathbb{M}$ with an axis-aligned rectangle having opposite corners $v$ and $(\mathcal{T}_{\mathbb{M}}^{\varphi})_{2\ep}v$, both of which lie in $T^kL^{\circ}$) is contained in $T^kL^{\circ}\cup \partial\mathbb{M}$ and so $(\mathcal{T}_{\mathbb{M}}^{\varphi})_{\ep}w$, and hence also $w$, lies in $T^kL^{\circ}\cup\partial\mathbb{M}$. Similarly, if $v\in T^k A^{\circ}$, then $[v,(\mathcal{T}_{\mathbb{M}}^{\varphi})_{2\ep}v ]\subset T^kA^{\circ}\cup\partial\mathbb{M}$, from which it follows that $w\in T^kA^{\circ}\cup\partial\mathbb{M}$. 
\end{proof}

 For the bottleneck distance $d_{B,\mathcal{T}^{\varphi}_{\mathbb{M}}}^{\mathbb{M},\partial\mathbb{M}}$ it is particularly important to understand how $d_{\mathrm{int}}^{\mathcal{T}_{\mathbb{M}}^{\varphi}}$ measures distances from a point of $\mathbb{M}\setminus \partial\mathbb{M}$ to the boundary $\partial\mathbb{M}$.  The following criterion is helpful.
 \begin{prop}\label{bdrydist}
 	Let $v\in\mathbb{M}\setminus\partial\mathbb{M}$ and $0<\ep<\infty$.  Then there exists $w\in\partial \mathbb{M}$ such that $d_{\mathrm{int}}^{\mathcal{T}_{\mathbb{M}}^{\varphi}}(v,w)< \ep$ if and only if we have $(\mathcal{T}_{\mathbb{M}}^{\varphi})_{2\ep}v \npreceq Tv$.
 \end{prop}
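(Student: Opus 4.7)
The plan is to reduce the statement to the case $v \in \mathfrak{D}$ via $T$-equivariance, and then to argue case by case based on which of the regions (\ref{regionlist}) contains $v$. For the reduction, write $v = T^k v_0$ with $v_0 \in \mathfrak{D}$; since the flow $\mathcal{T}_{\mathbb{M}}^{\varphi}$ commutes with $T$, since $T$ is order-preserving and bijective, and since $\partial\mathbb{M}$ is $T$-invariant, both sides of the claimed equivalence are preserved when $v$ is replaced by $v_0$ (the candidate $w$ is correspondingly replaced by $T^{-k}w$). So assume $v \in \mathfrak{D}\setminus \partial\mathbb{M}$, placing it in exactly one of the regions $S^{\circ}, L^{\circ}, A^{\circ}, \{-\Lambda\}\times(-\Lambda,\Lambda), \{\Lambda\}\times(-\Lambda,\Lambda), (-\Lambda,\Lambda)\times\{-\Lambda\}, (-\Lambda,\Lambda)\times\{\Lambda\}$, or $\{(\Lambda,-\Lambda)\}$.

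For the cases other than $L^{\circ}$ and $A^{\circ}$: Proposition \ref{sepregions} implies that every $w$ at finite $d_{\mathrm{int}}^{\mathcal{T}_{\mathbb{M}}^{\varphi}}$-distance from $v$ lies in the same region (or one of its $T$-translates), which is disjoint from $\partial\mathbb{M}$ in each of these subcases; so the left-hand side of the equivalence fails. To see that the right-hand side also fails, I write $v=(x,y)$, compute $Tv = (-2\Lambda - y,\, 2\Lambda - x)$ and $(\mathcal{T}_{\mathbb{M}}^{\varphi})_{2\ep}v$ explicitly from (\ref{mflow}) in each of these subcases, and observe that because $\rho^{\varphi}_{s}$ maps $[-\Lambda,\Lambda]$ into itself and fixes the endpoints, the inequalities $\rho^{\varphi}_{-2\ep}(x) \geq -\Lambda \geq -2\Lambda - y$ and $\rho^{\varphi}_{2\ep}(y) \leq \Lambda \leq 2\Lambda - x$ (or their analogues on the edges of $L$ and $A$ that do not lie on $\partial\mathbb{M}$) hold automatically.

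The heart of the argument is $v=(x,y)\in L^{\circ}$, i.e.\ $x<-\Lambda$, $y<\Lambda$, $x+y>-2\Lambda$; the case $v\in A^{\circ}$ is symmetric under $T$ (or under the involution swapping the two coordinates). From the formula (\ref{mflow}) one computes that $(\mathcal{T}_{\mathbb{M}}^{\varphi})_{2\ep}v \preceq Tv$ is equivalent, after discarding a trivially-true second inequality, to the single inequality $\rho^{\varphi}_{2\ep}(-2\Lambda-x)\leq y$. So the right-hand side of the proposition is $\rho^{\varphi}_{2\ep}(-2\Lambda-x)>y$. I then look for candidate boundary points $w = (-2\Lambda-y_0,y_0)$ with $y_0\in[-\Lambda,\Lambda)$ on the boundary segment $\partial\mathbb{M}\cap L$. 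A direct computation of the interleaving conditions $v\preceq (\mathcal{T}_{\mathbb{M}}^{\varphi})_{\ep'}w$ and $w\preceq (\mathcal{T}_{\mathbb{M}}^{\varphi})_{\ep'}v$ shows that they reduce, using $x+y>-2\Lambda$ to resolve the $\max$ and $\min$, to the sandwich $\rho^{\varphi}_{-\ep'}(y)\leq y_0\leq \rho^{\varphi}_{\ep'}(-2\Lambda-x)$. Such a $y_0$ exists (and automatically sits in $[-\Lambda,\Lambda)$ since $-2\Lambda-x<\Lambda$) exactly when $\rho^{\varphi}_{2\ep'}(-2\Lambda-x)\geq y$.

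Combining, $d_{\mathrm{int}}^{\mathcal{T}_{\mathbb{M}}^{\varphi}}(v,w)<\ep$ for some $w\in\partial\mathbb{M}\cap L$ iff the above sandwich can be solved for some $\ep'<\ep$, iff $\rho^{\varphi}_{2\ep}(-2\Lambda-x)>y$ (the jump from nonstrict inequality for $\ep'$ to strict inequality at $\ep$ uses continuity and strict monotonicity of $\ep\mapsto \rho^{\varphi}_{2\ep}(-2\Lambda-x)$, together with $-2\Lambda-x<\Lambda$ so this quantity is not frozen at $\Lambda$). Meanwhile, by Proposition \ref{sepregions} any $w\in\partial\mathbb{M}$ at finite distance from $v\in L^{\circ}$ necessarily lies in $L^{\circ}\cup(\partial\mathbb{M}\cap L)$, so we did not miss any candidate boundary points. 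This matches the condition $(\mathcal{T}_{\mathbb{M}}^{\varphi})_{2\ep}v\not\preceq Tv$ and completes the proof; the main bookkeeping obstacle is simply managing the substitutions in (\ref{mflow}) carefully, together with the continuity argument converting the nonstrict existence-of-$\ep'$ into the strict inequality at $\ep$.
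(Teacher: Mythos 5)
Your overall route is the paper's own: dispose of every region other than $L^{\circ}$ and $A^{\circ}$ by Proposition \ref{sepregions} together with a direct order computation from (\ref{mflow}), and in $L^{\circ}$ identify both sides of the equivalence with the single inequality $y<\rho^{\varphi}_{2\ep}(-2\Lambda-x)$, producing boundary witnesses on $\partial\mathbb{M}\cap L$ exactly as in the paper. Your sandwich $\rho^{\varphi}_{-\ep'}(y)\leq y_0\leq \rho^{\varphi}_{\ep'}(-2\Lambda-x)$ is a correct computation (the two discarded inequalities are indeed implied by $x+y>-2\Lambda$), and the continuity/strict-monotonicity step converting the existence of a suitable $\ep'<\ep$ into the strict inequality at $\ep$ is fine, since $-2\Lambda-x\in(-\Lambda,\Lambda)$ for $v\in L^{\circ}$.

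There is, however, one genuine gap, in the ``only if'' direction of the $L^{\circ}$ case. Your computation only covers candidate boundary points of the special form $w=(-2\Lambda-y_0,y_0)\in\partial\mathbb{M}\cap L$, and you rule out all other boundary points by asserting, with Proposition \ref{sepregions} as justification, that any $w\in\partial\mathbb{M}$ at finite distance from $v\in L^{\circ}$ lies in $L^{\circ}\cup(\partial\mathbb{M}\cap L)$. But for $v\in L^{\circ}$ that proposition only gives $w\in L^{\circ}\cup\partial\mathbb{M}$; it says nothing about \emph{where} in $\partial\mathbb{M}$ the point $w$ sits, so as cited it does not exclude, say, a point on the upper boundary line or in a distant translate $T^{j}(\mathfrak{D}\cap\partial\mathbb{M})$. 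The localization does hold, but it needs the extra argument the paper uses: from an $\ep'$-interleaving one gets $v\preceq(\mathcal{T}_{\mathbb{M}}^{\varphi})_{\ep'}w\preceq(\mathcal{T}_{\mathbb{M}}^{\varphi})_{2\ep'}v$, so $(\mathcal{T}_{\mathbb{M}}^{\varphi})_{\ep'}w$ lies in the interval $[v,(\mathcal{T}_{\mathbb{M}}^{\varphi})_{2\ep'}v]$, which is contained in $L$; since the flow preserves each of the regions $T^{j}S$, $T^{j}L$, $T^{j}A$, this forces $w$ itself into $\partial\mathbb{M}\cap L$ (alternatively, intersect that interval with the line $s+t=-2\Lambda$ and read off $y\leq\rho^{\varphi}_{2\ep'}(-2\Lambda-x)$ directly, bypassing the parametrization of $w$ altogether). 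A secondary, minor point: dismissing $A^{\circ}$ as ``symmetric under $T$ or the coordinate swap'' is not literally accurate -- $T$ carries $A^{\circ}$ to $T(A^{\circ})$, not to $L^{\circ}$, and the reflection $(x,y)\mapsto(-y,-x)$ intertwines the flows only if $\varphi$ is odd -- but the case is indeed handled by the same computations with the two filtration directions exchanged, which is all the paper does as well.
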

 
 \begin{proof}
If $v$ lies in $T^kR$ for any of the regions $R$ in (\ref{regionlist}) other than $L^{\circ},A^{\circ},$ and $\mathfrak{D}\cap\partial\mathbb{M}$, then Proposition \ref{sepregions} shows that
 $d_{\mathrm{int}}^{\mathcal{T}_{\mathbb{M}}^{\varphi}}(v,w)=\infty$ 
	for all $w\in\partial\mathbb{M}$, so in this case the current proposition is equivalent to the statement that 
	$(\mathcal{T}_{\mathbb{M}}^{\varphi})_{2\ep}v \preceq Tv$ for all (finite) $\ep$.  
	But for each of these regions $R$ one may readily check that every element of $T^kR$ is less than (according to $\preceq$) every element of $T^{k+1}R$, so since $(\mathcal{T}_{\mathbb{M}}^{\varphi})_{2\ep}v\in T^kR$ and $Tv\in T^{k+1}R$ the conclusion follows.
	
	Since we assume $v\notin \partial\mathbb{M}$, there remain only the cases that $v\in T^kL^{\circ}$ and that $v\in T^{k}A^{\circ}$ (for some $k\in\Z$).
	
	Suppose that $v=T^k(x,y)$ where $(x,y)\in L^{\circ}$, so that $x<-\Lambda, y<\Lambda$, and $x+y>-2\Lambda$.  Thus $-\Lambda<-2\Lambda-x<y$.  We have $Tv=T^k(-2\Lambda-y,2\Lambda-x)$ while $(\mathcal{T}_{\mathbb{M}}^{\varphi})_{2\ep}v=T^k\left(-2\Lambda-\rho_{2\ep}^{\varphi}(-2\Lambda-x),\rho_{2\ep}^{\varphi}(y)\right)$.  
	Since $T^k$ respects $\preceq$ (in the sense that $v_1\preceq v_2\Leftrightarrow T^kv_1\preceq T^kv_2$) and since $\rho_{2\ep}^{\varphi}(y)<\Lambda<3\Lambda<2\Lambda-x$, the condition that
		 $(\mathcal{T}_{\mathbb{M}}^{\varphi})_{2\ep}(v)\npreceq Tv$ is equivalent to the condition that $-2\Lambda-\rho_{2\ep}^{\varphi}(-2\Lambda-x)<-2\Lambda-y$, \emph{i.e.} that $y<\rho_{2\ep}^{\varphi}(-2\Lambda-x)$.  
If there is $w\in \partial \mathbb{M}$ such that $d_{\mathrm{int}}^{\mathcal{T}_{\mathbb{M}}^{\varphi}}(v,w)< \ep$, then $w\in [v,(\mathcal{T}_{\mathbb{M}}^{\varphi})_{2\ep}v]$, which is contained in $T^kL$, so $w\in T^k(L\cap \partial\mathbb{M})$ and we can write $w=T^k(x',-2\Lambda-x')$ where $-3\Lambda< x'\leq-\Lambda$.  Moreover we will have $v\preceq (\mathcal{T}_{\mathbb{M}}^{\varphi})_{\ep'}w$ and $w\preceq (\mathcal{T}_{\mathbb{M}}^{\varphi})_{\ep'}v$ for some $   \ep'<\ep$; the first of these inequalities implies that  \[ y<\rho_{\ep}^{\varphi}(-2\Lambda-x')\] while the second implies that $x'>-2\Lambda-\rho_{\ep}^{\varphi}(-2\Lambda-x)$, \emph{i.e.} that $-2\Lambda-x'<\rho_{\ep}(-2\Lambda-x)$, and hence that \[ \rho_{\ep}^{\varphi}(-2\Lambda-x')<\rho_{2\ep}(-2\Lambda-x).\]  Thus, if $v=T^k(x,y)$ where $(x,y)\in L$ and if there is $w\in\partial\mathbb{M}$ with $d_{\mathrm{int}}^{\mathcal{T}_{\mathbb{M}}^{\varphi}}(v,w)< \ep$, we deduce that $y<\rho_{2\ep}^{\varphi}(-2\Lambda-x)$, which as noted earlier is equivalent (when $v\in T^kL^{\circ}$) to the statement that $(\mathcal{T}_{\mathbb{M}}^{\varphi})_{2\ep}(v)\npreceq Tv$. 

Conversely, suppose that $v=T^k(x,y)$ with $(x,y)\in L^{\circ}$ has $(\mathcal{T}_{\mathbb{M}}^{\varphi})_{2\ep}(v)\npreceq Tv$. Then $-2\Lambda-x<y<\rho_{2\ep}^{\varphi}(-2\Lambda-x)$, so for some $\delta$ with $0<\delta<\ep$ we have $y=\rho_{2\delta}^{\varphi}(-2\Lambda-x)$.  Let $y'=\rho_{\delta}^{\varphi}(-2\Lambda-x)=\rho_{-\delta}^{\varphi}(y)$, and $x'=-2\Lambda-y'$, so that $-2\Lambda-x'=\rho_{\delta}^{\varphi}(-2\Lambda-x)$.  

One then finds \[ (\mathcal{T}_{\mathbb{M}}^{\varphi})_{\delta}(T^k(x',y'))=T^k\left(-2\Lambda-\rho_{2\delta}^{\varphi}(-2\Lambda-x),y\right)\succeq T^k(x,y)=v\] and \[ (\mathcal{T}_{\mathbb{M}}^{\varphi})_{\delta}(v)= (\mathcal{T}_{\mathbb{M}}^{\varphi})_{\delta}(T^k(x,y))=T^k(x',\rho_{2\delta}^{\varphi}(y'))\succeq T^k(x',y').\]  Thus, letting $w=T^k(x',y')$, we have $w\in\partial\mathbb{M}$ with  $d_{\mathrm{int}}^{\mathcal{T}_{\mathbb{M}}^{\varphi}}(v,w)\leq \delta< \ep$.  

This completes the proof of the desired equivalence in the case that $v\in T^kL^{\circ}$.  The sole remaining case is that $v\in T^kA^{\circ}$; this is a mirror image of the one just analyzed and is left to the reader.
 \end{proof}

\subsection{Converse stability}

Stability theorems in persistent homology generally express the continuity of persistence diagrams with respect to their geometric or algebraic input. In algebraic contexts one can often also prove an inequality in the opposite direction, see, \emph{e.g.}, \cite[Section 8]{BL}.   In our case we have:

\begin{prop}\label{convstab}
	Let $\mathcal{C}$ and $\mathcal{X}$ be admissible $\Lambda$-bounded filtered cospans, with persistence diagrams $\mathcal{D}(\mathcal{C})$ and $\mathcal{D}(\mathcal{X})$.  Then \[ d_{\mathrm{int}}^{\mathcal{T}^{\varphi}}(\mathcal{X},\mathcal{C})\leq d_{B,\mathcal{T}_{\mathbb{M}}^{\varphi}}^{\mathbb{M},\partial\mathbb{M}}(\mathcal{D}(\mathcal{X}),\mathcal{D}(\mathcal{C})).  \]
\end{prop}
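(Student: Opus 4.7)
The plan is to run the standard ``matching gives interleaving'' argument, using the decomposition of admissible $\Lambda$-bounded filtered cospans into standard elementary summands (Corollary \ref{decompexists}, with uniqueness supplied by Corollary \ref{decompunique}) together with the explicit correspondence between these summands and the block modules in $\mathbb{M} \setminus \partial\mathbb{M}$ provided by Proposition \ref{blockclass}. Concretely, fix $\ep > d_{B,\mathcal{T}_{\mathbb{M}}^{\varphi}}^{\mathbb{M},\partial\mathbb{M}}(\mathcal{D}(\mathcal{X}),\mathcal{D}(\mathcal{C}))$ and choose an $\ep$-matching of $\mathcal{D}(\mathcal{X})$ and $\mathcal{D}(\mathcal{C})$ relative to $\partial\mathbb{M}$. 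Via Proposition \ref{blockclass} this matching descends to a pairing of standard elementary summands of $\mathcal{X}$ with those of $\mathcal{C}$, with a residual collection in each whose associated points of $\mathbb{M}$ lie within $d_{\mathrm{int}}^{\mathcal{T}_{\mathbb{M}}^{\varphi}}$-distance $\ep$ of $\partial\mathbb{M}$.

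The first step is an ``additivity of interleavings'' lemma: if $\mathcal{C} \cong \bigoplus_i \mathcal{C}_i$ and $\mathcal{X} \cong \bigoplus_i \mathcal{X}_i$ in $\mathsf{HCO}^{\Lambda}(\kappa)$, and each pair $(\mathcal{C}_i,\mathcal{X}_i)$ admits an $\ep$-interleaving with respect to the flow $\mathcal{T}^{\varphi}$, then so does $(\mathcal{C},\mathcal{X})$. This is formal given that $\mathcal{T}^{\varphi}_{\ep}$ commutes with direct sums (acting only on the filtration functions $\ell_{\uparrow}, \ell^{\downarrow}$) and that the structure natural transformations $\mathcal{T}^{\varphi}_{(0 \leq 2\ep)}$ are given by identities on the underlying complexes.

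The second, and main, step is a case-by-case construction of $\ep$-interleavings between individual summands. Proposition \ref{sepregions} implies that any two points of $\mathbb{M} \setminus \partial\mathbb{M}$ at finite $d_{\mathrm{int}}^{\mathcal{T}_{\mathbb{M}}^{\varphi}}$-distance lie in the same region $T^kR$ (with $R$ from the list \eqref{regionlist}), so a matched pair of standard elementary summands of $\mathcal{C}$ and $\mathcal{X}$ must be of the same type, with the same grading index $k$. For each such type (the eight types of Definition \ref{sesdef}), I would write down the interleaving maps explicitly: for example, a matched pair $(>_b^a)_k$ and $(>_{b'}^{a'})_k$ has $d_{\mathrm{int}}^{\mathcal{T}_{\mathbb{M}}^{\varphi}}(T^{-k}(a,b), T^{-k}(a',b')) \leq \ep$, which by direct computation with \eqref{mflow} amounts to $|\varphi^{-1}(a) - \varphi^{-1}(a')| \leq \ep$ and $|\varphi^{-1}(b) - \varphi^{-1}(b')| \leq \ep$, and one then takes the interleaving maps to be (up to scalar) the ``identity'' on each constituent complex, with zero chain-homotopy terms $K_{\uparrow}, K^{\downarrow}$; the required filtration inequalities $\ell^X \circ \alpha \leq \rho^{\varphi}_{-\ep}\ell^C$ etc.\ are exactly what the distance bound provides. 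The analogous computations for the remaining seven types are parallel.

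For the unmatched summands (paired to $\partial\mathbb{M}$), I need to show that a standard elementary summand $\mathcal{C}_0$ with $d_{\mathrm{int}}^{\mathcal{T}_{\mathbb{M}}^{\varphi}}(v(\mathcal{C}_0), \partial\mathbb{M}) \leq \ep$ admits an $\ep$-interleaving with the zero object. By Proposition \ref{sepregions}, only summands of types $(\uparrow_a^b)_k$ and $(\downarrow^a_b)_k$ can have finite distance to $\partial\mathbb{M}$, and Proposition \ref{bdrydist} together with \eqref{mflow} translates the distance bound into the inequality $\rho^{\varphi}_{2\ep}(a) \geq b$ (resp.\ $\rho^{\varphi}_{-2\ep}(a) \leq b$); this says precisely that after translating the filtration function by $-2\ep$ (resp.\ $+2\ep$), the two generators of $\mathcal{E}_k(a,b)_{\uparrow}$ (resp.\ $\mathcal{E}_k(a,b)^{\downarrow}$) land at the same filtration level and the complex becomes contractible as a filtered complex, which gives the required interleaving with $0$. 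Assembling these three steps via the additivity lemma produces an $\ep$-interleaving between $\mathcal{X}$ and $\mathcal{C}$, and taking the infimum over $\ep$ yields the inequality.

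The main obstacle is the second step, in which one must verify, with correct signs, that matched standard summands across the eight types (especially the subtler cases $(\nearrow_a)_k$, $(\searrow^a)_k$, $(\uparrow_a^\infty)_k$, $(\downarrow^a_{-\infty})_k$, $\square_k$, where the target chain complex $D_*$ is nonzero while one of $C_{\uparrow *}$ or $C^{\downarrow}_{*}$ is zero) really do admit $\ep$-interleavings for every $\ep$ exceeding the distance between the associated points of $\mathbb{M}$. The algebra is routine, but one must be careful that the filtration conditions translate correctly through the various cases of the definition of $\mathcal{T}_{\mathbb{M}}^{\varphi}$ on $S$, $L$, $A$.
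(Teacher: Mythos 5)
Your proposal is correct and follows essentially the same route as the paper's proof: decompose both cospans into standard elementary summands, use additivity of interleavings, interleave matched summands (which by Proposition \ref{sepregions} have the same type and grading) via identity maps on the underlying complexes with zero homotopy terms, and interleave the near-boundary summands (necessarily of type $(\uparrow_a^b)_k$ or $(\downarrow^a_b)_k$) with the zero object using Proposition \ref{bdrydist}. The only slight imprecision is in the last step: the translated filtration levels need not coincide — the inequalities $\rho^{\varphi}_{2\ep}(a)\geq b$, resp.\ $\rho^{\varphi}_{-2\ep}(a)\leq b$, are what make the identity morphism to the $2\ep$-shifted summand filtered-nullhomotopic (i.e., $\mathcal{T}^{\varphi}_{(0\le 2\ep)}$ vanishes on that summand in $\mathsf{HCO}^{\Lambda}(\kappa)$), which is exactly the mechanism the paper uses.
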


\begin{proof}
	Suppose that there is an $\ep$-matching between $\mathcal{D}(\mathcal{X})$ and $\mathcal{D}(\mathcal{C})$, where $0<\ep<\infty$. Up to isomorphism in $\mathsf{HCO}^{\Lambda}(\kappa)$, we then have decompositions of $\mathcal{C}$ and $\mathcal{X}$ into standard elementary summands: \[ \mathcal{C}=\left(\bigoplus_{i\in I}\mathcal{C}_i\right)\oplus \left(\bigoplus_{j\in J_1}\mathcal{C}'_j\right),\qquad \mathcal{X}=\left(\bigoplus_{i\in I}\mathcal{X}_i\right)\oplus \left(\bigoplus_{j\in J_2}\mathcal{X}'_j\right) \] where: \begin{itemize} \item For each $i\in I$, $H_0\circ\mathcal{F}(\mathcal{C}_i)\cong B^{c_i}$ and $H_0\circ\mathcal{F}(\mathcal{X}_i)\cong B^{x_i}$ for some $c_i,x_i\in\mathbb{M}\setminus\partial\mathbb{M}$ such that $d_{\mathrm{int}}^{\mathcal{T}_{\mathbb{M}}^{\varphi}}(c_i,x_i)\leq \ep$.  (Here the block modules $B^{c_i},B^{x_i}$ are defined as in (\ref{bvdef}).)
		\item Each $H_0\circ\mathcal{F}(\mathcal{C}'_j)$ and each $H_0\circ \mathcal{F}(\mathcal{X}'_j)$ is isomorphic to some block module $B^{y_j}$ with the property that, for some $z_j\in\partial\mathbb{M}$,   $d_{\mathrm{int}}^{\mathcal{T}_{\mathbb{M}}^{\varphi}}(y_j,z_j)\leq\ep$. \end{itemize}
 The standard elementary summands $\mathcal{C}_i,\mathcal{X}_i,\mathcal{C}'_j$, $\mathcal{X}'_j$ can be read off from the corresponding $c_i,x_i,y_j,$ and $z_j$ via the table in Proposition \ref{blockclass}. 
 
 We wish to show that there is an $\ep$-interleaving between $\mathcal{X}$ and $\mathcal{C}$.  Since the functor $\mathcal{T}_{\ep}\co \mathsf{HCO}^{\Lambda}(\kappa)\to\mathsf{HCO}^{\Lambda}(\kappa)$ is additive, a direct sum of $\ep$-interleavings is an $\ep$-interleaving, so it suffices to show that the $\mathcal{C}'_j$ and $\mathcal{X}'_j$ are each $\ep$-interleaved with the zero object of $\mathsf{HCO}^{\Lambda}(\kappa)$ (having $C_{\uparrow *}=C^{\downarrow}_{*}=D_*=\{0\}$), and that each $\mathcal{X}_i$ is $\ep$-interleaved with $\mathcal{C}_i$.  
 
 First let $\mathcal{Y}$ be any of the $\mathcal{C}'_j$ or $\mathcal{X}'_j$, with $H_0\circ\mathcal{F}(\mathcal{Y})\cong B^{y_j}$ where $d_{\mathrm{int}}^{\mathcal{T}_{\ep}^{\varphi}}(y_j,z_j)\leq \ep$ for some $z_j\in\partial\mathbb{M}$.  To show that $\mathcal{Y}$ is $\ep$-interleaved with the zero object, it suffices to show that the effect of the natural transformation $\mathcal{T}_{(0\leq 2\ep)}$ on $\mathcal{Y}$ is the zero map $\mathcal{Y}\to \mathcal{T}^{\varphi}_{2\ep}\mathcal{Y}$, since then the unique maps to and from the zero object will give an $\ep$-interleaving.
 
   Proposition \ref{sepregions} implies that, for some $k$, either $y_j\in T^kL^{\circ}$ or $y_j\in T^kA^{\circ}$.  The two cases are similar; let us assume that $y_j\in T^kA^{\circ}$, so $y_j=T^k(b,2\Lambda-a)$ where $\Lambda>a>b>-\Lambda$.  By Proposition \ref{blockclass},  $\mathcal{Y}$ must then be $(\downarrow^{a}_{b})_{-k-1}$.  We have $T(b,2\Lambda-a)=(-4\Lambda+a,2\Lambda-b)$ and, for any $\delta>0$, $(\mathcal{T}_{\mathcal{M}}^{\varphi})_{\delta}(b,2\Lambda-a)= (\rho_{-\delta}^{\varphi}(b),2\Lambda-\rho_{-\delta}^{\varphi}(a))$, so the existence of a point of $\partial\mathbb{M}$ within distance $\ep$ from $y_j$ implies via Proposition \ref{bdrydist} that $2\Lambda-\rho_{-\delta}^{\varphi}(a)>2\Lambda-b$ for all $\delta>2\ep$, and hence that \[ a\leq \rho_{2\ep}^{\varphi}(b).\] 
   Now $\mathcal{T}^{\varphi}_{(0\leq 2\ep),(\downarrow_{b}^{a})_{-k-1}}$ is the cohomology class in $\mathsf{HCO}^{\Lambda}_{\kappa}$ of the element \\$I\in \mathrm{Hom}^0_{\mathsf{CO}^{\Lambda}(\kappa)}(( \downarrow_{b}^{a})_{-k-1},\mathcal{T}^{\varphi}_{2\ep}(\downarrow_{b}^{a})_{-k-1})$ that is given by the identity on the underlying chain complexes of $(\downarrow_{b}^{a})_{-k-1}$, the only nontrivial component of which is the identity map on the underlying chain complex of the elementary descending chain complex $(\mathcal{E}_{-k-1}(a,b)^{\downarrow})$.  This identity map is homotopic to $0$ by the map $K^{\downarrow}$ that sends the generator in degree $-k-1$ (with filtration level $a$) to that in degree $-k$ (with filtration level $b$),  and the fact that $a\leq \rho_{2\ep}^{\varphi}(b)$ implies that $(-K^{\downarrow},0,0,0,0)$ is well-defined as an element of $\mathrm{Hom}_{\mathsf{CO}^{\Lambda}(\kappa)}^{-1}\left(( \downarrow_{b}^{a})_{-k-1},\mathcal{T}^{\varphi}_{2\ep}(\downarrow_{b}^{a})_{-k-1}  \right)$; evidently this element has differential equal to $I$ according to the definition in (\ref{diffl}).  So the cohomology class of $I$ vanishes.
 
Thus $\mathcal{T}^{\varphi}_{(0\leq 2\ep),\mathcal{Y}}=0$ in the case that $\mathcal{Y}$ is $\mathcal{C}'_j$ or $\mathcal{X}'_j$ and $y_j\in T^k A^{\circ}$.  Similarly, in the case that $y_j\in T^kL^{\circ}$, one finds that $\mathcal{Y}=(\uparrow_{a}^{b})_{-k-1}$ with $\rho_{-2\ep}^{\varphi}(b)\leq a$, which again implies that $\mathcal{T}^{\varphi}_{(0\leq 2\ep),\mathcal{Y}}=0$.  So the summands $\mathcal{C}'_j$ and $\mathcal{X}'_j$ are each $\ep$-interleaved with the zero object of $\mathsf{HCO}^{\Lambda}(\kappa)$.

We now turn to the summands $\mathcal{C}_i$ and $\mathcal{X}_i$, with $H_0\circ\mathcal{F}(\mathcal{C}_i)\cong B^{c_i}$ and $H_0\circ\mathcal{F}(\mathcal{X}_i)\cong B^{x_i}$ where $c_i,x_i\in\mathbb{M}\setminus\partial\mathbb{M}$ and $d_{\mathrm{int}}^{\mathcal{T}_{\mathbb{M}}^{\varphi}}(c_i,x_i)\leq \ep$.  Fix the index $i\in I$.  By Proposition \ref{sepregions}, the elements $c_i$ and $x_i$ both belong to the same $T^kR$ where $R$ is one of the eight regions  
 $S^{\circ},L^{\circ}, A^{\circ},(-\Lambda,\Lambda)\times\{-\Lambda\},(-\Lambda,\Lambda)\times\{\Lambda\},\{-\Lambda\}\times (-\Lambda,\Lambda),\{\Lambda\}\times (-\Lambda,\Lambda),$ or $\{(\Lambda,-\Lambda)\}$ from (\ref{regionlist}), corresponding to the eight lines in the table in Proposition \ref{blockclass}.  In particular, the underlying chain complexes of $\mathcal{C}_i$ and $\mathcal{X}_i$ are equal to each other, though the filtrations differ.  Using Proposition \ref{blockclass} and (\ref{mflow}), the fact that $d_{\mathrm{int}}^{\mathcal{T}_{\mathbb{M}}^{\varphi}}(c_i,x_i)\leq \ep$, \emph{i.e.}, that $c_i\preceq (\mathcal{T}^{\varphi}_{\mathbb{M}})_{\ep}x_i$ and $x_i\preceq (\mathcal{T}^{\varphi}_{\mathbb{M}})_{\ep}c_i$, is easily seen in each of the eight cases to imply that the identity maps on the underlying chain complexes respect the filtrations in such a way as to give well-defined morphisms in $\mathsf{HCO}^{\Lambda}(\kappa)$ both from $\mathcal{C}_i$ to $\mathcal{T}^{\varphi}_{\ep}\mathcal{X}_i$ and from $\mathcal{X}_i$ to $\mathcal{T}^{\varphi}_{\ep}\mathcal{C}_i$.  Since the natural transformation $\mathcal{T}_{(0\leq 2\ep)}$ is also given by the identity maps on the underlying chain complexes, this yields and $\ep$-interleaving between $\mathcal{C}_i$ and $\mathcal{X}_i$.
 
Combining these $\ep$-interleavings between standard elementary summands, we conclude that the $\ep$-matching between $\mathcal{D}(\mathcal{X})$ and $\mathcal{D}(\mathcal{C})$ implies the existence of an $\ep$-interleaving between $\mathcal{X}$ and $\mathcal{C}$, from which the proposition immediately follows.  
\end{proof}

\subsection{Local stability for relative interlevel set homology}

Recall that the output of relative interlevel set homology as in \cite{BBF20},\cite{BBF21} (with coefficients in a field $\kappa$) is a functor $\mathbb{M}\to \mathsf{Vect}_{\kappa}$ vanishing on $\partial\mathbb{M}$ which is \emph{homological} in the sense of \cite[Definition C.3]{BBF21}.  When such a functor $F$ is sequentially continuous (\cite[Definition 2.4]{BBF21}), \cite[Theorem 3.21]{BBF21} asserts that $F$ decomposes as a direct sum of block modules $B^v$ (see (\ref{bvdef})) where each $v\in\mathbb{M}\setminus\partial\mathbb{M}$, and we defined the persistence diagram $\mathcal{D}(F)$ to be the multiset of elements of $\mathbb{M}\setminus\partial\mathbb{M}$ arising in this decomposition. One may then compare the interleaving and bottleneck distances between such functors and their associated persistence diagrams.  While we do not prove a full stability result in this context, we do prove the following local statement.

\begin{prop}\label{rishstab}
	Let $F\in\mathrm{Vect}_{\kappa}^{\mathbb{M}}$ be a sequentially continuous homological functor vanishing on $\partial\mathbb{M}$ whose persistence diagram $\mathcal{D}(F)$ assigns nonzero multiplicity to only finitely many points in $\mathbb{M}\setminus\partial\mathbb{M}$.  Then  there is $\delta_F>0$ such that, for any sequentially continuous homological functor $G: \mathbb{M}\to\mathsf{Vect}_{\kappa}$ that vanishes on $\partial\mathbb{M}$ and satisfies $d_{\mathrm{int}}^{\mathcal{T}_{\mathsf{Vect}_{\kappa}^{\mathbb{M}}}^{\varphi}}(G,F)< \delta_F$, 
		we have \[ d_{B,\mathcal{T}_{\mathsf{Vect}_{\kappa}^{\mathbb{M}}}^{\varphi}}^{\mathbb{M},\partial\mathbb{M}}(\mathcal{D}(G),\mathcal{D}(F))\leq d_{\mathrm{int}}^{\mathcal{T}_{\mathsf{Vect}_{\kappa}^{\mathbb{M}}}^{\varphi}} (G,F).\]		
\end{prop}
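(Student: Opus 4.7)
The plan is to exploit the finiteness of $\mathcal{D}(F)$ to reduce the problem to a direct matching argument in which each block of $F$ is either matched to a nearby block of $G$ (using the interleaving maps restricted to that summand) or matched to $\partial\mathbb{M}$ (if it was already close to the boundary). First I would partition $\mathcal{D}(F)=\mathcal{D}_\infty(F)\sqcup\mathcal{D}_{\mathrm{fin}}(F)$ according to whether $d_{\mathrm{int}}^{\mathcal{T}_{\mathbb{M}}^{\varphi}}(v,\partial\mathbb{M})$ is infinite or finite; by Proposition \ref{sepregions}, the latter set consists only of points in the interiors $T^kL^{\circ}$ or $T^kA^{\circ}$. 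Since $\mathcal{D}(F)$ is finite, I can then choose $\delta_F>0$ to be at most one quarter of the minimum over: all pairwise distances $d_{\mathrm{int}}^{\mathcal{T}_{\mathbb{M}}^{\varphi}}(v,v')$ with $v,v'\in\mathcal{D}(F)$ distinct but at finite mutual distance, and all distances $d_{\mathrm{int}}^{\mathcal{T}_{\mathbb{M}}^{\varphi}}(v,\partial\mathbb{M})$ for $v\in\mathcal{D}_\infty(F)$. With this choice, for any $\epsilon<\delta_F$, the open $(2\epsilon)$-balls around points of $\mathcal{D}_\infty(F)$ in the extended pseudometric $d_{\mathrm{int}}^{\mathcal{T}_{\mathbb{M}}^{\varphi}}$ are pairwise disjoint and disjoint from $\partial\mathbb{M}$.

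Given an $\epsilon$-interleaving $(\alpha,\beta)$ between $G$ and $F$ with $\epsilon<\delta_F$, I would next build the matching. For each $v\in\mathcal{D}_\infty(F)$, the block $B^v$ sits in a region $T^kR$ with $R$ one of the eight subsets from $(\ref{regionlist})$ that is preserved by every $(\mathcal{T}^{\varphi}_{\mathbb{M}})_{\epsilon}$; by a rank-counting / dimension argument applied to the structure maps of $F$ and $G$ in a $(2\epsilon)$-neighborhood of $v$, together with the fact that $\alpha$ and $\beta$ compose via $\mathcal{T}_{(0\leq2\epsilon)}$ to structure maps inside this neighborhood which must be identities on every $B^v$ component, I can show that the multiplicity of points of $\mathcal{D}(G)$ in the $\epsilon$-ball around $v$ is equal to the multiplicity of $v$ in $\mathcal{D}(F)$. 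The separation property of $\delta_F$ ensures that these $\epsilon$-balls around distinct $v$'s are disjoint, so this produces a well-defined partial bijection matching each $v\in\mathcal{D}_\infty(F)$ with (equally many) points of $\mathcal{D}(G)$ within distance $\epsilon$. For points $v\in\mathcal{D}_{\mathrm{fin}}(F)$, a similar but easier analysis inside the region $T^kL^{\circ}$ or $T^kA^{\circ}$ (using the explicit form of elementary summands like $(\uparrow_a^b)_k$ and $(\downarrow_b^a)_k$) either produces a matching partner in $\mathcal{D}(G)$ within $\epsilon$, or, if $d_{\mathrm{int}}^{\mathcal{T}_{\mathbb{M}}^{\varphi}}(v,\partial\mathbb{M})\leq \epsilon$, allows me to leave $v$ unmatched and covered by the $\partial\mathbb{M}$ clause in the definition of an $\epsilon$-matching.

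Finally, I would need to account for points $w\in\mathcal{D}(G)$ not already matched by the above procedure. For such a $w$, the rank-counting in a neighborhood of $w$ under the reverse interleaving maps $\beta$ then $\alpha$ shows that either (a) $w$ lies within $\epsilon$ of some $v\in\mathcal{D}(F)$ that has already absorbed all local multiplicity of $F$ but whose $(2\epsilon)$-ball intersects $w$, in which case we re-index the matching to accommodate $w$; (b) $w$ lies within $\epsilon$ of $\partial\mathbb{M}$ (so is covered by the $\Delta$-clause); or (c) $w$ is far from both $\mathcal{D}(F)$ and $\partial\mathbb{M}$, which contradicts the interleaving as it would force an extra block of $F$ to appear via the composition $\alpha\circ\beta=\mathcal{T}_{(0\leq 2\epsilon),G}$.

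The main obstacle I anticipate is case (c) in the previous paragraph, i.e., ensuring that $\mathcal{D}(G)$ cannot have exotic points that are far from both $\mathcal{D}(F)$ and $\partial\mathbb{M}$. This requires an induced-matching argument in the spirit of Bauer--Lesnick, adapted to the block-module decomposition of homological functors on $\mathbb{M}$ rather than to intervals in $\mathbb{R}$. Concretely, I would apply $H_0\circ\mathcal{F}$-like reasoning to reduce the question to the behavior of $\alpha$ and $\beta$ on individual $B^v$-summands, use the block-preserving property of the flow $\mathcal{T}_{\mathbb{M}}^{\varphi}$ on each region $T^kR$, and exploit Proposition \ref{sepregions} to prevent blocks from one region of $G$ from being ``seen'' in a different region by $F$. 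Once that is handled, the remaining manipulations—packaging everything into a bijection $\mathcal{D}(G)'\to\mathcal{D}(F)'$ with the required distance bound and the required coverage by $\partial\mathbb{M}$—are routine bookkeeping.
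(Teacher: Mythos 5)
Your high-level strategy (use finiteness of $\mathcal{D}(F)$ to pick a separating $\delta_F$, then show the interleaving forces the local multiplicities of $\mathcal{D}(G)$ near each $v\in\mathcal{D}(F)$ to agree with $m(v,F)$, and finally rule out points of $\mathcal{D}(G)$ far from both $\mathcal{D}(F)$ and $\partial\mathbb{M}$) is the same as the paper's, but the proposal has a genuine gap at its center: the ``rank-counting / dimension argument'' that is supposed to equate multiplicities is never actually specified, and the one mechanism you do invoke --- that the compositions of $\alpha$ and $\beta$ ``must be identities on every $B^v$ component'' --- is unjustified, since interleaving morphisms between $F$ and $G$ need not respect the block decompositions at all; only the compositions with shift maps are controlled. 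The paper resolves exactly this point by introducing a concrete invariant: for $w$ not too close to $\partial\mathbb{M}$ it sets $n_{\delta}(w,G)=\mathrm{rank}\bigl(G((\mathcal{T}^{\varphi}_{\mathbb{M}})_{\delta}w)\to \underline{G}((\mathcal{T}^{\varphi}_{\mathbb{M}})_{-\delta}Tw)\bigr)$, where $\underline{G}(w')=\varinjlim_{u\ll w'}G(u)$; the colimit is essential because the support of a block $B^v$ is half-open, so evaluation at a single point near the far corner $Tv$ would not isolate precisely the blocks $B^{w_i}$ with $d_{\mathrm{int}}^{\mathcal{T}^{\varphi}_{\mathbb{M}}}(w,w_i)\le\delta$. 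With this invariant in hand, the interleaving gives factorizations yielding $n_{\ep'-\ep}(v,F)\le n_{\ep'}(v,G)\le n_{\ep'+\ep}(v,F)$, and both outer terms equal $m(v,F)$ once $\delta_F$ separates the points of $\mathcal{D}(F)$ from each other and from $\partial\mathbb{M}$ by more than $2\delta_F$. Without some such device, your claimed multiplicity identity is an assertion, not a proof.

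The second unresolved step you flag yourself --- excluding ``exotic'' points $w\in\mathcal{D}(G)$ far from both $\mathcal{D}(F)$ and $\partial\mathbb{M}$ --- does not require a Bauer--Lesnick induced-matching theorem adapted to $\mathbb{M}$; in the paper it is a one-line consequence of the same invariant: the factorization $G((\mathcal{T}^{\varphi}_{\mathbb{M}})_{\ep'-\ep}w)\to F((\mathcal{T}^{\varphi}_{\mathbb{M}})_{\ep'}w)\to \underline{F}((\mathcal{T}^{\varphi}_{\mathbb{M}})_{-\ep'}Tw)\to \underline{G}((\mathcal{T}^{\varphi}_{\mathbb{M}})_{\ep-\ep'}Tw)$ has zero middle map when $w$ is farther than $\ep'$ from every point of $\mathcal{D}(F)$ and from $\partial\mathbb{M}$, so $n_{\ep'-\ep}(w,G)=0$ and $w$ cannot occur in $\mathcal{D}(G)$. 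Two smaller issues: your $\delta_F$ is built from distances to $\partial\mathbb{M}$ of points in $\mathcal{D}_{\infty}(F)$, which are infinite and so impose no constraint (the paper instead requires all points of $\mathcal{D}(F)$, including those at finite distance from the boundary, to be farther than $2\delta_F$ from $\partial\mathbb{M}$, which is possible since the diagram is finite and simplifies the bookkeeping you do for $\mathcal{D}_{\mathrm{fin}}(F)$); and the appeal to elementary summands such as $(\uparrow_a^b)_k$ is out of place here, since $F$ and $G$ are abstract homological functors on $\mathbb{M}$, not filtered cospans --- the only decomposition available is into the blocks $B^v$.
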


\begin{proof}
We remark first that the formula defining the maps $(\mathcal{T}_{\mathbb{M}}^{\varphi})_{\ep}\co \mathbb{M}\to\mathbb{M}$ from (\ref{mflow}) makes sense for all $\ep\in \mathbb{R}$, not just $\ep\in [0,\infty)$; below we will use these maps $(\mathcal{T}_{\mathbb{M}}^{\varphi})_{\ep}$ for all $\ep\in\R$.

Recall that the partial order on $\mathbb{M}$ is defined by saying that $(s,t)\preceq (s',t')$ iff $s\geq s'$ and $t\leq t'$.  Let us introduce the notation $(s,t)\ll (s',t')$ to signify the stronger condition that both $s>s'$ and $t<t'$.

With this said, for $v\in \mathbb{M}\setminus\partial\mathbb{M}$, the block module $B^v$ of (\ref{bvdef})  has the property that, for $w\preceq w'$, we have \begin{equation} \mathrm{rank}(B^v(w)\to B^v(w'))=\left\{\begin{array}{ll} 1 & \mbox{if }v\preceq w\preceq w'\ll T(v)\\ 0 & \mbox{otherwise}\end{array}\right..\label{blockrank1}\end{equation}

For any functor $G\co \mathbb{M}\to \mathsf{Vect}_{\kappa}$, and any $w\in \mathbb{M}$, the structure maps of $G$ make $\{G(v)|v\ll w\}$ into a directed system.  So we define \[ \underline{G}(w)=\varinjlim_{v\ll w}G(v).\]  Thus we have a canonical map $G(v)\to \underline{G}(w)$ whenever $v\ll w$.  In the special case $G=B^v$, (\ref{blockrank1}) yields, for $w\ll w'$,  
\begin{equation} \mathrm{rank}(B^v(w)\to \underline{B}^v(w'))=\left\{\begin{array}{ll} 1 & \mbox{if }v\preceq w\ll w'\preceq T(v)\\ 0 & \mbox{otherwise}\end{array}\right..\label{blockrank2}\end{equation}
Given  $w\in \mathbb{M}\setminus\partial\mathbb{M}$ and a positive number $\delta$ such that $(\mathcal{T}_{\mathbb{M}}^{\varphi})_{2\delta}w\ll Tw$, and given any functor $G\co \mathbb{M}\to\mathsf{Vect}_{\kappa}$ that vanishes on $\partial\mathbb{M}$, let \[ n_{\delta}(w,G)=\mathrm{rank}\left(G\left((\mathcal{T}^{\varphi}_{\mathbb{M}})_{\delta}w\right)\to \underline{G}\left((\mathcal{T}^{\varphi}_{\mathbb{M}})_{-\delta}Tw \right)\right).\]  (The condition on $\delta$, which by Proposition \ref{bdrydist} will hold provided that $\inf\{d_{\mathrm{int}}^{\mathcal{T}_{\mathbb{M}^{\varphi}}}(w,w')|w'\in\partial\mathbb{M}\}>\delta$, ensures that the map $G\left((\mathcal{T}^{\varphi}_{\mathbb{M}})_{\delta}w\right)\to \underline{G}\left((\mathcal{T}^{\varphi}_{\mathbb{M}})_{-\delta}Tw\right)$ is well-defined.) 
	
	Assuming $G$ to decompose as a direct sum of block modules, $G=\oplus_{i\in I}B^{w_i}$, we see from (\ref{blockrank2}) that (continuing to assume that $(\mathcal{T}_{\mathbb{M}}^{\varphi})_{2\delta}w\ll Tw$) \begin{align}\nonumber n_{\delta}(w,G)&=\#\{i\in I|w_i\preceq (\mathcal{T}^{\varphi}_{\mathbb{M}})_{\delta}w\ll (\mathcal{T}^{\varphi}_{\mathbb{M}})_{-\delta}Tw\preceq Tw_i \}\\  \nonumber &= \#\{i\in I| w_i\preceq (\mathcal{T}^{\varphi}_{\mathbb{M}})_{\delta}w\mbox{ and }w\preceq (\mathcal{T}^{\varphi}_{\mathbb{M}})_{\delta}w_i\} \\&= \#\left\{i\in I\left|d_{\mathrm{int}}^{\mathcal{T}_{\mathbb{M}}^{\varphi}}(w,w_i) \leq \delta  \right.\right\}\label{blockmult} \end{align}
	
	Let $F$ be as in the hypotheses of the proposition, and take $\delta_F>0$ sufficiently small that every point occurring with nonzero multiplicity in $\mathcal{D}(F)$ lies a distance (as measured by $d_{\mathrm{int}}^{\mathcal{T}_{\mathbb{M}}^{\varphi}}$) greater than $2\delta_F$ from $\partial\mathbb{M}$, and also that any two distinct points occurring with nonzero multiplicity in $\mathcal{D}(F)$ lie a distance greater than $2\delta_F$ from each other.  For any $v\in\mathbb{M}\setminus\partial\mathbb{M}$ let $m(v,F)$ denote the multiplicity of $v$ in the persistence diagram of $F$.  Then by Proposition \ref{bdrydist}, each $v$ occurring in the persistence diagram of $F$ has $(\mathcal{T}_{\mathbb{M}}^{\varphi})_{4\delta_F}v\preceq Tv$, so that $n_{\delta}(v,G)$ is defined for any $\delta<2\delta_F$ and any functor $G\co\mathbb{M}\to \mathrm{Vect}_{\kappa}$, and  (\ref{blockmult}) implies that \[ m(v,F)=n_{\delta}(v,F)\mbox{ whenever } m(v,F)\neq 0\mbox{ and }0<\delta<2\delta_F.  \]
	
	Now suppose that $G$, as in the hypothesis of the proposition, has $d_{\mathrm{int}}^{\mathcal{T}_{\mathsf{Vect}_{\kappa}^{\mathbb{M}}}^{\varphi}}(G,F)< \delta_F$, so that there is an $\ep$-interleaving between $F$ and $G$ for some $\ep<\delta_F$.  For any $\ep'$ with $\ep<\ep'<\delta_F$, and any $v$ occurring with nonzero multiplicity in $\mathcal{D}(F)$, this $\ep$-interleaving gives rise to factorizations 
	\[ F((\mathcal{T}_{\mathbb{M}}^{\varphi})_{\ep'-\ep}v)\to G((\mathcal{T}_{\mathbb{M}}^{\varphi})_{\ep'}v)\to \underline{G}((\mathcal{T}_{\mathbb{M}}^{\varphi})_{-\ep'}Tv)\to \underline{F}((\mathcal{T}_{\mathbb{M}}^{\varphi})_{\ep-\ep'}Tv) \] and \[  G((\mathcal{T}_{\mathbb{M}}^{\varphi})_{\ep'}v)\to F((\mathcal{T}_{\mathbb{M}}^{\varphi})_{\ep'+\ep}v)\to \underline{F}((\mathcal{T}_{\mathbb{M}}^{\varphi})_{-\ep'-\ep}Tv)\to  \underline{G}((\mathcal{T}_{\mathbb{M}}^{\varphi})_{-\ep'}Tv), \] which together imply that \[ n_{\ep'-\ep}(v,F)\leq n_{\ep'}(v,G)\leq n_{\ep'+\ep}(v,F)\]  whenever $m(v,F)\neq 0$, $\ep<\ep'<\delta$, and $F$ and $G$ are $\ep$-interleaved.  But in this situation $n_{\ep'-\ep}(v,F)=n_{\ep'+\ep}(v,F)=m(v,F)$.  Thus, decomposing $G$ as $G=\oplus_{i\in I}B^{w_i}$, we see from (\ref{blockmult}) that, if $G$ and $F$ are $\ep$-interleaved and $\ep<\ep'<\delta_F$, \[ \#\left\{i\in I\left|d_{\mathrm{int}}^{\mathcal{T}_{\mathbb{M}}^{\varphi}}(v,w_i)\leq \ep'   \right. \right\}=m(v,F)\mbox{ whenever }m(v,F)\neq 0.  \]  Meanwhile, if a point $w\in\mathbb{M}$ lies a distance greater than $\ep'$ both from all points of $\partial\mathbb{M}$ and from  all points having nonzero multiplicity in $\mathcal{D}(F)$, then the $\ep$-interleaving induces a factorization \[  G((\mathcal{T}_{\mathbb{M}}^{\varphi})_{\ep'-\ep}w)\to F((\mathcal{T}_{\mathbb{M}}^{\varphi})_{\ep'}w)\to \underline{F}((\mathcal{T}_{\mathbb{M}}^{\varphi})_{-\ep'}Tw)\to \underline{G}((\mathcal{T}_{\mathbb{M}}^{\varphi})_{\ep-\ep'}Tv)  \] where the middle map is zero by (\ref{blockmult}); this shows that $n_{\ep'-\ep}(w,G)=0$ and hence that $w$ does not appear in the persistence diagram of $G$.
	
	We conclude that when $F$ and $G$ are $\ep$-interleaved and $\ep<\ep'<\delta_F$, there are exactly $m(v,F)$ points in $\mathcal{D}(G)$, counted with multiplicity, that lie within $\ep'$ of any point $v$ with $m(v,F)\neq 0$, and that any points of $\mathcal{D}(G)$ that are not within distance $\ep'$ of any points of $\mathcal{D}(F)$ must lie within distance $\ep'$ of some point of $\partial\mathbb{M}$.  Our assumption that the distinct points of $\mathcal{D}(F)$ are further than $2\delta_F$ away from each other implies that a point of $\mathcal{D}(G)$ cannot be within $\ep'$ of two different points of $\mathcal{D}(F)$, and the assumption that the points of $\mathcal{D}(F)$ are further than $2\delta_F$ away from $\partial\mathbb{M}$ implies that   a point of $\mathcal{D}(G)$ cannot be within $\ep'$ of both a point of $\mathcal{D}(F)$ and a point of $\partial\mathbb{M}$.
	
	So we obtain an $\ep'$-matching between $\mathcal{D}(\mathcal{G})$ and $\mathcal{D}(F)$ relative to $\partial\mathbb{M}$ by assigning each point  $w$ of $\mathcal{D}(G)$ that is not within $\ep'$ of some point of $\partial\mathbb{M}$ to (one of the $m(v,F)$ copies of) the unique point $v$ of $\mathcal{D}(F)$ that is within $\ep'$ of $w$.
	
	We have thus shown that if $0<\ep<\delta_F$ and if there is an $\ep$-interleaving between $F$ and $G$ then, for all $\ep'>\ep$, there is an $\ep'$-matching between $\mathcal{D}(G)$ and $\mathcal{D}(F)$ relative to $\partial\mathbb{M}$.  The proposition now follows by taking $\ep'$ arbitarily close to $\ep$. 
\end{proof}

This quickly yields the following local stability result for filtered cospans, which we will later leverage to prove a full stability result.

\begin{cor}\label{localhco}
	Let $\mathcal{C}$ be an admissible filtered cospan such that the persistence diagram $\mathcal{D}(\mathcal{C})$ assigns nonzero multiplicity to only finitely many points of $\mathbb{M}\setminus\partial\mathbb{M}$.  Then there is $\delta_{\mathcal{C}}>0$ such that, for any other admissible filtered cospan $\mathcal{X}$ with
	 $d_{\mathrm{int}}^{\mathcal{T}^{\varphi}}(\mathcal{X},\mathcal{C})<\delta_{\mathcal{C}}$, we have
	  \[ d_{B,\mathcal{T}^{\varphi}_{\mathbb{M}}}^{\mathbb{M},\partial\mathbb{M}}(\mathcal{D}(\mathcal{X}),\mathcal{D}(\mathcal{C}))\leq d_{\mathrm{int}}^{\mathcal{T}^{\varphi}}(\mathcal{X},\mathcal{C}). \] 
\end{cor}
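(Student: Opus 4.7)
The plan is to derive the corollary from the vector-space version, Proposition \ref{rishstab}, by transporting the hypothesis and conclusion along the functor $H_0\circ\mathcal{F}\co \mathsf{HCO}^{\Lambda}(\kappa)\to \mathsf{Vect}_{\kappa}^{\mathbb{M}}$. Concretely, I will set $F=H_0\circ\mathcal{F}(\mathcal{C})$ and $G=H_0\circ\mathcal{F}(\mathcal{X})$ and then argue that everything needed for an application of Proposition \ref{rishstab} is already available from results earlier in the paper, and that the hypothesis on $\mathcal{C}$ translates into the corresponding finiteness hypothesis on $F$.

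The first step is to verify the input hypotheses of Proposition \ref{rishstab}. By Corollary \ref{cohfunct}, $F$ and $G$ are homological and vanish on $\partial\mathbb{M}$; sequential continuity for admissible filtered cospans was noted just after Definition \ref{persrish} (the relevant filtered subcomplexes stabilize along decreasing sequences in $\mathbb{M}$ because of admissibility). Proposition \ref{blockclass} gives an explicit bijection between the standard elementary summands appearing in a decomposition of $\mathcal{C}$ (as in Corollary \ref{decompexists}) and the block modules $B^v$ with $v\in \mathbb{M}\setminus\partial\mathbb{M}$ appearing in the block decomposition of $F$; by definition $\mathcal{D}(\mathcal{C})=\mathcal{D}(F)$, so the finite-support hypothesis on $\mathcal{D}(\mathcal{C})$ gives exactly the finite-support hypothesis needed on $\mathcal{D}(F)$. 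Choose $\delta_{\mathcal{C}}:=\delta_F$ as supplied by Proposition \ref{rishstab}.

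The second step transports interleavings through the functor. Proposition \ref{flowfunctor} asserts that $\mathcal{F}$, and hence $H_0\circ\mathcal{F}$, is strictly $[0,\infty)$-equivariant with respect to the flows $\mathcal{T}^{\varphi}$ on $\mathsf{HCO}^{\Lambda}(\kappa)$ and $\mathcal{T}^{\varphi}_{\mathsf{Vect}_{\kappa}^{\mathbb{M}}}$ on $\mathsf{Vect}_{\kappa}^{\mathbb{M}}$. A general observation about strictly equivariant functors (already used implicitly elsewhere in the paper) is that they send $\varepsilon$-interleavings to $\varepsilon$-interleavings: if $(\alpha,\beta)$ is an $\varepsilon$-interleaving between $\mathcal{X}$ and $\mathcal{C}$, then $(H_0\circ\mathcal{F}(\alpha), H_0\circ\mathcal{F}(\beta))$ is an $\varepsilon$-interleaving between $G$ and $F$, because both the interleaving equations (\ref{intlvcrit}) and the structure natural transformations $\mathcal{T}_{(0\leq 2\varepsilon)}$ are preserved strictly. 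Consequently
\[
d_{\mathrm{int}}^{\mathcal{T}_{\mathsf{Vect}_{\kappa}^{\mathbb{M}}}^{\varphi}}(G,F)\leq d_{\mathrm{int}}^{\mathcal{T}^{\varphi}}(\mathcal{X},\mathcal{C}).
\]

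For the third step, assume $d_{\mathrm{int}}^{\mathcal{T}^{\varphi}}(\mathcal{X},\mathcal{C})<\delta_{\mathcal{C}}$. The inequality above then forces $d_{\mathrm{int}}^{\mathcal{T}_{\mathsf{Vect}_{\kappa}^{\mathbb{M}}}^{\varphi}}(G,F)<\delta_F$, so Proposition \ref{rishstab} applies and yields $d_{B,\mathcal{T}^{\varphi}_{\mathbb{M}}}^{\mathbb{M},\partial\mathbb{M}}(\mathcal{D}(G),\mathcal{D}(F))\leq d_{\mathrm{int}}^{\mathcal{T}_{\mathsf{Vect}_{\kappa}^{\mathbb{M}}}^{\varphi}}(G,F)$. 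Combining with the previous display and recalling that $\mathcal{D}(\mathcal{X})=\mathcal{D}(G)$ and $\mathcal{D}(\mathcal{C})=\mathcal{D}(F)$ gives the desired bound. The only subtlety — and the main thing to check carefully — is the functorial preservation of interleavings in the second step; this requires only unwinding the definition of a strict flow and using that $H_0\circ\mathcal{F}$ commutes strictly (not merely up to natural isomorphism) with the $\mathcal{T}^{\varphi}_{\varepsilon}$, which is precisely what Proposition \ref{flowfunctor} supplies.
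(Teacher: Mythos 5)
Your proposal is correct and follows essentially the same route as the paper's own proof: transport an $\ep$-interleaving through the strictly equivariant functor $H_0\circ\mathcal{F}$ (Proposition \ref{flowfunctor}) to get $d_{\mathrm{int}}^{\mathcal{T}_{\mathsf{Vect}_{\kappa}^{\mathbb{M}}}^{\varphi}}(H_0\circ\mathcal{F}(\mathcal{X}),H_0\circ\mathcal{F}(\mathcal{C}))\leq d_{\mathrm{int}}^{\mathcal{T}^{\varphi}}(\mathcal{X},\mathcal{C})$, and then apply Proposition \ref{rishstab} with $\delta_{\mathcal{C}}=\delta_{H_0\circ\mathcal{F}(\mathcal{C})}$. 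Your additional verification of the hypotheses of Proposition \ref{rishstab} (homological, vanishing on $\partial\mathbb{M}$, sequential continuity, finite support) is a slightly more explicit account of points the paper leaves implicit, but the argument is the same.
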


\begin{proof}
	Recall that, by definition, $\mathcal{D}(\mathcal{C})=\mathcal{D}(H_0\circ \mathcal{F}(\mathcal{C}))$ and likewise $\mathcal{D}(\mathcal{X})=\mathcal{D}(H_0\circ \mathcal{F}(\mathcal{X}))$.  In view of Proposition \ref{flowfunctor} and the evident analogous statement for the functor 
	$H_0\co \mathsf{K}(\kappa)^{\mathbb{M}}\to \mathsf{Vect}_{\kappa}^{\mathbb{M}}$, an $\ep$-interleaving between $\mathcal{C}$ and $\mathcal{X}$ is carried by $H_0\circ\mathcal{F}$ to an $\ep$-interleaving between $H_0\circ \mathcal{F}(\mathcal{C})$ and $H_0\circ \mathcal{F}(\mathcal{X})$.  Thus  
	 $d_{\mathrm{int}}^{\mathcal{T}^{\varphi}_{\mathsf{Vect}_{\mathbb{M}}^{\kappa}}}(H_0\circ\mathcal{F}(\mathcal{X}),H_0\circ \mathcal{F}(\mathcal{C}))\leq d_{\mathrm{int}}^{\mathcal{T}^{\varphi}}(\mathcal{X},\mathcal{C})$ (cf. \cite[Theorem 4.3]{dSMS}).  The corollary is now immediate from Proposition \ref{rishstab}, taking $\delta_{\mathcal{C}}=\delta_{H_0\circ\mathcal{F}(\mathcal{C})}$.
\end{proof}

\subsection{The stability theorem}

Let us say that a $\Lambda$-bounded filtered cospan \\ $\mathcal{C}=\left((C_{\uparrow *},\partial_{\uparrow}^C,\ell_{\uparrow}^C),(C^{\downarrow}_{*},\partial^{\downarrow}_C,\ell^{\downarrow}_C),(D_*,\partial_D),\psi_{\uparrow},\psi^{\downarrow}\right)$ is \emph{finitely-generated} if each of the chain complexes $C_{\uparrow *},C^{\downarrow}_{*},D_*$ is finitely-generated as a $\kappa$-vector space, and that $\mathcal{C}$ is of finite type if $\mathcal{C}$ is isomorphic in $\mathsf{HCO}^{\Lambda}(\kappa)$ to a finitely-generated filtered cospan.  This implies that we have a well-defined persistence diagram $\mathcal{D}(\mathcal{C})$, which is a finite multiset of elements of $\mathbb{M}\setminus\partial\mathbb{M}$.

Here is our main stability theorem:

\begin{theorem}\label{mainstab}
	Let $\mathcal{C}$ and $\mathcal{X}$ be two $\Lambda$-bounded filtered cospans that are of finite type.  Then $d_{B,\mathcal{T}_{\mathbb{M}}^{\varphi}}^{\mathbb{M},\partial\mathbb{M}}(\mathcal{D}(\mathcal{X}),\mathcal{D}(\mathcal{C}))\leq d_{\mathrm{int}}^{\mathcal{T}^{\varphi}}(\mathcal{X},\mathcal{C})$.
\end{theorem}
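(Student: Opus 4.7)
The plan is to bootstrap Corollary \ref{localhco} (which gives stability in a neighborhood of a finite-type filtered cospan) to the global statement via an interpolation argument, thereby filling the gap between the pointwise nature of local stability and the global bottleneck bound. Fix $\ep > d_{\mathrm{int}}^{\mathcal{T}^{\varphi}}(\mathcal{X}, \mathcal{C})$; it suffices to exhibit an $\ep$-matching between $\mathcal{D}(\mathcal{X})$ and $\mathcal{D}(\mathcal{C})$ relative to $\partial\mathbb{M}$, since letting $\ep$ decrease to $d_{\mathrm{int}}^{\mathcal{T}^{\varphi}}(\mathcal{X}, \mathcal{C})$ then yields the theorem. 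By hypothesis there is an $\ep$-interleaving $(\mathfrak{a}, \mathfrak{b})$ between $\mathcal{X}$ and $\mathcal{C}$.

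The key step I would carry out is to promote this interleaving to a one-parameter family of finite-type $\Lambda$-bounded filtered cospans $\{\mathcal{C}^{(s)}\}_{s \in [0, \ep]}$ with $\mathcal{C}^{(0)} \cong \mathcal{X}$, $\mathcal{C}^{(\ep)} \cong \mathcal{C}$ in $\mathsf{HCO}^{\Lambda}(\kappa)$, and $d_{\mathrm{int}}^{\mathcal{T}^{\varphi}}(\mathcal{C}^{(s)}, \mathcal{C}^{(s')}) \leq |s - s'|$ for all $s, s' \in [0, \ep]$. This ``geodesic'' interpolation should be modeled on standard one-parameter interpolations of persistence modules: one forms something like a mapping-cylinder object within $\mathsf{CO}^{\Lambda}(\kappa)$ using $\mathcal{T}_s^{\varphi}\mathcal{X}$, $\mathcal{T}_{\ep-s}^{\varphi}\mathcal{C}$ and the interleaving data $(\mathfrak{a}, \mathfrak{b})$, and then appeals to Theorem \ref{introiso} to verify the isomorphism type at the endpoints and to the additivity of the flow on the underlying chain complexes to verify the interleaving bound.

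Given such a family, Corollary \ref{localhco} produces, for each $s \in [0, \ep]$, a positive number $\delta_s > 0$ with the property that $d_B^{\mathbb{M}, \partial\mathbb{M}}(\mathcal{D}(\mathcal{Y}), \mathcal{D}(\mathcal{C}^{(s)})) \leq d_{\mathrm{int}}^{\mathcal{T}^{\varphi}}(\mathcal{Y}, \mathcal{C}^{(s)})$ whenever the right-hand side is less than $\delta_s$. Compactness of $[0, \ep]$ applied to the open cover $\{(s - \delta_s/2, s + \delta_s/2)\}_{s \in [0, \ep]}$ yields a finite partition $0 = s_0 < s_1 < \cdots < s_n = \ep$ for which $s_{i+1} - s_i < \delta_{s_i}$. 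Applying the local result to each consecutive pair and telescoping via the triangle inequality for $d_B^{\mathbb{M}, \partial\mathbb{M}}$ (which follows by composing $\ep$-matchings and routing through $\partial\mathbb{M}$ when the intermediate witnesses are unmatched), one obtains
\[ d_B^{\mathbb{M}, \partial\mathbb{M}}(\mathcal{D}(\mathcal{X}), \mathcal{D}(\mathcal{C})) \leq \sum_{i=0}^{n-1} (s_{i+1} - s_i) = \ep. \]

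The main obstacle is the construction of the interpolating family $\{\mathcal{C}^{(s)}\}$: one must verify simultaneously that each $\mathcal{C}^{(s)}$ is of finite type (so that Corollary \ref{localhco} applies at each step and the relevant persistence diagrams are well-defined and finite via Corollary \ref{decompexists}) and that the interleaving distance along the family grows linearly with the parameter. A pragmatic alternative, if the full ``geodesic'' interpolation proves delicate, is to decompose $\mathcal{X}$ and $\mathcal{C}$ into finitely many standard elementary summands via Corollary \ref{decompexists} and instead run a Bauer--Lesnick-style induced matching argument, using the interleaving maps together with Proposition \ref{blockclass} to match blocks directly; Proposition \ref{symm} then lets one reduce to bounding the two bottleneck hemidistances separately, one of which follows from the other by the formal symmetry of the interleaving hypothesis.
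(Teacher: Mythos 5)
Your overall skeleton (interpolate, cover $[0,1]$ using the local constants from Corollary \ref{localhco}, telescope with the triangle inequality) is the same as the paper's, but the step you rely on — a ``geodesic'' family with $\mathcal{C}^{(0)}\cong\mathcal{X}$ and $\mathcal{C}^{(\ep)}\cong\mathcal{C}$ \emph{exactly} — is precisely the point that fails, and the paper's argument is structured around avoiding it. The mapping-cylinder construction you gesture at does not terminate at $\mathcal{C}$: if you build the cylinders of the interleaving maps $\alpha_\uparrow,\alpha^\downarrow$ and interpolate the filtrations, then at the far endpoint the cylinder splits (via the filtered homotopies $H_\uparrow,H^\downarrow$ furnished by the interleaving) as $\mathrm{Cone}(\alpha_\uparrow)\oplus C_{\uparrow *}$ and $\mathrm{Cone}(\alpha^\downarrow)\oplus C^{\downarrow}_*$, i.e.\ as $\mathcal{B}\oplus\mathcal{C}$ for an extra filtered cospan $\mathcal{B}$ (this is exactly the paper's Lemma \ref{interp}(iii)). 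The summand $\mathcal{B}$ is built from unfiltered-acyclic but filtered-nontrivial cones and is generally \emph{not} isomorphic to zero in $\mathsf{HCO}^{\Lambda}(\kappa)$; the interleaving maps are homotopy equivalences but not filtered homotopy equivalences, so you cannot discard it. Nor can you import the standard persistence-module interpolation (Chazal--de Silva--Glisse--Oudot style) to land exactly on $\mathcal{C}$: that construction needs the interleaving relations to hold strictly, whereas in $\mathsf{HCO}^{\Lambda}(\kappa)$ they hold only up to chain homotopy. Running your argument with what the cylinder actually gives yields only $d_B(\mathcal{D}(\mathcal{B}\oplus\mathcal{C}),\mathcal{D}(\mathcal{X}))\leq\ep$, hence (after restricting the matching) only the hemidistance bound $\delta_{B,\mathcal{T}^{\varphi}_{\mathbb{M}}}^{\mathbb{M},\partial\mathbb{M}}(\mathcal{D}(\mathcal{C}),\mathcal{D}(\mathcal{X}))\leq\ep$ — an $\ep$-embedding of $\mathcal{D}(\mathcal{C})$ into $\mathcal{D}(\mathcal{X})$, not a matching.

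The paper closes this gap by observing that the interleaving serves as an $\ep$-implantation in \emph{both} directions: the interpolation argument applied twice gives both hemidistance bounds, and Proposition \ref{symm} (the Schr\"oder--Bernstein-type identity $d_B=\max$ of the two hemidistances) upgrades the pair of embeddings to a genuine $\ep$-matching. This is also exactly what removes the factor of $2$ that a naive cylinder argument (as in \cite{UZ}) would otherwise incur. Your proposal mentions Proposition \ref{symm} only inside the hand-waved ``Bauer--Lesnick induced matching'' alternative, which is not carried out and would itself require proving an induced-matching theorem for block-decomposable $\mathbb{M}$-indexed modules with respect to the flow $\mathcal{T}^{\varphi}_{\mathbb{M}}$ (note, via Proposition \ref{sepregions}, that several block types sit at infinite distance from $\partial\mathbb{M}$, so such a theorem is not a formal consequence of the $\R$-indexed case). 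So as written, the main route rests on an unproved and likely false interpolation claim, and the fallback is not developed; the fix is to weaken the interpolation target to $\mathcal{B}\oplus\mathcal{C}$, work with embeddings/hemidistances, and symmetrize via Proposition \ref{symm}.
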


\begin{cor}\label{fnstable} In the context of Example \ref{pinsing},
	let $f,g\co\mathbb{X}\to  [-\Lambda,\Lambda]$ with $f^{-1}(\{\pm\Lambda\})=g^{-1}(\{\pm\Lambda\})=\partial^{\pm}\mathbb{X}$ be such that the pinned singular filtered cospans $\mathcal{S}_{\partial}(\mathbb{X},f;\kappa)$ and $\mathcal{S}_{\partial}(\mathbb{X},g;\kappa)$ are of finite type.  Then \[ d_{B,\mathcal{T}_{\mathbb{M}}^{\varphi}}^{\mathbb{M},\partial\mathbb{M}}\left(\mathcal{D}(\mathcal{S}_{\partial}(\mathbb{X},f;\kappa)),\mathcal{D}(\mathcal{S}_{\partial}(\mathbb{X},g;\kappa))\right)\leq \sup_{\mathbb{X}\setminus\partial\mathbb{X}}\left|\varphi^{-1}\circ f-\varphi^{-1}\circ g\right|. \]
\end{cor}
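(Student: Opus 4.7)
The plan is to deduce the corollary as an immediate consequence of Theorem \ref{mainstab} together with Example \ref{function-interleave}. Let $\epsilon = \sup_{\mathbb{X}\setminus\partial\mathbb{X}}|\varphi^{-1}\circ f - \varphi^{-1}\circ g|$. If $\epsilon = \infty$ there is nothing to prove, so I will assume $\epsilon < \infty$. For any $\epsilon' > \epsilon$ the hypothesis $\sup_{\mathbb{X}\setminus\partial\mathbb{X}}|\varphi^{-1}\circ f - \varphi^{-1}\circ g| \leq \epsilon'$ holds, so by Example \ref{function-interleave} there is an $\epsilon'$-interleaving in $\mathsf{HCO}^{\Lambda}(\kappa)$ between $\mathcal{S}_{\partial}(\mathbb{X},f;\kappa)$ and $\mathcal{S}_{\partial}(\mathbb{X},g;\kappa)$. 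Taking the infimum over such $\epsilon'$ yields
\[
d_{\mathrm{int}}^{\mathcal{T}^{\varphi}}\left(\mathcal{S}_{\partial}(\mathbb{X},f;\kappa),\mathcal{S}_{\partial}(\mathbb{X},g;\kappa)\right) \leq \epsilon.
\]

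Next I would invoke Theorem \ref{mainstab}, whose hypotheses are satisfied by the finite-type assumption on $\mathcal{S}_{\partial}(\mathbb{X},f;\kappa)$ and $\mathcal{S}_{\partial}(\mathbb{X},g;\kappa)$. This gives
\[
d_{B,\mathcal{T}_{\mathbb{M}}^{\varphi}}^{\mathbb{M},\partial\mathbb{M}}\left(\mathcal{D}(\mathcal{S}_{\partial}(\mathbb{X},f;\kappa)),\mathcal{D}(\mathcal{S}_{\partial}(\mathbb{X},g;\kappa))\right) \leq d_{\mathrm{int}}^{\mathcal{T}^{\varphi}}\left(\mathcal{S}_{\partial}(\mathbb{X},f;\kappa),\mathcal{S}_{\partial}(\mathbb{X},g;\kappa)\right),
\]
and chaining the two inequalities yields the desired bound. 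There is no real obstacle here, since both of the ingredients have already been established; the corollary is just the concatenation of the ``geometric-to-algebraic'' stability of Example \ref{function-interleave} with the ``algebraic-to-combinatorial'' stability of Theorem \ref{mainstab}.
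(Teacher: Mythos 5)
Your proposal is correct and follows exactly the paper's argument: the paper also proves Corollary \ref{fnstable} by combining Example \ref{function-interleave} (which gives the interleaving bound $d_{\mathrm{int}}^{\mathcal{T}^{\varphi}}\leq \sup|\varphi^{-1}\circ f-\varphi^{-1}\circ g|$) with Theorem \ref{mainstab}. The extra $\epsilon'>\epsilon$ step is harmless but unnecessary, since Example \ref{function-interleave} already yields an $\epsilon$-interleaving when the sup is $\leq\epsilon$.
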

For example, the finite type condition would hold if $f,g$ are Morse functions on a compact manifold, or simplexwise linear functions on a finite simplicial complex.
\begin{proof}[Proof of Corollary \ref{fnstable}] This follows immediately from Theorem \ref{mainstab} and Example \ref{function-interleave}.\end{proof}

The main ingredients in the proof of Theorem \ref{mainstab} are the local result Corollary \ref{localhco} and the upcoming \textbf{interpolation lemma}.  This follows a well-worn strategy for proving stability results in persistent homology by combining a local stability theorem with the existence of suitable interpolations, see, \emph{e.g.}, \cite[Section 3.3]{CEH07}, \cite[Section 5.6]{CDGO},\cite[Section 9]{UZ}.  Our specific approach adapts an argument in \cite{UZ}, with additional assistance from Proposition \ref{symm}.\footnote{In fact, by using Proposition \ref{symm} in the same way as we do below, one can strengthen the  stability result \cite[Theorem 1.4(i)]{UZ} to an isometry theorem, removing the factor of $2$ that appears therein.}  Recall Definition \ref{implantdfn} of an $\ep$-implantation as a weaker, asymmetric version of an $\ep$-interleaving.

\begin{lemma}[Interpolation Lemma] \label{interp}
	Let $\ep>0$, let $\mathcal{C}$ and $\mathcal{X}$ be  $\Lambda$-bounded filtered cospans, and suppose that there is an $\ep$-implantation in $\mathsf{HCO}^{\Lambda}(\kappa)$ of $\mathcal{C}$ into $\mathcal{X}$.  Then there is a one-parameter family $\{\mathcal{Y}_t\}_{t\in [0,1]}$ of finite-type $\Lambda$-bounded filtered cospans such that:
	\begin{itemize}
		\item[(i)] For any $s,t\in [0,1]$ with $s\leq t$, $\mathcal{Y}_s$ and $\mathcal{Y}_t$ are $\ep(t-s)$-interleaved;
		\item[(ii)] $\mathcal{Y}_0$ is isomorphic in $\mathsf{HCO}^{\Lambda}(\kappa)$ to $\mathcal{X}$; and
		\item[(iii)] For some finite-type $\Lambda$-bounded filtered cospan $\mathcal{B}$, $\mathcal{Y}_1$ is isomorphic in $\mathsf{HCO}^{\Lambda}(\kappa)$ to $\mathcal{B}\oplus\mathcal{C}$.
	\end{itemize}
	\end{lemma}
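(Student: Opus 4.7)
The plan is to construct $\{\mathcal{Y}_t\}$ as a one-parameter family of filtered mapping cylinders of the implantation data, with filtration functions depending continuously on $t$. First, represent the $\epsilon$-implantation by cocycles $\mathfrak{a}=(\alpha^{\downarrow},\alpha_{\uparrow},\alpha,K^{\downarrow},K_{\uparrow}) \in \mathrm{Hom}^0_{\mathsf{CO}^{\Lambda}(\kappa)}(\mathcal{C}, \mathcal{T}^{\varphi}_{\epsilon}\mathcal{X})$ and $\mathfrak{b}$ analogously, and use the dg-structure together with Proposition \ref{htopicpsi} to modify them so that $\mathcal{T}^{\varphi}_{\epsilon}(\mathfrak{b}) \circ \mathfrak{a} = \mathcal{T}^{\varphi}_{(0\leq 2\epsilon),\mathcal{C}}$ holds on the nose, not merely up to coboundary.

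For each $t\in[0,1]$, define the three constituent chain complexes of $\mathcal{Y}_t$ as algebraic mapping cylinders; for the ascending piece, take
$$Y_{t,\uparrow*} = X_{\uparrow*} \oplus C_{\uparrow*}[1] \oplus C_{\uparrow*},$$
with the standard mapping-cylinder differential built from $\alpha_{\uparrow}$ and filtration
$$\ell^{Y_t}_{\uparrow}(x,c',c) = \max\bigl\{\ell^X_{\uparrow}(x),\, \rho^{\varphi}_{\epsilon(1-t)}(\ell^C_{\uparrow}(c')),\, \rho^{\varphi}_{\epsilon(1-t)}(\ell^C_{\uparrow}(c))\bigr\}.$$
Define $Y_t^{\downarrow}$ analogously from $\alpha^{\downarrow}$ (using $\rho^{\varphi}_{-\epsilon(1-t)}$ in place of $\rho^{\varphi}_{\epsilon(1-t)}$), the unfiltered chain complex $D_{Y_t}$ as the ordinary mapping cylinder of $\alpha$, and the structure maps $\psi^{Y_t}_{\uparrow}, \psi^{Y_t,\downarrow}$ as combinations of $\psi^X, \psi^C, \alpha$ with correction terms built from $K_{\uparrow}, K^{\downarrow}$. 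The condition $\delta \mathfrak{a}=0$ extracted from (\ref{diffl}) then supplies precisely the identities making the $\psi^{Y_t}$ genuine chain maps. For (ii), at $t=0$ the summand $C_{\uparrow}[1] \oplus C_{\uparrow}$ is filtered acyclic (being a cone on an identity), yielding a filtered homotopy equivalence $X_{\uparrow} \hookrightarrow Y_{0,\uparrow}$, and similar equivalences for the descending and unfiltered components combine through Corollary \ref{hcoiso} to give $\mathcal{Y}_0 \cong \mathcal{X}$. For (iii), use the identity $\mathcal{T}^{\varphi}_{\epsilon}(\mathfrak{b}) \circ \mathfrak{a} = \mathcal{T}^{\varphi}_{(0\leq 2\epsilon),\mathcal{C}}$ to perform a change of basis on $\mathcal{Y}_1$ that separates it into a subcospan isomorphic to $\mathcal{C}$ together with a complementary finite-type cospan $\mathcal{B}$. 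For (i), for $s\leq t$ the identity maps on underlying vector spaces descend to mutually inverse-like morphisms $\mathcal{Y}_s \leftrightarrow \mathcal{T}^{\varphi}_{\epsilon(t-s)}\mathcal{Y}_t$, since $\ell^{Y_t}_{\uparrow}$ depends monotonically on $t$ via (\ref{monotone}); the required compositions with the structural transformation follow directly.

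The main obstacle will be ensuring the cospan maps $\psi^{Y_t}$ are genuine chain maps and that the construction actually yields a well-defined object of $\mathsf{CO}^{\Lambda}(\kappa)$. The correction terms involving $K_{\uparrow}, K^{\downarrow}$ are governed by the full five-tuple structure of (\ref{quint}) and the composition law (\ref{composition}); verifying the interplay between the cylinder differential and the $\psi$-maps demands a careful expansion of $\delta\mathfrak{a}=0$. A subsidiary technical issue is the initial strict normalization $\mathcal{T}^{\varphi}_{\epsilon}(\mathfrak{b})\circ\mathfrak{a} = \mathcal{T}^{\varphi}_{(0\leq 2\epsilon),\mathcal{C}}$: only cohomological equality is a priori guaranteed, so one must exhibit a witnessing degree-$(-1)$ cochain and absorb it into $\mathfrak{a}$ or $\mathfrak{b}$ while preserving the filtration bounds (\ref{filtpres}).
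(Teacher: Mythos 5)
Your overall strategy (interpolating filtrations on mapping cylinders built from the implantation data) is the same as the paper's, but two of your specific choices leave genuine gaps. The decisive one is the filtration formula: with $\ell^{Y_t}_{\uparrow}(x,c',c)=\max\bigl\{\ell^X_{\uparrow}(x),\,\rho^{\varphi}_{\ep(1-t)}(\ell^C_{\uparrow}(c')),\,\rho^{\varphi}_{\ep(1-t)}(\ell^C_{\uparrow}(c))\bigr\}$ the cylinder differential is not filtration-nonincreasing once $t>0$. Indeed the differential sends the shifted copy $c'$ of $C_{\uparrow *}$ to $\alpha_{\uparrow}c'$ in the $X$-summand, and all you know is $\ell^X_{\uparrow}\circ\alpha_{\uparrow}\leq\rho^{\varphi}_{\ep}\circ\ell^C_{\uparrow}$, whereas your formula demands $\ell^X_{\uparrow}(\alpha_{\uparrow}c')\leq\rho^{\varphi}_{\ep(1-t)}(\ell^C_{\uparrow}(c'))$; so $\mathcal{Y}_t$ is not a well-defined object of $\mathsf{CO}^{\Lambda}(\kappa)$ for $t>0$. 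The fix (this is what the paper does) is to keep the gap between the shifted-$C$ level and the $X$-level at least $\ep$ for all $t$: raise the shifted-$C$ level from $\rho^{\varphi}_{\ep}\circ\ell^C_{\uparrow}$ to $\rho^{\varphi}_{2\ep}\circ\ell^C_{\uparrow}$ and the $X$-level from $\ell^X_{\uparrow}$ to $\rho^{\varphi}_{\ep}\circ\ell^X_{\uparrow}$, lowering only the unshifted-$C$ level from $\rho^{\varphi}_{\ep}\circ\ell^C_{\uparrow}$ to $\ell^C_{\uparrow}$; each level moves by at most $\ep$ per unit of $t$, so property (i) still follows from the identity maps. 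Your $t=1$ filtration also sabotages (iii): splitting the cylinder as $\mathrm{Cone}(\alpha_{\uparrow})\oplus C_{\uparrow *}$ requires a change of basis whose off-diagonal entries are $\beta_{\uparrow}$ (filtration cost $\ep$) and a homotopy $H_{\uparrow}$ between $\beta_{\uparrow}\alpha_{\uparrow}$ and the identity (cost $2\ep$), and with all three summands at levels $\ell^X_{\uparrow},\ell^C_{\uparrow},\ell^C_{\uparrow}$ neither entry is filtered; the paper's $\ell^1_{\uparrow}$ is engineered exactly so that the block map with identity diagonal and bottom row $(-H_{\uparrow},\ \beta_{\uparrow},\ 1)$ is a filtered chain isomorphism.

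The second gap is your normalization step: you assert that $\mathfrak{a},\mathfrak{b}$ can be modified so that $\mathcal{T}^{\varphi}_{\ep}(\mathfrak{b})\circ\mathfrak{a}=\mathcal{T}^{\varphi}_{(0\leq 2\ep),\mathcal{C}}$ holds on the nose, but replacing $\mathfrak{a}$ and $\mathfrak{b}$ by cohomologous cocycles only changes the composite by a coboundary, so strict equality is a much stronger condition than the cohomological one (it would make $\mathcal{C}$ a strict retract at the cocycle level) and there is no reason it can be arranged while preserving (\ref{filtpres}). Fortunately it is also unnecessary: the only consequence of the implantation relation that the argument needs is the pair of \emph{filtered} chain homotopies $H_{\uparrow},H^{\downarrow}$ (with filtration costs $2\ep$) between $\beta_{\uparrow}\alpha_{\uparrow},\beta^{\downarrow}\alpha^{\downarrow}$ and the identities, which come directly from the degree-$(-1)$ cochain witnessing $\mathfrak{b}\circ\mathfrak{a}\simeq 1$; the splitting in (iii) then only needs to hold in $\mathsf{HCO}^{\Lambda}(\kappa)$, and Corollary \ref{hcoiso} supplies it once the relevant cospan diagrams commute up to chain homotopy, using $K_{\uparrow},K^{\downarrow},L_{\uparrow},L^{\downarrow}$. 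Relatedly, you can drop the replacement of the apex by $\mathrm{Cyl}(\alpha)$ with $K$-correction terms: taking the apex to be $Y_*$ itself with structure maps $\phi_{\uparrow}\circ p_{X_{\uparrow}}$ and $\phi^{\downarrow}\circ p_{X^{\downarrow}}$ (cylinder retraction followed by the given structure maps) yields genuine chain maps with no corrections to verify.
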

\begin{proof}[Proof of Theorem \ref{mainstab}, assuming Lemma \ref{interp}] Given $\mathcal{C},\mathcal{X}$ as in the statement of the theorem, for $\ep$ arbitrarily close to $d_{\mathrm{int}}^{\mathcal{T}^{\varphi}}(\mathcal{X},\mathcal{C})$ there is an $\ep$-interleaving between $\mathcal{C}$ and $\mathcal{X}$.  This $\ep$-interleaving may be interpreted both as an $\ep$-implantation of $\mathcal{C}$ into $\mathcal{X}$ and as an $\ep$-implantation of $\mathcal{X}$ into $\mathcal{C}$.  Regarding it first as an $\ep$-implantation of $\mathcal{C}$ into $\mathcal{X}$, let $\mathcal{Y}_t$ be as given by Lemma \ref{interp}. For $t\in [0,1]$ write $\delta_t$ for the number denoted in Corollary \ref{localhco} by $\delta_{\mathcal{Y}_t}$.  By taking a finite subcover of the open cover $\{(t-\delta_t,t+\delta_t)\}_{t\in [[0,1]}$ of $[0,1]$, we find numbers $t_0,\ldots,t_m$ with $0=t_0<\cdots<t_i<t_{i+1}<\cdots<t_m=1$ such that $\ep(t_{i+1}-t_i)<\delta_{t_i}$.  So since $\mathcal{Y}_{t_i}$ and $\mathcal{Y}_{t_{i+1}}$ are $\ep(t_{i+1}-t_i)$-interleaved, Corollary \ref{localhco} shows that $d_{B,\mathcal{T}_{\mathbb{M}}^{\varphi}}^{\mathbb{M},\partial\mathbb{M}}(\mathcal{D}(\mathcal{Y}_{i+1}),\mathcal{D}(\mathcal{Y}_i))\leq \ep(t_{i+1}-t_i)$ for $i=0,\ldots,m-1$.  So by the triangle inequality for the bottleneck distance, $d_{B,\mathcal{T}_{\mathbb{M}}^{\varphi}}^{\mathbb{M},\partial\mathbb{M}}(\mathcal{D}(\mathcal{Y}_{1}),\mathcal{D}(\mathcal{Y}_0))\leq \ep$, \emph{i.e.},
 \[  d_{B,\mathcal{T}_{\mathbb{M}}^{\varphi}}^{\mathbb{M},\partial\mathbb{M}}\left(\mathcal{D}(\mathcal{B}\oplus\mathcal{C}),\mathcal{D}(\mathcal{X})\right)\leq \ep,\] where $\mathcal{B}$ is as in Lemma \ref{interp}(iii).  Now the persistence diagram of $\mathcal{B}\oplus\mathcal{C}$ is the disjoint union of those of $\mathcal{C}$ and of $\mathcal{B}$, so a $\ep$-matching between  $\mathcal{D}(\mathcal{B}\oplus\mathcal{C})$ and $\mathcal{D}(\mathcal{X})$ relative to $\partial\mathbb{M}$ restricts to an $\ep$-embedding of $\mathcal{D}(\mathcal{C})$ into $\mathcal{D}(\mathcal{X})$ relative to $\partial \mathbb{M}$.  Hence, with notation as in Section \ref{flowmetrics}, $\delta_{B,\mathcal{T}_{\mathbb{M}}}^{M,\partial\mathbb{M}}(\mathcal{D}(\mathcal{C}),\mathcal{D}(\mathcal{X}))\leq \ep$. 

But since our $\ep$-interleaving also serves as an $\ep$-implantation of $\mathcal{X}$ into $\mathcal{C}$, we may reverse the roles of $\mathcal{C}$ and $\mathcal{X}$ in the above argument to see that also $\delta_{B,\mathcal{T}_{\mathbb{M}}^{\varphi}}^{\mathbb{M},\partial\mathbb{M}}(\mathcal{D}(\mathcal{X}),\mathcal{D}(\mathcal{C}))\leq \ep$. Since $\ep$ is arbitrarily close to $d_{\mathrm{int}}^{\mathcal{T}^{\varphi}}(\mathcal{X},\mathcal{C})$, it now follows from  Proposition \ref{symm} that
\[	d_{B,\mathcal{T}_{\mathbb{M}}^{\varphi}}^{\mathbb{M},\partial\mathbb{M}}(\mathcal{D}(\mathcal{X}),\mathcal{D}(\mathcal{C}))=\max\left\{\delta_{B,\mathcal{T}_{\mathbb{M}}^{\varphi}}^{\mathbb{M},\partial\mathbb{M}}(\mathcal{D}(\mathcal{X}),\mathcal{D}(\mathcal{C})),\delta_{B,\mathcal{T}_{\mathbb{M}}^{\varphi}}^{\mathbb{M},\partial\mathbb{M}}(\mathcal{D}(\mathcal{C}),\mathcal{D}(\mathcal{X}))  \right\}\leq d_{\mathrm{int}}^{\mathcal{T}^{\varphi}}(\mathcal{X},\mathcal{C}).\]
\end{proof}

\begin{remark}
If one ignores the last paragraph of the above proof, it also serves as a proof that, for finite-type $\Lambda$-bounded filtered cospans $\mathcal{C}$ and $\mathcal{X}$, \[ \delta_{B,\mathcal{T}_{\mathbb{M}}^{\varphi}}^{\mathbb{M},\partial\mathbb{M}}(\mathcal{D}(\mathcal{C}),\mathcal{D}(\mathcal{X}))\leq \delta_{\mathrm{int}}^{\mathcal{T}^{\varphi}}(\mathcal{C},\mathcal{X}):=\inf\{\ep|\exists\mbox{ an $\ep$-implantation of }\mathcal{C}\mbox{ into }\mathcal{X}\}.\]  Thus, roughly speaking, if $\mathcal{C}$ is approximately a retract of $\mathcal{X}$ then the persistence diagram of $\mathcal{C}$ is approximately a subset of that of $\mathcal{X}$.
\end{remark}

The rest of this subsection is devoted to the proof of Lemma \ref{interp}. Let \\ $\mathcal{C}=\left((C_{\uparrow *},\partial_{\uparrow}^C,\ell_{\uparrow}^C),(C^{\downarrow}_{*},\partial^{\downarrow}_C,\ell^{\downarrow}_C),(D_*,\partial_D),\psi_{\uparrow},\psi^{\downarrow}\right)$ and $\mathcal{X}=\left((X_{\uparrow *},\partial_{\uparrow}^X,\ell_{\uparrow}^X),(X^{\downarrow}_{*},\partial^{\downarrow}_X,\ell^{\downarrow}_X),(Y_*,\partial_Y),\phi_{\uparrow},\phi^{\downarrow}\right)$ admit an $\ep$-implantation of $\mathcal{C}$ into $\mathcal{X}$ in $\mathsf{HCO}^{\Lambda}_{\kappa}$.  Thus there are morphisms \[ \frak{a}=(\alpha^{\downarrow},\alpha_{\uparrow},\alpha,K^{\downarrow},K_{\uparrow})\co \mathcal{C}\to \mathcal{T}^{\ep}\mathcal{X} \] and  \[ \frak{b}=(\beta^{\downarrow},\beta_{\uparrow},\beta,L^{\downarrow},L_{\uparrow})\co\mathcal{T}^{\ep}\mathcal{X}\to\mathcal{T}^{2\ep}\mathcal{C} \] such that $\frak{b}\circ\frak{a}$ is equal in $\mathsf{HCO}^{\Lambda}(\kappa)$ to the map $(1_{C^{\downarrow}_{*}},1_{C_{\uparrow *}},1_{D_*},0,0)\co \mathcal{C}\to\mathcal{T}^{2\ep}\mathcal{C}$. Thus in particular the chain maps $\alpha^{\downarrow}\co \co C^{\downarrow}_{*}\to X^{\downarrow}_{*}$, $\alpha_{\uparrow}\co C_{\uparrow *}\to X_{\uparrow *}$, $\beta^{\downarrow}\co X^{\downarrow}_{*}\to C^{\downarrow}_{*}$, and $\beta_{\uparrow}\co X_{\uparrow *}\to C_{\uparrow *}$ satisfy \[ \ell_{X}^{\downarrow}\circ\alpha^{\downarrow}\geq \rho_{-\ep}^{\varphi}\circ \ell_{C}^{\downarrow},\quad \ell^{X}_{\uparrow}\circ\alpha_{\uparrow}\leq \rho_{\ep}^{\varphi}\circ \ell^{C}_{\uparrow},\quad \ell_{C}^{\downarrow}\circ\beta^{\downarrow}\geq \rho_{-\ep}^{\varphi}\circ \ell_{X}^{\downarrow},\quad \ell^{C}_{\uparrow}\circ\beta_{\uparrow}\leq \rho_{\ep}^{\varphi}\circ \ell^{X}_{\uparrow},\]
and
there are maps $H^{\downarrow}\co C^{\downarrow}_{ *}\to C^{\downarrow}_{*+1}$, $H_{\uparrow}\co C_{\uparrow *}\to C_{\uparrow *+1}$
such that\[ 1_{C^{\downarrow}_{*}}-\beta^{\downarrow}\alpha^{\downarrow}=\partial^{\downarrow}_{C}H^{\downarrow}+H^{\downarrow}\partial^{\downarrow}_{C},\quad 1_{C_{\uparrow*}}-\beta_{\uparrow}\alpha_{\uparrow}=\partial_{\uparrow}^{C}H_{\uparrow}+H_{\uparrow}\partial_{\uparrow}^{C},  \] and \[ \ell^{\downarrow}_{C}\circ H^{\downarrow}\geq \rho_{-2\ep}^{\varphi}\circ \ell^{\downarrow}_{C},\quad \ell_{\uparrow}^{C}\circ H_{\uparrow}\leq \rho_{2\ep}^{\varphi}\circ\ell^{C}_{\uparrow}.\]

We will construct the interpolating family $\mathcal{Y}_t$ out of the \textbf{mapping cylinders} of $\alpha^{\downarrow}$ and $\alpha_{\uparrow}$. In general, if 
$f\co (A_*,\partial_A)\to (B_*,\partial_B)$ is a chain map between two chain complexes, the mapping cylinder of $f$ is defined to be the chain complex
 $(\mathrm{Cyl}(f)_*,\partial_{\mathrm{Cyl}})$ with $ \mathrm{Cyl}(f)_k=A_{k-1}\oplus B_k\oplus A_{k+1}$ and with $\partial_{\mathrm{Cyl}}$ given in block form with respect to this direct sum decomposition by
	 \[ \partial_{\mathrm{Cyl}}=\left(\begin{array}{ccc} -\partial_{A} & 0 & 0 \\ f & \partial_B & 0 \\ -1_A & 0 & \partial_A    \end{array}\right). \] 
	  The inclusions  $j_B\co B_*\to \mathrm{Cyl}(f)_*$ and $j_A\co A_*\to\mathrm{Cyl}(f)_*$ as the second and third summands are thus chain maps.  Moreover the map
	   \begin{equation}  \label{pbdef} p_B\co \mathrm{Cyl}(f)_*\to B_* \qquad p_B(a,b,c)=b+f(c) \end{equation}
	    is a chain homotopy inverse to $j_B$: we clearly have $p_Bj_B=1_{B_*}$, while the map $G\co \mathrm{Cyl}(f)_*\to \mathrm{Cyl}(f)_{*+1}$ defined by
	 \begin{equation}\label{cylhtopy} G(a,b,c)=(-c,0,0)\mbox{ satisfies }1_{B_*}-j_Bp_B=\partial_{\mathrm{Cyl}}G+G\partial_{\mathrm{Cyl}}.\end{equation}

Each member of our family $\mathcal{Y}_t$ will have as its underlying diagram of chain complexes the data \begin{equation}\label{cyldiag} \xymatrix{ \mathrm{Cyl}(\alpha^{\downarrow})_* \ar[dr]^{\phi^{\downarrow}\circ p_{X^{\downarrow}}}  & \\ & Y_* \\ \mathrm{Cyl}(\alpha_{\uparrow})_* \ar[ur]_{\phi_{\uparrow}\circ p_{X_{\uparrow}}} } \end{equation} Given this, to complete the specification of the $\mathcal{Y}_t$ we need to define ascending and descending filtration functions
$\ell^{t}_{\uparrow}\co \mathrm{Cyl}(\alpha_{\uparrow})_*\to\R\cup\{-\infty\}$ and $\ell^{\downarrow}_{t}\co \mathrm{Cyl}(\alpha^{\downarrow})_*\to\R\cup\{\infty\}$ for $t\in [0,1]$.  We define these to map $0$ to $-\infty$ and $\infty$, respectively, and to act on $\mathrm{Cyl}(\alpha_{\uparrow})_*\setminus\{0\}$ and $\mathrm{Cyl}(\alpha^{\downarrow})_*\setminus\{0\}$ by (recalling our increasing homeomorphism $\varphi\co [-\infty,\infty]\to [-\Lambda,\Lambda]$)
	 \begin{equation}\label{ltdef} \ell_{\uparrow}^{t}=\varphi\circ\left((1-t)(\varphi^{-1}\circ\ell_{\uparrow}^{0})+t(\varphi^{-1}\circ\ell_{\uparrow}^{1})\right),\qquad \ell^{\downarrow}_{t}=\varphi\circ\left((1-t)(\varphi^{-1}\circ\ell^{\downarrow}_{0})+t\varphi^{-1}\circ\ell^{\downarrow}_{1}\right)\end{equation} where
	 \[ \ell_{\uparrow}^{0}(a,b,c)=\max\left\{\rho_{\ep}(\ell^{C}_{\uparrow}(a)), \ell^{X}_{\uparrow}(b) , \rho_{\ep}^{\varphi}(\ell^{C}_{\uparrow}(c)) \right\}, \qquad  \ell_{\uparrow}^{1}(a,b,c)=\max\left\{\rho_{2\ep}(\ell^{C}_{\uparrow}(a)), \rho_{\ep}(\ell^{X}_{\uparrow}(b)) , \ell^{C}_{\uparrow}(c) \right\}  \] 
	 for $a\in C_{\uparrow\,k-1}$, $b\in X_{\uparrow k}$, $c\in C_{\uparrow k}$, and similarly 
	 \[ \ell^{\downarrow}_{0}(a,b,c)=\min\left\{\rho_{-\ep}(\ell_{C}^{\downarrow}(a)), \ell_{X}^{\downarrow}(b) , \rho_{-\ep}^{\varphi}(\ell_{C}^{\downarrow}(c)) \right\}, \qquad  \ell^{\downarrow}_{1}(a,b,c)=\min\left\{\rho_{-2\ep}(\ell_{C}^{\downarrow}(a)), \rho_{-\ep}(\ell_{X}^{\downarrow}(b)) , \ell_{C}^{\downarrow}(c) \right\} \] 
	 for $a\in C^{\downarrow}_{k-1},b\in X^{\downarrow}_{k},c\in C_{k}^{\downarrow}$.

	The facts that $\ell^{X}_{\uparrow}\circ\alpha_{\uparrow}\leq \rho_{\ep}^{\varphi}\circ\ell_{\uparrow}^{C}$ and $\ell^{\downarrow}_X\circ\alpha^{\downarrow}\geq \rho_{-\ep}^{\varphi}\circ\ell_{C}^{\downarrow}$
	imply that, for both $i=0,1$, we have $\ell_{\uparrow}^i\circ\partial_{\mathrm{Cyl}}\leq \ell_{\uparrow}^{i}$ on $\mathrm{Cyl}(\alpha_{\uparrow})$ and $\ell^{\downarrow}_{i}\circ\partial_{\mathrm{Cyl}}\geq \ell^{\downarrow}_{i}$ on $\mathrm{Cyl}(\alpha^{\downarrow})$.  Hence the same relations hold with the $\ell_{\uparrow}^{i}$  and $\ell^{\downarrow}_i$ replaced by the conjugated versions of their convex combinations as in (\ref{ltdef}).  So for each $t\in [0,1]$
	we have an ascending chain complex $(\mathrm{Cyl}(\alpha_{\uparrow})_*,\ell_{\uparrow}^{t})$ and a descending chain complex $(\mathrm{Cyl}(\alpha^{\downarrow})_*,\ell^{\downarrow}_t)$; the filtered cospan $\mathcal{Y}_t$ is defined to be given by the diagram (\ref{cyldiag}) with these ascending and descending chain complex structures on 
	$\mathrm{Cyl}(\alpha_{\uparrow})_*$ and $\mathrm{Cyl}(\alpha^{\downarrow})_*$.
	
	By construction, we have \[ \rho^{\varphi}_{-\ep}\circ \ell_{\uparrow}^{0}\leq \ell_{\uparrow}^{1}\leq \rho_{\ep}^{\varphi}\circ\ell_{\uparrow}^{0},\qquad \rho_{\ep}^{\varphi}\circ\ell^{\downarrow}_{0}\geq \ell^{\downarrow}_{1}\geq \rho_{-\ep}^{\varphi}\circ\ell^{\downarrow}_{0}; \] equivalently \[ -\ep+\varphi^{-1}\circ\ell_{\uparrow}^{0}\leq \varphi^{-1}\circ\ell_{\uparrow}^{1}\leq \ep+\varphi^{-1}\circ\ell_{\uparrow}^{0},\qquad \ep+\varphi^{-1}\circ\ell^{\downarrow}_{0}\geq \varphi^{-1}\circ\ell^{\downarrow}_{1}\geq -\ep+\varphi^{-1}\circ\ell^{\downarrow}_{0}.   \]
	If $s,t\in [0,1]$ with $s\leq t$, we have \[ \left|\varphi^{-1}\circ \ell_{\uparrow}^{t}-\varphi^{-1}\circ \ell_{\uparrow}^{s}\right|=\left|(t-s)(\varphi^{-1}\circ \ell_{\uparrow}^{1}-\varphi^{-1}\circ \ell_{\uparrow}^{0})\right|\leq \ep|t-s|.  \] Hence 
	 \[ \varphi^{-1}\circ \ell_{\uparrow}^{s}\geq \ep(s-t)+\varphi^{-1}\circ\ell_{\uparrow}^{t}, \qquad \varphi^{-1}\circ\ell_{\uparrow}^{t}\geq \ep(s-t)+\varphi^{-1}\circ\ell_{\uparrow}^{s}.\]  Then postcomposing each side of these inequalities with the increasing map $\varphi$ yields \[ \ell_{\uparrow}^{s}\geq \rho_{\ep(s-t)}^{\varphi}\circ\ell_{\uparrow}^{t},\qquad \ell_{\uparrow}^{t}\geq \rho_{\ep(s-t)}^{\varphi}\circ\ell_{\uparrow}^{s}.  \]  An identical analysis yields, likewise, that \[ \ell^{\downarrow}_{s}\leq \rho_{\ep(t-s)}^{\varphi}\circ\ell^{\downarrow}_{t},\qquad \ell^{\downarrow}_{t}\leq \rho_{\ep(t-s)}^{\varphi}\circ\ell^{\downarrow}_{s}\]
	It follows from this that the identity maps on $\mathrm{Cyl}(\alpha_{\uparrow})_*,\mathrm{Cyl}(\alpha^{\downarrow})_*,Y_*$  provide morphisms in $\mathsf{HCO}^{\Lambda}(\kappa)$ from $\mathcal{Y}_s$ to $\mathcal{T}_{\ep(t-s)}\mathcal{Y}_t$ and from  from $\mathcal{Y}_t$ to $\mathcal{T}_{\ep(t-s)}\mathcal{Y}_s$.  This evidently provides an $\ep(t-s)$-interleaving between $\mathcal{Y}_s$ and $\mathcal{Y}_t$, proving property (i) in Lemma \ref{interp}.
	
	We now prove property (ii), that $\mathcal{Y}_0\cong \mathcal{X}$.  We have inclusions $j_{X_{\uparrow}}\co X_{\uparrow *}\to \mathrm{Cyl}(\alpha_{\uparrow})_*$ and $j_{X^{\downarrow}}\co \mathrm{Cyl}(\alpha^{\downarrow})_*$ and retractions $p_{X_{\uparrow}}\co \mathrm{Cyl}(\alpha_{\uparrow})_*\to X_{\uparrow *}$ and $p_{X^{\downarrow}}\co \mathrm{Cyl}(\alpha^{\downarrow})_*\to X^{\downarrow}_{*}$ as in (\ref{pbdef}); these resepct the ascending and descending filtrations given by $\ell_{\uparrow}^{0}$ and $\ell^{\downarrow}_{0}$ because $\ell_{\uparrow}^{X}\circ\alpha_{\uparrow}\leq \rho_{\ep}^{\varphi}\circ\ell_{\uparrow}^{C}$ and $\ell^{\downarrow}_{X}\circ\alpha^{\downarrow}\geq \rho_{-\ep}^{\varphi}\circ\ell^{\downarrow}_{C}$.  Moreover $p_{X_{\uparrow}}\circ j_{X_{\uparrow}}$ and $p_{X^{\downarrow}}\circ j_{X^{\downarrow}}$ are the respective identities, and $j_{X_{\uparrow}}\circ p_{X_{\uparrow}}$ and $j^{X^{\downarrow}}\circ p_{X^{\downarrow}}$ are homotopic to the respective identities by maps as in (\ref{cylhtopy}) that also respect the filtrations given by $\ell_{\uparrow}^{0}$ and $\ell^{\downarrow}_{0}$.  Hence, with $\mathrm{Cyl}(\alpha_{\uparrow})_*$ and $\mathrm{Cyl}(\alpha^{\downarrow})_*$ viewed as ascending and descending chain complexes using  $\ell_{\uparrow}^{0}$ and $\ell^{\downarrow}_{0}$, the maps $j_{X_{\uparrow}}$ and $j_{X^{\downarrow}}$ are filtered homotopy equivalences.  Thus we obtain an isomorphism in $\mathsf{HCO}^{\Lambda}(\kappa)$, \[
	\xymatrix{ X^{\downarrow}_{*} \ar[rd]_{\phi^{\downarrow}}\ar[rr]^{j_{X^{\downarrow}}} & & 
	 \mathrm{Cyl}(\alpha^{\downarrow})_* \ar[dr]^{\phi^{\downarrow}\circ p_{X^{\downarrow}}}  & \\  & Y_*\ar@{=}[rr] &  & Y_* \\ X_{\uparrow *} \ar[ru]^{\phi_{\uparrow}} \ar[rr]^{j_{X_{\uparrow}}} &&\mathrm{Cyl}(\alpha_{\uparrow})_* \ar[ur]_{\phi_{\uparrow}\circ p_{X_{\uparrow}}}
	} 
\]	between $\mathcal{X}$ and $\mathcal{Y}_0$, proving (ii).
	
The filtered cospan $\mathcal{B}$ in part (iii) is given by \[  \xymatrix{ \mathrm{Cone}(\alpha^{\downarrow})_*\ar[rd] & \\ & 0 \\ \mathrm{Cone}(\alpha_{\uparrow})_*\ar[ru] &   }  \] with the filtration functions \[ \ell_{\uparrow}(c,x)=\max\{\rho_{2\ep}(\ell_{\uparrow}^{C}(x)),\rho_{\ep}(\ell_{\uparrow}^{X}(x))\}, \qquad  \ell^{\downarrow}(c,x)=\max\{\rho_{-2\ep}(\ell^{\downarrow}_{C}(x)),\rho_{-\ep}(\ell^{\downarrow}_{X}(x))\} \] on $\mathrm{Cone}(\alpha_{\uparrow})_*$ and $\mathrm{Cone}(\alpha^{\downarrow})_*$, respectively.  Thus, as a filtered vector space, $\mathrm{Cone}(\alpha_{\uparrow})_*\oplus C_{\uparrow *}$ is identical to $\mathrm{Cyl}(\alpha)_*$ with the filtration $\ell_{\uparrow}^{1}$, but the boundary operator on $\mathrm{Cone}(\alpha_{\uparrow})\oplus C_{\uparrow *}$ is given in block form by $\left(\begin{array}{ccc}-\partial_{\uparrow}^{C} & 0 & 0 \\ \alpha_{\uparrow} & \partial_{\uparrow}^{X} & 0 \\ 0 & 0 & \partial_{\uparrow}^{C}\end{array}\right)$ while that on 
$\mathrm{Cyl}(\alpha)_*$ is given by 	$\left(\begin{array}{ccc}-\partial_{\uparrow}^{C} & 0 & 0 \\ \alpha_{\uparrow} & \partial_{\uparrow}^{X} & 0 \\ -1_{C_{\uparrow *}} & 0 & \partial_{\uparrow}^{C}\end{array}\right)$.  A similar remark applies to $\mathrm{Cone}(\alpha^{\downarrow})\oplus C^{\downarrow}_{*}$ and $\mathrm{Cyl}(\alpha^{\downarrow})_*$.    

Because the homotopy $H_{\uparrow}$ between $\beta_{\uparrow}\circ\alpha_{\uparrow}$ and $1_{C_{\uparrow *}}$ satisfies $\ell_{\uparrow}^{C}\circ H_{\uparrow}\leq \rho_{2\ep}^{\varphi}\circ\ell_{\uparrow}^{C}$, and because $\ell_{\uparrow}^{C}\circ\beta_{\uparrow}\leq \rho_{\ep}^{\varphi}\circ\ell_{\uparrow}^{X}$, the map  given in block form by \[ A_{\uparrow}=\left(\begin{array}{ccc} 1_{C_{\uparrow *-1}} &  0 & 0 \\ 0 & 1_{X_{\uparrow *}} & 0 \\ -H_{\uparrow} & \beta_{\uparrow} & 1_{C_{\uparrow *}}   \end{array}\right) \] respects the filtration given by $\ell_{\uparrow}^{1}$, and one sees that it intertwines the boundary operators to give a chain map $\mathrm{Cyl}(\alpha_{\uparrow})_*\to \mathrm{Cone}(\alpha_{\uparrow})_*\oplus C_{\uparrow *}$.  Moreover $A_{\uparrow}$ is invertible, and its inverse $A_{\uparrow}^{-1}=\left(\begin{array}{ccc} 1_{C_{\uparrow *-1}} &  0 & 0 \\ 0 & 1_{X_{\uparrow *}} & 0 \\ H_{\uparrow} & -\beta_{\uparrow} & 1_{C_{\uparrow *}}   \end{array}\right)$ likewise gives a filtered chain map $\mathrm{Cone}(\alpha_{\uparrow})_*\oplus C_{\uparrow *}\to \mathrm{Cyl}(\alpha_{\uparrow})_*$.    So $A_{\uparrow}\co \mathrm{Cyl}(\alpha_{\uparrow})_*\to \mathrm{Cone}(\alpha_{\uparrow})_*\oplus C_{\uparrow *}$ is a filtered chain isomorphism with respect to the filtration $\ell_{\uparrow}^{1}$ on $\mathrm{Cyl}(\alpha_{\uparrow *})$ and the above filtration $\ell_{\uparrow}$ on $\mathrm{Cone}(\alpha_{\uparrow})_*$.  Likewise one has a filtered chain isomorphism \[  A^{\downarrow}= \left(\begin{array}{ccc}1_{C^{\downarrow}_{*+1}} & 0 & 0 \\ 0 & 1_{X^{\downarrow}_*} & 0 \\ -H^{\downarrow} & \beta^{\downarrow} & 1_{C^{\downarrow}_*}\end{array}\right)\co \mathrm{Cyl}(\alpha^{\downarrow})_*\to \mathrm{Cone}(\alpha^{\downarrow})_*\oplus C^{\downarrow}_{*}.    \]  Let $\pi^{\downarrow}\co \mathrm{Cone}(\alpha^{\downarrow})_*\oplus C^{\downarrow}_{*}\to C^{\downarrow}_{*}$ and $\pi_{\uparrow}\co \mathrm{Cone}(\alpha_{\uparrow})_{*}\oplus C_{\uparrow *}\to C_{\uparrow *}$ denote the projections, so that the filtered cospan $\mathcal{B}\oplus \mathcal{C}$ is given by \[ \xymatrix{
\mathrm{Cone}(\alpha^{\downarrow})_*\oplus C^{\downarrow}_{*} \ar[rd]^>>>>>>>>>{\psi^{\downarrow}\circ\pi^{\downarrow}} & \\ & D_* \\  \mathrm{Cone}(\alpha_{\uparrow})_*\oplus C_{\uparrow *} \ar[ru]_>>>>>>>>>{\psi_{\uparrow}\circ\pi_{\uparrow}} & }  \]  We claim that the diagram \begin{equation}\label{lastiso} \xymatrix{ \mathrm{Cyl}(\alpha^{\downarrow})_*\ar[rd]_>>>>>>>>>{\phi^{\downarrow}\circ p_{X^{\downarrow}}} \ar[rr]^{A^{\downarrow}} & &  \mathrm{Cone}(\alpha^{\downarrow})_*\oplus C^{\downarrow}_{*} \ar[rd]^>>>>>>>>>{\psi^{\downarrow}\circ\pi^{\downarrow}} & \\ & Y_*\ar[rr]^{\beta} & & D_* \\ \mathrm{Cyl}(\alpha_{\uparrow})_*\ar[ru]^>>>>>>>>>>{\phi_{\uparrow}\circ p_{X_{\uparrow}}} \ar[rr]_{A_{\uparrow}} & &  \mathrm{Cone}(\alpha_{\uparrow})_*\oplus C_{\uparrow *} \ar[ru]_>>>>>>>>>{\psi_{\uparrow}\circ\pi_{\uparrow}} & }      \end{equation}
  extends to an isomorphism $\mathcal{Y}_1\to \mathcal{B}\oplus \mathcal{C}$.  Since $A^{\downarrow}$ and $A_{\uparrow}$ are filtered homotopy equivalences and $\beta$ is a homotopy equivalence, Corollary \ref{hcoiso} shows that this holds provided that the diagram commutes up to (unfiltered) chain homotopy.  To see this, observe first that the map $K_{\uparrow}\co \mathrm{Cyl}(\alpha_{\uparrow})_*\to C_{\uparrow *+1}$ defined by $K_{\uparrow}(a,b,c)=H_{\uparrow}c$ satisfies $\partial_{\uparrow}^{C}K_{\uparrow}+K_{\uparrow}\partial_{\mathrm{Cyl}}=\pi_{\uparrow}\circ A_{\uparrow}-\beta_{\uparrow}\circ p_{X_{\uparrow}}$.  So $\psi_{\uparrow}\circ\pi_{\uparrow}\circ A_{\uparrow}$ is chain homotopic to $\psi_{\uparrow}\circ \beta_{\uparrow}\circ p_{X_{\uparrow}}$. But $\psi_{\uparrow}\circ\beta_{\uparrow}$ is chain homotopic to $\beta\circ\phi_{\uparrow}$ due to $\mathfrak{b}=(\beta^{\downarrow},\beta_{\uparrow},\beta,L^{\downarrow},L_{\uparrow})$ being a morphism between $\mathcal{T}^{\ep}\mathcal{X}$ and $\mathcal{T}^{2\ep}\mathcal{C}$. Thus $\psi_{\uparrow}\circ\pi_{\uparrow}\circ A_{\uparrow}$ is chain homotopic to $\beta\circ\phi_{\uparrow}\circ p_{X_{\uparrow}}$, and the lower parallelogram of (\ref{lastiso}) commutes up to homotopy.  The upper parallelogram commutes up to homotopy by the same argument.  So Corollary \ref{hcoiso} applies to give the desired isomorphism $\mathcal{Y}_1\cong \mathcal{B}\oplus\mathcal{C}$, completing the proof of Lemma \ref{interp} and hence of Theorem \ref{mainstab}.
  \begin{flushright} $\square$ \end{flushright}

\appendix

\section{The Morse-singular equivalence}\label{app}

The purpose of this appendix is to establish Lemma \ref{Efiltered}, concerning the behavior with respect to the natural filtrations of the homotopy equivalence from \cite{Paj} between the Morse and singular chain complexes. This first requires a detailed review of how this homotopy equivalence is constructed. 

Given a field  $\kappa$ and a compact smooth manifold $\mathbb{Y}$ with boundary, let $h\co \mathbb{Y}\to [a,b]$ be a Morse function such that $a$ and $b$ are regular values, and let $w$ be a gradient-like vector field for $h$ whose flow is Morse-Smale.  For the sake of determining signs in the Morse complex, one also chooses orientations of the descending disks $d_p(w)$ of the critical points of $h$.  \cite{Paj} provides a homotopy equivalence $\mathcal{E}(h,w)\co CM_{*}(h,v;\kappa)\to S_*(\mathbb{Y},h^{-1}(\{a\});\kappa)$; there are various auxiliary choices involved in the construction of $\mathcal{E}(h,w)$ that we will review, and it is shown in \cite{Paj} that different choices lead to chain homotopic maps.  In the context of Example \ref{morseex} where one has a Morse function $f\co\mathbb{X}\to [-\Lambda,\Lambda]$, we will apply this both with $h$ equal to $f$ and with $h$ equal to the restrictions of $f$ to various sublevel sets; the main subtlety will be in ensuring that the auxiliary choices can be made in such a way that the  various versions of $\mathcal{E}(h,w)$ are consistent with each other (not merely up to homotopy).

A first choice that is made in the construction of $\mathcal{E}(h,w)$ is that of a \emph{Morse-Smale filtration} $\mathcal{W}=\left(W^{(-1)}\subset W^{(0)}\subset \ldots\subset W^{(n)}\right)$ as in \cite[Section 4.3.8]{Paj}.  For this one uses a Morse function $\phi\co \mathbb{Y}\to \R$ for  which $w$ is still gradient-like but which, typically unlike $h$, is ``ordered'' in the sense that if $p$ and $q$ are critical points of $\phi$ with $\mathrm{ind}(p)<\mathrm{ind}(q)$ then $\phi(p)<\phi(q)$. One then takes regular values $a=a_0<a_1<\cdots<a_n=b$, where $n=\dim \mathbb{Y}$, such that, for $k=0,\ldots,n$, the index-$k$ critical points of $h$ (equivalently, of $\phi$) all lie in $\phi^{-1}((a_k,a_{k+1}))$, and defines $W^{(k)}=\phi^{-1}([a,a_{k+1}])$.  Thus $W^{(-1)}=h^{-1}(\{a\})$, $W^{(n)}=\mathbb{Y}$, and $H_i(W^{(k)},W^{(k-1)};\kappa))=0$ if $i\neq k$ while $H_k(W^{(k)},W^{(k-1)};\kappa)$ is a  $\kappa$-vector space with basis given by the relative homology classes $[d_p(w)]$ of the descending disks  for the index $k$-critical points $p$ of $h$, oriented via the arbitrarily-chosen orientations that are part of the input to the Morse complex.

Let us write $P_*(\mathcal{W})$ for the chain complex\footnote{\cite{Paj} would denote this complex as $C_*(\phi,w)$; for our purposes later it seems worthwhile to use notation emphasizing the role of the filtration $\mathcal{W}$.  For $i\notin \{0,1,\ldots,n\}$, $P_i(\mathcal{W})$ should be interpreted as zero and, consistently with this,  $W^{(i)}$ should be interpreted as equal to $W^{(-1)}$ for $i<-1$ and to $W^{(n)}$ for $i>n$.} (similar to the cellular chain complex of a CW complex) given as a graded $\kappa$-vector space by $P_k(\mathcal{W})=H_k(W^{(k)},W^{(k-1)};\kappa)$ and with differential $\delta\co P_{k+1}(\mathcal{W})\to P_k(\mathcal{W})$ given by the connecting homomorphism for the long exact sequence of the triple $(W^{(k+1)},W^{(k)},W^{(k-1)})$.   There is then an obvious isomorphism of graded vector spaces $I_{h,\mathcal{W}}\co CM_*(h,w;\kappa)\to P_*(\mathcal{W})$, sending a critical point $p$ to the relative homology class $[d_p(w)]$ of its descending disk, and \cite[Proposition 6.1.8]{Paj} asserts that this is an isomorphism of chain complexes.  Recalling that $W^{(-1)}=h^{-1}(\{a\})$ and $W^{(n)}=\mathbb{Y}$,  the map $\mathcal{E}(h,w)\co CM_*(h,w;\kappa)\to S_*(\mathbb{Y},h^{-1}(\{a\});\kappa)$ is the composition of this isomorphism $I_{h,\mathcal{W}}$ with a chain homotopy equivalence $\chi\co P_*(\mathcal{W})\to S_*(W^{(n)},W^{(-1)};\kappa)$ constructed  as follows, based on the proof of \cite[Theorem 6.3.6]{Paj}.

Given $k\geq 0$, we assume, inductively\footnote{As a base case, since $P_{i}(\mathcal{W})=S_i(W^{(n)},W^{(-1)};\kappa)=0$ for $i<0$, our conditions are trivially satisfied by taking $\chi_i=0$ for $i<0$.}, that for all $i<k$ we have constructed vector space homomorphisms $\chi_i\co P_i(\mathcal{W})=H_i(W^{(i)},W^{(i-1)};\kappa)\to S_i(W^{(n)},W^{(-1)};\kappa)$ in such a way that: 
\begin{description}[style=multiline, labelwidth=1.5cm]
	\item[\namedlabel{ind1}{($\chi.1$)}]  $\chi_i$ has image in the subspace $S_i(W^{(i)},W^{(-1)};\kappa)$ of $S_i(W^{(n)},W^{(-1)};\kappa)$.
	\item[\namedlabel{ind2}{($\chi.2$)}] $\partial_i\circ\chi_i=\chi_{i-1}\circ\delta_{i}$, where $\partial_i\co S_i(W^{(i)},W^{(-1)};\kappa)\to S_{i-1}(W^{{(i)}},W^{(-1)};\kappa)$ and $\delta_{i}\co P_{i}(\mathcal{W})\to P_{i-1}(\mathcal{W})$ are the differentials on the respective complexes;
	\item[\namedlabel{ind3}{($\chi.3$)}] Given any $x\in P_i(\mathcal{W})$, the element $\chi_i(x)\in S_i(W^{(i)},W^{(-1)};\kappa)$, which satisfies $\partial_i\chi_i(x)\in S_{i-1}(W^{(i-1)},W^{(-1)};\kappa)$ by (\ref{ind1}) and (\ref{ind2}), projects to $H_i\left(\frac{S_*(W^{(i)},W^{(-1)};\kappa)}{S_*(W^{(i-1)},W^{(-1)};\kappa)}\right)$ as a representative of the class $x\in H_i(W^{(i)},W^{(i-1)};\kappa)$. 
\end{description}

Under this inductive hypothesis, we show how to construct $\chi_k\co P_k(\mathcal{W})\to S_k(W^{(n)},W^{(-1)};\kappa)$ so that (\ref{ind1}), (\ref{ind2}), and (\ref{ind3}) continue to hold for $i=k$.   For each standard generator $[d_p(w)]$ of $P_k(\mathcal{W})$, we first choose a chain $c_p\in S_k(W^{(k)},W^{(-1)};\kappa)$ which projects to a relative cycle in $S_k(W^{(k)},W^{(k-1)};\kappa)$ representing $[d_p(w)]$ in homology. Then $\partial_kc_p\in S_{k-1}(W^{(k-1)},W^{(-1)};\kappa)$ is a cycle which projects to a representative of the class $\delta_k[d_p(w)]\in H_{k-1}(W^{(k-1)},W^{(k-2)};\kappa)$.  So by the inductive hypothesis, $\chi_{k-1}\delta_k[d_p(w)]$ and $\partial_kc_p$ project to $S_{k-1}(W^{(k-1)},W^{(k-2)};\kappa)$ as representatives of the same relative homology class, whence there are $b_p\in S_{k}(W^{(k-1)},W^{(-1)};\kappa)$ and $a_p\in S_{k-1}(W^{(k-2)},W^{(-1)};\kappa)$ such that \[ \chi_{k-1}\delta_k[d_p(w)]=\partial_kc_p+\partial_kb_p+a_p. \]  Now $\partial_{k-1}\chi_{k-1}\delta_k[d_p(w)]=\chi_{k-2}\delta_{k-1}\delta_k[d_p(w)]=0$ by induction, so it follows that $\partial_{k-1}a_p=0$.  But $H_{k-1}(W^{(k-2)},W^{(-1)};\kappa)=0$ (this follows by an easy induction from the fact that $H_i(W^{(j)},W^{(j-1)};\kappa)=0$ whenever $i>j$), so it follows that $a_p=\partial_kb'_p$ for some $b'_p\in S_k(W^{(k-2)},W^{(-1)};\kappa)$.  So if we set $\chi_k[d_p(w)]=c_p+b_p+b'_p\in S_k(W^{(k)},W^{(-1)};\kappa)$, then $\chi_k[d_p(w)]$ differs from $c_p$ by  chains contained in $W^{(k-1)}$ and so, like  $c_p$, represents $[d_p(w)]$ in $H_k(W^{(k)},W^{(k-1)};\kappa)$.  Moreover, by construction, $\chi_{k-1}\partial_k[d_p(w)]=\partial_k\chi_k[d_p(w)]$. 

Repeating this construction for each index-$k$ critical point $p$ and then extending linearly to the span $P_k(\mathcal{W})$ of the various $[d_p(w)]$, we obtain a map $\chi_k\co P_k(\mathcal{W}))\to S_k(W^{(k)},W^{(-1)};\kappa)\subset S_k(W^{(n)},W^{(-1)};\kappa)$ such that (\ref{ind1}), (\ref{ind2}), and (\ref{ind3}) now hold for all $i\leq k$, completing the inductive construction of the chain map $\chi\co P_*(\mathcal{W})\to S_*(W^{(n)},W^{(-1)};\kappa)$.

\begin{remark} The choices of $c_p$, $b_p$, and $b'_p$ (and thus the definition of $\chi_k[d_p(w)]$) are made independently for the different critical points of index $k$; the only prior choices that impact the definition of $\chi_k[d_p(w)]$ are those made in the course of constructing $\chi_{k-1}\delta_{k}[d_p(w)]$.  
\end{remark}

\begin{remark}\label{deftocrit} If  $z$ is the largest critical value of the function $h\co \mathbb{Y}\to[a,b]$, then at each stage the chains $\chi_k[d_p(w)]$ can be taken to lie in the subspace $S_k(h^{-1}([a,z])\cap W^{(k)},W^{(-1)};\kappa)$ of $S_k(W^{(k)},W^{(-1)};\kappa)$; this follows readily from the fact that the flow of $-w$ can be used to construct a deformation retraction of  $W^{(k)}$ onto $h^{-1}([a,z])\cap W^{(k)}$ (\cite[Remark 3.4]{Mil}), which can be used to modify the chains $c_p$, $b_p$, and $b'_p$ to have image in $h^{-1}([a,z])$.
\end{remark}

\begin{remark}Define ascending $\N$-filtrations on $P_*(\mathcal{W})$ and on $S_*(W^{(n)},W^{(-1)};\kappa)$ by setting, for $m\in \N$, \[ P^{(m)}_{k}=\left\{\begin{array}{ll}P_{k}(\mathcal{W}) & \mbox{if }k\leq m\\ 0 &\mbox{otherwise} \end{array}\right. \quad\mbox{and}\quad S^{(m)}_{k}=S_k(W^{(m)},W^{(-1)};\kappa).  \]  Then (\ref{ind1}) shows that $\chi$ is a map of $\N$-filtered chain complexes and (\ref{ind3}) shows that $\chi$ induces isomorphisms $H_*(P^{(0)}_{*})\to H_*(S^{(0)}_{*})$  and $H_*\left(\frac{P^{(m+1)}_*}{P^{(m)}_{*}}\right)\to H_*\left(\frac{S^{(m+1)}_*}{S^{(m)}_{*}}\right)$. As in \cite[Proof of Corollary 6.3.8]{Paj}, from this one infers inductively from the five-lemma that $\chi$ is a quasi-isomorphism, and hence a homotopy equivalence since $P_*(\mathcal{W})$ and $S_*(W^{(n)},W^{(-1)};\kappa)$ are bounded-below chain complexes of $\kappa$-vector spaces. Thus any map $\mathcal{E}(h,w)=\chi\circ I_{h,\mathcal{W}}\co CM_*(h,w)\to S_*(\mathbb{Y},h^{-1};\{a\};\kappa)$ constructed by the above prescription is always a homotopy equivalence. \end{remark}

With this preparation, we shift notation to that used in  Example \ref{morseex} and Lemma \ref{Efiltered}: let $\kappa$ be a field and let $\mathbb{X}$ be a compact smooth manifold with boundary, and let $f\co \mathbb{X}\to [-\Lambda,\Lambda]$ be a Morse function for which $-\Lambda$ and $\Lambda$ are regular values, with preimages $\partial^{\pm}\mathbb{X}=f^{-1}(\{\pm\Lambda\})$.  One then has an ascending chain complex, the Morse complex $CM_{*}(f,v;\kappa)$, defined with the assistance of a gradient-like vector field $v$ for $f$ having Morse-Smale flow and of arbitrarily-chosen orientations on the descending disks of the critical points, such that for each $t\in [-\Lambda,\Lambda)$ the filtered subcomplex $CM_{*}^{\leq t}(f,v;\kappa)$ is generated in degree $k$ by those critical points $p$ of $f$ with index $k$ such that $f(p)\leq t$, with boundary operator constructed from counts of gradient flowlines for $-v$ connecting critical points that differ in index by $1$.  

At the same time, the relative singular chain complex $S_*(\mathbb{X}\setminus\partial^+\mathbb{X},\partial^-\mathbb{X};\kappa)$ is an ascending complex, with $S_{*}^{\leq t}(\mathbb{X},\partial^-\mathbb{X};\kappa)=S_*(f^{-1}([-\Lambda,t]),\partial^-\mathbb{X};\kappa)$, and Lemma \ref{Efiltered} asserts that the choices involved in the construction of the map $\mathcal{E}(f,v)\co CM(f,v;\kappa)\to S_*(\mathbb{X}\setminus\partial^+\mathbb{X},\partial^-\mathbb{X};\kappa)$ can be made in such a way that $\mathcal{E}(f,v)$ is a filtered quasi-isomorphism.

\begin{proof}[Proof of Lemma \ref{Efiltered}]
	
	Let the critical values of $f$, all lying in the interval $(-\Lambda,\Lambda)$ be $t_1<t_2<\cdots <t_r$, and choose (necessarily regular) values $a_0<a_1<\cdots<a_r$ with $a_0=-\Lambda$, $a_r=\Lambda$, and $t_i<a_i<t_{i+1}$ for $i\in\{1,\ldots,r-1\}$.  For $m\in\{1,\ldots,r\}$ let us define $\mathbb{X}_m=f^{-1}([a_0,a_m])$.  Note that $CM_{*}(f|_{\mathbb{X}_m},v|_{\mathbb{X}_m})=CM_{*}^{\leq t_m}(f,v;\kappa)$.  So for any $m\in \{1,\ldots,r\}$, the construction recalled above gives rise to a homotopy equivalence $\mathcal{E}(f|_{\mathbb{X}_m},v|_{\mathbb{X}_m})\co CM_{*}^{\leq t_m}(f,v;\kappa)\to S_*(\mathbb{X}_m,\partial^-\mathbb{X})$; in particular, the case that $m=r$ gives a version of $\mathcal{E}(f,v)$ from the statement of the proposition.  The main point will be to ensure that, if suitable choices are made in the constructions, the restriction of $\mathcal{E}(f,v)$ to $CM_{*}^{\leq t_m}(f,v;\kappa)$ is equal to $\mathcal{E}(f|_{\mathbb{X}_m},v|_{\mathbb{X}_m})$.
	
	The construction of each $\mathcal{E}(f|_{\mathbb{X}_m},v|_{\mathbb{X}_m})$ involves the choice of a Morse-Smale filtration \[ \mathcal{W}_m=\left(\partial^-\mathbb{X}=W^{(-1)}_{m}\subset W^{(0)}_{m}\subset \cdots\subset W^{(n)}_{m}=\mathbb{X}_m\right) \] with $n=\dim\mathbb{X}_m=\dim\mathbb{X}$, giving rise to the complex $P_*(\mathcal{W}_m)$ with $P_k(\mathcal{W}_m)=H_k(W^{(k)}_{m},W^{(k-1)}_{m};\kappa)$ as described at the start of this section.  Using \cite[Theorem 4.3.58]{Paj}, we may, and do, choose these filtrations in such a way that $W^{(k)}_{m}\subset W^{(k)}_{m+1}$ for all $k$ and $m$.  Hence, if $1\leq m<m'\leq r$, there is a chain map $
	i_{m,m'}\co P_*(\mathcal{W}_m)\to P_*(\mathcal{W}_{m'})$ induced by the inclusions $W^{(k)}_{m}\hookrightarrow W^{(k)}_{m'}$.  As in the proofs of \cite[Propositions 6.2.4 and 6.2.1]{Paj}, one obtains a commutative diagram \begin{equation}\label{cmpdiag} \xymatrix{ CM_{*}^{\leq t_m}(f,v;\kappa)\ar[rr]^{I_{f|_{\mathbb{X}_m},\mathcal{W}_m}} \ar[d] & &  P_*(\mathcal{W}_m) \ar[d]^{i_{m,m'}} \\ CM_*^{\leq t_{m'}}(f,v;\kappa) \ar[rr]_{I_{f|_{\mathbb{X}_{m'}},\mathcal{W}_{m'}}} &  & P_*(\mathcal{W}_{m'}) }\end{equation} where the left vertical map is the inclusion.
	In particular, if $p$ is a critical point of $f$ with $f(p)<t_m$, then the inclusion-induced map $i_{m,m'}\co H_k(W^{(k)}_{m},W^{(k-1)}_{m};\kappa)\to H_k(W^{(k)}_{m'},W^{(k-1)}_{m'};\kappa)$ sends the standard basis element $[d_p(v|_{\mathbb{X}_m})]=I_{f|_{\mathbb{X}_m},\mathcal{W}_m}(p)$ for $P_k(\mathcal{W}_m)=H_k(W^{(k)}_{m},W^{(k-1)}_{m};\kappa)$, corresponding to the descending disk for $p$, to the standard basis element $[d_p(v|_{\mathbb{X}_{m'}})]$ for $P_k(\mathcal{W}_{m'})$.
	
	We now claim that we can construct the homotopy equivalences $\chi^{m}\co P_*(\mathcal{W}_m)\to S_*(\mathbb{X}_m,\partial^-\mathbb{X};\kappa)$ for $m=1,\ldots,r$, consistently with the description at the start of this section, in such a way that, for each $m$, we have a commutative diagram \begin{equation}\label{mrdiag} \xymatrix{ P_*(\mathcal{W}_m) \ar[d]^{i_{m,r}}\ar[r]^<<<<<{\chi^{m}} & S_*(\mathbb{X}_m,\partial^-\mathbb{X};\kappa) \ar[d] \\ P_*(\mathcal{W}_r) \ar[r]_<<<<<{\chi^{r}} & S_*(\mathbb{X},\partial^-\mathbb{X};\kappa) }  \end{equation}	where the right vertical map is the inclusion.  
	
	To achieve this, we apply the inductive procedure described earlier simultaneously for $m=1,\ldots,r$, giving, for each index $k$, commutative diagrams \begin{equation}\label{mm'diag} \xymatrix{ P_k(\mathcal{W}_m) \ar[d]^{i_{m,m'}}\ar[r]^<<<<<{\chi^{m}_k} & S_k(W_{m}^{(k)},\partial^-\mathbb{X};\kappa) \ar[d] \\ P_k(\mathcal{W}_{m'}) \ar[r]_<<<<<{\chi^{r}_k} & S_k(W_{m'}^{(k)},\partial^-\mathbb{X};\kappa) } \end{equation} whenever $m<m'$. (Here the induction is on $k$.)  In the inductive step, if $p$ is an index-$k$ critical point, we let $m$ be the minimal number such that $p\in \mathbb{X}_m$ and choose 
	the chain $\chi_{k}^{m}[d_p(w|_{\mathbb{X}_m})]\in S_k(W_{m}^{(k)},W_{m}^{(-1)};\kappa)$ as required for the inductive construction of $\chi^{m}\co P_*(\mathcal{W}_m)\to S_*(\mathbb{X}_m,\partial^-\mathbb{X};\kappa)$.  (Thus $\partial_k\chi_{k}^{m}[d_p(w|_{\mathbb{X}_m})]$ coincides with $\chi_{k-1}^{m}\delta_k[d_p(w|_{\mathbb{X}_m})]$ which has been constructed earlier in the induction, and $\chi_{k}^{m}[d_p(w|_{\mathbb{X}_m})]$ represents $[d_p(w|_{\mathbb{X}_m})]$ in $H_k(W_{m}^{(k)},W_{m}^{(k-1)};\kappa)$.)   
	Because, for $m<m'$, the inclusion-induced maps $i_{m,m'}\co P_*(\mathcal{W}_m)\to\mathcal{P}_*(\mathcal{W}_{m'})$ are chain maps which respect the bases given by the descending disk classes $[d_p(w)]$, the image of $\chi_{k}^{m}[d_p(w|_{\mathbb{X}_m})]$ under the inclusion $W_{m}^{(k)}\hookrightarrow W_{m'}^{(k)}$   will satisfy the properties required in the definition of $\chi_{k}^{m'}[d_p(w|_{\mathbb{X}_{m'}})]$.  In this way we obtain definitions of $\chi_{k}^{m}[d_p(w|_{\mathbb{X}_m})]$ for all choices of $m$ with the property that $p\in \mathbb{X}_m$. Extending linearly over all index-$k$ critical points gives maps $\chi_{k}^{m}\co P_k(\mathcal{W}_m)\to S_{k}(W_{m}^{(k)};\partial^-\mathbb{X};\kappa)$ such that (\ref{mm'diag}) commutes by definition and such the requisite properties (\ref{ind1}),(\ref{ind2}),(\ref{ind3}) are satisfied.  
	Induction on $k$ then gives homotopy equivalences $\chi^{m}$ consistent with the construction in \cite{Paj} such that (\ref{mrdiag}) commutes.
	
	Moreover, consistently with Remark \ref{deftocrit}, if $p$ is a critical point of $h$ and $m$ is the minimal value such that $p\in \mathbb{X}_m$, then  $\chi^{m}[d_p(w|_{\mathbb{X}_m})]$ can be taken to be supported in $f^{-1}([-\Lambda,t_m])$, as $t_m$ is the largest critical value of $f|_{\mathbb{X}_m}$.  So we can arrange moreover that the homotopy equivalences $\chi^{m}$ factor as \[ \chi^{m}\co P_*(\mathcal{W}_m)\to   S_{*}(f^{-1}([-\Lambda,t_m]),\partial^-\mathbb{X})\to S_{*}(\mathbb{X}_m,\partial^-\mathbb{X};\kappa)  \] with the second map given by inclusion.  Since both the composition and the second map above are homotopy equivalences, so is the first map.  So composing with the isomorphisms $I_{f|_{\mathbb{X}_m},\mathcal{W}_m}$ from (\ref{cmpdiag}), we deduce that the homotopy equivalence \[ \mathcal{E}(f,v)=\chi^{r}\circ I_{f,\mathcal{W}_r}\co CM_*(f,v;\kappa)\to S_*(\mathbb{X},\partial^-\mathbb{X};\kappa) \] has image contained in $S_*(f^{-1}([-\Lambda,t_r];\partial^-\mathbb{X};\kappa)\subset S_*(\mathbb{X}\setminus\partial^+\mathbb{X},\partial^-\mathbb{X};\kappa)$ and restricts, for each critical value $t_m$ of $f$, to a homotopy equivalence \[ CM_{*}^{\leq t_m}(f,v;\kappa)\to S_{*}(f^{-1}([-\Lambda,t_m]),\partial^-\mathbb{X};\kappa).\]
	
	We are to show that, for every $t\in [-\Lambda,\Lambda)$, $\mathcal{E}(f,v)$ restricts as a quasi-isomorphism $CM_{*}^{\leq t}(f,v;\kappa)\to S_{*}(f^{-1}([-\Lambda,t]),\partial^-\mathbb{X};\kappa)$; we have just established this in the case that $t$ is one of the critical values $t_m$.  But this quickly implies the desired result for all $t$, since if $t<t_1$ then $CM_{*}^{\leq t}(f,v;\kappa)=0$ while $f^{-1}([-\Lambda,t])$ deformation retracts to $\partial^-\mathbb{X}$, and otherwise if $m\in\{1,\ldots,r\}$ is the largest value such that $t_m\leq t$ then $CM_{*}^{\leq t}(f,v;\kappa)=CM_{*}^{\leq t_m}(f,v;\kappa)$ and $f^{-1}([-\Lambda,t])$ deformation retracts to $f^{-1}([-\Lambda,t_m])$. 
\end{proof}

\end{document}